\pgfplotsset{compat=1.11}
\numberwithin{equation}{section}
\renewcommand{\thefootnote}{\fnsymbol{footnote}}
\newcommand{\definedas}{\mathrel{\raise.095ex\hbox{\rm :}\mkern-5.2mu=}}
\newcommand{\R}{\mathbb{R}}
\newcommand{\N}{\mathbb{N}}
\newcommand{\Z}{\mathbb{Z}}
\newcommand{\Sbb}{\mathbb{S}}
\renewcommand{\d}{\,\mathrm{d}}
\newcommand{\Hess}{\mathrm{Hess}}
\newcommand{\ul}[1]{\underline{#1}}
\newcommand{\btr}[1]{\left\vert#1\right\vert}
\newcommand{\newbtr}[1]{\vert#1\vert}
\newcommand{\norm}[1]{\btr{\btr{#1}}}
\newcommand{\spann}[1]{\left\langle#1\right\rangle}
\newcommand{\Ric}{\mathrm{Ric}}
\newcommand{\scal}{\mathrm{R}}
\newcommand{\Riem}{\operatorname{Rm}}
\newcommand{\two}{\operatorname{II}}
\newcommand{\tr}{\text{tr}}
\newcommand{\dive}{\operatorname{div}}
\newcommand{\newnorm}[1]{\vert\vert#1\vert\vert}
\newcommand{\hatgamma}{\widehat{\gamma}}
\newcommand{\tildegamma}{\widetilde{\gamma}}
\theoremstyle{plain}
\newtheorem{thm}{Theorem}[section]
\newtheorem*{thm*}{Main Theorem}
\newtheorem{prop}[thm]{Proposition}
\newtheorem{conj}[thm]{Conjecture}
\newtheorem{lem}[thm]{Lemma}
\theoremstyle{definition}
\newtheorem{defi}[thm]{Definition}
\newtheorem{bem}[thm]{Remark}
\newtheorem{kor}[thm]{Corollary}
\begin{document}
		\begin{center}\LARGE Foliations of asymptotically Schwarzschildean lightcones by surfaces of constant spacetime mean curvature \end{center}
		\vspace{0.5cm}
		\begin{center}
			{\large Klaus Kr\"oncke\footnote[1]{kroncke@kth.se} and Markus Wolff\footnote[2]{markuswo@kth.se}}\\
			\vspace{0.4cm}
			{\large Department of Mathematics}\\
			{\large KTH Royal Institute of Technology}
		\end{center}
		\vspace{0.4cm}
		\begin{abstract}
		 	We construct asymptotic foliations of asymtotically Schwarzschildean lightcones by surfaces of constant spacetime mean curvature (STCMC). 
		 	Our construction is motivated by the approach of Huisken-Yau for the Riemannian setting in employing a geometric flow. We prove that initial data within a sufficient a-priori class converges exponentially to an STCMC surface under area preserving null mean curvature flow. Further, we show that the resulting STCMC surfaces form an asymptotic foliation that is unique within the a-priori class.
		\end{abstract}
		\renewcommand{\thefootnote}{\arabic{footnote}}
		\setcounter{footnote}{0}
	\section{Introduction}
	
	For a spacelike, codimension-$2$ surface $(\Sigma,\gamma)$ in an ambient spacetime $(\overline{M},\overline{g})$, the \emph{spacetime mean curvature} $\mathcal{H}^2$ of $\Sigma$ as introduced by Cederbaum--Sakovich \cite{cederbaumsakovich} is defined as the Lorentzian length of the codimension-$2$ mean curvature vector $\vec{\mathcal{H}}$, i.e.,
	\[
	\mathcal{H}^2=\overline{g}(\vec{\mathcal{H}},\vec{\mathcal{H}}).
	\]
	Moreover, we say $(\Sigma,\gamma)$ is a \emph{surface of constant spacetime mean curvature} (STCMC) if $\mathcal{H}^2$ is constant along $\Sigma$. Although we will be mostly interested in surfaces where $\mathcal{H}^2$ is strictly positive, we note that $\vec{\mathcal{H}}$ has a-priori no fixed causal character and trapped surfaces, where $\mathcal{H}^2<0$ on $\Sigma$, naturally occur in the context of General Relativity.
	
	In the case that $\Sigma$ is contained in an initial data set $(M,g,K)$, which we can think of as a spacelike hypersurface $(M,g)$ in $(\overline{M},\overline{g})$ with second fundamental form $K$, one finds
	\[
	\mathcal{H}^2=H^2-(\tr_\Sigma K)^2,
	\]
	where $H$ denotes the mean curvature of $(\Sigma,\gamma)$ in $(M,g)$. In the context of initial data sets, STCMC surfaces have been studied as a generalization of CMC surfaces, see e.g. \cite{cederbaumsakovich, tenan, wolff3}. The famous Alexandrov Theorem \cite{aleks} states that any CMC surface in Euclidean space is a round sphere, while a version proven by Brendle \cite{brendleCMC} for more general warped product manifolds shows that any CMC surface in the spatial Schwarzschild manifold of positive mass $m>0$ is a sphere centered around the minimal surface $\{r=2m\}$. See also Borghini--Fogagnolo--Pinamonti \cite{borghinifogagnolopinamonti}. 
	
	In their seminal work \cite{huiskenyau}, Huisken--Yau first proposed a geometric notion of center of mass via asymptotic foliations of CMC surfaces. In their work, they construct such a foliation in asymptotically Schwarzschildean Riemannian manifolds using volume preserving mean curvature flow. In a recent work \cite{guileesun}, Gui--Lee--Sun give a new construction of the foliation by Huisken--Yau using area preserving mean curvature flow. Due to their properties,
	(asymptotic) foliations by CMC surfaces have been studied extensively in the context of General Relativity. See \cite{brendleeichmair, eichmairkoerber, huang, ma, ye2, ye3} and references therein for a non-exhaustive list of contributions. In the case of an asymptotically flat initial data set $(M,g,K)$, generalizations have been proposed by Metzger \cite{metzger} and by Cederbaum--Sakovich \cite{cederbaumsakovich} via foliations by surfaces of constant expansion and by STCMC surfaces, respectively. We also refer to Tenan \cite{tenan} for a recent alternative construction of the foliation by STCMC surfaces using a modified volume preserving flow. Indeed, explicit examples by Cederbaum--Nerz \cite{cednerz} suggest that the center of mass formulation by Cederbaum--Sakovich via STCMC surfaces seems to remedy some of the deficiencies and convergence issues of the Huisken--Yau center of mass in the case of a non-time symmetric initial data set $(M,g,K)$.
	
	Here, our aim is to construct asymptotic foliations by STCMC surfaces for a given null hypersurface with suitable asymptotics. We recall that a null hypersurface $\mathcal{N}$ carries a degenerate induced metric and is ruled by null geodesics. In particular, null hypersurfaces model the  trajectories of light that is emanating off of a given light source, such as of a far away celestial object, and considerations for conserved quantities such as energy, linear momentum, angular momentum, and center of mass are of similar interest as for initial data sets. Due to the degeneracy of the induced metric, we directly study the codimension-$2$ geometry of a spacelike cross section in the ambient spacetime. In particular, due to the gauge freedom in choosing a null frame $\{\ul{L},L\}$ normal to the spacelike cross section, the gauge independent spacetime mean curvature $\mathcal{H}^2$ presents itself as a direct analogue to the mean curvature of a surface in a Riemannian manifold in this setting. We note that with respect to a null frame $\{\ul{L},L\}$ with $\overline{g}(\ul{L},L)=2$, the spacetime mean curvature $\mathcal{H}^2$ is given by
	\[
	\mathcal{H}^2=\ul{\theta}\theta,
	\]
	where $\ul{\theta}$, $\theta$ denote the expansions with respect to $\ul{L}$, $L$, respectively. In particular, the intrinsic GCM spheres constructed by Klainerman--Szeftel \cite{klainermanszeftel} are also STCMC surfaces.
	
	In analogy with the Riemannian case, STCMC surfaces in the standard lightcone of the Minkowksi spacetime are intrinsically round and arise from spheres after a suitable Lorentz transformation, while STCMC surfaces in the Schwarzschild lightcone are round spheres centered around the marginally outer trapped surface $\{r=2m\}$, see Chen--Wang \cite{chenwang}. In particular, the Schwarzschild lightcone admits a unique (global) foliation by STCMC surfaces. In analogy with \cite{huiskenyau}, our main result is the construction of such asymptotic foliations outside of spherical symmetry under sufficiently strong asymptotics:

	\begin{thm*}
		Let $\mathcal{N}$ be an asymptotically Schwarzschildean lightcone with positive mass parameter $m>0$. Then there exists an asymptotic foliation of $\mathcal{N}$ by STCMC surfaces that is unique within a suitable a-priori class of surfaces.
	\end{thm*}
	For the precise statement and our definition of asymptotically Schwarzschildean lightcones, we refer to Theorem \ref{thm_main_foliation} and Definition \ref{defi_asymclassS} below, respectively. We note that {the leaves of the foliation are sufficiently round such }that one can evaluate energy, linear momentum and mass along the foliation in a meaningful way. In fact, one directly sees that the foliation has vanishing Bondi-momentum, and thus $E_{Bondi}=m_{Bondi}=m$ with respect to the constructed STCMC foliation. Hence, the constructed foliation is centered at least in the sense of vanishing linear momentum. We moreover conjecture that the foliation is possibly suitable to define a notion of center of mass. We refer to Section \ref{sec_outlook} for a more detailed discussion. 
	\setcounter{section}{1}
	\setcounter{thm}{0}
	
	Motivated by the work of Huisken--Yau \cite{huiskenyau}, we construct STCMC surfaces by evolving spacelike cross sections along the null hypersurface $\mathcal{N}$ under an appropriately modified mean curvature flow. Along a null hypersurface, mean curvature flow was first studied by Roesch--Scheuer in \cite{roeschscheuer}, and by the second named author in the case of the Minkowski and the de\,Sitter lightcone \cite{wolff1, wolff6}. Observe that there is no canonical way to define enclosed volume in this setting in a gauge invariant way, and that the rate of change of area for a spacelike cross section is given by the expansion $\ul{\theta}$ which is only pointwise dependent on the spacelike cross section. In particular, $\ul{\theta}$ extends to a well-defined function on $\mathcal{N}$. It thus seems natural to consider area preserving flows in this setting, and assuming that $\ul{\theta}>0$ on $\mathcal{N}$ allows us to define area preserving flows in a gauge independent way. Following the conventions of Roesch \cite{roesch2}, we refer to such null hypersurfaces as null cones and note that our {assumptions} ensure that $\ul{\theta}>0$ in the asymptotic region.
	
	Here, we say a family of spacelike cross sections $x\colon[0,T)\times\Sigma\to\mathcal{N}$ evolves along $\mathcal{N}$ along \emph{area preserving null mean curvature flow} if
	\begin{align}\label{eq_intro_flow}
	\frac{\d}{\d t}x=-\frac{1}{2\ul{\theta}}\left(\mathcal{H}^2-\fint\mathcal{H}^2\right)\ul{L},
	\end{align}
	where $\ul{L}$ denotes a choice of null generator of $\mathcal{N}$ and $\ul{\theta}$ denotes the expansion with respect to $\ul{L}$. It is direct to see that the parabolic flow preserves area and that STCMC surfaces are the stationary points of the flow. The similarities between our analysis and the work of Huisken--Yau suggest that area preserving null mean curvature flow can indeed be understood as the direct analogue to volume preserving mean curvature flow in the null setting. While not true in general, the second named author has shown in \cite{wolff1, wolff6} that area preserving null mean curvature flow directly arises as a rescaling of mean curvature flow in the lightcone of the Minkowski, de\,Sitter, and Anti de\,Sitter spacetime where it is furthermore equivalent to Hamilton's Ricci flow in $2$ dimensions.
	
	As in \cite{huiskenyau}, we consider spacelike cross sections within a suitable a-priori class of surfaces specified in Definition \ref{defi_aprioriclass} below. The a-priori class ensures $C^0$-estimates on the surface and that it is sufficiently round in the sense that the trace-free part $\accentset{\circ}{A}$ of a suitably defined notion of \emph{scalar second fundamental form} $A$ is sufficiently small in $C^1$. This notion of scalar second fundamental form was first considered by the second named author in \cite{wolff1, wolff4} in the Minkowksi lightcone. It holds that $\accentset{\circ}{A}=0$ if and only if $\mathcal{H}^2=\operatorname{const.}$ in the Minkowski lightcone, where in addition a suitable stability results holds with respect to integral norms \cite{wolff4}. We note that $A$ is gauge invariant and that $\tr_\Sigma A=\mathcal{H}^2$. However, $A$ can also be understood as a null second fundamental form for a specific choice of gauge equivalent to inverse mean curvature flow \cite{sauter}. This particular gauge choice was independently utilized by Le \cite{le1, le2, le3} in the Minkowski lightcone. 
	
	Using a slightly adapted version of a result by Shi--Wang--Wu \cite{shiwangwu}, the a-priori estimates (Proposition \ref{prop_apriori}) indeed show that any spacelike cross section in the a-priori class is $C^{2,\alpha}$-close to a round sphere under some mild additional assumptions, where the appropriate boost vector $\vec{a}$ is determined via an associated $4$-vector ${\textbf{Z}}$, see Subsection \ref{subsec_4vector}. The vector ${\textbf{Z}}$ was defined for a spacelike cross section of the Minkowski lightcone by the second named author in \cite{wolff4}, and independently for a surface in an asymptotically hyperbolic initial data set by Cederbaum--Cortier--Sakovich \cite{cederbaumcortiersakovich}. The a-priori estimates yield the stability of spacelike cross sections (Proposition \ref{prop_strictstability}), the invertibility of the Jacobi operator (Proposition \ref{prop_stability2}), and uniqueness of spacelike cross sections with prescribed spacetime mean curvature (Proposition \ref{prop_uniqueness}) within the a-priori class. Showing that the a-priori class is preserved under area preserving null mean curvature flow, the above properties lead to long time existence (Theorem \ref{thm_longtime}) and convergence of the flow (Theorem \ref{thm_convergence}). Finally, we show that the limiting STCMC surfaces starting from a smooth family of coordinate spheres form an asymptotic foliation (Theorem \ref{thm_foliation}).
	\\\\
	The paper is structured as follows: In Section \ref{sec_prelim}, we give a brief introduction to null geometry and other relevant concepts and collect the necessary preliminaries for our analysis. In Section \ref{sec_apriori}, we introduce the a-priori class, derive the crucial a-priori estimates, and prove all relevant properties of cross sections within the a-priori class. We introduce area preserving null mean curvature flow in Section \ref{sec_APNMCF} and show that the preservation of the a-priori class along the flow leads to long time existence and convergence. We show that the limiting surfaces of the flow form an asymptotic foliation and prove our main result in Section \ref{sec_foliation}. We close with some comments in Section \ref{sec_outlook}. For the convenience of the reader, we collect some more technical computations in the Appendix.
	
	\subsection*{Acknowledgments}
	The authors would like to thank Carla Cederbaum and Gerhard Huisken for their interest and enlightening discussions. The second named author appreciates the support by the Verg foundation.
	
	\section{Preliminaries}\label{sec_prelim}
	\subsection{Null Geometry and codimension $2$ surfaces}\label{subsec_generalsetup}
	In the following, we give a brief overview of the relevant concepts in null geometry and the geometry of codimension $2$ surfaces that we frequently employ throughout the work below. We refer the interested reader to \cite[Sections 4.7 and 4.8]{wolff_thesis} for a more detailed introduction using similar notation. See also \cite{marssoria, roesch, roeschscheuer, sauter}.
	
	We say a smooth, oriented hypersurface $\mathcal{N}$ in an ambient spacetime $(\overline{M},\overline{g})$ (of dimension $4$) is a \emph{null hypersurface} if $\mathcal{N}$ carries a degenerate induced metric. In particular, there exists a smooth null vector field $\ul{L}\in \Gamma(T\mathcal{N})$ such that $T_p\mathcal{N}=(\ul{L}_p)^\perp\subset T_p\overline{M}$ for any $p\in\mathcal{N}$. In this sense, $\ul{L}$ is both tangential and normal to $\mathcal{N}$ and we moreover observe that
	\begin{align}\label{eq_surfgrav}
		\overline{\nabla}_{\ul{L}}\ul{L}=\kappa\ul{L}
	\end{align}
	for a smooth function $\kappa$ on $\mathcal{N}$, where $\overline{\nabla}$ denotes the Levi-Civita connection of $(\overline{M},\overline{g})$. Hence, the integral curves of $\ul{L}$ are affine null geodesics. We call $\ul{L}$ a \emph{null generator} of $\mathcal{N}$, and note that $a\ul{L}$ is a different choice of null generator for any non-vanishing function $a\in C^\infty(\mathcal{N})$, which corresponds to a reparametrization of the integral curves. By \eqref{eq_surfgrav}, we can reparametrize the integal curves such that they are geodesics, and thus $\mathcal{N}$ is ruled by null geodesics.
	
	In the following, we consider spacelike codimension $2$ submanifolds $(\Sigma,\gamma)$ of $(\overline{M},\overline{g})$ as (spacelike) cross sections of $\mathcal{N}$, i.e., $\Sigma\subset \mathcal{N}$ and where we always assume that any integral curve of $\ul{L}$ intersects $\Sigma$ transversally and exactly once. As $T\Sigma\subset T\mathcal{N}$, we have $\ul{L}\in\Gamma(T^\perp\Sigma)$ and there exists a uniquely determined null vector field $L$ such that $\{\ul{L},L\}$ forms a null frame of $\Gamma(T^\perp\mathcal{N})$ with $\overline{g}(\ul{L},L)=2$. We recall that the \emph{null second fundamental forms} of $\Sigma$ with respect to $\{\ul{L},L\}$ are defined as
	\begin{align*}
		\underline{\chi}(X,Y):=\overline{g}(\overline{\nabla}_X\ul{L},Y),\qquad\chi(X,Y):=\overline{g}(\overline{\nabla}_XL,Y),
	\end{align*}
	where $X,Y\in\Gamma(T\Sigma)$, and that the \emph{null expansions} of $\Sigma$ with respect to $\{\ul{L},L\}$ are defined as
	\begin{align*}
		\ul{\theta}:=\tr_\gamma\ul{\chi},\qquad
		\theta:=\tr_\gamma\chi.
	\end{align*}
	We observe that the vector-valued second fundamental form $\vec{\two}$ and codimension $2$ mean curvature vector $\vec{\mathcal{H}}$ of $(\Sigma,\gamma)$ in $(\overline{M},\overline{g})$ can be written as
	\begin{align*}
		\vec{\two}=-\frac{1}{2}\chi\ul{L}-\frac{1}{2}\ul{\chi}L,\qquad
		\vec{\mathcal{H}}=-\frac{1}{2}\theta\ul{L}-\frac{1}{2}\ul{\theta}L
	\end{align*}
	with respect to the null frame $\{\ul{L},L\}$. In particular, the \emph{spacetime mean curvature} $\mathcal{H}^2$ of $\Sigma$, defined as the Lorentzian length of $\vec{\mathcal{H}}$ \cite{cederbaumsakovich}, is given by
	\[
		\mathcal{H}^2:=\overline{g}(\vec{\mathcal{H}},\vec{\mathcal{H}})=\ul{\theta}\theta.
	\]
	Further, the \emph{connection-$1$ form} of $\Sigma$ with respect to $\{\ul{L},L\}$ is defined as
	\[
		\zeta(X):=\frac{1}{2}\overline{g}(\overline{\nabla}_X\ul{L},L),
	\]
	where $X\in\Gamma(T\Sigma)$. In addition, if $\ul{\theta}$ is non-vanishing along $\Sigma$, where we can assume $\ul{\theta}>0$ without loss of generality (reparametrizating the integral curves of $\ul{L}$ if necessary), we define the \emph{scalar second fundamental form} $A$ of $\Sigma$ as
	\begin{align}\label{eq_prelim_defA}
		A:=\ul{\theta}\chi,
	\end{align}
	and \emph{torsion} $\tau$ of $\Sigma$ as
	\begin{align}\label{eq_prelim_deftau}
		\tau:=\zeta-\d\ln(\ul{\theta}),
	\end{align}
	see also \cite[Section 3]{wolff1} and \cite[Definition 1.3]{roesch}, respectively. We observe that the quantities $\mathcal{H}^2=\tr_\gamma A$, $A$, $\tau$ are independent of the choice of null frame $\{\ul{L},L\}$ (if well-defined), and thus independent of the choice of null generator $\ul{L}$ for a spacelike cross section of $\mathcal{N}$. We now state the well-known Gauss- and Codazzi equations with respect to a null frame $\{\ul{L},L\}$. Here, $\overline{\Riem}$, $\overline{\Ric}$, $\overline{\scal}$, and $\Riem$, $\Ric$, $\scal$ denote the Riemann curvature tensor, Ricci tensor, and scalar curvature of $(\overline{M},\overline{g})$ and $(\Sigma,\gamma)$, respectively. See Appendix \ref{appx_B} for our conventions for the Riemann curvature tensor.
	\begin{prop}[{\cite[Proposition 4.24]{wolff_thesis}}]\label{prop_nullgauss}
		Let $(x^i)$ denote local coordinates of $(\Sigma,\gamma)$. Then
		\begin{align}
		\overline{\Riem}_{ijkl}&=\Riem_{ijkl}-\frac{1}{2}\chi_{jl}\ul{\chi}_{ik}-\frac{1}{2}\ul{\chi}_{jl}\chi_{ik}+\frac{1}{2}\chi_{jk}\ul{\chi}_{il}+\frac{1}{2}\ul{\chi}_{jk}\chi_{il},\label{eq_prelim_gauss1}\\
		\overline{\Ric}_{ik}-\frac{1}{2}\overline{\Riem}_{i\ul{L}kL}-\frac{1}{2}\overline{\Riem}_{iLk\ul{L}}&=\Ric_{ik}-\frac{1}{2}\theta\ul{\chi}_{ik}-\frac{1}{2}\ul{\theta}\chi_{ik}+\frac{1}{2}(\chi\cdot\ul{\chi})_{ik}+\frac{1}{2}(\ul{\chi}\cdot\chi)_{ik},\label{eq_prelim_gauss2}\\
		\overline{\scal}-2\overline{\Ric}(L,\ul{L})+\frac{1}{2}\overline{\Riem}(\ul{L},L,L,\ul{L})&=\operatorname{R}-\mathcal{H}^2+\newbtr{\vec{\two}}^2.\label{eq_prelim_gauss3}
		\end{align}
	\end{prop}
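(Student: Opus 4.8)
The plan is to deduce all three identities from the single Gauss equation for the spacelike codimension-$2$ submanifold $(\Sigma,\gamma)\subset(\overline{M},\overline{g})$, which with our curvature conventions (Appendix \ref{appx_B}) reads
\[
\overline{\Riem}(X,Y,Z,W)=\Riem(X,Y,Z,W)+\overline{g}\big(\vec{\two}(X,W),\vec{\two}(Y,Z)\big)-\overline{g}\big(\vec{\two}(X,Z),\vec{\two}(Y,W)\big)
\]
for $X,Y,Z,W\in\Gamma(T\Sigma)$. Substituting $\vec{\two}=-\tfrac12\chi\,\ul{L}-\tfrac12\ul{\chi}\,L$ and using $\overline{g}(\ul{L},L)=2$ together with $\overline{g}(\ul{L},\ul{L})=\overline{g}(L,L)=0$, a short computation gives $\overline{g}(\vec{\two}(X,W),\vec{\two}(Y,Z))=\tfrac12\big(\chi(X,W)\ul{\chi}(Y,Z)+\ul{\chi}(X,W)\chi(Y,Z)\big)$, and evaluating on coordinate fields $\partial_i,\partial_j,\partial_k,\partial_l$ produces \eqref{eq_prelim_gauss1} directly.

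For \eqref{eq_prelim_gauss2} and \eqref{eq_prelim_gauss3} I would take successive traces, using that along $\Sigma$ the ambient inverse metric decomposes into a tangential and a normal part as $\overline{g}^{\mu\nu}=\gamma^{ik}\partial_i^\mu\partial_k^\nu+\tfrac12\big(\ul{L}^\mu L^\nu+L^\mu\ul{L}^\nu\big)$, the normal block being the inverse of the Gram matrix of $\{\ul{L},L\}$ for the normalization $\overline{g}(\ul{L},L)=2$. Contracting \eqref{eq_prelim_gauss1} with $\gamma^{jl}$ over its second and fourth slots and writing $\gamma^{jl}\overline{\Riem}_{ijkl}=\overline{\Ric}_{ik}-\tfrac12\overline{\Riem}_{i\ul{L}kL}-\tfrac12\overline{\Riem}_{iLk\ul{L}}$ (the two normal contractions being precisely the part of the full ambient Ricci trace not captured by $\gamma^{jl}$), while on the surface $\gamma^{jl}\Riem_{ijkl}=\Ric_{ik}$, $\gamma^{jl}\chi_{jl}=\theta$, $\gamma^{jl}\ul{\chi}_{jl}=\ul{\theta}$, yields \eqref{eq_prelim_gauss2}. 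Contracting \eqref{eq_prelim_gauss2} once more with $\gamma^{ik}$ and combining it with the analogous decompositions $\overline{\scal}=\gamma^{ik}\overline{\Ric}_{ik}+\overline{\Ric}(L,\ul{L})$ and $\overline{\Ric}(L,\ul{L})=\gamma^{ik}\big(\tfrac12\overline{\Riem}_{i\ul{L}kL}+\tfrac12\overline{\Riem}_{iLk\ul{L}}\big)+\tfrac12\overline{\Riem}(\ul{L},L,L,\ul{L})$ rearranges the left-hand side into $\overline{\scal}-2\overline{\Ric}(L,\ul{L})+\tfrac12\overline{\Riem}(\ul{L},L,L,\ul{L})$; on the right one uses $\tr_\gamma\chi=\theta$, $\tr_\gamma\ul{\chi}=\ul{\theta}$, $\mathcal{H}^2=\ul{\theta}\theta$ and the identity $\gamma^{ik}\gamma^{jl}\tfrac12(\chi_{ij}\ul{\chi}_{kl}+\ul{\chi}_{ij}\chi_{kl})=\langle\chi,\ul{\chi}\rangle=\newbtr{\vec{\two}}^2$ to land on \eqref{eq_prelim_gauss3}.

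I expect the only real difficulty to be the bookkeeping rather than any idea: one has to use the antisymmetries $\overline{\Riem}_{abcd}=-\overline{\Riem}_{bacd}=-\overline{\Riem}_{abdc}$ and the pair symmetry $\overline{\Riem}_{abcd}=\overline{\Riem}_{cdab}$ to see that the two ``mixed'' single-normal traces coincide, and — in the passage to \eqref{eq_prelim_gauss3} — that the three contributions proportional to $\overline{\Riem}(\ul{L},L,L,\ul{L})$, coming respectively from $\overline{\scal}$, from $-2\overline{\Ric}(L,\ul{L})$, and from the explicit summand on the left-hand side, cancel. Keeping every factor of $\tfrac12$ consistent with the normalization $\overline{g}(\ul{L},L)=2$ is the place where a slip would most easily creep in, so I would compute the purely normal traces once and reuse them. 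Alternatively, and perhaps more transparently, one can first write the standard codimension-$2$ Gauss, Gauss--Ricci, and twice-traced Gauss equations in an orthonormal normal frame $\{e_3,e_4\}$ with $e_4$ timelike, and then substitute $\ul{L}=e_3-e_4$ and $L=e_3+e_4$ so that $\overline{g}(\ul{L},L)=2$; this replaces the degenerate-normal bookkeeping by a single change of frame at the end.
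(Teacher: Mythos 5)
Your proposal is correct: the Gauss equation with $\vec{\two}=-\tfrac12\chi\,\ul{L}-\tfrac12\ul{\chi}\,L$ and $\overline{g}(\ul{L},L)=2$ gives \eqref{eq_prelim_gauss1}, and the traces via the decomposition $\overline{g}^{\mu\nu}=\gamma^{ik}\partial_i^\mu\partial_k^\nu+\tfrac12(\ul{L}^\mu L^\nu+L^\mu\ul{L}^\nu)$ yield \eqref{eq_prelim_gauss2} and \eqref{eq_prelim_gauss3}, with all factors of $\tfrac12$ and the cancellation of the $\overline{\Riem}(\ul{L},L,L,\ul{L})$ contributions working out as you indicate. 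The paper itself does not prove the proposition but cites \cite[Proposition 4.24]{wolff_thesis}, and your argument is precisely the standard derivation that citation refers to, so there is nothing to add.
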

	\begin{prop}[{\cite[Proposition 4.25]{wolff_thesis}}]\label{prop_nullcodazzi}
		Let $(x^i)$ denote local coordinates of $(\Sigma,\gamma)$. Then
		\begin{align*}
		\nabla_i\ul{\chi}_{jk}-\nabla_j\ul{\chi}_{ik}
		&=\overline{\Riem}_{ijk\ul{L}}-\zeta_j\ul{\chi}_{ik}+\zeta_i\ul{\chi}_{jk},\\
		\nabla_i\chi_{jk}-\nabla_j\chi_{ik}&=\overline{\Riem}_{ijkL}+\zeta_j\chi_{ik}-\zeta_i\chi_{jk}.
		\end{align*}
	\end{prop}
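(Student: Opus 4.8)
The plan is to reduce both identities to a direct computation, exactly parallel to the classical derivation of the Codazzi equation for a Riemannian hypersurface, except that the normal bundle of $\Sigma$ is now two–dimensional, so we record the components of $\overline{\Riem}$ paired against the two null normals $\ul{L}$ and $L$ separately. The connection on the left-hand side is of course the Levi-Civita connection $\nabla$ of $\gamma$.

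\textbf{Step 1 (Weingarten-type formulas).} First I would decompose $\overline{\nabla}_X\ul{L}$ and $\overline{\nabla}_X L$ for $X\in\Gamma(T\Sigma)$ into tangential and normal parts. Writing $\overline{\nabla}_X\ul{L}=a\ul{L}+bL+V$ with $V$ tangent to $\Sigma$ and differentiating the relations $\overline{g}(\ul{L},\ul{L})=\overline{g}(L,L)=0$ and $\overline{g}(\ul{L},L)=2$ along $X$ forces $b=0$ and $a=\zeta(X)$, while $\overline{g}(V,Y)=\overline{g}(\overline{\nabla}_X\ul{L},Y)=\ul{\chi}(X,Y)$ identifies $V$ as the $\gamma$-dual of $\ul{\chi}(X,\cdot)$. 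In local coordinates $(x^i)$ this reads
\[
\overline{\nabla}_i\ul{L}=\zeta_i\ul{L}+\ul{\chi}_i{}^k\partial_k,\qquad \overline{\nabla}_i L=-\zeta_i L+\chi_i{}^k\partial_k,
\]
the second identity following in the same way from $\overline{g}(L,L)=0$ and $X(\overline{g}(\ul{L},L))=0$; in particular the induced connection on $T^\perp\Sigma$ acts on the frame $\{\ul{L},L\}$ through the single $1$-form $\zeta$, with $\nabla^\perp\ul{L}=\zeta\otimes\ul{L}$ and $\nabla^\perp L=-\zeta\otimes L$.

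\textbf{Step 2 (commute covariant derivatives).} Using the Gauss formula $\overline{\nabla}_i\partial_k=\Gamma_{ik}^l\partial_l+\vec{\two}(\partial_i,\partial_k)=\Gamma_{ik}^l\partial_l-\tfrac12\chi_{ik}\ul{L}-\tfrac12\ul{\chi}_{ik}L$ together with Step 1, I would expand $\overline{\nabla}_i\overline{\nabla}_j\ul{L}$, antisymmetrize in $i,j$ (so that, since $[\partial_i,\partial_j]=0$, the result equals $\overline{R}(\partial_i,\partial_j)\ul{L}$), and read off the tangential part: the $\Gamma$-terms accompanying $\partial_i\ul{\chi}_j{}^l$ assemble into $\nabla_i\ul{\chi}_j{}^l$ up to a piece symmetric in $i,j$ which drops out, and one is left with $\nabla_i\ul{\chi}_j{}^l-\nabla_j\ul{\chi}_i{}^l+\zeta_j\ul{\chi}_i{}^l-\zeta_i\ul{\chi}_j{}^l$. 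Pairing with $\partial_k$ via $\overline{g}$, lowering the index with $\gamma$, and identifying the result with the component $\overline{\Riem}_{ijk\ul{L}}$ according to the curvature conventions of Appendix~\ref{appx_B} gives the first identity; repeating the same computation with $L$ in place of $\ul{L}$ — the only change being the sign of the $\zeta$ term produced by Step 1 — gives the second. (Equivalently, one may simply quote the general higher-codimension Codazzi equation $(\overline{R}(X,Y)Z)^\perp=(\tilde\nabla_X\vec{\two})(Y,Z)-(\tilde\nabla_Y\vec{\two})(X,Z)$ and substitute $\vec{\two}=-\tfrac12\chi\ul{L}-\tfrac12\ul{\chi}L$ together with the normal connection from Step 1, then pair the identity against $\ul{L}$ and against $L$.)

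\textbf{Main obstacle.} There is no analytic content here; the only thing that needs genuine care is the bookkeeping of sign conventions — both the sign of $\overline{\Riem}$ fixed in Appendix~\ref{appx_B} and the sign in $\vec{\two}=-\tfrac12\chi\ul{L}-\tfrac12\ul{\chi}L$ — together with the consistent raising and lowering of indices with $\gamma$ and the identification of which terms are symmetric in $i,j$ and hence vanish upon antisymmetrization. Carrying the computation out with those conventions fixed throughout, both displayed identities fall out directly.
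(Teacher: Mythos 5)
Your proposal is correct: the Weingarten-type decompositions $\overline{\nabla}_i\ul{L}=\zeta_i\ul{L}+\ul{\chi}_i{}^k\partial_k$, $\overline{\nabla}_iL=-\zeta_iL+\chi_i{}^k\partial_k$ together with the Gauss formula and antisymmetrization in $i,j$ yield precisely the two displayed identities with the signs matching the curvature convention of Appendix \ref{appx_B}. The paper itself gives no proof (it cites \cite[Proposition 4.25]{wolff_thesis}), and your argument is exactly the standard derivation one would find there.
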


	\begin{bem}\label{bem_nullcodazzi}
		If $\ul{\theta}>0$, and $A$ and $\tau$ are well-defined, we find the following Codazzi-Equation for $A$ by using the product rule:
		\begin{align}\label{eq_codazziA}
			\nabla_iA_{jk}-\nabla_jA_{ik}&=\ul{\theta}\overline{\Riem}_{ijkL}+\tau_jA_{ik}-\tau_iA_{jk}.
		\end{align}
	\end{bem}
	Using the Codazzi Equation \eqref{eq_codazziA}, we will in the following establish a null version of Simon's identity for the scalar second fundamental form $A$ (if $\ul{\theta}>0$). Such a version has first been utilized in the Minkowski lightcone by the second named author, cf. \cite[Lemma 9]{wolff1}. Here, we will state it in the general setting. We also note that the Simon's identity admits an equivalent formulation using ambient tensor derivatives of the ambient curvature tensor, cf. Remark \ref{bem_nullsimon} in Appendix \ref{appendix_nullsimon}.
	\begin{prop}[Null Simon's Idenity]\label{prop_nullsimon}
		\begin{align*}
		\nabla_k\nabla_l A_{ij}
		=&\nabla_i\nabla_j A_{kl}+\Riem_{kijm}A^m_l+\Riem_{kilm}A^m_j
		\\&
		+\tau_j\nabla_iA_{kl}+\tau_i\nabla_kA_{jl}-\tau_k\nabla_iA_{jl}-\tau_l\nabla_kA_{ij}
		\\	&
		+\nabla_i\tau_jA_{kl}+\nabla_k\tau_iA_{lj}-\nabla_i\tau_kA_{jl}-\nabla_k\tau_lA_{ij}
		\\		&
		+\nabla_i(\overline{\Riem}_{kjl(\ul{\theta}L)})+\nabla_k(\overline{\Riem}_{lij(\ul{\theta}L)}).
		\end{align*}
	\end{prop}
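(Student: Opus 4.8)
\emph{Proof strategy.} This identity is the null counterpart of the classical Simons identity for the scalar second fundamental form, and the plan is to imitate the standard derivation: the only structural inputs are the Codazzi equation \eqref{eq_codazziA} for $A$ and the Ricci commutation identity on $(\Sigma,\gamma)$, and the extra $\tau$-, $\nabla\tau$- and ambient-curvature terms are forced precisely because, in contrast to a hypersurface in flat space, $\nabla A$ is not totally symmetric. Throughout one uses that $A$ is symmetric (hence so is $\nabla A$ in its last two slots), that $A$ and $\tau$ are well-defined since $\ul{\theta}>0$, and the curvature sign conventions of Appendix~\ref{appx_B}; recall also $\overline{\Riem}_{ijk(\ul{\theta}L)}=\ul{\theta}\,\overline{\Riem}_{ijkL}$.

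First I would rewrite \eqref{eq_codazziA}, after relabeling indices, in the form
\[
\nabla_l A_{ij}=\nabla_i A_{lj}+\overline{\Riem}_{lij(\ul{\theta}L)}+\tau_i A_{lj}-\tau_l A_{ij},
\]
apply $\nabla_k$ and expand with the product rule; this already produces the term $\nabla_k(\overline{\Riem}_{lij(\ul{\theta}L)})$ and a first batch of lower-order terms, and leaves $\nabla_k\nabla_i A_{lj}$ to be handled. The Ricci identity turns $\nabla_k\nabla_i A_{lj}$ into $\nabla_i\nabla_k A_{lj}$ at the cost of exactly the two intrinsic curvature terms $\Riem_{kijm}A^m_l+\Riem_{kilm}A^m_j$ appearing in the statement. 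Then, applying \eqref{eq_codazziA} twice more — once to replace $\nabla_k A_{lj}$ by $\nabla_l A_{kj}$, and once again, after using $A_{kj}=A_{jk}$, to replace $\nabla_l A_{jk}$ by $\nabla_j A_{lk}=\nabla_j A_{kl}$ — brings the leading term into the desired form $\nabla_i\nabla_j A_{kl}$ and generates two further ambient-curvature derivatives $\nabla_i(\overline{\Riem}_{klj(\ul{\theta}L)})$ and $\nabla_i(\overline{\Riem}_{ljk(\ul{\theta}L)})$, together with two more batches of $\tau$- and $\nabla\tau$-terms.

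It then remains to collect terms. The first Bianchi identity gives $\overline{\Riem}_{klj(\ul{\theta}L)}+\overline{\Riem}_{ljk(\ul{\theta}L)}=\overline{\Riem}_{kjl(\ul{\theta}L)}$, which fuses the last two ambient-curvature contributions into the single term $\nabla_i(\overline{\Riem}_{kjl(\ul{\theta}L)})$ of the statement. A bookkeeping comparison, repeatedly using $A_{ab}=A_{ba}$ and the symmetry of $\nabla A$ in its last two indices, then shows that the three batches of $\tau\ast\nabla A$ terms collapse to $\tau_j\nabla_iA_{kl}+\tau_i\nabla_kA_{jl}-\tau_k\nabla_iA_{jl}-\tau_l\nabla_kA_{ij}$ and, likewise, the $\nabla\tau\ast A$ terms to $\nabla_i\tau_jA_{kl}+\nabla_k\tau_iA_{lj}-\nabla_i\tau_kA_{jl}-\nabla_k\tau_lA_{ij}$, which is exactly the claimed identity. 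I do not expect any conceptual obstacle here; the one genuinely delicate point is the index and sign bookkeeping — matching the Ricci-identity term and the ambient-curvature terms against the conventions of Appendix~\ref{appx_B}, and checking that the numerous $\tau$-dependent contributions recombine with no leftover $\tau\ast\tau\ast A$ or undifferentiated-curvature remainders. (A slightly more invariant alternative would carry out the computation in terms of ambient covariant derivatives of $\overline{\Riem}$, as recorded in Remark~\ref{bem_nullsimon}, but the argument above is the most economical.)
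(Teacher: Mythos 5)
Your proposal is correct and follows essentially the same route as the paper's proof: rewrite \eqref{eq_codazziA} for $\nabla_lA_{ij}$, apply $\nabla_k$, commute $\nabla_k\nabla_i$ via the Ricci identity for symmetric $(0,2)$-tensors (producing $\Riem_{kijm}A^m_l+\Riem_{kilm}A^m_j$), apply Codazzi again under $\nabla_i$, and collect terms using the symmetry of $A$. The only (harmless) detour is splitting the second Codazzi step into two index swaps fused by the first Bianchi identity: since $A_{lj}=A_{jl}$, a single application of \eqref{eq_codazziA} to $\nabla_kA_{jl}$ already gives $\nabla_jA_{kl}+\overline{\Riem}_{kjl(\ul{\theta}L)}+\tau_jA_{kl}-\tau_kA_{jl}$, which is exactly what your two-step-plus-Bianchi computation reproduces, so the paper gets by with two Codazzi applications where you use three.
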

	\begin{proof}
Using twice the Codazzi equation \eqref{eq_codazziA}, and the fact that
		\[
		\nabla_k\nabla_lT_{ij}-\nabla_l\nabla_kT_{ij}=\Riem_{kljm}T^m_i+\Riem_{klim}T^m_j
		\]
	holds	for any symmetric $(0,2)$ tensor $T$
we obtain that
		\begin{align*}
		\nabla_k\nabla_lA_{ij}
		=&\nabla_k\left(\nabla_iA_{lj}+\Riem_{lij(\ul{\theta}L)}+\tau_iA_{lj}-\tau_lA_{ij}\right)\\
		=&\nabla_k\nabla_iA_{lj}+\nabla_k(\overline{\Riem}_{lij(\ul{\theta}L)})+\tau_i\nabla_kA_{lj}-\tau_l\nabla_kA_{ij}+\nabla_k\tau_iA_{lj}-\nabla_k\tau_lA_{ij}\\
		=&\nabla_i\nabla_k A_{jl}+\Riem_{kilm}A^m_j+\Riem_{kijm}A^m_l+\nabla_k(\overline{\Riem}_{lij(\ul{\theta}L)})\\
		&+\tau_i\nabla_kA_{lj}-\tau_l\nabla_kA_{ij}+\nabla_k\tau_iA_{lj}-\nabla_k\tau_lA_{ij}\\
		=&\nabla_i(\nabla_j A_{kl}+\overline{\Riem}_{kjl(\ul{\theta}L)}+\tau_jA_{kl}-\tau_kA_{jl})\\
		&+\Riem_{kilm}A^m_j+\Riem_{kijm}A^m_l+\nabla_k(\overline{\Riem}_{lij(\ul{\theta}L)})\\
		&+\tau_i\nabla_kA_{lj}-\tau_l\nabla_kA_{ij}+\nabla_k\tau_iA_{lj}-\nabla_k\tau_lA_{ij}\\
		=&\nabla_i\nabla_j A_{kl}+\Riem_{kijm}A^m_l+\Riem_{kilm}A^m_j\\
		&+\tau_j\nabla_iA_{kl}+\tau_i\nabla_kA_{jl}-\tau_k\nabla_iA_{jl}-\tau_l\nabla_kA_{ij}\\
		&+\nabla_i\tau_jA_{kl}+\nabla_k\tau_iA_{lj}-\nabla_i\tau_kA_{jl}-\nabla_k\tau_lA_{ij}\\
		&+\nabla_i(\overline{\Riem}_{kjl(\ul{\theta}L)})+\nabla_k(\overline{\Riem}_{lij(\ul{\theta}L)}).
		\qedhere
		\end{align*}
	\end{proof}
	Recall that for a fixed choice of null generator $\ul{L}$ and spacelike cross section $S$, one can construct a \emph{background foliation} of $\mathcal{N}$ given by a diffeomorphism
	\[
		\Phi_{{\ul{L},S}}\colon (r_1,r_2)\times S\to \mathcal{N}, (s,q)\mapsto \gamma^{\ul{L}}_q(s)
	\]
	for $-\infty\le r_1<r_2\le \infty$, where $\gamma^{\ul{L}}_q$ denotes the integral curve of $\ul{L}$ starting at $q\in S\subset \mathcal{N}$. While the foliation a-priori only exists locally, we will always assume that $\Phi_{{\ul{L},S}}$ is surjective for convenience. Let $\pi^1$ and $\pi^S$ denote the projection onto the $s$ and $S$ component, respectively. We note that for any spacelike cross section $\Sigma$, $\phi_\Sigma:=\pi^S\circ\Phi_{{\ul{L},S}}^{-1}\vert_{\Sigma}$ is a smooth diffeomorphism from $\Sigma$ to $S$, and we can define
	\[
		F\colon \Sigma\to(r_1,r_2),\qquad p\mapsto s= \pi^1\circ\Phi_{{\ul{L},S}}^{-1}(p).
	\]
	We extend $F$ constantly along the integral curves of $\ul{L}$ for technical reasons. In particular, all spacelike cross sections are homeomorphic, and in the following we will always assume that they are topological spheres. For a fixed diffeomorphism $\phi\colon \Sbb^2\to S$, $\Phi_{{\ul{L},S}}$ now induces the diffeomorphism
	\[
		\Phi_{\ul{L}}:=\Phi_{{\ul{L},S}}\circ(\mathrm{id},\phi)\colon (r_1,r_2)\times\Sbb^2\to\mathcal{N}.
	\]
	By defining the function 
	\[
		\omega\colon\Sbb^2\to(r_1,r_2),\qquad\vec{x}\mapsto F\circ\phi_{\Sigma}^{-1}\circ\phi(\vec{x})
	\]
	for a given spacelike cross section $\Sigma$, we can uniquely identify $\Sigma$ as the graph of $\omega$ over $\Sbb^2$ in the sense that $\Sigma=\{s=\omega\}=\{p\in\mathcal{N}\vert p=\Phi_{\ul{L}}(\omega(\vec{x}),\vec{x})\}$. We will write $\Sigma=\Sigma_\omega$ and denote the induced metric on $\Sigma$ as $\gamma_{\omega}$. Moreover, we will identify $F=\omega\circ\phi^{-1}\circ\pi^S$ and $\omega$ going forward by slight abuse of notation.
	
	We denote the leaves of the background foliation by $S_r:=\{s=r\}$, and denote all objects defined on $S_r$ via a subscript $r$. In particular, we observe that the null vector field $L_r\in\Gamma(T^\perp\Sigma_r)$, null second fundamental forms $\ul{\chi}_r$, $\chi_r$, null expansions $\ul{\theta}_r$, $\theta_r$, and connection $1$-form $\zeta_r$ extend to smooth objects on $\mathcal{N}$. Additionally, for any spacelike cross section $\Sigma$ and all $p\in\Sigma$ with $p=\omega(\vec{x})$, $\vec{x}\in\Sbb^2$, we note that $p\in S_r$ for $r=F(p)=\omega(\vec{x})$ and there exists a diffeomorphism
	\[
		T_pS_{r=\omega(\vec{x})}\to T_p\Sigma,\qquad V\mapsto \widetilde{V}:=V+V(\omega)\ul{L},
	\]
	and we observe that
	\[
		L_p=(L_{r=\omega(\vec{x})})_p+\btr{\nabla\omega}_{\gamma_\omega}\ul{L}-2\nabla\omega(p),
	\]
	where $L$ denotes the unique null vector field in $\Gamma(T^\perp\Sigma)$ such that $\overline{g}(\ul{L},L)=2$, and $\nabla\omega$ the gradient of $\omega$ on $\Sigma$, respectively. Using the above relations, we find  that
	\begin{align*}
		(\gamma_\omega)_p(\widetilde{V},\widetilde{W})=(\gamma_{r=\omega(\vec{x})})_{p}(V,W),\qquad
		(\ul{\chi})_p(\widetilde{V},\widetilde{W})=(\ul{\chi}_{r=\omega(\vec{x})})_p(V,W),
	\end{align*}
	and $\ul{\theta}(p)=\ul{\theta}_{r=\omega(\vec{x})}(p)$, where $\ul{\chi}$ and $\ul{\theta}$ denote the null second fundamental form and null expansion of $\Sigma$ with respect to $\ul{L}$, respectively. We observe that $\gamma_r$, $\ul{\chi}_r$ extend to well-defined transversal tensor fields on $\Gamma(T\mathcal{N})$, cf. \cite[Section 3]{marssoria}, and $\ul{\theta}_r$ to a well-defined function on $\mathcal{N}$ (independent of the choice of background foliation). Hence, we will often omit the subscript $r$ for these objects without ambiguity. In particular, the properties of $\ul{\theta}$ are fully determined by $\mathcal{N}$. We say $\mathcal{N}$ is a \emph{null cone} if $\ul{\theta}>0$ on $\mathcal{N}$, cf. \cite[Definition 2.1]{roesch2}, and note that $A$, $\tau$ are well-defined for any spacelike cross section of a null one.
	
	We refer to \cite[Proposition 4.22]{wolff_thesis} on how $\chi$, $\zeta$, $\theta$ can be expressed via $\omega$ and the background foliation. Motivated by these formulas, we choose to extend $\chi_r$, $\zeta_r$ transversally to $\Gamma(T\mathcal{N})$\footnote{Unlike $\ul{\chi}$, this particular choice of extension does depend on the background foliation and is not necessarily consistent with the formal definition, but is most convenient for our purposes in view of \cite[Proposition 4.22]{wolff_thesis}. In contrast, the formal definition of $\zeta_r$ yields $\frac{1}{2}\overline{g}(\overline{\nabla}_{\ul{L}}\ul{L},L_r)=\kappa$, which does not agree with our choice of extension unless $\ul{L}$ is geodesic, cf. \cite{roeschscheuer}.}, i.e.,
	\begin{align*}
		\chi_r(\cdot, \ul{L})=\chi_r(\ul{L},\cdot)&=0,\qquad 
		\zeta_r(\ul{L})=0,
	\end{align*}
	and thus find that $\chi_r(\widetilde{V},\widetilde{W})=\chi_r(V,W)$, $\zeta_r(\widetilde{V})=\zeta_r(V)$ when restricted to $\Gamma(T\Sigma)$, cf. \cite[Proposition 4.22]{wolff_thesis}.
	
	Finally, by considering coordinates $\partial_I$ on $\Sbb^2$ (which we will always denote by capital letters), we can use $\Phi_{\ul{L}}$ to push these coordinates forward onto $\Gamma(T\mathcal{N})$ along the integral curves of $\ul{L}$ using parallel transport such that $[\partial_I,\ul{L}]=0$, where we identify the coordinates $\partial_I$ and their pushforward tangent to $S_r$ by slight abuse of notation. In particular, this induces coordinates $\partial_i$ on any spacelike cross section $\Sigma=\Sigma_\omega$, where
	\begin{align}\label{eq_prelim_nullgeometry_partiali}
		\partial_i=\partial_I+\partial_I\omega\ul{L}.
	\end{align}
	Recall that we have extended $\omega$ constantly along the integral curves of $\ul{L}$, and by the properties of $\gamma_\omega$ we find
	\begin{align*}
		\nabla\omega&=(\gamma_\omega)^{ij}(\partial_j\omega)\partial_i=(\gamma_\omega)^{IJ}(\partial_J\omega)(\partial_I+\partial_I\omega\ul{L})=\nabla^I\omega\partial_I+\btr{\nabla\omega}^2_{\gamma_\omega}\ul{L}
	\end{align*}
	by slight abuse of notation, where $\nabla^I\omega:=(\gamma_\omega)^{IJ}\partial_J\omega$. In particular, we find
	\begin{align}\label{eq_prelim_nullgeometry_L}
		L=L_r-\btr{\nabla\omega}^2_{\gamma_{\omega}}\ul{L}-2\nabla\omega^I\partial_I.
	\end{align}
	Equivalently, we can now pull back all objects onto $\Sbb^2$. Ultimately, this allows us to perform our analysis of spacelike cross sections $\Sigma$ of $\mathcal{N}$ purely in terms of objects on $\Sbb^2$. In particular, we can identify tensor fields on $\Sigma$, and tensor fields of the background foliation, as tensor fields $T$ on $\Sbb^2$ depending on the function $\omega$, and smooth families of tensor fields $(T_r)$ on $\Sbb^2$ (evaluated along the set $\{r=\omega\}$), where we perform all computations with respect to the same coordinates $\partial_I$, and use the decompositions \eqref{eq_prelim_nullgeometry_partiali}, \eqref{eq_prelim_nullgeometry_L} when necessary.
	\subsection{Weighted tensor norms}\label{subsec_norms}
Recall that for a metric $\gamma$ on $\Sbb^2$, we define the \emph{area radius} $\rho$ via the relation
	\[
		4\pi\rho^2=\int_{\Sbb^2}\d\mu_\gamma.
	\]
Now we define weighted norms as follows. Set
	\[
	\norm{f}_{L^p(\gamma)}:=\left(\int_{\Sbb^2}\btr{f}^p\d\mu_\gamma\right)^{\frac{1}{p}},
	\]
	and 
	\[
	\norm{f}_{L^\infty(\gamma)}:=\operatorname{ess sup}\limits_{\Sbb^2}\btr{f}.
	\]
	and define weighted Sobolev norms iteratively by
	\[
	\norm{f}_{W^{0,p}(\gamma)}:=\norm{f}_{L^p(\Sigma)},\text{  }\norm{f}_{W^{k+1,p}(\gamma)}:=\norm{f}_{L^p(\gamma)}+\rho\norm{\nabla f}_{W^{k,p}(\gamma)}.
	\]
We define weighted $C^k$-norms by setting $p=\infty$ in the above, i.e.,
	\[
		\norm{f}_{C^k(\gamma)}:=\norm{f}_{W^{k,\infty}(\gamma)}.
	\]
	As $\rho=1$ for the standard round metric $\hatgamma$, the weighted norms defined here agree with the usual (unweighted) Sobolev norms in this case, so we may use both without ambiguity. 	These norms are similar to the weighted norms used e.g. in \cite{cederbaumsakovich} and extend to weighted norms on tensor bundles in a canonical way.
	
	{As a null hypersurface $\mathcal{N}$ carries a degenerate induced metric, we formulate decay conditions along $\mathcal{N}$ with respect to a choice of background foliation, and identify (transversal\footnote{trivial in $\ul{L}$-direction, cf. \cite[Section 4]{marssoria}}) tensors fields along the background foliation with a smooth family of tensor fields on $\Sbb^2$ via the induced pullback as described in Subsection \ref{subsec_generalsetup}. In the following, we will usually omit the pullback for simplicity and measure decay with respect to the round sphere $(\Sbb^2,\hatgamma)$.}
	\begin{defi}\label{defi_prelim_O}
		Let $(T_r)$ be a (smooth) family of $(m,n)$-tensors on $\Sbb^2$, $m,n\in\mathbb{N}_0$, and let $k,l\in\mathbb{N}_0$, $\alpha\in\Z$. We say $(T_r)$ is in $O_{k,l}(r^{\alpha})$ (or simply $T_r=O_{k,l}(r^{\alpha})$) if
		\[
			\btr{\widehat{\nabla}^i\partial_r^jT_r}_{\hatgamma}\le Cr^{\alpha-j}
		\]
		for all $0\le i\le k$, $0\le j\le l$. We often omit the subscript $r$ for simplicity.
	\end{defi}
	\begin{bem}\label{bem_prelim_O}
		Note that $\widehat{\nabla}$ and $\partial_r$ commute, i.e., the above estimates also holds for any combination of $\widehat{\nabla}$ and $\partial_r$ derivatives. In the following, we will always associate $\partial_r$ with the null generator $\ul{L}$ of a given background foliation. Recall that $[\ul{L},\partial_I]=0$, so we can also understand the $r$-partial derivatives as Lie-derivatives, i.e.,
		\[
			\partial_r^jT=\mathcal{L}_{\ul{L}}^jT,
		\]
		cf. \cite[Section 3]{marssoria}. However, compared to the formulation of decay in \cite{marssoria}, we chose to define the decay in tangent directions of $\Sbb^2$ with respect to the Levi-Civita connection of a given metric on $\Sbb^2$ (namely $\hatgamma$) instead of Lie-derivatives.
	\end{bem}
	{In addition, note that due to the degeneracy of the metric it is not straightforward to define a Levi-Civita connection of $\mathcal{N}$. In particular, we will regard spacelike cross sections as codimension $2$ surfaces and employ the Gauss- and Codazzi equations, Propositions \ref{prop_nullgauss} and \ref{prop_nullcodazzi}, in the ambient spacetime. Thus, curvature terms of the ambient spacetime will frequently appear in our analysis below. Using the frame $\{\partial_I,\ul{L},L_r\}$ of $TM$ and its dual frame of $T^*M$ along $\mathcal{N}$ induced by the background foliation, we formulate decay conditions for tensor fields of the ambient spacetime along $\mathcal{N}$ with respect to a choice of background foliation in the following way:}
	\begin{defi}\label{defi_prelim_O_spacetime}
		Let $(S_r)\subset \overline{M}$ be a smooth family of topological codimension $2$ spheres in an ambient spacetime $(\overline{M},\overline{g})$, and let $\{\ul{L}_r,L_r\}$ be a (smooth) null frame of $S_r$, $\alpha\in\Z$. We say an $(m,n)$-tensor $T$ of $\overline{M}$ is in $\overline{O}_{k,l}(r^\alpha)$ (or simply $T=\overline{O}_{k,l}(r^{\alpha})$) (with respect to the null frame $\{\ul{L}_r,L_r\}$ and $\hatgamma$) if for all subsets $\mathcal{A}\subseteq \{1,\dotsc,n\}$, $\mathcal{B}\subseteq \{1,\dotsc,m\}$
		the (smooth) family of $(m-\btr{\mathcal{B}},n-\btr{\mathcal{A}})$-tensors
		\[
			(T_r)_{\alpha_1\dotsc\alpha_n}^{\beta_1\dotsc\beta_m}
		\]
		on $\Sbb^2$ defined via $\alpha_i\in\{\ul{L}_r,L_r\}$ $\forall i\in\mathcal{A}$, ${\beta_j\in\{\ul{L}_r^*,L_r^*\}}$ $\forall j\in\mathcal{B}$, and the pullback of $T$ with respect to $S_r$ is in $O_{k,l}(r^{\alpha+n-\btr{\mathcal{A}}-m+\btr{\mathcal{B}}})$ in the sense of Definition \ref{defi_prelim_O}.
	\end{defi}
	\begin{bem}\label{bem_prelim_O_spacetime}
		{
		Here, we chose the formulation of decay for tensor fields of the ambient spacetime such that the scaling will be consistent with respect to any choice of subsets $\mathcal{A}$, $\mathcal{B}$ and the induced metric $\gamma_r$ on $S_r$: If $T=\overline{O}(r^\alpha)$, then
		\[
			\btr{T_{\alpha_1\dotsc\alpha_n}^{\beta_1\dotsc\beta_m}}_{\gamma_r}\le Cr^\alpha
		\] 
		for all possible combinations of indices $\alpha_i,\beta_j\in\{\partial_I,\ul{L}_r,L_r\}$ if $\gamma_r$ is uniformly equivalent to the conformally round metric $r^2\hatgamma$, cf. Definition \ref{defi_asymclassS} below. Note that if one formulates the decay assumptions in Definition \ref{defi_prelim_O_spacetime} directly with respect to $\gamma_r$ (or any $r$-dependent family of metrics on $\Sbb^2$), then the Levi-Civita connection $\nabla^{\gamma_r}$ and $\partial_r$ will not commute in general. In particular, the decay $O$ in Definition \ref{defi_prelim_O} and $\overline{O}$ in Definition \ref{defi_prelim_O_spacetime} is formulated such that is convenient to use in their respective (mutually exclusive) cases.}
	\end{bem}

	\subsection{An associated $4$-vector in the Minkowski spacetime}\label{subsec_4vector}
		To obtain the appropriate a-priori estimates, we quickly summarize the definition and properties of an associated $4$-vector in the Minkowski spacetime. 
Recall that every spacelike cross section of the standard Minkowski lightcone is of the form $\Sigma=\{t=r=\omega\}=:\Sigma_\omega$ for a function $\omega:\Sbb^2\to\R$ and the  induced metric is $\omega^2\hatgamma$. We define the associated $4$-vector $\textbf{Z}=\textbf{Z}(\Sigma_{\omega})$ by setting its components as
		\begin{align*}
		\textbf{Z}^t:=\frac{1}{\btr{\Sigma}}\int_{\Sbb^2}\omega^3\d\widehat{\mu},\qquad
		\textbf{Z}^i:=\frac{1}{\btr{\Sigma}}\int_{\Sbb^2}\omega^3f^i\d\widehat{\mu},\quad i=1,2,3
		\end{align*}
		where $\d\widehat{\mu}$ is the volume element of the standard metric $\widehat{\gamma}$ on $\Sbb^2$ and $f^i$ denote the first spherical harmonics. This $4$-vector was {introduced for spacelike cross sections of the Minkowski lightcone} by the second named author in \cite{wolff4}, and we refer to \cite{wolff4} for more details. {We note that this $4$-vector is up to scaling equivalent to a notion of hyperbolic center independently defined by Cederbaum--Cortier--Sakovich, cf.\ \cite{cederbaumcortiersakovich}.}
		
		 As shown in \cite{wolff4}, $\textbf{Z}$ is a timelike, {future-pointing} vector, so there exists a unique $\vec{a}\in\R^3$ such that
		\[
		\textbf{Z}=\btr{\textbf{Z}}\begin{pmatrix}
		\sqrt{1+\btr{\vec{a}}^2}\\\vec{a}
		\end{pmatrix}.
		\]
		Let $\Lambda_{\vec{a}}$ be the Lorentz boost associated to the vector $\vec{a}\in\R^3$.
The boosted surface $\Lambda_{\vec{a}}(\Sigma_{\omega})$ is now given by $\Sigma_{\omega_{\vec{a}}}$,
 where
%
		\[
		\omega_{\vec{a}}(\vec{x})=\frac{\omega\circ\Phi(\vec{x})}{\sqrt{1+\btr{\vec{a}}^2}-\vec{a}\cdot\vec{x}},
		\]
		and $\Phi\colon\Sbb^2\to\Sbb^2$ is a diffeomorphism\footnote{In fact, $\Phi$ is a M\"obius transformation in the M\"obius group, which is isomorphic to $\operatorname{SO}^+(1,3)$.} uniquely determined by $\Lambda_{\vec{a}}$. It is shown that $\textbf{Z}$ transforms equivariantly under Lorentz boosts, i.e.,
		\[
		\textbf{Z}(\Sigma_{\omega_{\vec{a}}})=\Lambda_{\vec{a}}(\textbf{Z}(\Sigma_\omega)).
		\]
		For $\rho>0$, $\vec{a}\in\R^3$,
	we set	
		\[
		b_{\rho,\vec{a}}(\vec{x}):=\rho_{\vec{a}}(\vec{x})=\frac{\rho}{\sqrt{1+\btr{\vec{a}}^2}-\vec{a}\cdot\vec{x}}
		\]
		and abbreviate $b_{\vec{a}}:=b_{1,\vec{a}}$. 

		Although $\textbf{Z}$ was used in \cite{wolff4} to obtain integral estimates controlling the conformal factor, we desire pointwise estimates in the following. The following proposition gives the corresponding estimate, and is obtained by a straightforward modification of the arguments in the proof of \cite[Theorem 3]{shiwangwu} {using the balancing condition Equation (34) in \cite[Proposition 19]{wolff4}}:
		\begin{prop}\label{prop_shiwangwu}
			Let $\gamma=\omega^2\hatgamma$ be a conformally round metric with area radius $\rho$, and associated $4$-vector $\textbf{Z}$ corresponding to a vector $\vec{a}\in\R^3$. Assume further that $\frac{1}{10}\rho\le\omega\le 10\rho$. Define $\widetilde{\omega}:=\rho^{-1}\omega$, and denote the Gauss curvature of $\widetilde{\omega}^2\widehat{\gamma}$ by $\widetilde{K}$. Then there exist (uniform) constants $\varepsilon>0$ $C>0$, such that if
			\[
			\norm{\widetilde{K}-1}_{C^1\left(\hatgamma\right)}\le \varepsilon
			\]
			then
			\begin{align*}
			\norm{\widetilde{\omega}_{-\vec{a}}-1}_{C^{2,\alpha}\left(\hatgamma\right)}\le C\norm{\widetilde{K}-1}_{C^\alpha\left(\hatgamma\right)},
			\end{align*}
			where $C$ is only depending on $\alpha$.
			In particular,
			\begin{align*}
			\norm{\omega-b_{\rho,\vec{a}}}_{C^{2,\alpha}\left(\hatgamma\right)}\le C(\vec{a})\rho C\norm{\widetilde{K}-1}_{C^1\left(\hatgamma\right)}.
			\end{align*}
		\end{prop}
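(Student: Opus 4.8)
The plan is to reduce the pointwise estimate on $\widetilde\omega_{-\vec a}$ to a quantitative application of the uniformization argument in \cite[Theorem 3]{shiwangwu}, exploiting that after the Lorentz boost $\Lambda_{-\vec a}$ the associated $4$-vector of the boosted surface is purely timelike, i.e.\ its spatial part vanishes. Concretely, I would first recall that since $\widetilde\omega^2\hatgamma$ has area radius $1$ and Gauss curvature $\widetilde K$ close to $1$ in $C^1$, the metric is $C^{2,\alpha}$-close to \emph{some} round metric on $\Sbb^2$ by the stability of the uniformization/Nirenberg problem on $\Sbb^2$; the point of the $4$-vector is precisely to pin down \emph{which} round metric, namely $\hatgamma$ itself once one passes to the boost normalized by $\vec a$. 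So the first step is to invoke the equivariance $\textbf Z(\Sigma_{\omega_{\vec a}})=\Lambda_{\vec a}(\textbf Z(\Sigma_\omega))$ together with the definition of $\vec a$ via $\textbf Z=\btr{\textbf Z}(\sqrt{1+\btr{\vec a}^2},\vec a)^{\mathsf T}$ to conclude that $\textbf Z(\Sigma_{\widetilde\omega_{-\vec a}})$ has vanishing spatial components; equivalently, the first spherical harmonics of $\widetilde\omega_{-\vec a}^{\,3}$ integrate to zero against $\d\widehat\mu$. This is exactly the balancing condition (Equation (34) in \cite[Proposition 19]{wolff4}) that one feeds into the Shi--Wang--Wu scheme.

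Next I would run the Shi--Wang--Wu argument itself, mutatis mutandis. One writes $\widetilde\omega_{-\vec a}^2\hatgamma = u^2\hatgamma$ with $u:=\widetilde\omega_{-\vec a}$; the Gauss curvature equation reads $\widehat\Delta\ln u = 1 - K u^2$, where $K$ is the Gauss curvature of the boosted metric, which equals $\widetilde K\circ\Phi_{-\vec a}$ and hence still satisfies $\norm{K-1}_{C^\alpha(\hatgamma)}\le C(\vec a)\varepsilon$ (here one uses that $\Phi_{-\vec a}$ is a fixed smooth diffeomorphism with derivative bounds depending only on $\vec a$, so composition is bounded on $C^\alpha$; the $C^1$ bound on $\widetilde K$ is needed to get the requisite $C^\alpha$ smallness of $K$ after this change of variables, which is where the loss of one derivative in the hypothesis comes from). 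Linearizing at $u\equiv c_0$ (the mean value) and using that the kernel of the linearized operator $\widehat\Delta + 2$ on $\Sbb^2$ is spanned by the first spherical harmonics, the balancing condition kills precisely the kernel contribution; a standard Schauder estimate plus a contraction-mapping / implicit-function-theorem argument in a small $C^{2,\alpha}$-ball — valid once $\varepsilon$ is small enough — then yields $\norm{u-c_0}_{C^{2,\alpha}(\hatgamma)}\le C\norm{K-1}_{C^\alpha(\hatgamma)}$, and the area radius normalization forces $c_0=1+O(\varepsilon^2)$, giving $\norm{u-1}_{C^{2,\alpha}(\hatgamma)}\le C\norm{\widetilde K-1}_{C^\alpha(\hatgamma)}$ as claimed.

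Finally, for the "in particular" statement I would translate the $C^{2,\alpha}$-closeness of $\widetilde\omega_{-\vec a}$ to $1$ back into a statement about $\omega$. Undoing the boost, $\widetilde\omega = (\widetilde\omega_{-\vec a})_{\vec a}$, and by the explicit formula $(\widetilde\omega_{-\vec a})_{\vec a}(\vec x) = \widetilde\omega_{-\vec a}(\Phi_{\vec a}(\vec x))\cdot b_{\vec a}(\vec x)$, so $\widetilde\omega - b_{\vec a} = (\widetilde\omega_{-\vec a}\circ\Phi_{\vec a} - 1)\,b_{\vec a}$; since $\Phi_{\vec a}$ and $b_{\vec a}$ have $C^{2,\alpha}$-norms controlled by $\vec a$ alone, one gets $\norm{\widetilde\omega - b_{\vec a}}_{C^{2,\alpha}(\hatgamma)}\le C(\vec a)\norm{\widetilde\omega_{-\vec a}-1}_{C^{2,\alpha}(\hatgamma)}$, and multiplying through by $\rho$ and using $\widetilde\omega = \rho^{-1}\omega$, $b_{\rho,\vec a} = \rho\, b_{\vec a}$ gives the stated bound (absorbing $\norm{\widetilde K - 1}_{C^\alpha}\le\norm{\widetilde K-1}_{C^1}$).

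I expect the main obstacle to be the careful tracking of how the constants and norms behave under the boost change of variables $\Phi_{-\vec a}$ — in particular verifying that the Shi--Wang--Wu smallness threshold and Schauder constants can be taken uniform in the relevant range of $\vec a$, and that the one-derivative loss (hypothesis in $C^1$, conclusion in terms of $C^\alpha$) is genuinely what the composition with $\Phi_{\pm\vec a}$ costs and no more. The underlying PDE argument is standard elliptic bootstrapping; the delicate point is purely the bookkeeping of the Möbius normalization and ensuring the balancing condition is applied to the correct surface.
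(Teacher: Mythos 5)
Your proposal follows essentially the same route as the paper: the paper's proof consists of invoking a straightforward modification of \cite[Theorem 3]{shiwangwu} together with the balancing condition \cite[Proposition 19, Equation (34)]{wolff4} (which holds for $\widetilde{\omega}_{-\vec{a}}$ precisely by the equivariance of $\textbf{Z}$ that you use), and of obtaining the final estimate on $\omega$ as in \cite[Theorem 23]{wolff4}, so your sketch merely spells out the uniformization/Schauder details hidden in those citations. The uniformity-in-$\vec{a}$ issue you flag is harmless, since the hypothesis $\frac{1}{10}\rho\le\omega\le 10\rho$ already forces a universal bound on $\btr{\vec{a}}$.
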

		Note that by definition, $\widetilde{\omega}_{-\vec{a}}$ satisfies the balancing condition \cite[Proposition 19, Equation (34)]{wolff4} . The last estimate on the original conformal factor $\omega$ follows as in the proof of \cite[Theorem 23]{wolff4}\footnote{Note the difference in notation.}
	
	\subsection{Asymptotically Schwarzschildean lightcones}\label{subsec_asymcond}
		We now specify our asymptotic conditions. To a large extend, these constitute a version of the asymptotic flatness conditions in \cite{marssoria} suitably modified and strengthened for our setting.
		\begin{defi}[Asymptotically Schwarzschildean lightcones]\label{defi_asymclassS}
			Let $\mathcal{N}$ be a null hypersurface in an ambient spacetime. We say $\mathcal{N}$ is an \emph{asymptotically Schwarzschildean lightcone} (of mass $m$), if there exists a geodesic null generator $\ul{L}$ and an (asymptotic) background foliation $(S_r)_{r\in(r_0,\infty)}$, $r_0>0$, of topological $2$-spheres, and a smooth, real-valued function $h:(r_0,\infty)\to(0,\infty)$ with $h(r)=1-\frac{2m}{r}+O_4(r^{-2})$, such that
			\begin{align*}
				\gamma_r&=r^2\hatgamma+O_{3,3}(1),\\
				\ul{\chi}_r&=r\hatgamma+O_{3,3}(r^{-1}),\\
				\chi_r&=rh(r)\hatgamma+O_{1,1}(r^{-1}),\\
				\zeta_r&=O_{2,2}(r^{-2}),
			\end{align*}
			and the ambient curvature tensor satisfies
			\[
				\overline{\Riem}-\overline{\Riem}^{Schw}=\overline{O}_{2,2}(r^{-4}),
			\]
			and
			\[
				\overline{\nabla}\overline{\Riem}-\overline{\nabla}\overline{\Riem}^{Schw}=\overline{O}_{1,1}(r^{-5}),
			\]
			where $\overline{\Riem}^{Schw}$, $\overline{\nabla}\overline{\Riem}^{Schw}$ denote the curvature in the Schwarzschild spacetime of mass $m$, 
			{and where we evaluate $\overline{\Riem}^{Schw}$, $\overline{\nabla}\overline{\Riem}^{Schw}$ in $\ul{L}$, $L_r$ with respect to the corresponding null vectors $\ul{L}$, $L_r$ in the Schwarzschild spacetime by slight abuse of notation.} 
		\end{defi}
		\begin{bem}\label{bem_asymclassS}\,
				\begin{enumerate}
					\item[(i)]{
				While we only assume the above decay assumptions to hold along $\mathcal{N}$, Definition \ref{defi_asymclassS} heuristcally covers the case when the ambient semi-Riemannian metric $\overline{g}$ is sufficiently close to the Schwarzschild metric $\overline{g}^{Schw}$ in a neighbourhood of $\mathcal{N}$. Indeed, we expect that this is the case if
				\[
					\btr{\partial^a\left(\overline{g}_{\alpha\beta}-\overline{g}^{Schw}_{\alpha\beta}\right)}\le \frac{C}{r^{2+|a|}}
				\]
				in suitable spacetime coordinates for derivatives up to forth order, i.e., $\btr{a}=0,\dotsc,4$, in analogue to the assumptions in \cite{huiskenyau}.}
			
				\item[(ii)]{Up to the differences in formulating the decay, cf. Subsection \ref{subsec_norms}, an asymptotically Schwarzschildean lightcone $\mathcal{N}$ is indeed asymptotically flat in the sense of \cite{marssoria}, where in both cases the decay is formulated along a given background foliation. However, while applying a Lorentz transformation in $\operatorname{SO}^+(1,3)$ to an asymptotically flat background foliation gives rise to a different asymptotically flat background foliation, this is not true for the background foliation for which the null hypersurface is asymptotically Schwarzschildean. See Section \ref{sec_outlook} for a more detailed discussion.}
			
				\item[(iii)]{We note that our formal assumptions could possibly be weakened or possibly used to derive improved decay by utilizing the full force of the Null Structure Equations. For example, the decay assumptions of $\overline{\nabla}{\Riem}$ in directions tangent to $S_r$ readily follow from the other assumptions, as the Gauss Equation yields
				\begin{align*}
				\overline{\nabla}_I\overline{\Riem}_{KJAL_r}=&\,{}^{\gamma_r}\nabla_I\left(\overline{\Riem}_{KJAL_r}\right)+\overline{\Riem}_{KJAM}(\chi_r)_j^m-(\zeta_r)_I\overline{\Riem}_{KJAL_r}\\
				&\,-\frac{1}{2}\overline{\Riem}_{\ul{L}JAL_r}(\chi_r)_{IK}-\frac{1}{2}\overline{\Riem}_{K\ul{L}AL_r}(\chi_r)_{IJ}-\frac{1}{2}\overline{\Riem}_{KJ\ul{L}L_r}(\chi_r)_{IA}\\
				&\,-\frac{1}{2}\overline{\Riem}_{L_rJAL_r}\ul{\chi}_{IK}--\frac{1}{2}\overline{\Riem}_{KL_rAL_r}\ul{\chi}_{IJ}.
				\end{align*}
				To avoid further lengthy computations, we have stated all the decay assumptions as needed for simplicity.}
			\end{enumerate}
		\end{bem}
		\noindent Note that it is direct to see that
		\begin{align}\label{eq_asym_inversemetric}
			\gamma_r^{kl}=\frac{1}{r^2}\hatgamma^{kl}+O_{3,3}(r^{-4})
		\end{align}
		which yield the following immediate consequences along the leaves of background foliation $(S_r)$:
		\begin{lem}\label{lem_prelim_backgroundfol}
			Let $\mathcal{N}$ be asymptotically Schwarzschildean. For a leave $S_r$ of the background foliation, we find
			\begin{align*}
				\ul{\theta}_r&=\frac{2}{r}+O_{3,3}(r^{-3}),\\
				\theta_r&=\frac{2}{r}h(r)+O_{1,1}(r^{-3}),
			\end{align*}
			and 
			\begin{align*}
				\accentset{\circ}{\ul{\chi}}_r&=O_{3,3}(r^{-1}),\\
				\accentset{\circ}{\chi}_r&=O_{1,1}(r^{-1}).
			\end{align*}
			In particular, we have
			\begin{align*}
				\accentset{\circ}{A}_r&=O_{1,1}(r^{-2}),\\
				\tau_r&=O_{2,2}(r^{-2}),\\
				\mathcal{H}^2_r&=\frac{2h}{r^2}+O_{1,1}(r^{-4})
			\end{align*}
			for $r$ sufficiently large.
		\end{lem}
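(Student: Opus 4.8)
The plan is to compute each quantity by expanding in inverse powers of $r$ from the asymptotic conditions of Definition \ref{defi_asymclassS}, being careful to track the orders in both tangential derivatives (subscript $k$) and $\partial_r$-derivatives (subscript $l$). First I would compute the expansions $\ul\theta_r = \tr_{\gamma_r}\ul\chi_r$ and $\theta_r = \tr_{\gamma_r}\chi_r$. Using $\gamma_r^{kl} = r^{-2}\hatgamma^{kl} + O_{3,3}(r^{-4})$ from \eqref{eq_asym_inversemetric} together with $\ul\chi_r = r\hatgamma + O_{3,3}(r^{-1})$, the leading term is $r^{-2}\hatgamma^{kl}\cdot r\hatgamma_{kl} = 2/r$, and the error is the sum of $r^{-2}\hatgamma^{kl}\cdot O_{3,3}(r^{-1}) = O_{3,3}(r^{-3})$ and $O_{3,3}(r^{-4})\cdot r\hatgamma_{kl} = O_{3,3}(r^{-3})$; so $\ul\theta_r = 2/r + O_{3,3}(r^{-3})$. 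The same computation for $\theta_r$ with $\chi_r = rh(r)\hatgamma + O_{1,1}(r^{-1})$ gives $\theta_r = (2/r)h(r) + O_{1,1}(r^{-3})$; note that here the order in both indices drops to $1$ because the expansion of $\chi_r$ is only controlled in $O_{1,1}$, and $h$ itself is smooth with $h(r) = 1 - 2m/r + O_4(r^{-2})$, which does not degrade the $O_{1,1}$ bound since $h$ and its $r$-derivatives are bounded (indeed $h = 1 + O(r^{-1})$).

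Next I would extract the trace-free parts. Writing $\accentset{\circ}{\ul\chi}_r = \ul\chi_r - \tfrac12\ul\theta_r\gamma_r$, the leading pieces cancel: $r\hatgamma - \tfrac12 (2/r)(r^2\hatgamma) = r\hatgamma - r\hatgamma = 0$, so what remains is the combination of the error terms $O_{3,3}(r^{-1})$ (from $\ul\chi_r$), $O_{3,3}(r^{-3})\cdot r^2\hatgamma = O_{3,3}(r^{-1})$ (from the $\ul\theta_r$ error against the leading metric), and $\tfrac12(2/r)\cdot O_{3,3}(1) = O_{3,3}(r^{-1})$ (from the leading $\ul\theta_r$ against the metric error); hence $\accentset{\circ}{\ul\chi}_r = O_{3,3}(r^{-1})$. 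The identical argument with the $\chi_r$ expansion, noting the leading cancellation $rh\hatgamma - \tfrac12(2h/r)(r^2\hatgamma) = 0$, gives $\accentset{\circ}{\chi}_r = O_{1,1}(r^{-1})$, again limited to $O_{1,1}$ by the input on $\chi_r$.

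Finally I would assemble $\accentset{\circ}{A}_r$, $\tau_r$, and $\mathcal{H}^2_r$. Recall $A_r = \ul\theta_r\chi_r$, so $\accentset{\circ}{A}_r = \ul\theta_r\accentset{\circ}{\chi}_r$; multiplying $\ul\theta_r = 2/r + O_{3,3}(r^{-3})$ by $\accentset{\circ}{\chi}_r = O_{1,1}(r^{-1})$ gives $O_{1,1}(r^{-2})$ (the product of $O$-families multiplies the powers and takes the minimum of the derivative orders). For the torsion $\tau_r = \zeta_r - \d\ln\ul\theta_r$: the first term is $O_{2,2}(r^{-2})$ by hypothesis, and $\ln\ul\theta_r = \ln(2/r) + \ln(1 + O_{3,3}(r^{-2})) = -\ln(r/2) + O_{3,3}(r^{-2})$, whose tangential differential $\d\ln\ul\theta_r$ is $\widehat\nabla$ of an $r$-independent-plus-$O_{3,3}(r^{-2})$ function; the $-\ln(r/2)$ piece is constant on each $\Sbb^2$ and so contributes nothing to $\d$ (the tangential exterior derivative), leaving $\d\ln\ul\theta_r = O_{3,3}(r^{-2})$, hence $\tau_r = O_{2,2}(r^{-2})$. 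For the spacetime mean curvature $\mathcal{H}^2_r = \ul\theta_r\theta_r = (2/r + O_{3,3}(r^{-3}))\cdot((2h/r) + O_{1,1}(r^{-3})) = 4h/r^2 + (2/r)O_{1,1}(r^{-3}) + O_{3,3}(r^{-3})(2h/r) + (\text{higher}) = 4h/r^2 + O_{1,1}(r^{-4})$ — here I should double-check the stated leading coefficient, since $\mathcal{H}^2_r = \ul\theta_r\theta_r$ with $\ul\theta_r \approx 2/r$ and $\theta_r\approx 2h/r$ gives $4h/r^2$ rather than $2h/r^2$; if the lemma intends $2h/r^2$ then the intended normalization must be $\overline{g}(\ul L, L) = 2$ absorbed differently, so I would reconcile this with the convention $\vec{\mathcal H} = -\tfrac12\theta\ul L - \tfrac12\ul\theta L$ and $\mathcal{H}^2 = \ul\theta\theta$ stated in the preliminaries, which indeed gives $4h/r^2$ — most likely the factor in the lemma should read $4h/r^2$, or $h$ in the lemma is implicitly rescaled. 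The genuinely routine part is the bookkeeping of $O$-orders; the only point demanding care is making sure every leading-order cancellation is exact and that no product of error terms secretly has a better or worse order than claimed, in particular that the weaker $O_{1,1}$ inputs on $\chi_r$ propagate correctly and are not accidentally upgraded.
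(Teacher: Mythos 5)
Your computation is correct and takes essentially the paper's route: the paper gives no separate proof, stating the lemma as an immediate consequence of Definition \ref{defi_asymclassS} and the inverse-metric expansion \eqref{eq_asym_inversemetric}, which is exactly the trace/trace-free bookkeeping (with the $O_{1,1}$ inputs on $\chi_r$ limiting the derivative orders) that you carry out. Your flag on the leading coefficient is also justified: with the paper's convention $\mathcal{H}^2=\ul{\theta}\theta$ and the expansions $\ul{\theta}_r\approx 2/r$, $\theta_r\approx 2h/r$, the leading term is $4h/r^2=\frac{4}{r^2}-\frac{8m}{r^3}+O(r^{-4})$, which is what the rest of the paper uses (compare Lemma \ref{lem_prelim_spacetimemeancurvature} and \eqref{eq_aprioriH2}), so the stated $\frac{2h}{r^2}$ is a typo rather than a sign that your normalization is off.
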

		\begin{bem}\label{bem_prelim_backgroundfol}
			Note that $\gamma_r$ is (uniformly) equivalent to  $\widetilde{\gamma}_r=r^2\hatgamma$ provided $r$ is sufficiently large. In particular, any $(n,m)$-tensor satisfies
			\[
				\btr{T}_{\gamma_r}\le C r^{n-m}\btr{T}_{\hatgamma}
			\]
		\end{bem}
Consider a cross-section $\Sigma=\Sigma_{\omega}$ in $\mathcal{N}$ with induced metric $\gamma_\omega$, where $\omega:\Sbb^2\to\R$ is defined as in Subsection \ref{subsec_generalsetup}. 		
		We collect some prelimary consequences to be used later, where we always assume that $\mathcal{N}$ is asymptotically Schwarzschildean.
		\begin{lem}\label{lem_prelim_chitau}
			Let $(\Sigma_\omega,\gamma_\omega)$ be a spacelike cross section of $\mathcal{N}$, such that $\frac{1}{2}{\rho}\le \omega \le 2{\rho}$, $\rho>2r_0$, and assume that
			\[
			\norm{\frac{\omega}{\rho}}_{C^3\left(\hatgamma\right)}\le c.
			\]
			Then there is a constant $C$ only depending on $c$ such that
			\begin{align}
				\norm{\ul{\theta}-\frac{2}{\omega}}_{C^3(\gamma_\omega)}&\le \frac{C}{\rho^3},\label{eq_prelim_chitau_ultheta}\\
				\newnorm{\accentset{\circ}{\ul{\chi}}}_{C^3(\gamma_\omega)}&\le \frac{C}{\rho^3},\label{eq_prelim_chitau_ulchi}\\
				\norm{\tau}_{C^2(\gamma_\omega)}&\le \frac{C}{\rho^3}\label{eq_prelim_chitau_tau},
			\end{align}
			for $\rho$ sufficiently large.
		\end{lem}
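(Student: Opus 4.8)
The plan is to derive all three estimates from the decay of the background-foliation quantities in Lemma~\ref{lem_prelim_backgroundfol} (equivalently Definition~\ref{defi_asymclassS}) by evaluating the relevant tensors along the set $\{r=\omega\}$. Two elementary tools will be used repeatedly. The first is a \emph{substitution estimate}: if $(T_r)$ is a family of tensors on $\Sbb^2$ with $T_r=O_{k,l}(r^{\alpha})$ and $\tfrac12\rho\le\omega\le2\rho$, $\newnorm{\omega/\rho}_{C^k(\hatgamma)}\le c$, then the tensor $T_{r=\omega}$ on $\Sbb^2$ obtained by evaluating $T_r$ at $r=\omega(\vec{x})$ satisfies $\newnorm{T_{r=\omega}}_{C^{\min\{k,l\}}(\hatgamma)}\le C(c)\rho^{\alpha}$; this follows from an iterated application of the chain rule, using $\btr{\widehat\nabla^j\omega}_{\hatgamma}\le c\rho$ for $j\le k$, so that every factor $r^{-1}\sim\rho^{-1}$ produced when $\partial_r$ hits $T$ is compensated by a factor $\widehat\nabla\omega=O(\rho)$. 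The second is a \emph{norm comparison}: applying the substitution estimate to $\gamma_r=r^2\hatgamma+O_{3,3}(1)$ shows that $\gamma_\omega$ is uniformly equivalent to $\rho^2\hatgamma$ for $\rho$ large, has area radius comparable to $\rho$, and satisfies $\btr{\Gamma(\gamma_\omega)-\widehat\Gamma}_{C^2(\hatgamma)}\le C(c)$; consequently, for any $(0,q)$-tensor $S$ on $\Sbb^2$ one has $\newnorm{S}_{C^k(\gamma_\omega)}\le C(c)\rho^{-q}\newnorm{S}_{C^k(\hatgamma)}$ for $k\le 3$.

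With these, \eqref{eq_prelim_chitau_ultheta} and \eqref{eq_prelim_chitau_ulchi} are immediate. Since $\ul{\theta}$ and $\accentset{\circ}{\ul{\chi}}$ are transversal and pointwise determined by $\Sigma$ (Subsection~\ref{subsec_generalsetup}), their pullbacks to $\Sbb^2$ are $\ul{\theta}_r$ and $\accentset{\circ}{\ul{\chi}}_r$ evaluated along $r=\omega$. Feeding $\ul{\theta}_r-\tfrac2r=O_{3,3}(r^{-3})$ and $\accentset{\circ}{\ul{\chi}}_r=O_{3,3}(r^{-1})$ from Lemma~\ref{lem_prelim_backgroundfol} into the substitution estimate and then applying the norm comparison with $q=0$, respectively $q=2$, gives $\newnorm{\ul{\theta}-\tfrac2\omega}_{C^3(\gamma_\omega)}\le C\rho^{-3}$ and $\newnorm{\accentset{\circ}{\ul{\chi}}}_{C^3(\gamma_\omega)}\le C\rho^{-3}$.

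For $\tau=\zeta-\d\ln\ul{\theta}$ I first need to express the connection-$1$-form of $\Sigma_\omega$ through background data. Using \eqref{eq_prelim_nullgeometry_partiali}, \eqref{eq_prelim_nullgeometry_L}, that $\ul{L}$ is geodesic (so $\overline{\nabla}_{\partial_i}\ul{L}=\overline{\nabla}_{\partial_I}\ul{L}$) and $\overline{g}(\overline{\nabla}_{\partial_I}\ul{L},\ul{L})=0$, one finds (cf.\ \cite[Proposition 4.22]{wolff_thesis}), all terms evaluated at $r=\omega(\vec{x})$,
\[
\zeta(\partial_i)=(\zeta_r)_I-(\ul{\chi}_r)_{IJ}\nabla^J\omega,\qquad
(\d\ln\ul{\theta})(\partial_i)=\partial_I\ln\ul{\theta}_r+(\partial_I\omega)\,\partial_r\ln\ul{\theta}_r,
\]
with $\nabla^J\omega=\gamma_\omega^{JK}\partial_K\omega$. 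Subtracting and using $(\zeta_r)_I-\partial_I\ln\ul{\theta}_r=(\tau_r)_I$ yields
\[
\tau(\partial_i)=(\tau_r)_I-(\ul{\chi}_r)_{IJ}\nabla^J\omega-(\partial_I\omega)\,\partial_r\ln\ul{\theta}_r.
\]
Now I insert $\ul{\chi}_r=r\hatgamma+O_{3,3}(r^{-1})$, $\gamma_r^{kl}=r^{-2}\hatgamma^{kl}+O_{3,3}(r^{-4})$ (cf.\ \eqref{eq_asym_inversemetric}), and $\partial_r\ln\ul{\theta}_r=-\tfrac1r+O(r^{-3})$ — the latter by differentiating $\ul{\theta}_r=\tfrac2r+O_{3,3}(r^{-3})$. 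Evaluating at $r=\omega$ gives $(\ul{\chi}_r)_{IJ}\nabla^J\omega=\partial_I\ln\omega+O(\rho^{-2})$ and $(\partial_I\omega)\,\partial_r\ln\ul{\theta}_r=-\partial_I\ln\omega+O(\rho^{-2})$ in $C^2(\hatgamma)$, so the $O(1)$ pieces $\partial_I\ln\omega$ cancel in $\tau(\partial_i)$; together with $\tau_r|_{r=\omega}=O(\rho^{-2})$ (from $\tau_r=O_{2,2}(r^{-2})$) and the substitution estimate this gives $\newnorm{\tau}_{C^2(\hatgamma)}\le C\rho^{-2}$, and the norm comparison with $q=1$ yields $\newnorm{\tau}_{C^2(\gamma_\omega)}\le C\rho^{-1}\newnorm{\tau}_{C^2(\hatgamma)}\le C\rho^{-3}$, which is \eqref{eq_prelim_chitau_tau}.

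The main difficulty is the exactness of this cancellation: taken individually, $\zeta(\partial_i)$ and $(\d\ln\ul{\theta})(\partial_i)$ are only $O(1)$, so a crude bound yields merely $\tau=O(1)$; the gain to $O(\rho^{-3})$ rests on the sharp leading coefficients $\ul{\chi}_r\sim r\hatgamma$, $\gamma_r\sim r^2\hatgamma$, $\ul{\theta}_r\sim\tfrac2r$ together with their precise $O_{3,3}$-remainders, which force $(\ul{\chi}_r)_{IJ}\nabla^J\omega$ and $(\partial_I\omega)\,\partial_r\ln\ul{\theta}_r$ to agree up to $O(\rho^{-2})$. The remaining work — routine but unavoidable — is the bookkeeping behind the substitution estimate and norm comparison; it also explains why $\tau$ is controlled only in $C^2$, since its expression involves $\zeta_r=O_{2,2}(r^{-2})$, which after substituting $r=\omega$ supplies exactly two (and not three) tangential derivatives.
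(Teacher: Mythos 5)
Your proof is correct, and for \eqref{eq_prelim_chitau_ultheta}, \eqref{eq_prelim_chitau_ulchi} it is the same argument as the paper's (your two ``elementary tools'' are exactly the content of the appendix Lemmas \ref{lem_appx_c3control} and \ref{lem_appx_gammaomegadecay} that the paper cites, here re-derived by hand). The difference lies in the $\tau$-estimate. Both proofs start from the same decomposition $\tau(\partial_i)=(\tau_r)_I-\ul{\chi}_r(\nabla\omega,\partial_I)-\d\omega_I\,\ul{L}(\ln\ul{\theta})$, but the paper then substitutes the Raychaudhuri equation \cite[Proposition 4.27 (v)]{wolff_thesis}, $\ul{L}(\ul{\theta})=-\tfrac12\ul{\theta}^2-\newbtr{\accentset{\circ}{\ul{\chi}}}^2-\overline{\Ric}(\ul{L},\ul{L})$, so that the $O(1)$ trace parts cancel \emph{identically}, leaving $\tau=\tau_r\vert_{r=\omega}-\accentset{\circ}{\ul{\chi}}(\nabla\omega,\cdot)-\tfrac{1}{\ul{\theta}}\bigl(\newbtr{\accentset{\circ}{\ul{\chi}}}^2+\overline{\Ric}(\ul{L},\ul{L})\bigr)\d\omega$, which is then estimated using Lemma \ref{lem_prelim_backgroundfol} together with the ambient curvature estimate Lemma \ref{lem_appx_curvatureestimates1} for $\overline{\Ric}(\ul{L},\ul{L})$. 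You instead avoid the evolution equation and the curvature appendix entirely: you exhibit the cancellation only up to $O(\rho^{-2})$, using that the assumed decay $\ul{\theta}_r=\tfrac2r+O_{3,3}(r^{-3})$ includes $\partial_r$-derivatives, so that $\partial_r\ln\ul{\theta}_r=-\tfrac1r+O(r^{-3})$ matches the leading term of $\ul{\chi}_r\cdot\gamma_r^{-1}\d\omega$. This is legitimate because Definition \ref{defi_prelim_O} does control radial derivatives, and it makes the lemma more self-contained and elementary; what the paper's route buys is an exact, gauge-natural identity whose error terms are the geometric quantities $\accentset{\circ}{\ul{\chi}}$ and $\overline{\Ric}(\ul{L},\ul{L})$ (so smallness comes from structure rather than from the specific rate of the $\partial_r$-expansion), an identity that is structurally aligned with the Jacobi operator \eqref{eq_stability1} used later. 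Your observation that $\zeta_r=O_{2,2}(r^{-2})$ is what limits the conclusion to $C^2$ for $\tau$ is also consistent with the paper.
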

		\begin{proof}
			Equations \eqref{eq_prelim_chitau_ultheta} and \eqref{eq_prelim_chitau_ulchi} directly follow from Lemma \ref{lem_prelim_backgroundfol} and \ref{lem_appx_gammaomegadecay}. Recall that 
			\[
				\tau_j=\zeta_j-\d\ln(\ul{\theta})_j.
			\]
			Using \eqref{eq_prelim_nullgeometry_partiali}, we find
			\[
				\d\ln(\ul{\theta})_j=\d\ln(\ul{\theta})_J+\frac{\d\omega(\partial_J)}{\ul{\theta}}\ul{L}(\ul{\theta}).
			\]
			The identity for $\zeta$, see \cite[Proposition 4.22]{wolff_thesis}, and the evolution equations, see \cite[Proposition 4.27 (v)]{wolff_thesis}, then yield
			\[
			\tau=\tau_r\vert_{r=\omega}-\accentset{\circ}{\ul{\chi}}(\nabla\omega,\cdot)-\frac{1}{\ul{\theta}}\left(\newbtr{\accentset{\circ}{\ul{\chi}}}^2+\overline{\Ric}(\ul{L},\ul{L})\right)\d\omega.
			\]
			Hence, \eqref{eq_prelim_chitau_tau} follows from \eqref{eq_prelim_chitau_ultheta}, \eqref{eq_prelim_chitau_ulchi}, and using Lemma \ref{lem_prelim_backgroundfol}, \ref{lem_appx_c3control}, \ref{lem_appx_gammaomegadecay}, and \ref{lem_appx_curvatureestimates1}.
		\end{proof}
			\begin{lem}\label{lem_prelim_spacetimemeancurvature}
				Let $(\Sigma_\omega,\gamma_\omega)$ be a spacelike cross section of $\mathcal{N}$, such that $\frac{1}{2}{\rho}\le \omega \le 2{\rho}$, $\rho>2r_0$, 
				\[
				\newbtr{\accentset{\circ}{A}}_{\gamma_\omega}\le \frac{K_1}{\rho^2},\text{ }\newbtr{\nabla\accentset{\circ}{A}}_{\gamma_\omega}\le \frac{K_2}{\rho^3}
				\]and assume that
				\[
				\norm{\frac{\omega}{\rho}}_{C^2(\hatgamma)}\le c_1.
				\]
				Then there exists a constant $C$ only depending on $c_1,K_1,K_2$
				\[
				\norm{\mathcal{H}^2-2R-\frac{4(h-1)}{\omega^2}}_{C^1(\gamma_{\omega})}\le \frac{C}{\rho^4},
				\]
				for $\rho$ sufficiently large.
			\end{lem}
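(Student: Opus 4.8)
The plan is to reduce the whole estimate to the Gauss equation \eqref{eq_prelim_gauss3}. Splitting the null second fundamental forms as $\chi=\tfrac12\theta\gamma_\omega+\accentset{\circ}{\chi}$ and $\ul{\chi}=\tfrac12\ul{\theta}\gamma_\omega+\accentset{\circ}{\ul{\chi}}$, a short computation gives $\newbtr{\vec{\two}}^2=\langle\chi,\ul{\chi}\rangle_{\gamma_\omega}=\tfrac12\mathcal{H}^2+\langle\accentset{\circ}{\chi},\accentset{\circ}{\ul{\chi}}\rangle_{\gamma_\omega}$, so \eqref{eq_prelim_gauss3} rearranges to
\[
\mathcal{H}^2=2R-2\overline{\scal}+4\overline{\Ric}(L,\ul{L})-\overline{\Riem}(\ul{L},L,L,\ul{L})+2\langle\accentset{\circ}{\chi},\accentset{\circ}{\ul{\chi}}\rangle_{\gamma_\omega}.
\]
It therefore suffices to bound, in $C^1(\gamma_\omega)$ by $C\rho^{-4}$, the trace-free term $\langle\accentset{\circ}{\chi},\accentset{\circ}{\ul{\chi}}\rangle_{\gamma_\omega}$, the ambient Ricci terms $\overline{\scal}$ and $\overline{\Ric}(L,\ul{L})$, and the combination $\overline{\Riem}(\ul{L},L,L,\ul{L})+\tfrac{4(h-1)}{\omega^2}$ separately.

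For the trace-free term I would write $\accentset{\circ}{\chi}=\ul{\theta}^{-1}\accentset{\circ}{A}$, using \eqref{eq_prelim_defA} and $\ul{\theta}>0$. The hypotheses give $\newbtr{\accentset{\circ}{A}}_{\gamma_\omega}\le K_1\rho^{-2}$ and $\newbtr{\nabla\accentset{\circ}{A}}_{\gamma_\omega}\le K_2\rho^{-3}$; Lemma \ref{lem_prelim_chitau} gives $\norm{\ul{\theta}-\tfrac{2}{\omega}}_{C^3(\gamma_\omega)}\le C\rho^{-3}$ and $\newnorm{\accentset{\circ}{\ul{\chi}}}_{C^3(\gamma_\omega)}\le C\rho^{-3}$; and $\newbtr{\nabla\omega}_{\gamma_\omega}\le C$ since $\gamma_\omega$ is uniformly equivalent to $\omega^2\hatgamma$ and $\norm{\omega/\rho}_{C^2(\hatgamma)}\le c_1$ (cf.\ Remark \ref{bem_prelim_backgroundfol}). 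These combine to $\newbtr{\accentset{\circ}{\chi}}_{\gamma_\omega}\le C\rho^{-1}$ and $\newbtr{\nabla\accentset{\circ}{\chi}}_{\gamma_\omega}\le C\rho^{-2}$, hence $\newbtr{\langle\accentset{\circ}{\chi},\accentset{\circ}{\ul{\chi}}\rangle_{\gamma_\omega}}\le C\rho^{-4}$ and $\newbtr{\nabla\langle\accentset{\circ}{\chi},\accentset{\circ}{\ul{\chi}}\rangle_{\gamma_\omega}}_{\gamma_\omega}\le C\rho^{-5}$, which is the required $C^1$-bound.

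For the ambient terms I would substitute the decomposition $L=L_r-\newbtr{\nabla\omega}^2_{\gamma_\omega}\ul{L}-2\nabla\omega^I\partial_I$ from \eqref{eq_prelim_nullgeometry_L} into $\overline{\Riem}(\ul{L},L,L,\ul{L})$ and $\overline{\Ric}(L,\ul{L})$. Since $\overline{\Riem}(\ul{L},\ul{L},\cdot,\cdot)=0$ and $\overline{\Riem}(\ul{L},\cdot,\cdot,\ul{L})$ is symmetric in its middle slots, the first becomes $\overline{\Riem}(\ul{L},L_r,L_r,\ul{L})-4\nabla\omega^I\overline{\Riem}(\ul{L},L_r,\partial_I,\ul{L})+4\nabla\omega^I\nabla\omega^J\overline{\Riem}(\ul{L},\partial_I,\partial_J,\ul{L})$, and analogously for $\overline{\Ric}(L,\ul{L})$. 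I would then use two features of the Schwarzschild lightcone: since Schwarzschild is Ricci flat and its radial null directions $\ul{L},L_r$ are the doubly degenerate principal null directions, one has $\overline{\scal}^{Schw}=0=\overline{\Ric}^{Schw}$, $\overline{\Riem}^{Schw}(\ul{L},L_r,\partial_I,\ul{L})=0$ and $\overline{\Riem}^{Schw}(\ul{L},\partial_I,\partial_J,\ul{L})=0$ (equivalently, these follow from the null structure equations for $\ul{\chi}$ and $\zeta$ using $\ul{\chi}^{Schw}_r=r\hatgamma$, $\zeta^{Schw}_r=0$), while applying \eqref{eq_prelim_gauss3} to a round leaf $S_r$ of the Schwarzschild lightcone, where $R_r=\tfrac{2}{r^2}$ and the ambient Ricci as well as the trace-free data vanish, expresses $\overline{\Riem}^{Schw}(\ul{L},L_r,L_r,\ul{L})=2R_r-\mathcal{H}^2_r$, which by Lemma \ref{lem_prelim_backgroundfol} equals $-\tfrac{4(h-1)}{r^2}+O(r^{-4})$ (absorbing the difference $h-h^{Schw}=O_4(r^{-2})$ into the remainder). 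Feeding these in, together with the decay $\overline{\Riem}-\overline{\Riem}^{Schw}=\overline{O}_{2,2}(r^{-4})$ (hence $\overline{\Ric},\overline{\scal}=\overline{O}_{2,2}(r^{-4})$), $\newbtr{\nabla\omega}_{\gamma_\omega}\le C$ and $r=\omega\sim\rho$, yields $\newbtr{\overline{\scal}}+\newbtr{\overline{\Ric}(L,\ul{L})}\le C\rho^{-4}$ and $\overline{\Riem}(\ul{L},L,L,\ul{L})=-\tfrac{4(h(\omega)-1)}{\omega^2}+O(\rho^{-4})$, so $\overline{\Riem}(\ul{L},L,L,\ul{L})+\tfrac{4(h-1)}{\omega^2}=O(\rho^{-4})$. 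The $C^1$-versions are obtained by differentiating along $\Sigma_\omega$ and rerunning the same scheme, now invoking additionally $\overline{\nabla}\overline{\Riem}-\overline{\nabla}\overline{\Riem}^{Schw}=\overline{O}_{1,1}(r^{-5})$, the decay of $\ul{\chi}_r,\chi_r$ (order $r$, hence order $r^{-1}$ with one index raised) and $\zeta_r=O_{2,2}(r^{-2})$ entering through $\overline{\nabla}\ul{L}$, $\overline{\nabla}L_r$, and $\norm{\omega/\rho}_{C^2(\hatgamma)}\le c_1$ entering through $\nabla^2\omega$; the delicate point is that the $r$-derivative of $\overline{\Riem}^{Schw}(\ul{L},L_r,L_r,\ul{L})\big|_{r=\omega}$ and that of $\tfrac{4(h-1)}{\omega^2}$ each produce a term of size $\sim m\rho^{-3}$ in the $\hatgamma$-gradient which cancel exactly, so that only the $O_4(r^{-2})$ corrections in $h$ and the $\overline{O}(r^{-5})$ curvature deviations remain, giving a $C\rho^{-5}$ bound on the $\gamma_\omega$-gradient and hence the weighted $C^1$-bound.

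The step I expect to be the main obstacle is the bookkeeping in this last paragraph: one must verify that every "large" factor appearing after differentiation — the $O(1)$ gradients $\nabla\omega,\nabla^2\omega$ and the order-$r$ null second fundamental forms $\ul{\chi}_r,\chi_r$ coming from $\overline{\nabla}\ul{L}$, $\overline{\nabla}L_r$ — is always contracted against a curvature component that vanishes identically in Schwarzschild, so that only the $\overline{O}_{2,2}(r^{-4})$ (resp.\ $\overline{O}_{1,1}(r^{-5})$) deviation survives; the sole exceptional component is $\overline{\Riem}^{Schw}(\ul{L},L_r,L_r,\ul{L})$, whose leading part is by design precisely cancelled by the term $\tfrac{4(h-1)}{\omega^2}$. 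Once these structural vanishings (Schwarzschild being Petrov type~D along its radial null directions) are in place, every remaining term is a routine product of the weighted-norm estimates from Lemmas \ref{lem_prelim_backgroundfol}, \ref{lem_prelim_chitau} and Definition \ref{defi_asymclassS}.
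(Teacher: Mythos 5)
Your argument is correct and is essentially the paper's: the paper likewise rearranges the contracted Gauss equation \eqref{eq_prelim_gauss3} via $\newbtr{\vec{\two}}^2=\tfrac12\mathcal{H}^2+\spann{\accentset{\circ}{\ul{\chi}},\accentset{\circ}{\chi}}$, bounds the trace-free term $\spann{\ul{\theta}^{-1}\accentset{\circ}{\ul{\chi}},\accentset{\circ}{A}}$ exactly as you do using the hypotheses on $\accentset{\circ}{A}$ and Lemma \ref{lem_prelim_chitau}, and disposes of the ambient curvature block by citing Lemma \ref{lem_appx_curvatureestimates1}. The only difference is that you re-derive that appendix curvature lemma inline (frame decomposition of $L$, vanishing of the off-diagonal Schwarzschild curvature components, and the cancellation of the $m\rho^{-3}$-level gradient terms against $\nabla\bigl(\tfrac{4(h-1)}{\omega^2}\bigr)$), which is precisely the content of its proof together with Lemmas \ref{lem_appx_Riemcurv_classS} and \ref{lem_appx_DerivRiemcurv_classS}.
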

			\begin{proof}
				By \eqref{eq_prelim_gauss3} and the identity
\[
\newbtr{\vec{\two}}^2=\frac{1}{2}\mathcal{H}^2+\langle  \accentset{\circ}{\ul{\chi}},\accentset{\circ}{{\chi}}\rangle=\frac{1}{2}\mathcal{H}^2+\spann{\ul{\theta}^{-1}\accentset{\circ}{\ul{\chi}},\accentset{\circ}{A}},
\]				
				 we have
				\[
					\mathcal{H}^2-2R=-2\spann{\ul{\theta}^{-1}\accentset{\circ}{\ul{\chi}},\accentset{\circ}{A}}+2\left(\overline{R}-2\overline{\Ric}(\ul{L},L)-\frac{1}{2}\overline{\Riem}(\ul{L},L,\ul{L},L)\right).
				\]
				Using the assumption on $\accentset{\circ}{A}$ and Lemma \ref{lem_prelim_chitau}, we get
				\[
					\norm{\spann{\ul{\theta}^{-1}\accentset{\circ}{\ul{\chi}},\accentset{\circ}{A}}}_{C^1(\gamma_\omega)}\le \frac{C}{\rho^4},
				\]
				and using Lemma \ref{lem_appx_curvatureestimates1}, we find
				\[
					\norm{2\left(\overline{R}-2\overline{\Ric}(\ul{L},L)-\frac{1}{2}\overline{\Riem}(\ul{L},L,\ul{L},L)\right)-\frac{4(h-1)}{\omega^2}}_{C^1(\gamma_\omega)}\le \frac{C}{\rho^4},
				\]
				which finishes the proof.
			\end{proof}
		
	\section{The a-priori class}\label{sec_apriori}
	From now on, $\mathcal{N}$ will always be an asymptotically Schwarzschildean lightcone according to Definition \ref{defi_asymclassS}. The function representing a spacelike hypersurface $\Sigma\in\mathcal{N}$ is denoted by $\omega:\Sbb^2\to\R$. The induced metric on $\Sigma$ is denoted by $\gamma_{\omega}$ and its area radius is denoted by $\rho$. Further, we define the associated $4$-vector $\textbf{Z}$ of $\Sigma$ and the $3$-vector $\vec{a}\in\R^3$ in terms of $\omega$ as in Subsection \ref{subsec_4vector}.
	We define a suitable a-priori class of surfaces as follows:
		\begin{defi}\label{defi_aprioriclass}
			Let $\sigma$, $B_1$, $B_2$, $B_3$ be strictly positive constants. We define the a-priori class $B_\sigma(B_1,B_2,B_3)$ as
			\[
			B_\sigma(B_1,B_2,B_3):=\left\{\Sigma\colon\sigma-B_1\le \omega\le \sigma+ B_1,\text{ } \newbtr{\accentset{\circ}{A}}\le \frac{B_2}{\sigma^4},\text{ }\newbtr{\nabla\accentset{\circ}{A}}\le \frac{B_3}{\sigma^5}\right\},
			\]
			where $\newbtr{\cdot}=\newbtr{\cdot}_{\gamma_\omega}$.
			We say a surface $\Sigma$ lies strictly in $B_\sigma(B_1,B_2,B_3)$ if $\Sigma\in B_\sigma(B_1,B_2,B_3)$ and all inequalities hold as strict inequalities.
		\end{defi}
		\begin{bem}\label{bem_aprioriclass}
			Note that the leaves $S_r$ of the background foliation lie strictly in $B_\sigma(B_1,B_2,B_3)$ for suitable (fixed) values of $B_1$, $B_2$, $B_3$ (depending on the asymptotics of $\mathcal{N}$) by Lemma \eqref{lem_prelim_backgroundfol} and \eqref{lem_appx_gammaomegadecay}. In fact, one can check that boosted spheres corresponding to $b_{\sigma,\vec{a}}$ as defined in Subsection \ref{subsec_4vector} above, still remain strictly in $B_\sigma(B_1,B_2,B_3)$ for sufficiently small $\vec{a}$, cf. Proposition \ref{prop_apriori} below. The a-priori estimates Proposition \ref{prop_apriori} show that all surfaces in $B_\sigma(B_1,B_2,B_3)$ are close to boosted spheres (under some mild additional assumptions).
		\end{bem}
	\noindent	From the definition, it is immediately clear that for $\Sigma\in B_\sigma(B_1,B_2,B_3)$ we find
		\begin{align}\label{eq_areareadiusB1}
		\sigma-2B_1\le \rho,\widetilde{\rho}\le \sigma+2B_1,
		\end{align}
		where $\widetilde{\rho}$ is the area radius of the metric $\tildegamma_{\omega}=\omega^2\hatgamma$.
		In particular, we find
		\[
		(1+\delta)^{-1}\sigma\le \omega,\rho,\widetilde{\rho}\le (1+\delta)\sigma
		\]
		for any choice of $1\ge \delta>0$ provided $\sigma\ge\sigma_0(B_1,\delta)$. 
		\subsection{A-priori estimates for asymptotically Schwarzschildean $\mathcal{N}$}
		We obtain the following a-priori estimates:
		\begin{prop}\label{prop_apriori}
			Let $\mathcal{N}$ be an asymptotically Schwarzschildean lightcone, and let $\Sigma\in B_\sigma(B_1,B_2,B_3)$. Assume further that 
			\[
				\norm{\frac{\omega}{\rho}}_{C^3\left(\hatgamma\right)}\le c_1
			\]
			for some constant $c_1\ge 10$.
			Then
			\begin{align}\label{eq_apriori_veca}
			\btr{\vec{a}}\le \frac{C(B_1)}{\sigma},
			\end{align}
			provided $\sigma\ge\sigma_0=\sigma_0(B_1,B_3,\delta,c_1)$.
			For $C_0>0$ and $\sigma_0(B_1,B_3,C_0,\delta,c_1)$ such that $\btr{\vec{a}}\le C_0$, we find
			\begin{align}\label{eq_aprioic2alpha}
			\norm{\omega-b_{\widetilde{\rho},\vec{a}}}_{C^{2,\alpha}\left(\hatgamma\right)}\le \frac{C(B_3,C_0,c_1)}{\sigma}.
			\end{align}
			 Moreover,
			\begin{align}\label{eq_aprioriH2}
			\norm{\mathcal{H}^2-\frac{4}{\rho^2}+\frac{8m}{b_{\rho,\vec{a}}^3}}_{C^1(\gamma_\omega)}\le \frac{C(B_3,C_0,c_1)}{\sigma^4}.
			\end{align}
		\end{prop}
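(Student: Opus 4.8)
The plan is to deduce all three estimates from the Gauss equation, the a-priori bounds on $\accentset{\circ}{A}$, and the modified Shi--Wang--Wu estimate (Proposition \ref{prop_shiwangwu}). First I would control the Gauss curvature $\widetilde K$ of the conformally rescaled metric $\widetilde\gamma_\omega=\omega^2\hatgamma$. Writing the Gauss equation \eqref{eq_prelim_gauss3} as $R=\mathcal{H}^2-\newbtr{\vec{\two}}^2+\overline{\scal}-2\overline{\Ric}(L,\ul L)+\tfrac12\overline{\Riem}(\ul L,L,L,\ul L)$ and using $\newbtr{\vec{\two}}^2=\tfrac12\mathcal{H}^2+\spann{\ul\theta^{-1}\accentset{\circ}{\ul\chi},\accentset{\circ}A}$ (as in the proof of Lemma \ref{lem_prelim_spacetimemeancurvature}), I get $R=\tfrac12\mathcal{H}^2+O(\rho^{-4})$ where the error is controlled in $C^1$ by the a-priori bound $\newbtr{\accentset{\circ}A}\le B_2\sigma^{-4}$, Lemma \ref{lem_prelim_chitau}, and the curvature estimates (Lemma \ref{lem_appx_curvatureestimates1}). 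Since $\tr_{\gamma_\omega}A=\mathcal{H}^2$ and $A=\ul\theta\chi$, combining $\ul\theta=2\omega^{-1}+O(\rho^{-3})$ with the background value $\mathcal{H}^2_r=2h r^{-2}+O(r^{-4})$ and $h=1-2m/r+O(r^{-2})$ gives $\mathcal{H}^2=2\omega^{-2}+O(\rho^{-3})$ pointwise; more carefully one sees $R=\omega^{-2}+O(\rho^{-3})$ in $C^1$. By the standard relation between the Gauss curvature of a conformal metric on $\Sbb^2$ and the area radius, this translates into $\norm{\widetilde K-1}_{C^1(\hatgamma)}\le C\rho^{-1}$ (using $\norm{\omega/\rho}_{C^3}\le c_1$ to convert weighted to unweighted norms and to pass the $C^1(\gamma_\omega)$ bound to a $C^1(\hatgamma)$ bound on $\widetilde K$). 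The main subtlety here is bookkeeping the various $\rho$ versus $\widetilde\rho$ normalizations and checking the $C^1$-closeness, not merely pointwise.

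Next, with $\norm{\widetilde K-1}_{C^1(\hatgamma)}\le C\rho^{-1}$ small for $\sigma$ large, Proposition \ref{prop_shiwangwu} applies directly and yields $\norm{\widetilde\omega_{-\vec a}-1}_{C^{2,\alpha}(\hatgamma)}\le C\norm{\widetilde K-1}_{C^\alpha(\hatgamma)}\le C\rho^{-1}$, hence $\norm{\omega-b_{\widetilde\rho,\vec a}}_{C^{2,\alpha}(\hatgamma)}\le C(\vec a)\widetilde\rho\,\rho^{-1}$. To extract \eqref{eq_apriori_veca} I would argue as follows: the balancing condition built into $\widetilde\omega_{-\vec a}$ together with the $C^0$-pinching $\sigma-B_1\le\omega\le\sigma+B_1$ forces $|\vec a|$ to be small — the point is that $b_{\widetilde\rho,\vec a}$ has oscillation of order $\widetilde\rho\,|\vec a|$ over $\Sbb^2$ (to leading order $b_{\widetilde\rho,\vec a}(\vec x)\approx\widetilde\rho(1+\vec a\cdot\vec x)$), while $\omega$ oscillates by at most $2B_1$; combined with the $C^{2,\alpha}$-closeness just obtained, this gives $\widetilde\rho\,|\vec a|\le C(B_1)$, i.e. $|\vec a|\le C(B_1)\sigma^{-1}$. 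Once $|\vec a|\le C_0$ is known, $C(\vec a)$ in the Shi--Wang--Wu estimate is a uniform constant $C(C_0)$ and $\widetilde\rho\le(1+\delta)\sigma$, so \eqref{eq_aprioic2alpha} follows with a constant depending only on $B_3,C_0,c_1$.

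Finally, for \eqref{eq_aprioriH2} I would return to the computation of $\mathcal{H}^2$ and now insert the $C^{2,\alpha}$-information. From $R=\tfrac12\mathcal{H}^2+O(\rho^{-4})$ in $C^1$, it suffices to compute $R$ precisely. The metric $\gamma_\omega$ differs from the conformally round $b_{\rho,\vec a}^2\hatgamma$ by $O(\rho^{-1})$ relative error in $C^{2}$ (by \eqref{eq_aprioic2alpha} and Lemma \ref{lem_appx_gammaomegadecay}), and the Gauss curvature of a conformally round metric $b_{\rho,\vec a}^2\hatgamma$ is exactly $b_{\rho,\vec a}^{-2}$. Expanding, $R=b_{\rho,\vec a}^{-2}+O(\rho^{-3})$ in $C^1(\gamma_\omega)$ — but one must do better: using Lemma \ref{lem_prelim_spacetimemeancurvature}, $\mathcal{H}^2=2R+\tfrac{4(h-1)}{\omega^2}+O(\rho^{-4})=2R-\tfrac{8m}{\omega^3}+O(\rho^{-4})$, and since $\omega=b_{\rho,\vec a}+O(1)$ with $\omega,b_{\rho,\vec a}\sim\sigma$ one has $\omega^{-3}=b_{\rho,\vec a}^{-3}+O(\sigma^{-4})$, while $2R=2b_{\rho,\vec a}^{-2}+O(\rho^{-3})$. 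This only gives an $O(\rho^{-3})$ error, so the crux is to upgrade $2R$ to $4/\rho^2$ modulo $O(\sigma^{-4})$: here one uses that the $\sigma^{-3}$-term in the Taylor expansion of the Gauss curvature, after integrating over $\Sbb^2$ and using the balancing/Gauss--Bonnet constraint $\int R\,\d\mu_{\gamma_\omega}=4\pi$ together with $4\pi\rho^2=|\Sigma|$, has vanishing mean, and the $C^{2,\alpha}$-closeness from \eqref{eq_aprioic2alpha} controls its nonzero modes by $O(\sigma^{-4})$ in $C^1$. Substituting $2b_{\rho,\vec a}^{-2}=4/\rho^2+(\text{something that the balancing kills})+O(\sigma^{-4})$ and $-8m/\omega^3=-8m/b_{\rho,\vec a}^3+O(\sigma^{-4})$ yields \eqref{eq_aprioriH2}. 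I expect this last upgrade — showing the $\sigma^{-3}$-order discrepancy between $2R$ and $4/\rho^2$ is actually absorbed by the boosted-sphere ansatz $b_{\rho,\vec a}$ and the balancing condition — to be the main obstacle; everything else is careful but routine tensor bookkeeping using the lemmas already established.
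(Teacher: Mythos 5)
Your overall skeleton (Gauss curvature control $\to$ Proposition \ref{prop_shiwangwu} $\to$ bound on $\vec a$ $\to$ $C^{2,\alpha}$ closeness $\to$ expansion of $\mathcal{H}^2$) matches the paper's, but the quantitative engine that makes the paper's proof work is missing, and your substitutes do not close the gap. First, your starting claim that $\mathcal{H}^2=2\omega^{-2}+O(\rho^{-3})$ pointwise (hence $R=\omega^{-2}+O(\rho^{-3})$ in $C^1$) is unjustified: for a graph $\omega$ over the background one has $\mathcal{H}^2=\mathcal{H}^2_r-2\ul{\theta}\Delta_{\gamma_\omega}\omega+\ul{\theta}^2\btr{\nabla\omega}^2-2\ul{\theta}\zeta_r(\nabla\omega)$, and under the stated hypotheses ($\sigma-B_1\le\omega\le\sigma+B_1$ and $\norm{\omega/\rho}_{C^3(\hatgamma)}\le c_1$ with $c_1\ge 10$, no smallness) the term $-2\ul{\theta}\Delta_{\gamma_\omega}\omega$ is only $O(c_1\sigma^{-2})$, i.e.\ of the same order as $\mathcal{H}^2$ itself; the a-priori class only controls the \emph{trace-free} Hessian through $\accentset{\circ}{A}$, not the Laplacian, so the refined pointwise expansion is circular (it is a consequence of the proposition, via Corollary \ref{kor_aprior}). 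The paper's way around this is the step you omit entirely: contracting the Codazzi equation \eqref{eq_codazziA} and using the a-priori bound $\newbtr{\nabla\accentset{\circ}{A}}\le B_3\sigma^{-5}$ gives $\btr{\d\mathcal{H}^2+2\ul{\theta}\gamma_\omega^{ik}\overline{\Riem}_{ijkL}}\le C\sigma^{-5}$, hence $\btr{\d R}\le C\sigma^{-5}$ by Lemma \ref{lem_prelim_spacetimemeancurvature}; then Gauss--Bonnet pins a point where $R=\fint R=2/\rho^2$ and Bonnet--Myers bounds the diameter by $O(\sigma)$, so integrating the gradient bound yields the sharp sup estimate $\norm{R-2/\rho^2}_{C^1}\le C\sigma^{-4}$, i.e.\ \eqref{eq_aprioriscal}. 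Note that $B_3$ enters the constants precisely through this mechanism, which never appears in your plan.

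This omission breaks two of your three conclusions. With only $\norm{\widetilde K-1}_{C^1(\hatgamma)}\le C\rho^{-1}$ as input, Proposition \ref{prop_shiwangwu} gives $\norm{\omega-b_{\widetilde\rho,\vec a}}_{C^{2,\alpha}(\hatgamma)}\le C$, an $O(1)$ bound, a full factor of $\sigma$ weaker than \eqref{eq_aprioic2alpha}; the paper needs the $\sigma^{-2}$ Gauss-curvature bound (coming from \eqref{eq_aprioriscal}) to get the stated $C/\sigma$. (Your derivation of \eqref{eq_apriori_veca} would survive even with the weaker input, since the evaluation at $\vec x=\vec a/\btr{\vec a}$ only needs an $O(1)$ error against the $C^0$ pinching.) More seriously, your route to \eqref{eq_aprioriH2} founders at exactly the point you flag: even granting the correct \eqref{eq_aprioic2alpha}, $C^{2,\alpha}$-closeness of $\omega$ to $b_{\widetilde\rho,\vec a}$ at order $\sigma^{-1}$ only controls second derivatives, hence the Gauss curvature, at order $\sigma^{-3}$, and Gauss--Bonnet fixes only the \emph{mean} of $R$; there is no mechanism by which ``balancing kills the $\sigma^{-3}$ discrepancy'' pointwise, and the nonzero modes of $R-2/\rho^2$ cannot be bounded by $\sigma^{-4}$ from this information alone. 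In the paper, the $\sigma^{-4}$ bound on $R-2/\rho^2$ is \emph{not} deduced from the closeness to the boosted sphere at all; it is the independently established estimate \eqref{eq_aprioriscal}, and \eqref{eq_aprioriH2} then follows by combining it with Lemma \ref{lem_prelim_spacetimemeancurvature} and \eqref{eq_aprioic2alpha} (the latter only needed to replace $8m/\omega^3$ by $8m/b_{\rho,\vec a}^3$). To repair your proof you would need to insert the Codazzi/Gauss--Bonnet/Bonnet--Myers argument (or an equivalent use of the $\nabla\accentset{\circ}{A}$ bound) before invoking Proposition \ref{prop_shiwangwu}.
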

		\begin{bem}\label{bem_apriori}
			Note that using $\sigma-B_1\le \omega\le \sigma+B_1$, \eqref{eq_aprioic2alpha} and \eqref{eq_aprioriH2} in particular imply that
			\begin{align}\label{eq_aprioriH2_constant}
				\btr{\mathcal{H}^2-\frac{4}{\rho^2}+\frac{8m}{\sigma^3}}\le \frac{C(B_1,B_3,C_0,c_1)}{\sigma^4}.
			\end{align}
			{Moreover, the estimate on $\btr{\vec{a}}$ implies that there exists a constant $C$ only depending on $B_1$ such that for $\Sigma_{\omega_1}$, $\Sigma_{\omega_2}\in B_\sigma(B_1,B_2,B_3)$ we find
			\[
				\btr{\widetilde{\rho}_1-\widetilde{\rho}_2}+\sigma\btr{\vec{a}_1-\vec{a}_2}\le C\norm{\omega_1-\omega_2}_{L^\infty(\widehat{\gamma})}
			\]
			for $\sigma\ge \sigma_0(B_1,B_2,B_3,c_1)$, cf. Lemma \ref{lem_apriori_vecalipschitz} below.}
		\end{bem}
		In the following, it will suffice to pick some fixed values for $\delta$, $C_0$, e.g. $\delta=1$, $C_0=10$. Thus, we will drop the dependency of $\sigma_0$ on $\delta$, $C_0$ {going forward}.
		\begin{proof}[Proof of Proposition \ref{prop_apriori}]
	Before proving the estimates in the Lemma, we will first establish a $C^1$ bound for the scalar curvature. Contracting Equation \eqref{eq_codazziA} with $\gamma_\omega^{ik}$ yields that
			\begin{align}\label{eq_codazzi}
			\dive\accentset{\circ}{A}_j=\frac{1}{2}\d\mathcal{H}^2_j+\frac{1}{2}\mathcal{H}^2\tau_j-\accentset{\circ}{A}(\vec{\tau},\partial_j)+\ul{\theta}\gamma_\omega^{ik}\overline{\Riem}_{ijkL}.
			\end{align}
			{
			Taking a trace of the identity for $\chi$ in \cite[Proposition 4.22]{wolff_thesis} and multiplying by $\ul{\theta}$, we obtain
			\[
				\mathcal{H}^2=\mathcal{H}^2_r-2\ul{\theta}\Delta_{\gamma_\omega}\omega+\ul{\theta}^2\btr{\nabla\omega}^2-2\ul{\theta}\zeta_r(\nabla\omega).
			\]
			Under the assumptions on $\omega$, it is straightforward to check that $\btr{\mathcal{H}^2}\le \frac{C(c_1)}{\sigma^2}$ by Definition \ref{defi_asymclassS}, Lemma \ref{lem_prelim_backgroundfol}, Remark \ref{bem_prelim_backgroundfol}, and Lemma \ref{lem_appx_c3control}.
			Rearranging \eqref{eq_codazzi} and using the a-priori assumptions on $\mathring{A}$ and \eqref{eq_prelim_chitau_tau}, we get			
			}
			\begin{align}\label{eq_prelim_codazzi2}
			\btr{\frac{1}{2}\d\mathcal{H}^2_j+\ul{\theta}\gamma_\omega^{ik}\overline{\Riem}_{ijkL}}_{\gamma_{\omega}}\le \frac{C(B_3,c_1)}{\sigma^5},
			\end{align}
			provided $\sigma\ge \sigma_0(B_1,B_2,B_3,c_1)$. 
			{Using Lemma \ref{lem_prelim_chitau} and Lemma \ref{lem_appx_curvatureestimates1} we further obtain}
			\begin{align}\label{eq_codazzi_curvature}
			\norm{\ul{\theta}\gamma_\omega^{ik}\overline{\Riem}_{ijkL}-\left(\frac{4(h-1)}{\omega^3}-\frac{2h'}{\omega^2}\right)\d\omega_j}_{C^1(\gamma_\omega)}\le \frac{C(c_1)}{\sigma^5}.
			\end{align}
			Note that
			\[
				\d\left(\frac{(h-1)}{\omega^2}\right)=\left(\frac{h'}{\omega^2}-\frac{2(h-1)}{\omega^3}\right)\d\omega.
			\]
			Hence by Lemma \ref{lem_prelim_spacetimemeancurvature}, Equations \eqref{eq_prelim_codazzi2}, \eqref{eq_codazzi_curvature} imply that
			\begin{align*}
			\btr{\d \scal}_{\gamma_\omega}\le \btr{\frac{1}{2}\d\mathcal{H}^2-2\d\left(\frac{(h-1)}{\omega^2}\right)}_{\gamma_\omega}+\frac{C(c_1)}{\sigma^5}\le \frac{C(B_3,c_1)}{\sigma^5}
			\end{align*}
			provided $\sigma\ge\sigma_0(B_1,B_2,B_3,c_1)$.
			Using Gauss--Bonnet and the fact that $\Sbb^2$ is path-connected, there exists a point $p\in\Sbb^2$ such that $\scal(p)=\fint \scal=\frac{2}{\rho^2}$. Consider any point $q$ on $(\Sbb^2,\gamma_\omega)$ that can be connected to $p$ via a unit-speed curve $s(t)$ of length $L\le 10\sigma$. Then
			\[
			\scal(q)-\scal(p)=\int_s\spann{\nabla R,\dot\gamma}\d t\ge -\frac{C(B_3,c_1)}{\sigma^4}.
			\]
			Hence, for $\sigma\ge\sigma_0(B_1,B_2,B_3,c_1,c_2)$ we have $\scal(q)\ge \frac{1}{\sigma^2}$. Using the Bonnet--Myers Theorem, this implies that geodesics of this length already cover all of $(\Sbb^2,\gamma_\omega)$ for $\sigma$ sufficiently large, which implies
			\begin{align*}
			\btr{\scal-\fint \scal}\le \frac{C(B_3,c_1)}{\sigma^4}.
			\end{align*}
			In summary we have shown that
						\begin{align}\label{eq_aprioriscal}
			\norm{{\scal}-\frac{2}{{\rho}^2}}_{C^1(\widetilde{\gamma}_\omega)}&\le \frac{C(B_3,c_1)}{\sigma^4}.
\end{align}
Now we are going to show \eqref{eq_apriori_veca}. {First, let} $\widetilde{\scal}$ denote the scalar curvature of the metric $\widetilde{\gamma}_\omega=\omega^2\hatgamma$. In particular $\fint\widetilde{\scal}=\frac{2}{\widetilde{\rho^2}}$. Using \eqref{eq_appx_arearadiusdifference} and Lemma \ref{lem_appx_scalarcurv}, \eqref{eq_aprioriscal} implies the respective $C^1$-estimate for $\tildegamma_\omega$, i.e.,
			\begin{align*}
			\norm{\widetilde{\scal}-\frac{2}{\widetilde{\rho}^2}}_{C^1(\widetilde{\gamma}_\omega)}&\le \frac{C(B_3,c_1)}{\sigma^4}.
			\end{align*}
			Considering $\accentset{\vee}{\omega}:=\widetilde{\rho}^{-1}\omega$, we find
			\[
			\norm{\accentset{\vee}{K}-1}_{C^1(\hatgamma)}\le \frac{C(B_3,c_1)}{\sigma^2}
			\]
			by the scaling properties of the scalar curvature, where $\accentset{\vee}{K}$ denotes the Gauss curvature of the metric $\accentset{\vee}{\omega}^2\hatgamma$. Now, for $\sigma\ge\sigma_0(B_1,B_2,B_3,c_1)$ we may utilize Proposition \ref{prop_shiwangwu} to obtain
			\begin{align}\label{eq_proofaprioir1}
			\norm{\accentset{\vee}{\omega}_{-\vec{a}}-1}_{C^{2,\alpha}(\hatgamma)}\le \frac{C(B_3,c_1)}{\sigma^2}.
			\end{align}
			In particular, we find
			\[
			\btr{\frac{\omega\circ\Phi(\vec{x})}{\sqrt{1+\btr{\vec{a}}^2}+\vec{a}\cdot\vec{x}}-\widetilde{\rho}}\le \frac{C(B_3,c_1)}{\sigma}
			\]
			for any $\vec{x}\in\Sbb^2$. Assume $\vec{a}\not=0$, otherwise the first claim is trivial. Then, at $\vec{x}:=\frac{\vec{a}}{\btr{\vec{a}}}$ we find
			\[
			\left(\widetilde{\rho}-\frac{C(B_3,c_1)}{\sigma}\right)\left(\sqrt{1+\btr{\vec{a}}^2}+\btr{\vec{a}}\right)\le \omega\circ\Phi(\vec{x})\le \sigma+B_1=\sigma\left(1+\frac{B_1}{\sigma}\right).
			\]
			For $\sigma\ge \sigma_0(B_1,B_2,B_3,c_1)$ the left-hand side is positive, and thus squaring yields
			\[
			\left(\widetilde{\rho}-\frac{C(B_3,c_1)}{\sigma}\right)^2\left(1+2\btr{\vec{a}}\left(\sqrt{1+\btr{\vec{a}}^2}+\btr{\vec{a}}\right)\right)\le \sigma^2\left(1+\frac{2B_1}{\sigma}+\frac{B_1^2}{\sigma^2}\right),
			\]
			and we conclude
			\begin{align*}
			2\btr{\vec{a}}&\le 2\btr{a}\left(\sqrt{1+\btr{\vec{a}}^2}+\btr{\vec{a}}\right)
			\\&\le \left(\frac{\sigma^2}{\left(\widetilde{\rho}-\frac{C(B_3,c_1)}{\sigma}\right)^2}-1\right)+\frac{\sigma^2}{\left(\widetilde{\rho}-\frac{C(B_3,c_1)}{\sigma}\right)^2}\left(\frac{2B_1}{\sigma}+\frac{B_1^2}{\sigma^2}\right)\\&\le \frac{C(B_1)}{\sigma}
			\end{align*}
			for $\sigma\ge \sigma_0(B_1,B_2,B_3,c_1)$. This proves \eqref{eq_apriori_veca}.
			
			Now assume that $\sigma\ge \sigma_0(B_1,B_2,B_3,C_0,c_1)$ such that $\btr{\vec{a}}\le C_0$ (as we can choose $C_0$ to be a fixed constant going forward we will drop the dependency on $C_0$ in the following). Now, to obtain the desired estimate for $\omega$, \eqref{eq_aprioic2alpha}, we recall that Proposition \ref{prop_shiwangwu} also yields an estimate on $\accentset{\vee}{\omega}$ (without applying a balancing/boost) of the form
			\begin{align}\label{eq_proofaprioir2}
			\norm{\accentset{\vee}{\omega}-b_{\vec{a}}}_{C^{2,\alpha}(\Sbb^2)}\le C(\vec{a})\frac{C(B_3,c_1)}{\sigma^2}.
			\end{align}
			Using $\btr{\vec{a}}\le C_0$, and multiplying the equation by $\widetilde{\rho}$ gives \eqref{eq_aprioic2alpha}. Due to \eqref{eq_appx_differencetensorconformallyround}, we may replace $\widetilde{\rho}$ by $\rho$ in \eqref{eq_aprioic2alpha}. Lastly, the estimate for $\mathcal{H}^2$ follows from combining \eqref{eq_aprioic2alpha}, \eqref{eq_aprioriscal} and Lemma \ref{lem_prelim_spacetimemeancurvature}.
		\end{proof}
		{We now show that Proposition \ref{prop_apriori} implies improved bounds on the derivatives of $\omega$ up to third order. This will later allow us to argue that the assumptions of Proposition \ref{prop_apriori} are preserved under the flow as long as the solutions remains in $B_\sigma(B_1,B_2,B_3)$.}
		\begin{kor}\label{kor_aprior}
			Let $\mathcal{N}$ be an asymptotically Schwarzschildean lightcone, $\Sigma\in B_\sigma(B_1,B_2,B_3)$, and assume that all conditions of Proposition \ref{prop_apriori} are satisfied.
			Then
			\begin{align*}
				\btr{\widehat{\nabla}^l\frac{\omega}{\rho}}_{\hatgamma}\le \frac{C(B_1)}{\sigma}
			\end{align*}
			for all $1\le l\le 3$ provided $\sigma\ge \sigma_0(B_1,B_2,B_3,c_1,c_2)$.
		\end{kor}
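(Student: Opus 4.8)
The plan is to bootstrap from the $C^{2,\alpha}$-closeness of $\omega$ to a boosted sphere, established in Proposition \ref{prop_apriori}, up to $C^3$-closeness, and then read off the claimed derivative bounds. Concretely, \eqref{eq_aprioic2alpha} together with $\btr{\vec{a}}\le C_0$ already gives $\norm{\widehat{\nabla}^l(\omega/\rho)}_{\hatgamma}\le C(B_1,C_0)/\sigma$ for $l=1,2$, since $b_{\widetilde\rho,\vec a}/\widetilde\rho = b_{\vec a}$ is a fixed smooth function on $\Sbb^2$ whose derivatives are controlled by $C_0$, and $\widetilde\rho$ and $\rho$ are comparable to $\sigma$ with $\btr{\widetilde\rho-\rho}\le C/\sigma$ by \eqref{eq_appx_differencetensorconformallyround} (and the estimate $\btr{\widetilde\rho - \rho}$ is even better). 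So the only real content is the third derivative.

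To get the $l=3$ bound, I would return to the equation that governs $\omega$, namely the trace of the identity for $\chi$ from \cite[Proposition 4.22]{wolff_thesis}, which reads schematically
\[
\mathcal{H}^2 = \mathcal{H}^2_r - 2\ul{\theta}\,\Delta_{\gamma_\omega}\omega + \ul{\theta}^2\btr{\nabla\omega}^2 - 2\ul{\theta}\,\zeta_r(\nabla\omega),
\]
as already used in the proof of Proposition \ref{prop_apriori}. Rearranging, $\Delta_{\gamma_\omega}\omega$ is expressed through $\mathcal{H}^2$, $\mathcal{H}^2_r$, $\ul\theta$, $\zeta_r$, and lower-order terms in $\nabla\omega$. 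By \eqref{eq_aprioriH2} we control $\mathcal{H}^2$ in $C^1(\gamma_\omega)$; by Lemma \ref{lem_prelim_backgroundfol} and Definition \ref{defi_asymclassS} we control $\mathcal{H}^2_r$, $\ul\theta$, $\zeta_r$ and their derivatives; and by the $C^{2,\alpha}$-bound on $\omega$ we control the quadratic gradient terms in $C^{1,\alpha}$. Hence the right-hand side is bounded in $C^1$, so $\Delta_{\gamma_\omega}\omega$ — and, after converting from $\gamma_\omega$ to $\widehat\gamma$ using the decay of $\gamma_\omega - r^2\hatgamma$ from Lemma \ref{lem_appx_gammaomegadecay}, the operator $\widehat\Delta(\omega/\rho)$ — is bounded in $C^1(\hatgamma)$ by $C/\sigma^3$ times the appropriate power of $\sigma$, i.e. $\norm{\widehat\Delta(\omega/\rho)}_{C^1(\hatgamma)}\le C/\sigma$. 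Interior (here global, on the closed manifold $\Sbb^2$) elliptic $L^p$ or Schauder estimates for the fixed operator $\widehat\Delta$ on $(\Sbb^2,\hatgamma)$ then upgrade the $C^{2,\alpha}$-bound on $\omega/\rho - b_{\vec a}$ to a $W^{3,p}$- or $C^{2,1}$-type bound, which in particular yields $\btr{\widehat\nabla^3(\omega/\rho)}_{\hatgamma}\le C(B_1,B_2,B_3,c_1)/\sigma$. One must be slightly careful that the elliptic estimate is applied to $\widehat\Delta$ with no zeroth-order term, so it should be stated for $\omega/\rho - \fint(\omega/\rho)$ or one uses that $\omega/\rho$ is already $L^\infty$-close to the explicit function $b_{\vec a}$ to pin down the kernel contribution; either way the conclusion follows.

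The main obstacle I anticipate is purely bookkeeping: keeping track of the powers of $\sigma$ (equivalently $\rho$) when passing back and forth between norms measured with respect to $\gamma_\omega$, the conformally round metric $\widetilde\gamma_\omega=\omega^2\hatgamma$, and the fixed round metric $\hatgamma$, since each change of reference metric costs or gains factors of $\rho$ according to the tensor type (Remark \ref{bem_prelim_backgroundfol}), and one needs the decay estimates for $\gamma_\omega$ and its inverse from Lemma \ref{lem_appx_gammaomegadecay} together with the $C^{2,\alpha}$-control on $\omega$ to control these conversions up to one derivative more than the order of the metric difference. A secondary, minor point is justifying that the elliptic estimate may be applied with constants independent of $\sigma$: this is immediate because after rescaling by $\rho$ the relevant operator is a fixed small perturbation (of size $O(1/\sigma)$ in its coefficients, by the $C^1$-closeness of $\gamma_\omega/\rho^2$ to $\hatgamma$) of the Laplacian of the fixed metric $\hatgamma$ on the fixed compact manifold $\Sbb^2$, so standard Schauder theory applies uniformly. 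No genuinely new idea beyond Proposition \ref{prop_apriori} and elliptic regularity is needed.
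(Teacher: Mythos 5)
Your reduction for $l=1,2$ is the paper's own: subtract $b_{\widetilde\rho,\vec a}$ using \eqref{eq_aprioic2alpha} and estimate the derivatives of $b_{\vec a}$. One imprecision: the bound $\btr{\vec a}\le C_0$ alone only gives $\btr{\widehat\nabla^l b_{\vec a}}\le C(C_0)$, which yields an $O(1)$ bound, not the claimed $C(B_1)/\sigma$. You need the sharper statement $\btr{\widehat\nabla b_{\vec a}}_{\hatgamma},\btr{\widehat\Hess\, b_{\vec a}}_{\hatgamma}\le C\btr{\vec a}$ (which the paper verifies by explicit computation in angular coordinates) combined with \eqref{eq_apriori_veca}, $\btr{\vec a}\le C(B_1)/\sigma$; this is what produces the $1/\sigma$ decay.

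The $l=3$ step has a genuine gap. The conclusion is a \emph{pointwise} bound on $\widehat\nabla^3(\omega/\rho)$, and your elliptic bootstrap cannot deliver it from the data you have: the traced equation gives $\widehat\Delta(\omega/\rho)$ controlled only in $C^1$ (the bound \eqref{eq_aprioriH2} on $\mathcal{H}^2$ is a weighted $C^1$ bound, with no H\"older modulus on $\nabla\mathcal{H}^2$), and at this endpoint Schauder/Calder\'on--Zygmund theory only yields $C^{2,\alpha}$ for every $\alpha<1$, or $W^{3,p}$ for every $p<\infty$ (equivalently, third derivatives in $BMO$/Zygmund-type spaces) --- neither gives $\widehat\nabla^3(\omega/\rho)\in L^\infty$, and the implication ``$\widehat\Delta u\in C^{0,1}\Rightarrow u\in C^{2,1}$'' is false in general. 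A second, structural point: the scalar equation for $\mathcal{H}^2$ only sees the trace of $\Hess_{\gamma_\omega}\omega$, and your argument never invokes the defining a-priori bounds $\newbtr{\accentset{\circ}{A}}\le B_2/\sigma^4$, $\newbtr{\nabla\accentset{\circ}{A}}\le B_3/\sigma^5$, which is exactly the missing trace-free information. The paper's proof avoids elliptic regularity altogether: it differentiates the full tensor identity \eqref{eq_Adef}, $A=\ul{\theta}\chi=A_r-2\ul{\theta}\Hess_{\gamma_\omega}\omega+\dotsb$, bounds $\nabla A$ pointwise by $C/\sigma^5$ by combining $\newbtr{\nabla\accentset{\circ}{A}}\le B_3/\sigma^5$ with the pointwise bound on $\nabla\mathcal{H}^2$ extracted from \eqref{eq_aprioriH2}, estimates the remaining background terms via Definition \ref{defi_asymclassS} and Lemmas \ref{lem_prelim_backgroundfol}, \ref{lem_appx_gammaomegadecay}, \ref{lem_appx_c3control}, and so obtains $\btr{\nabla\Hess_{\gamma_\omega}\omega}_{\gamma_\omega}\le C/\sigma^3$, which Lemma \ref{lem_appx_c3control} converts into the stated bound on $\widehat\nabla^3(\omega/\rho)$. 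Your approach can be repaired along these lines by splitting $\Hess_{\gamma_\omega}\omega$ into trace and trace-free parts and controlling $\nabla$ of the trace-free part through $\nabla\accentset{\circ}{A}$, but then it is essentially the paper's argument rather than an elliptic-regularity shortcut.
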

		\begin{bem}\label{bem_koraprior}\,
			\begin{enumerate}
				\item[(i)] 
				{Corollary \ref{kor_aprior} in particular implies that within Class $B_\sigma(B_1,B_2,B_3)$ the initial bounds on $\norm{\frac{\omega}{\rho}}_{C^3(\hatgamma)}$ can always be improved to some fixed constant, e.g. $c_1=10$, provided $\sigma\ge\sigma_0(B_1,B_2,B_3,c_1,c_2)$. Hence, although we always have to assume these initial bounds exist a-priori, we omit the dependency of all constants (expect $\sigma_0$) on $c_1$ in the following for simplicity.}
				\item[(ii)] The proof of Corollary \ref{kor_aprior} further shows that the $C^1$-norm of the full scalar second fundamental form is controlled. More precisely,
				\begin{align*}
					\btr{A}_{\gamma_\omega}&\le \frac{10}{\sigma^2}+\frac{C(B_2)}{\sigma^4},\\
					\btr{\nabla A}_{\gamma_\omega}&\le \frac{C(m,B_1,B_3,c_1,c_2)}{\sigma^5}
				\end{align*}
				provided $\sigma\ge\sigma_0(B_1,B_2,B_3,c_1)$, cf. Equation \eqref{eq_scalar2FF_C1} below, and where the bound on $A$ directly follows from Proposition \ref{prop_apriori} and the definition of the a-priori class.
			\end{enumerate}
		\end{bem}
		\begin{proof}[Proof of Corollary \ref{kor_aprior}]
			First note that Equation \eqref{eq_appx_arearadiusdifference} directly implies
			\[
			\norm{\frac{\omega}{\rho}-\frac{\omega}{\widetilde{\rho}}}_{C^2(\hatgamma)}=\norm{\frac{\omega}{\rho}}_{C^2(\hatgamma)}\btr{1-\frac{\rho}{\widetilde{\rho}}}\le \frac{C(c_1)}{\sigma^2}.
			\]
			Combining this with \eqref{eq_proofaprioir2} we get
			\begin{align*}
			\btr{\widehat{\nabla}\frac{\omega}{\rho}}_{\hatgamma}\le \btr{\widehat{\nabla}b_{\vec{a}}}_{\hatgamma}+\frac{C(B_3,c_1)}{\sigma^2},\\
			\btr{\widehat{\Hess}\frac{\omega}{\rho}}_{\hatgamma}\le \btr{\widehat{\Hess}\,b_{\vec{a}}}_{\hatgamma}+\frac{C(B_3,c_1)}{\sigma^2}.
			\end{align*}
			{In view of \eqref{eq_apriori_veca}, it suffices to show that $\btr{\widehat{\nabla}b_{\vec{a}}}_{\hatgamma}, \btr{\widehat{\Hess}\,b_{\vec{a}}}_{\hatgamma}\le C\btr{\vec{a}}$ in order to get the bound on the first two derivatives}. Without loss of generality we can assume that $\vec{a}=(0,0,a)$, $a:=\btr{\vec{a}}$, as rotations in $\R^3$ act as isometries on $(\Sbb^2,\hatgamma)$. In angular coordinates $(\theta,\varphi)$, we find
			\[
			\hatgamma=\d\theta^2+\sin(\theta)^2\d\varphi^2,
			\]
			and
			\[
			b:=b_{\vec{a}}=\frac{1}{\sqrt{1+a^2}-a\cos\theta}.
			\]
			Direct computation gives
			\begin{align*}
			b_{,\theta}&=\frac{-a\sin\theta}{\left(\sqrt{1+a^2}-a\cos\theta\right)^2},\\
			b_{,\theta\theta}&=\frac{a^2+a^2\sin\theta^2-a\sqrt{1+a^2}\cos\theta}{\left(\sqrt{1+a^2}-a\cos\theta\right)^3},
			\end{align*}
			which gives
			\begin{align*}
			\btr{\widehat{\nabla}b}^2_{\Sbb^2}&\le \frac{\btr{\vec{a}}^2}{{\left(\sqrt{1+a^2}-a\cos\theta\right)^4}},\\
			\btr{\widehat{\Hess}\,b}^2_{\Sbb^2}&\le C\btr{\vec{a}}^2\left(\frac{1}{{\left(\sqrt{1+a^2}-a\cos\theta\right)^6}}+\frac{1}{\left(\sqrt{1+a^2}-a\cos\theta\right)^4}\right).
			\end{align*}
			{
			Now note that there exists a unique $\alpha\in[0,\infty)$ such that $\sinh(\alpha)=a$, $\cosh(\alpha)=\sqrt{1+a^2}$, and that
			\[
			\frac{1}{\cosh(\alpha)-\sinh(\alpha)}=\cosh(\alpha)+\sinh(\alpha)=e^\alpha
			\]
			Hence, 
			\[
			\frac{1}{\sqrt{1+a^2}-a\cos\theta}\le\frac{1}{\sqrt{1+a^2}-a}\le C=C(C_0)
			\]
			as the exponential function is strictly increasing. This yields the claim up until $l=2$.}
			
			To prove the claim for $l=3$ first recall that 
			{by \cite[Proposition 4.22]{wolff_thesis}, we have
			\begin{align}\label{eq_Adef}
			A=\ul{\theta}\chi=A_r-2\ul{\theta}\Hess_{\gamma_\omega}\omega+\btr{\nabla\omega}^2_{\gamma_\omega}\ul{\theta}\ul{\chi}-2\ul{\theta}\d\omega\otimes\zeta_r-2\ul{\theta}\zeta_r\otimes\d\omega,
			\end{align}
			as we assume the background foliation to be geodesic ($\kappa=0$).}
			Rearranging the above equation and taking a tensor derivative yields
			\begin{align}
			\begin{split}\label{eq_apriori_nablahess}
			\nabla_i\Hess_{\gamma_\omega}\omega_{jk}
			=&\,-\frac{1}{2\ul{\theta}}\nabla_i A_{jk}+\frac{1}{2\ul{\theta}^2}\d\ul{\theta}_i A_{jk}+\frac{1}{2\ul{\theta}}\nabla (A_r)_{jk}-\frac{1}{2\ul{\theta}^2}\d\ul{\theta}_i (A_r)_{jk}\\
			&\,+\frac{1}{2}\d\left(\btr{\nabla\omega}_{\gamma_\omega}^2\right)_i\ul{\chi}_{jk}+\frac{1}{2}\btr{\nabla\omega}^2_{\gamma_\omega}\nabla_i\ul{\chi}_{jk}-2\Hess_{\gamma_\omega}\omega_{ij}\otimes(\zeta_r)_k\\
			&\,-2(\zeta_r)_j\otimes\Hess_{\gamma_\omega}\omega_{ik}-2\d\omega_j\otimes\nabla_i(\zeta_r)_k-2\nabla_i(\zeta_r)_j\otimes\d\omega_j.
			\end{split}
			\end{align}
			We first aim to bound $\nabla_iA_{jk}$. 
			{Note that Equation \eqref{eq_aprioriH2} directly yields
			\[
			\btr{\nabla\mathcal{H}^2}_{\gamma_\omega}\le \btr{\frac{24m}{b_{\rho,\vec{a}}^4}\nabla b_{\rho,\vec{a}}}_{\gamma_\omega}+\frac{C(B_3,c_1)}{\sigma^5}\le \frac{C(m,B_1,B_3,c_1)}{\sigma^5},
			\]
			where we utilized the bounds on $b_{\rho,\vec{a}}$ derived above and the scaling properties of $\gamma_\omega$. As $\mathcal{H}^2=\tr A$ and $\Sigma\in B_\sigma(B_1,B_2,B_3)$, this implies}
			\begin{align}\label{eq_scalar2FF_C1}
			\btr{\nabla A}_{\gamma_\omega}\le \frac{C(m,B_1,B_3,c_1)}{\sigma^5}.
			\end{align}
			{By Lemma \ref{lem_prelim_chitau}, we have
			\[
			\btr{\frac{1}{\ul{\theta}}\nabla A}_{\gamma_\omega}\le \frac{C(m,B_1,B_3,c_1)}{\sigma^4}.
			\]
			We further note that by Lemma \ref{lem_prelim_backgroundfol} and Lemma \ref{lem_appx_gammaomegadecay}, the already proven improved bound on $\nabla\omega$ now implies}
			\[
			\btr{\nabla\ul{\theta}}_{\gamma_\omega}\le\frac{C(B_1)}{\sigma^3},\text{ }\btr{\nabla\mathcal{H}^2_r}_{\gamma_\omega}\le\frac{C(B_1)}{\sigma^4}.
			\]
			Then, using Lemma \ref{lem_prelim_backgroundfol}, Equation \eqref{eq_aprioriH2}, and Lemma \ref{lem_appx_gammaomegadecay} we find
			\[
			\btr{\frac{1}{2\ul{\theta}^2}\d\ul{\theta}_i A_{jk}+\frac{1}{2\ul{\theta}}\nabla (A_r)_{jk}-\frac{1}{2\ul{\theta}^2}\d\ul{\theta}_i (A_r)_{jk}}_{\gamma_\omega}\le \frac{C(B_1)}{\sigma^3}
			\]
			provided $\sigma\ge \sigma_0(B_1,B_2,B_3,c_1,c_2)$ {}\footnote{By using Lemma \ref{lem_appx_gammaomegadecay}, the constant will a-priori also depend on $c_1$. However, as we already have proven the claim for $l=1$, we may assume $c_1=10$ for $\sigma_0$ sufficiently big without loss of generality.}. It remains to find sufficient bounds on the second and third line of \eqref{eq_apriori_nablahess}. Using the claim for $l=2$, the decay conditions on $\ul{\chi}$, $\zeta_r$ in Definition \ref{defi_asymclassS}, and Lemma \ref{lem_appx_c3control} (with $l=2$) and \ref{lem_appx_gammaomegadecay}, it is straightforward to check that
			\begin{align*}
			\btr{\frac{1}{2}\d\left(\btr{\nabla\omega}_{\gamma_\omega}^2\right)_i\ul{\chi}_{jk}+\frac{1}{2}\btr{\nabla\omega}^2_{\gamma_\omega}\nabla_i\ul{\chi}_{jk}-2\Hess_{\gamma_\omega}\omega_{ij}\otimes(\zeta_r)_k}_{\gamma_\omega}\le \frac{C(B_1,c_1,c_2)}{\sigma^4},\\
			\btr{-2(\zeta_r)_j\otimes\Hess_{\gamma_\omega}\omega_{ik}-2\d\omega_j\otimes\nabla_i(\zeta_r)_k-2\nabla_i(\zeta_r)_j\otimes\d\omega_j}_{\gamma_\omega}\le \frac{C(B_1,c_1,c_2)}{\sigma^4}.
			\end{align*}
			{Hence, Equation \eqref{eq_apriori_nablahess} and the above bounds imply}
			\[
			\btr{\nabla\Hess\omega}_{\gamma_{\omega}}\le\frac{C(B_1)}{\sigma^3}
			\]
			provided $\sigma\ge\sigma_0(B_1,B_2,B_3,c_1,c_2)$. The claim then follows from Lemma \ref{lem_appx_c3control}.
		\end{proof}
		{
		\begin{lem}\label{lem_apriori_vecalipschitz}
			Assume $\Sigma_{\omega_1},\Sigma_{\omega_2}\in B_\sigma(B_1,B_2,B_3)$ and the assumptions of Proposition \ref{prop_apriori} hold. Then
			\[
			\btr{\vec{a}_1-\vec{a}_2}\le \frac{{C(B_1)}}{\sigma}\norm{\omega_1-\omega_2}_{L^\infty(\widehat{\gamma})}
			\]
			for $\sigma\ge\sigma_0(B_1,B_2,B_3,c_1)$.
		\end{lem}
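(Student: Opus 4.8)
The plan is to exploit that the boost vector $\vec a$ is a simple, scale-invariant function of the associated $4$-vector $\textbf{Z}=\textbf{Z}(\Sigma_\omega)$, whose components depend on $\omega$ only through cubic integral functionals against fixed functions on $(\Sbb^2,\hatgamma)$, normalized by the area $\mathcal{A}_\omega:=\btr{\Sigma_\omega}=\int_{\Sbb^2}\omega^2\,\d\widehat\mu$. Since $\textbf{Z}$ is timelike and future-pointing (as recalled in Subsection \ref{subsec_4vector}), $\textbf{Z}^t>0$ and, from $\textbf{Z}=\btr{\textbf{Z}}\lr{\sqrt{1+\btr{\vec a}^2},\vec a}$, one reads off $\vec a=\vec v\,(1-\btr{\vec v}^2)^{-1/2}$, where $\vec v:=\textbf{Z}^{\mathrm{space}}/\textbf{Z}^t$ and $\textbf{Z}^{\mathrm{space}}:=(\textbf{Z}^1,\textbf{Z}^2,\textbf{Z}^3)$. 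Hence it suffices to establish (i) that $\vec v\mapsto\vec v\,(1-\btr{\vec v}^2)^{-1/2}$ is Lipschitz on every ball $\{\btr{\vec v}\le 1-\varepsilon\}$ with a constant depending only on $\varepsilon$, which is elementary, and (ii) that $\btr{\vec v_1-\vec v_2}\le\frac{C(B_1)}{\sigma}\norm{\omega_1-\omega_2}_{L^\infty(\hatgamma)}$. For (i), Proposition \ref{prop_apriori} gives $\btr{\vec a_j}\le C(B_1)/\sigma$, so for $\sigma\ge\sigma_0(B_1)$ we have $\btr{\vec a_j}\le C_0=10$, whence $\btr{\vec v_j}=\btr{\vec a_j}\,(1+\btr{\vec a_j}^2)^{-1/2}\le C_0\,(1+C_0^2)^{-1/2}<1$; thus $\vec v_1,\vec v_2$ both lie in a fixed ball strictly inside the unit ball and (i) applies with a universal constant.

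For (ii), I would first record crude bounds on $\textbf{Z}$. Since $\Sigma_{\omega_j}\in B_\sigma(B_1,B_2,B_3)$, we have $\frac{\sigma}{2}\le\omega_j\le 2\sigma$ for $\sigma\ge\sigma_0(B_1)$, so $\mathcal{A}_{\omega_j}$ and $\int_{\Sbb^2}\omega_j^3\,\d\widehat\mu$ are comparable to $\sigma^2$ and $\sigma^3$, while $\btr{\int_{\Sbb^2}\omega_j^3 f^i\,\d\widehat\mu}\le C\sigma^3$; in particular $\textbf{Z}^t(\omega_j)$ is comparable to $\sigma$ and $\btr{\textbf{Z}^{\mathrm{space}}(\omega_j)}\le C\sigma$. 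Factoring $\omega_1^3-\omega_2^3=(\omega_1-\omega_2)(\omega_1^2+\omega_1\omega_2+\omega_2^2)$ and $\omega_1^2-\omega_2^2=(\omega_1-\omega_2)(\omega_1+\omega_2)$ gives
\[
\btr{\int_{\Sbb^2}(\omega_1^3-\omega_2^3)\,f\,\d\widehat\mu}\le C\sigma^2\norm{\omega_1-\omega_2}_{L^\infty(\hatgamma)},\qquad\btr{\mathcal{A}_{\omega_1}-\mathcal{A}_{\omega_2}}\le C\sigma\norm{\omega_1-\omega_2}_{L^\infty(\hatgamma)}
\]
for $f\in\{1,f^1,f^2,f^3\}$. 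Writing each component of $\textbf{Z}(\omega_1)-\textbf{Z}(\omega_2)$ as a numerator difference divided by $\mathcal{A}_{\omega_1}$, plus a term of the shape $\lr{\int_{\Sbb^2}\omega_2^3(\,\cdot\,)\,\d\widehat\mu}\lr{\mathcal{A}_{\omega_1}^{-1}-\mathcal{A}_{\omega_2}^{-1}}$ with $\mathcal{A}_{\omega_1}^{-1}-\mathcal{A}_{\omega_2}^{-1}=(\mathcal{A}_{\omega_2}-\mathcal{A}_{\omega_1})(\mathcal{A}_{\omega_1}\mathcal{A}_{\omega_2})^{-1}$, and inserting the above bounds together with $\mathcal{A}_{\omega_j}\ge c\sigma^2$, one obtains $\btr{\textbf{Z}(\omega_1)-\textbf{Z}(\omega_2)}\le C(B_1)\norm{\omega_1-\omega_2}_{L^\infty(\hatgamma)}$ with no power of $\sigma$. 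Finally, from
\[
\vec v_1-\vec v_2=\frac{\textbf{Z}^{\mathrm{space}}(\omega_1)-\textbf{Z}^{\mathrm{space}}(\omega_2)}{\textbf{Z}^t(\omega_1)}+\textbf{Z}^{\mathrm{space}}(\omega_2)\lr{\frac{1}{\textbf{Z}^t(\omega_1)}-\frac{1}{\textbf{Z}^t(\omega_2)}}
\]
together with $\textbf{Z}^t(\omega_j)\ge c\sigma$ and $\btr{\textbf{Z}^{\mathrm{space}}(\omega_2)}\le C\sigma$, an additional factor $\sigma$ appears in the denominator and yields (ii). Combining (i) and (ii) proves the lemma, with $\sigma_0$ depending on $B_1,B_2,B_3$ and, through Proposition \ref{prop_apriori}, on $c_1$.

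The argument is entirely elementary; the one point that deserves care is that the factor $\sigma^{-1}$ asserted in the statement --- rather than a mere $O(1)$ Lipschitz bound --- must come solely from the quotient structure $\vec v=\textbf{Z}^{\mathrm{space}}/\textbf{Z}^t$ of the boost parameter: the $4$-vector $\textbf{Z}$ itself varies Lipschitz-continuously in $\omega$ only with an $O(1)$ constant, and it is the division by $\textbf{Z}^t\sim\sigma$ that produces the gain. One should also verify that step (i) genuinely applies to both $\vec v_1$ and $\vec v_2$ --- equivalently, that they stay uniformly inside the unit ball --- which is automatic since $\norm{\omega_1-\omega_2}_{L^\infty(\hatgamma)}\le 2B_1$ is negligible compared with $\sigma$ and $\btr{\vec v_j}\le C_0\,(1+C_0^2)^{-1/2}$.
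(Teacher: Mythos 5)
Your proposal is correct and follows essentially the same route as the paper: both arguments observe that the velocity $\vec{a}/\sqrt{1+\btr{\vec{a}}^2}$ is a quotient of cubic integral functionals of $\omega$, derive the $\sigma^{-1}$-Lipschitz bound for this quotient from the factorization of $\omega_1^3-\omega_2^3$ and the size $\textbf{Z}^t\sim\sigma$, and then invert back to $\vec{a}$ using the smallness $\btr{\vec{a}_j}\le C(B_1)/\sigma$ from Proposition \ref{prop_apriori}. The only differences are cosmetic: the paper works directly with the ratio $\int\omega^3 f_i\,\d\widehat{\mu}/\int\omega^3\,\d\widehat{\mu}$ (so the area normalization cancels and never needs to be estimated) and inverts via a reverse-triangle-inequality computation, whereas you carry the area factor along and invert via the explicit map $\vec{v}\mapsto\vec{v}(1-\btr{\vec{v}}^2)^{-1/2}$.
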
}
		\begin{proof}
			{
			Recall that $\vec{a}$ is uniquely determined via the associated $4$-vector ${\textbf{Z}}$ as defined in Subsection \ref{subsec_4vector}. In particular, we find
			\[
				\frac{\vec{a}^i}{\sqrt{1+\btr{\vec{a}}^2}}=\frac{\int_{\Sbb^2}\omega^3f_i\d\widehat{\mu}}{\int_{\Sbb^2}\omega^3\d\widehat{\mu}}.
			\]
			We define $f_0:=1$, and note that is immediate to check that
			\[
			\btr{\int_{\Sbb^2}\left(\omega_1^3-\omega_2^3\right)f_\alpha\d\widehat{\mu}}\le C\sigma^2\norm{\omega_1-\omega_2}_{L^\infty(\widehat{\gamma})}
			\]
			for all $\alpha\in \{0,1,2,3\}$ provided $\sigma\ge\sigma_0(B_1)$,  as $\btr{f_\alpha}\le 1$. This then implies that
			\[
			\btr{\frac{\vec{a}_1}{\sqrt{1+\btr{\vec{a}_1}^2}}-\frac{\vec{a}_2}{\sqrt{1+\btr{\vec{a}_2}^2}}}\le \frac{C(B_1)}{\sigma}\norm{\omega_1-\omega_2}_{L^\infty(\widehat{\gamma})}.
			\]
			Finally, using \eqref{eq_apriori_veca} and the third binomial formula, we find
			\begin{align*}
				\btr{\frac{\vec{a}_1}{\sqrt{1+\btr{\vec{a}_1}^2}}-\frac{\vec{a}_2}{\sqrt{1+\btr{\vec{a}_2}^2}}}&\ge \frac{\btr{\vec{a}_1-\vec{a}_2}}{\sqrt{1+\btr{\vec{a}_1^2}}}-\btr{\vec{a}_2}\btr{\frac{1}{\sqrt{1+\btr{\vec{a}_1}^2}}-\frac{1}{\sqrt{1+\btr{\vec{a}_2}^2}}}\\
				&\ge C\btr{\vec{a}_1-\vec{a}_2}-\frac{C(B_1)}{\sigma}\left(\btr{\vec{a}_1}^2-\btr{\vec{a}_2}^2\right)\\
				&\ge C\left(1-\frac{C(B_1)}{\sigma^2}\right)\btr{\vec{a}_1-\vec{a}_2},
			\end{align*}
			where we again used the third binomial formula and the reverse triangle inequality in the last line. This then implies the claim for $\sigma\ge \sigma_0(B_1,B_2,B_3,c_1)$.}
		\end{proof}
	
	\subsection{Stability of surfaces in the a-priori class}\label{subsec_stability}
	
		{Recall that the well-known Raychaudhuri optical equations, cf.\ \cite[Proposition 4.27]{wolff_thesis}, show that for a variation $x\colon(-\varepsilon,\varepsilon)\times\Sigma\to\mathcal{N}$ of a spacelike cross section $\Sigma$ with $\frac{\d}{\d s}=f\ul{L}$, we find that
		\[
			\frac{\d}{\d s}\btr{\Sigma_s}=\int_{\Sigma_s}\ul{\theta}f\d\mu_s.
		\]
		Moreover, if $\ul{\theta}>0$ on $\operatorname{Im}(x)\subseteq \mathcal{N}$ and $\tau$ is defined via \eqref{eq_prelim_deftau}, then \cite[Proposition 4.27 (v), (vi)]{wolff_thesis} and the product rule yield that}
		\[
			\frac{\d}{\d s}\mathcal{H}^2=J(\ul{\theta}f),
		\]
		where the 
		{Jacobi} operator $J$ 
		{in this setting} is given by
		\begin{align}
			\begin{split}\label{eq_stability1}
			J(f):=&\,-2\Delta f-4\tau(\nabla f)-\mathcal{H}^2f-f\left(\overline{\Ric}(\ul{L},L)-\frac{1}{2}\overline{\Riem}(\ul{L},L,L,\ul{L})\right)\\
			&\,-f\frac{\mathcal{H}^2}{\ul{\theta}^2}\left(\newbtr{\accentset{\circ}{\ul{\chi}}}^2+\overline{\Ric}(\ul{L},\ul{L})\right)-f\left(\frac{1}{\ul{\theta}}\spann{\accentset{\circ}{\ul{\chi}},\accentset{\circ}{A}}+2\dive\tau+2\btr{\tau}^2\right).
			\end{split}
		\end{align}
		{
		\begin{defi}\label{defi_apriori_stability}
			We say a spacelike cross section $\Sigma$ of a null cone $\mathcal{N}$ ($\ul{\theta}>0$) is stable under area preserving variations if there exists a non-negative constant $c\ge 0$ such that
			\[
				\int_{\Sigma}fJ(f)\d\mu\ge c\int_{\Sigma}f^2\d\mu
			\]
			for all $f\in W^{2,2}(\Sigma)$ such that $\int f\d\mu=0$. We say $\Sigma$ is strictly stable (under area preserving variations) if $c>0$.
		\end{defi}
		\begin{bem}\label{bem_apriori_stability}
			Here, stability of spacelike cross sections under area preserving variations is formulated in analogy to the stability of surfaces under volume preserving variations in the Riemannian setting. In particular, we formulate the stability of STCMC surfaces in analogy to the stability of CMC surfaces. Although $J$ is in general not self-adjoint, stability (under area preserving variations) implies that $J$ has a non-negative first eigenvalue in a similar sense to the setting of MOTS stability, cf. \cite{anderssonmarssimon}. For details, we refer to \cite[Section 4]{anderssonmarssimon}.
		\end{bem}
		}
		We will now show that surfaces within the a-priori class are strictly stable (under area preserving variations) in asymptotically Schwarzschildean lightcones (of positive mass):
		\begin{prop}\label{prop_strictstability}
			Let $\mathcal{N}$ be an asymptotically Schwarzschildean lightcone (of positive mass $m>0$). Let $\Sigma=\Sigma_{\omega}$ in $B_\sigma(B_1,B_2,B_3)$, and assume that all assumptions of Proposition \ref{prop_apriori} are satisfied.
			Then 
			{$\Sigma$ is strictly stable under area preserving variations} provided $\sigma\ge \sigma_0(m,B_1,B_2,B_3,c_1)$ with
			\begin{align}\label{eq_stability2}
			\int_\Sigma J(f)f\d\mu\ge \frac{6m}{\sigma^3}\int_\Sigma f^2\d\mu
			\end{align}
			for all $f\in {W}^{2,2}(\Sigma)$ with $\int_\Sigma f\d\mu=0$, where $\d\mu=\d\mu_{\gamma_{\omega}}$.
		\end{prop}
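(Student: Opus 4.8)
The plan is to compute the quadratic form $\int_\Sigma J(f)f\d\mu$ by integration by parts, to read off from the a-priori estimates that the resulting potential agrees to leading order with $\tfrac32\mathcal{H}^2-\scal$, and to play this off against a sharp Poincar\'e inequality for mean-zero functions --- the positive mass entering exactly at the decisive subleading order. First, testing \eqref{eq_stability1} with $f$ and integrating over $(\Sigma,\gamma_\omega)$, the term $-2\Delta f$ contributes $2\int_\Sigma\btr{\nabla f}^2\d\mu$, while $-4\tau(\nabla f)$ contributes $-2\int_\Sigma\spann{\tau,\nabla(f^2)}\d\mu=2\int_\Sigma(\dive\tau)f^2\d\mu$, which exactly cancels the term $2\dive\tau$ appearing in the last line of \eqref{eq_stability1}. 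This gives
\[
\int_\Sigma J(f)f\d\mu=2\int_\Sigma\btr{\nabla f}^2\d\mu-\int_\Sigma Vf^2\d\mu,
\]
with
\[
V=\mathcal{H}^2+\overline{\Ric}(\ul{L},L)-\tfrac12\overline{\Riem}(\ul{L},L,L,\ul{L})+\tfrac{\mathcal{H}^2}{\ul{\theta}^2}\left(\newbtr{\accentset{\circ}{\ul{\chi}}}^2+\overline{\Ric}(\ul{L},\ul{L})\right)+\tfrac1{\ul{\theta}}\spann{\accentset{\circ}{\ul{\chi}},\accentset{\circ}{A}}+2\btr{\tau}^2.
\]

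Next I would estimate $V$. By Lemma \ref{lem_prelim_chitau}, the definition of the class $B_\sigma(B_1,B_2,B_3)$, and the curvature estimates of Lemma \ref{lem_appx_curvatureestimates1} --- recalling that the Schwarzschild spacetime is Ricci- and scalar-flat, so that $\overline{\Ric}(\ul{L},L)$, $\overline{\Ric}(\ul{L},\ul{L})$ and $\overline{\scal}$ are $O(\sigma^{-4})$ --- every term of $V$ except $\mathcal{H}^2$ and $-\tfrac12\overline{\Riem}(\ul{L},L,L,\ul{L})$ is $O(\sigma^{-4})$; here one uses in particular that $\ul{\theta}^{-1}=O(\sigma)$, $\newbtr{\accentset{\circ}{\ul{\chi}}}=O(\sigma^{-3})$ and $\newbtr{\accentset{\circ}{A}}=O(\sigma^{-4})$. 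To deal with the remaining two terms I would insert the Gauss equation \eqref{eq_prelim_gauss3} in the form $-\tfrac12\overline{\Riem}(\ul{L},L,L,\ul{L})=-\scal+\mathcal{H}^2-\newbtr{\vec{\two}}^2+\overline{\scal}-2\overline{\Ric}(\ul{L},L)$, together with $\newbtr{\vec{\two}}^2=\tfrac12\mathcal{H}^2+\spann{\ul{\theta}^{-1}\accentset{\circ}{\ul{\chi}},\accentset{\circ}{A}}=\tfrac12\mathcal{H}^2+O(\sigma^{-6})$; this yields $V=\tfrac32\mathcal{H}^2-\scal+O(\sigma^{-4})$. Finally, \eqref{eq_aprioriH2_constant} gives $\mathcal{H}^2=\tfrac4{\rho^2}-\tfrac{8m}{\sigma^3}+O(\sigma^{-4})$, and \eqref{eq_aprioriscal} together with Gauss--Bonnet ($\fint\scal=\tfrac2{\rho^2}$) gives $\scal=\tfrac2{\rho^2}+O(\sigma^{-4})$, so that
\[
V\le\frac{4}{\rho^2}-\frac{12m}{\sigma^3}+\frac{C(m,B_1,B_2,B_3,c_1)}{\sigma^4}.
\]

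It then remains to bound the Dirichlet term from below. Since \eqref{eq_aprioriscal} forces the Gauss curvature of $(\Sigma,\gamma_\omega)$ to satisfy $K=\tfrac12\scal\ge\tfrac1{\rho^2}-C\sigma^{-4}>0$ for $\sigma$ large, the Lichnerowicz estimate for closed surfaces bounds the first nonzero eigenvalue of $-\Delta_{\gamma_\omega}$ from below by $2K_{\min}\ge\tfrac2{\rho^2}-C\sigma^{-4}$, so that $2\int_\Sigma\btr{\nabla f}^2\d\mu\ge(\tfrac4{\rho^2}-C\sigma^{-4})\int_\Sigma f^2\d\mu$ whenever $\int_\Sigma f\d\mu=0$ (one could instead invoke \eqref{eq_aprioic2alpha}, using that $b_{\rho,\vec{a}}^2\hatgamma$ is isometric to the round sphere of radius $\rho$, together with a standard eigenvalue perturbation). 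Combining this with the bound on $V$, the leading terms $\tfrac4{\rho^2}$ cancel identically, leaving
\[
\int_\Sigma J(f)f\d\mu\ge\left(\frac{12m}{\sigma^3}-\frac{C}{\sigma^4}\right)\int_\Sigma f^2\d\mu\ge\frac{6m}{\sigma^3}\int_\Sigma f^2\d\mu
\]
for $\sigma\ge\sigma_0(m,B_1,B_2,B_3,c_1)$, which is \eqref{eq_stability2} (the integration by parts is valid for $f\in W^{2,2}(\Sigma)$). The hardest part is precisely this cancellation: strict stability is invisible at the leading order $\rho^{-2}$, so one must extract the subleading term, and it is the mass term $-8m/\sigma^3$ sitting inside $\mathcal{H}^2$ (equivalently, the condition $h<1$ in the asymptotics) that tips the balance --- this is the only point where $m>0$ is genuinely used, and it mirrors exactly the role of positive mass in the Riemannian argument of Huisken--Yau. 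One must also take care to keep the area radius $\rho$, rather than $\sigma$, in the leading terms, so that this cancellation is exact.
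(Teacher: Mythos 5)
Your proof is correct and follows essentially the same route as the paper: integrate by parts (with the $\tau$-terms cancelling), reduce the potential to $\tfrac32\mathcal{H}^2-\scal=\tfrac{4}{\rho^2}-\tfrac{12m}{\sigma^3}+O(\sigma^{-4})$ using the Gauss equation and the a-priori/curvature estimates, and absorb the leading term by the first-eigenvalue bound $\lambda_1\ge\min_\Sigma\scal$ for mean-zero functions, leaving the mass term to give strict stability. The only differences are cosmetic: you invoke Lichnerowicz where the paper cites \cite[Lemma 2.9]{sauter}, and you expand the Gauss equation directly where the paper packages the same computation into Lemma \ref{lem_prelim_spacetimemeancurvature} and Lemma \ref{lem_appx_curvatureestimates1}.
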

		\begin{proof}
			Note that integration by parts yields
			\begin{align*}
				\int_{\Sigma}J(f)f\d\mu
				=&\,\int_\Sigma \left(2\btr{\nabla f}^2-2\scal f^2-\left((\mathcal{H}^2-2\scal)+\overline{\Ric}(\ul{L},L)-\frac{1}{2}\overline{\Riem}(\ul{L},L,L,\ul{L})\right)f^2\right)\d\mu\\
				&\,-\int_\Sigma \left(\frac{\mathcal{H}^2}{\ul{\theta}^2}\left(\newbtr{\accentset{\circ}{\ul{\chi}}}^2+\overline{\Ric}(\ul{L},\ul{L})\right)f^2+\left(\frac{1}{\ul{\theta}}\spann{\accentset{\circ}{\ul{\chi}},\accentset{\circ}{A}}+2\btr{\tau}^2\right)f^2\right)\d\mu.
			\end{align*}
			Using Lemma \ref{lem_prelim_chitau}, \ref{lem_prelim_spacetimemeancurvature}, Proposition \ref{prop_apriori} and Lemma \ref{lem_appx_curvatureestimates1} we find
			\begin{align*}
				\int_{\Sigma}J(f)f\d\mu\ge \int_{\Sigma}\left(2\btr{\nabla f}^2-2\scal f^2+\left(\frac{4(1-h)}{\omega^2}+\frac{2h'}{\omega}\right)f^2\right)\d\mu-\frac{C(B_1,B_2,B_3)}{\sigma^4}\int_\Sigma f^2\d\mu.
			\end{align*}
		Using \eqref{eq_aprioriscal}	and that $\mathcal{N}$ is asymptotically Schwarzschildean, i.e., $h(r)=1-\frac{2m}{r}+O_4(r^{-2})$ , we find
			\begin{align*}
			\int_{\Sigma}J(f)f\d\mu&\ge \int_{\Sigma}\left(2\btr{\nabla f}^2-2\int_{\Sigma}\left(\min\limits_{\Sigma}\scal\right) f^2+\frac{12m}{\omega^3}f^2\right)\d\mu-\frac{C(B_1,B_2,B_3)}{\sigma^4}\int_\Sigma f^2\d\mu\\
			&\ge \frac{6m}{\sigma^3}\int_\Sigma f^2\d\mu,
			\end{align*}
			provided $\sigma\ge \sigma_0(m,B_1,B_2,B_3,c_1)$. Here, we used that the integral over the first two terms is non-negative due to \cite[Lemma 2.9]{sauter}. 			
		\end{proof}
		Additionally, we find the following Sobolev inequality:
		\begin{lem}\label{prop_sobolevineq}
			Let $\mathcal{N}$ be asymptotically Schwarzschildean, $\Sigma\in B_\sigma(B_1,B_2,B_3)$, and assume that all assumptions in Proposition \ref{prop_apriori} are satisfied. Then there exists a uniform constant $C>0$ such that
			\begin{align}\label{eq_propsobolev1}
			\norm{f}_{L^2(\gamma_\omega)}\le C\int_\Sigma\left(\btr{\nabla f} +\rho\mathcal{H}^2\btr{f}\right)\d\mu
			\end{align}
			for all $f\in W^{1,2}(\Sigma)$ provided $\sigma\ge \sigma_0(m,B_1,B_2,B_3,c_1)$. In particular,
			\begin{align}\label{eq_propsobolev2}
			\norm{f}_{L^2(\gamma_\omega)}\le C\rho^{-1}\norm{f}_{W^{1,1}(\gamma_\omega)}.
			\end{align}
		\end{lem}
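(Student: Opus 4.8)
\emph{Proof proposal.} The plan is to reduce the assertion to the classical borderline Sobolev embedding $W^{1,1}\hookrightarrow L^2$ on the round sphere $(\Sbb^2,\hatgamma)$, exploiting that within the a-priori class the induced metric $\gamma_\omega$ is, in a quantitatively uniform way, comparable to $\sigma^2\hatgamma$. Concretely, by the definition of $B_\sigma(B_1,B_2,B_3)$ we have $\sigma-B_1\le\omega\le\sigma+B_1$, and by Corollary \ref{kor_aprior} (resp. \eqref{eq_aprioic2alpha}) the rescaled conformal factor $\omega/\sigma$ is bounded in $C^1(\hatgamma)$ by a uniform constant. Combining this with the decomposition \eqref{eq_prelim_nullgeometry_partiali} and the decay $\gamma_r=r^2\hatgamma+O_{3,3}(1)$ from Definition \ref{defi_asymclassS} (see also Lemma \ref{lem_appx_gammaomegadecay}), one gets, for $\sigma\ge\sigma_0$, a uniform constant $C$ with $C^{-1}\sigma^2\hatgamma\le\gamma_\omega\le C\sigma^2\hatgamma$. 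Hence $C^{-1}\sigma^2\d\widehat{\mu}\le\d\mu_{\gamma_\omega}\le C\sigma^2\d\widehat{\mu}$, and $\btr{\nabla f}_{\gamma_\omega}\le C\sigma^{-1}\btr{\widehat{\nabla}f}_{\hatgamma}$ (and conversely), all with constants independent of $\sigma$ and of $f$.

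Granting this, I would first apply the Sobolev inequality on $(\Sbb^2,\hatgamma)$: since $\dim\Sbb^2=2$, there is a fixed geometric constant $C_0$ with $\norm{f}_{L^2(\hatgamma)}\le C_0\big(\norm{\widehat{\nabla}f}_{L^1(\hatgamma)}+\norm{f}_{L^1(\hatgamma)}\big)$ for all $f\in W^{1,1}(\Sbb^2)$. Rewriting each term via the comparisons above — estimating $\norm{f}_{L^2(\hatgamma)}\ge C^{-1}\sigma^{-1}\norm{f}_{L^2(\gamma_\omega)}$ on the left, and $\norm{\widehat{\nabla}f}_{L^1(\hatgamma)}\le C\sigma^{-1}\int_\Sigma\btr{\nabla f}\d\mu$, $\norm{f}_{L^1(\hatgamma)}\le C\sigma^{-2}\int_\Sigma\btr{f}\d\mu$ on the right — and multiplying through by $\sigma$, one obtains
\[
\norm{f}_{L^2(\gamma_\omega)}\le C\left(\int_\Sigma\btr{\nabla f}\d\mu+\frac{1}{\sigma}\int_\Sigma\btr{f}\d\mu\right)
\]
with a uniform constant $C$.

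It remains to trade the weight $\sigma^{-1}$ for $\rho\mathcal{H}^2$. By \eqref{eq_aprioriH2_constant} and \eqref{eq_areareadiusB1}, for $\sigma\ge\sigma_0$ we have $\mathcal{H}^2=\tfrac{4}{\rho^2}-\tfrac{8m}{\sigma^3}+\landau{\sigma^{-4}}$ with $\tfrac12\sigma\le\rho\le2\sigma$, so the leading term $4/\rho^2$ alone forces uniform constants $0<c\le C$ with $c\sigma^{-1}\le\rho\mathcal{H}^2\le C\sigma^{-1}$ (here $m>0$ enters only to ensure $\mathcal{H}^2>0$ asymptotically). Thus $\sigma^{-1}\btr{f}\le c^{-1}\rho\mathcal{H}^2\btr{f}$ pointwise, and inserting this into the displayed inequality gives \eqref{eq_propsobolev1}. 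For \eqref{eq_propsobolev2}, use the two-sided bound on $\rho\mathcal{H}^2$ once more to estimate the right-hand side of \eqref{eq_propsobolev1} by $C\rho^{-1}\big(\rho\norm{\nabla f}_{L^1(\gamma_\omega)}+\norm{f}_{L^1(\gamma_\omega)}\big)=C\rho^{-1}\norm{f}_{W^{1,1}(\gamma_\omega)}$, by the very definition of the weighted $W^{1,1}$-norm. The computation is essentially bookkeeping; the only genuine point to watch is that all three comparison constants — the metric equivalence constant, the round-sphere Sobolev constant $C_0$, and the constants in $c\sigma^{-1}\le\rho\mathcal{H}^2\le C\sigma^{-1}$ — be uniform in $\sigma$ and in $f$, which is exactly what Proposition \ref{prop_apriori} and Corollary \ref{kor_aprior} guarantee.
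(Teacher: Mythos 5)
Your argument is correct and is essentially the paper's proof: both reduce to the borderline Sobolev inequality on the round unit sphere (the paper phrases it as the Michael--Sobolev inequality with $H=2$, which on $(\Sbb^2,\hatgamma)$ is the same statement as your $W^{1,1}\hookrightarrow L^2$ embedding), transfer to $\gamma_\omega$ via the uniform comparability $\gamma_\omega\sim\sigma^2\hatgamma$ inside the a-priori class, and then use the estimate on $\mathcal{H}^2$ from Proposition \ref{prop_apriori} (the paper uses only the lower bound $\rho^2\mathcal{H}^2\ge 2$, you use the two-sided bound) to insert the weight $\rho\mathcal{H}^2$ and deduce \eqref{eq_propsobolev2}. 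The only cosmetic difference is your intermediate scale-invariant inequality with weight $\sigma^{-1}$; the substance is identical.
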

		
		\begin{proof}
Because of the estimate \eqref{eq_aprioriH2} on $\mathcal{H}^2$, \eqref{eq_propsobolev1} directly implies \eqref{eq_propsobolev2}. Therefore, it remains to show the first inequality. Recall that the Michael--Sobolev inequality yields that for any immersed surface $S$ in $(\R^3,\delta)$, we have
			\[
			\norm{f}_{L^2(S)}\le c\int_S\left(\btr{\nabla f}+\btr{H}\btr{f}\right)\d\mu
			\]
			for a universal constant $c$, and where $H$ denotes the mean curvature of $S$. In particular, the estimate holds for the round sphere $(\Sbb^2,\hatgamma)$, where $H=2$. Because of \eqref{eq_aprioriH2}, we have $\rho^2\mathcal{H}^2\ge 2$ for $\sigma\ge \sigma_0(m,B_1,B_2,B_3,c_1)$. Because $\gamma_{\omega}\leq C\rho \hatgamma$, it follows that
			\begin{align*}
			\norm{f}_{L^2(\gamma_\omega)}&\le C\rho\norm{f}_{L^2(\hatgamma)}\\
			&\le C\rho\int_{\Sbb^2}\left(\btr{\nabla f}_{\hatgamma}+2\btr{f}\right)\d\widehat{\mu}\\
			&\le C\rho\int_{\Sigma}\left(\omega^{-1}\btr{\nabla f}_{\gamma_{\omega}}+\mathcal{H}^2\btr{f}\right)\d\mu\\
			&\le C\int_{\Sigma}\left(\btr{\nabla f}_{\gamma_{\omega}}+\rho\mathcal{H}^2\btr{f}\right)\d\mu,
			\end{align*}
			which finishes the proof.
		\end{proof}
\begin{bem}
			In view of the Michael--Simon Sobolev inequality employed in the proof above, we expect the optimal estimate to be 
			\begin{align}
			\norm{f}_{L^2(\gamma_\omega)}\le C\int_\Sigma\left(\btr{\nabla f} +\frac{1}{2}\rho\mathcal{H}^2\btr{f}\right)\d\mu,
			\end{align}
			which should follow with some additional effort in analogy to the work of Huisken--Yau, cf. \cite[Proposition 5.4]{huiskenyau}.
		\end{bem}		
		
		Using \cite[Lemma A.2]{cederbaumsakovich}, we directly obtain the following Corollary:
		\begin{kor}\label{kor_sobolevineq}
			Under the assumptions of Lemma \ref{prop_sobolevineq}, we have
			\[
			\norm{f}_{L^\infty(\gamma_\omega)}\le C\sigma^{-1}\norm{f}_{W^{2,2}(\gamma_\omega)},
			\]
			where $C$ is a universal constant, for all $f\in W^{2,2}(\Sigma)$ provided $\sigma\ge \sigma_0(m,B_1,B_2,B_3,c_1)$.
		\end{kor}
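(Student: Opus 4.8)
The plan is to deduce the statement directly from the abstract iteration lemma \cite[Lemma A.2]{cederbaumsakovich}, whose purpose is exactly to turn a weighted first-order Sobolev inequality of the type \eqref{eq_propsobolev2} into the weighted Sobolev embedding $W^{2,2}\hookrightarrow L^\infty$ with the sharp power of the area radius. First I would verify that its hypotheses hold: under the assumptions of Lemma \ref{prop_sobolevineq} (which include the assumptions of Proposition \ref{prop_apriori}), estimate \eqref{eq_propsobolev2} is available with a universal constant, and by \eqref{eq_areareadiusB1} the area radius $\rho$ of $\gamma_\omega$ satisfies $\tfrac{1}{2}\sigma\le\rho\le 2\sigma$ once $\sigma\ge\sigma_0$. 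Hence the $\rho$-weights appearing in the definition of $\norm{\cdot}_{W^{k,p}(\gamma_\omega)}$ may be traded for $\sigma$-weights throughout at the cost of universal constants, and feeding \eqref{eq_propsobolev2} into \cite[Lemma A.2]{cederbaumsakovich} yields $\norm{f}_{L^\infty(\gamma_\omega)}\le C\sigma^{-1}\norm{f}_{W^{2,2}(\gamma_\omega)}$ with $C$ universal, as claimed.

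For completeness I would recall the mechanism inside that lemma, which is a Moser-type iteration of the weighted inequality \eqref{eq_propsobolev2}. Applying \eqref{eq_propsobolev2} to $|v|^2$ and using $\btr{\nabla|v|^2}\le 2\btr{v}\,\btr{\nabla v}$ together with the Cauchy--Schwarz inequality promotes joint $L^2$-control of $v$ and $\nabla v$ to an $L^4$-bound for $v$, with a controlled $\sigma$-weight. Running this with $v=f$ and with $v=\nabla f$ (the latter using $\nabla^2 f\in L^2$), and iterating on higher powers $|v|^p$, produces $L^p(\gamma_\omega)$-bounds for $f$ and $\nabla f$ for all finite $p$, from which one final application of \eqref{eq_propsobolev2} delivers the pointwise bound. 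Tracking the powers of $\sigma$ picked up at each step, they combine to the single factor $\sigma^{-1}$ --- precisely the scaling encoded in \cite[Lemma A.2]{cederbaumsakovich}.

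The only genuinely delicate point is the uniformity of the final constant. One must check that every invocation of \eqref{eq_propsobolev2} in the iteration uses the one universal constant supplied by Lemma \ref{prop_sobolevineq}, and that the uniform equivalence of $\gamma_\omega$ with $\rho^2\hatgamma$ (already exploited in the proof of Lemma \ref{prop_sobolevineq}) together with $\rho\sim\sigma$ is applied consistently when passing between norms on $(\Sbb^2,\gamma_\omega)$ and norms on $(\Sbb^2,\hatgamma)$. Since the a-priori constants $B_1,B_2,B_3$ enter only through the threshold $\sigma_0$ and never through the estimates themselves, the resulting $C$ is indeed universal. The main obstacle here is therefore purely bookkeeping rather than conceptual, which is why the conclusion is legitimately stated in a single line as "by \cite[Lemma A.2]{cederbaumsakovich}".
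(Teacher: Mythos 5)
Your proposal matches the paper's proof: the corollary is obtained by feeding the weighted Sobolev inequality \eqref{eq_propsobolev2} (with $\rho\sim\sigma$ from \eqref{eq_areareadiusB1}) directly into \cite[Lemma A.2]{cederbaumsakovich}, exactly as you do. Your additional sketch of the iteration inside that lemma is harmless but not needed, since the paper simply cites the lemma.
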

		Next, we refine our spectral analysis of the stability operator $J$ in Proposition \ref{prop_strictstability}.
		\begin{prop}\label{prop_stability2}
			Let $\mathcal{N}$ be asymptotically Schwarzschildean. Let $\Sigma\in B_\sigma(B_1,B_2,B_3)$, and assume that all assumptions of Proposition \ref{prop_apriori} are satisfied. If $\sigma\ge \sigma_0(m,B_1,B_2,B_3,c_1)$, then 
			\[
			J\colon W^{2,2}(\gamma_\omega)\to L^2(\gamma_\omega)
			\] 
			is invertible, and there exists a constant $C=C(m,B_1,B_2,B_3)$ such that
			\[
			\norm{f}_{W^{2,2}(\gamma_\omega)}\le C\sigma^3\norm{J(f)}_{L^2(\gamma_\omega)}
			\]
			for all $f\in W^{2,2}(\gamma_\omega)$.
		\end{prop}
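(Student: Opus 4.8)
Here is the plan. The operator $J$ from \eqref{eq_stability1} is a uniformly elliptic second-order operator on the closed surface $(\Sigma,\gamma_\omega)$ with principal part $-2\Delta_{\gamma_\omega}$, and its first- and zeroth-order coefficients are controlled in $L^\infty(\gamma_\omega)$ by Lemma \ref{lem_prelim_chitau}, Proposition \ref{prop_apriori}, and Lemma \ref{lem_appx_curvatureestimates1}. Since lower-order terms act compactly from $W^{2,2}(\gamma_\omega)$ to $L^2(\gamma_\omega)$ on a closed manifold, $J\colon W^{2,2}(\gamma_\omega)\to L^2(\gamma_\omega)$ is Fredholm of index $0$, the index of $-2\Delta$. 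Hence it suffices to establish the claimed a priori estimate $\norm{f}_{W^{2,2}(\gamma_\omega)}\le C\sigma^3\norm{J(f)}_{L^2(\gamma_\omega)}$: this yields injectivity, and an injective Fredholm operator of index $0$ is invertible.

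The key input for the estimate is a precise description of the zeroth-order potential $P:=-J(1)=\mathcal{H}^2+(\text{ambient curvature and }\tau\text{ terms})$. Combining \eqref{eq_aprioriH2_constant} with Lemma \ref{lem_prelim_chitau}, Lemma \ref{lem_appx_curvatureestimates1}, and the fact that $\mathcal{N}$ is asymptotically Schwarzschildean (so the ambient Ricci and scalar curvature and the relevant sectional curvature agree with those of a vacuum Schwarzschild metric up to the stated decay), one checks that $\fint_\Sigma P=\frac{4}{\rho^2}+O(\sigma^{-3})$ and, crucially, $\norm{P-\fint_\Sigma P}_{L^\infty(\gamma_\omega)}\le C\sigma^{-4}$. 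Now write $f=\ol{f}+f_0$ with $\ol{f}:=\fint_\Sigma f\d\mu$ and $\int_\Sigma f_0\d\mu=0$, set $g:=J(f)$, and test $J(f)=g$ against the constant $1$: integrating by parts in the $\tau(\nabla f)$ term and using $\int_\Sigma f_0\d\mu=0$ gives $|\ol{f}|\,|\int_\Sigma P\d\mu|\le\norm{g}_{L^1(\gamma_\omega)}+C\sigma^{-4}\norm{f_0}_{L^1(\gamma_\omega)}$, and since $|\int_\Sigma P\d\mu|\ge c>0$ for $\sigma$ large, this yields
\[
|\ol{f}|\le C\sigma\norm{g}_{L^2(\gamma_\omega)}+C\sigma^{-3}\norm{f_0}_{L^2(\gamma_\omega)}.
\]

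Next I would apply $J$ to $f_0=f-\ol{f}$, obtaining $J(f_0)=g+\ol{f}P$, test against $f_0$, and invoke the strict stability estimate \eqref{eq_stability2} for the left-hand side. Writing $\int_\Sigma Pf_0\d\mu=\int_\Sigma(P-\fint_\Sigma P)f_0\d\mu$ and using the oscillation bound on $P$ together with the bound on $|\ol{f}|$ above, every cross term on the right is of order $\norm{g}_{L^2}\norm{f_0}_{L^2}$, $\sigma^{-2}\norm{g}_{L^2}\norm{f_0}_{L^2}$, or $\sigma^{-6}\norm{f_0}_{L^2}^2$; for $\sigma$ large the term $\frac{6m}{\sigma^3}\norm{f_0}_{L^2}^2$ absorbs the last one and one concludes $\norm{f_0}_{L^2(\gamma_\omega)}\le C\sigma^3\norm{g}_{L^2(\gamma_\omega)}$. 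Feeding this back gives $\norm{\ol{f}}_{L^2(\gamma_\omega)}=|\ol{f}|\,|\Sigma|^{1/2}\le C\sigma^2\norm{g}_{L^2(\gamma_\omega)}$, hence $\norm{f}_{L^2(\gamma_\omega)}\le C\sigma^3\norm{g}_{L^2(\gamma_\omega)}$. Finally, to upgrade to $W^{2,2}$, I would rewrite $2\Delta f=-g-4\tau(\nabla f)-Pf$ and apply the standard $L^2$-elliptic estimate $\norm{f}_{W^{2,2}(\gamma_\omega)}\le C\rho^2\norm{\Delta f}_{L^2(\gamma_\omega)}+C\norm{f}_{L^2(\gamma_\omega)}$, which holds with a $\sigma$-independent constant after rescaling by $\rho^{-2}$ since $\rho^{-2}\gamma_\omega$ has uniformly bounded geometry by Corollary \ref{kor_aprior}; the first-order term contributes $\rho^2\norm{\tau}_{L^\infty}\norm{\nabla f}_{L^2}\le C\rho^{-2}\norm{f}_{W^{2,2}}$, which is absorbed, and the zeroth-order term contributes $C\norm{f}_{L^2}$, giving $\norm{f}_{W^{2,2}(\gamma_\omega)}\le C\rho^2\norm{g}_{L^2(\gamma_\omega)}+C\norm{f}_{L^2(\gamma_\omega)}\le C\sigma^3\norm{g}_{L^2(\gamma_\omega)}$ with $C=C(m,B_1,B_2,B_3)$.

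The main obstacle is the sharp bookkeeping of the potential $P=-J(1)$: one needs that its mean value is bounded away from zero on the scale $\sigma^0$ (so that the constant mode $\ol{f}$ is controlled by $\int_\Sigma g\d\mu$) and that its oscillation is only $O(\sigma^{-4})$, so that the coupling between $\ol{f}$ and the mean-zero part $f_0$, as well as the first-order divergence term $\int_\Sigma(\dive\tau)f_0^2\d\mu$, can be absorbed into the spectral gap $\frac{6m}{\sigma^3}$ furnished by strict stability. Both facts rely on the cancellations imposed by Proposition \ref{prop_apriori} and the vacuum Schwarzschild structure of the ambient curvature; verifying that the $O(\sigma^{-3})$ Schwarzschild corrections cancel down to $O(\sigma^{-4})$ in the oscillation — rather than merely being small in magnitude — is the step requiring the most care. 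Everything else is routine elliptic theory and weight bookkeeping.
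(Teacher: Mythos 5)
Your proposal is correct, and the ingredients it leans on are exactly those the paper has available: strict stability \eqref{eq_stability2} for mean-zero test functions, the near-constancy of the zeroth-order coefficient of $J$ (your potential $P=-J(1)$ satisfies $P=\tfrac{4}{\rho^2}-\tfrac{12m}{\sigma^3}+O(\sigma^{-4})$ in $L^\infty$, which is precisely what \eqref{eq_aprioriH2_constant}, Lemma \ref{lem_prelim_chitau} and Lemma \ref{lem_appx_curvatureestimates1} give, and is recorded in the paper as its estimate \eqref{eq_prop_stability1}--\eqref{eq_prop_stability2}), the smallness $\btr{\dive\tau}\le C\sigma^{-4}$, and a Sauter-type elliptic estimate for the $W^{2,2}$ upgrade. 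Where you differ is in the mechanics of the $L^2$ coercivity and in how invertibility is concluded. The paper splits $f$ into its mean $f_0$ and $f-f_0$, shows $J(f_0)\approx(-\tfrac{4}{\rho^2}+\tfrac{12m}{\sigma^3})f_0$, and then expands $\norm{J(f)}_{L^2}^2=\norm{J(f_0)}_{L^2}^2+\norm{J(f-f_0)}_{L^2}^2+2\int J(f_0)J(f-f_0)\d\mu$, bounding the cross term by integrating by parts against the constant and absorbing it via Young's inequality against the stability lower bound $\norm{J(f-f_0)}_{L^2}\ge\tfrac{6m}{\sigma^3}\norm{f-f_0}_{L^2}$; this yields $\norm{f}_{L^2}\le\tfrac{\sigma^3}{3m}\norm{J(f)}_{L^2}$ directly. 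You instead test $J(f)=g$ once against the constant $1$ (controlling the mean through $\int_\Sigma P\,\d\mu\approx 16\pi$ and the $O(\sigma^{-4})$ oscillation of $P$ and $\dive\tau$) and once against the mean-zero part (invoking stability), which reaches the same bound with the same powers of $\sigma$ and arguably less bookkeeping, since no quadratic expansion of $\norm{J(f)}^2$ is needed. For invertibility, the paper proves injectivity of the formal adjoint $J^*$ (identical to $J$ up to $\tau\mapsto-\tau$, so the same estimate applies) and quotes the Fredholm alternative; you observe instead that $J$ is a compact perturbation of $-2\Delta\colon W^{2,2}\to L^2$, hence Fredholm of index zero, so your a priori estimate alone gives bijectivity — a slightly more economical route that avoids examining $J^*$. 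The final elliptic step, writing $2\Delta f=-J(f)-4\tau(\nabla f)-Pf$ and absorbing the lower-order contributions, coincides with the paper's use of \cite[Corollary 2.10]{sauter}. The one place where your write-up is only a sketch is the verification of the $O(\sigma^{-4})$ oscillation bound on $P$; you flag this yourself, and it does go through exactly as you anticipate because $\omega=\sigma+O(B_1)$, $\btr{\vec{a}}\le C\sigma^{-1}$ make all the $m/\omega^3$- and $m/b_{\rho,\vec a}^3$-type terms constant to order $\sigma^{-4}$, while the remaining terms ($\overline{\Ric}(\ul{L},\ul{L})$, $\ul{\theta}^{-1}\spann{\accentset{\circ}{\ul{\chi}}},{\accentset{\circ}{A}}$, $\dive\tau$, $\btr{\tau}^2$) are individually $O(\sigma^{-4})$.
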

		\begin{proof} Throughout the proof, norms will always be measured with respect to the metric $\gamma_{\omega}$.		
			Let $f\in W^{2,2}$. As $L^2\subset L^1$, $f_0:=\fint_{\Sigma} f\d\mu$ is well defined, and by definition, we find
			\[
			\norm{f}_{L^2}^2=\norm{f_0}_{L^2}^2+\norm{f-f_0}_{L^2}^2.
			\]
			In the following, $C$ will always denote a constant depending only on $m,B_1,B_2,B_3$, which may vary from line to line.
			Note that by \eqref{eq_aprioriH2_constant} and Lemma \ref{lem_appx_curvatureestimates1}, we find
			\begin{align}\label{eq_prop_stability1}
			\btr{\left(-\mathcal{H}^2-\overline{\Ric}(\ul{L},L)+\frac{1}{2}\overline{\Riem}(\ul{L},L,L,\ul{L})\right)-\left(-\frac{4}{\rho^2}+\frac{12m}{\sigma^3}\right)}\le \frac{C}{\sigma^4},
			\end{align}
			which in view of the estimates in the proof of Proposition \ref{prop_strictstability} directly implies that
			\begin{align}\label{eq_prop_stability2}
				\btr{J(f_0)-f_0\left(-\frac{4}{\rho^2}+\frac{12m}{\sigma^3}\right)}\le \frac{C}{\sigma^4}\btr{f_0}.
			\end{align}
			Using \eqref{eq_prop_stability2}, we obtain 
			\begin{align*}
			\btr{\int_{\Sigma} J(f_0)J(f-f_0)\d\mu}
			&\le \frac{C}{\sigma^4}\int_\Sigma\btr{f_0J(f-f_0)\d\mu}+\btr{-\frac{4}{\rho^2}+\frac{12m}{\sigma^3}}\btr{\int_\Sigma f_0J(f-f_0)\d\mu}.
			\end{align*}
			Using integration by parts and that $f_0$ is constant, we find
			\begin{align*}
				\int_\Sigma f_0J(f-f_0)\d\mu=&\,-\int_\Sigma f_0(f-f_0)\left(\mathcal{H}^2+\overline{\Ric}(\ul{L},L)-\frac{1}{2}\overline{\Riem}(\ul{L},L,L,\ul{L})\right)\d\mu\\
				&\,-\int_\Sigma f_0(f-f_0)\left(\frac{\mathcal{H}^2}{\ul{\theta}^2}\left(\newbtr{\accentset{\circ}{\ul{\chi}}}^2+\overline{\Ric}(\ul{L},\ul{L})\right)+\frac{1}{\ul{\theta}}\spann{\accentset{\circ}{\ul{\chi}},\accentset{\circ}{A}}-2\dive\tau+2\btr{\tau}^2\right)\d\mu.
			\end{align*}
			Using \eqref{eq_prop_stability1} to estimate the first line, and Lemma \ref{lem_prelim_chitau}, \ref{lem_prelim_spacetimemeancurvature}, Proposition \ref{prop_apriori} and Lemma \ref{lem_appx_curvatureestimates1} to estimate the second line, we find
				\begin{align*}
				\btr{\int_\Sigma f_0J(f-f_0)\d\mu}&\le \btr{-\frac{4}{\rho^2}+\frac{12m}{\sigma^3}}\btr{\int_{\Sigma}f_0(f-f_0)\d\mu}+\frac{C}{\sigma^4}\int_{\Sigma}\btr{f_0(f-f_0)\d\mu}
				\\				&=\frac{C}{\sigma^4}\int_{\Sigma}\btr{f_0(f-f_0)}\d\mu,
			\end{align*}
			and the equality holds because $\int_{\Sigma} (f-f_0)\d\mu=0$.
			Hence,
			\begin{align}\label{eq_Jf_f_0}
				\btr{\int_{\Sigma} J(f_0)J(f-f_0)\d\mu}
				&\le \frac{C}{\sigma^4}\int_{\Sigma}\btr{f_0J(f-f_0)}\d\mu+\frac{C}{\sigma^6}\int_{\Sigma} \btr{f_0(f-f_0)}\d\mu.
			\end{align}
			Moreover, Proposition \ref{prop_strictstability} and the H\"older inequality yield
			\[
			\norm{J(f-f_0)}_{L^2}\norm{f-f_0}_{L^2}\ge \int_{\Sigma} J(f-f_0)\cdot(f-f_0)\d\mu\ge \frac{6m}{\sigma^3}\norm{f-f_0}^2_{L^2},
			\]
			which implies
			\begin{align}\label{eq_f_f0}
			\norm{J(f-f_0)}_{L^2}^2\ge \frac{(6m)^2}{\sigma^6}\norm{f-f_0}^2_{L^2}.
			\end{align}
			Similarly, \eqref{eq_prop_stability2} implies
			\begin{align}\label{eq_f0}
			\norm{J(f_0)}_{L^2}^2\ge \left(\frac{16}{\sigma^4}-\frac{C}{\sigma^5}\right)\norm{f_0}_{L^2}^2.
			\end{align}
			Using \eqref{eq_Jf_f_0}, \eqref{eq_f_f0}, and \eqref{eq_f0}, we conclude that
			\begin{align*}
			\norm{J(f)}^2_{L^2}
			=&\,\norm{J(f_0)}_{L^2}^2+\norm{J(f-f_0)}_{L^2}^2+2\int_{\Sigma} J(f_0)J(f-f_0)\d\mu\\
			\ge&\, \left(\frac{16}{\sigma^4}-\frac{C}{\sigma^5}\right)\norm{f_0}_{L^2}^2+\norm{J(f-f_0)}_{L^2}^2\\
			&\,-\frac{C}{\sigma^4}\norm{f_0}_{L^2}\norm{J(f-f_0)}_{L^2}-\frac{C}{\sigma^6}\norm{f_0}_{L^2}\norm{f-f_0}_{L^2}\\
			\ge &\,\left(\frac{16}{\sigma^4}-\frac{C}{\sigma^5}\right)\norm{f_0}_{L^2}^2+\left(1-\frac{C}{\sigma^3}\right)\norm{J(f-f_0)}_{L^2}^2\\
			&\,-\frac{C}{\sigma^6}\norm{f_0}_{L^2}\norm{f-f_0}_{L^2}\\
			\ge&\, \left(\frac{16}{\sigma^4}-\frac{C}{\sigma^5}\right)\norm{f_0}_{L^2}^2+\frac{(6m)^2}{\sigma^6}\left(1-\frac{C}{\sigma^3}\right)\norm{f-f_0}_{L^2}^2\\
			&\,-\frac{C}{\sigma^6}\norm{f_0}_{L^2}\norm{f-f_0}_{L^2}\\
			\ge&\, \left(\frac{16}{\sigma^4}-\frac{C}{\sigma^5}\right)\norm{f_0}_{L^2}^2+\frac{(6m)^2}{\sigma^6}\left(1-\frac{C}{\sigma}-\frac{C}{\sigma^3}\right)\norm{f-f_0}_{L^2}^2\\
			\ge &\, \frac{9m^2}{\sigma^6}\left(\norm{f_0}_{L^2}^2+\norm{f-f_0}_{L^2}^2\right)=\frac{9m^2}{\sigma^6}\norm{f}_{L^2}^2,
			\end{align*}
			provided $\sigma\ge \sigma_0(m,B_1,B_2,B_3,c_1)$, where we used Young's inequality in the third and fifth line. 
			Hence, for $\sigma\ge \sigma_0(m,B_1,B_2,B_3,c_1)$
			\begin{align}\label{eq_prop_sobolevineq2}
			\norm{f}_{L^2}\le \frac{\sigma^3}{3m}\norm{J(f)}_{L^2},
			\end{align}
			which in particular implies that $J$ is injective. Next, note that integration by parts shows that the adjoint $J^*$ of $J$ is given by
			\begin{align*}
				J^*(f):=&\,-2\Delta f+4\tau(\nabla f)-\mathcal{H}^2f-f\left(\overline{\Ric}(\ul{L},L)-\frac{1}{2}\overline{\Riem}(\ul{L},L,L,\ul{L})\right)\\
				&\,-f\frac{\mathcal{H}^2}{\ul{\theta}^2}\left(\newbtr{\accentset{\circ}{\ul{\chi}}}^2+\overline{\Ric}(\ul{L},\ul{L})\right)-f\left(\frac{1}{\ul{\theta}}\spann{\accentset{\circ}{\ul{\chi}},\accentset{\circ}{A}}-2\dive\tau+2\btr{\tau}^2\right),
			\end{align*}
			which agrees with $J$ up to replacing $\tau$ by $-\tau$. Since this sign was inconsequential for establishing the estimates as above, we also find that $J^*$ is injective. Hence, $J$ is invertible by the Fredholm-Alternative \cite[Appendix I, Theorem 31]{besse}.
			
			It remains to prove the $W^{2,2}$-estimate. In view of the a-priori estimates, Proposition \ref{prop_apriori}, and using \cite[Corollary 2.10]{sauter}, we obtain
			\[
			\norm{f}_{W^{2,2}}\le C\sigma^2\norm{\Delta f}_{L^2}+\norm{f_0}_{L^2}\le C\sigma^2\norm{\Delta f}_{L^2}+\norm{f}_{L^2}.
			\]
			Together with
			\[
			\norm{\Delta f}_{L^2}\le C\norm{J(f)}_{L^2}+4\norm{\tau(\nabla f)}_{L^2}+\frac{C}{\sigma^2}\norm{f}_{L^2},
			\]
			and \eqref{eq_prop_sobolevineq2}, we therefore get
			\begin{align*}
			\norm{f}_{W^{2,2}}&\le C\left(\sigma^2(\norm{J(f)}_{L^2}+4\norm{\tau(\nabla f)}_{L^2})+\norm{f}_{L^2}\right)\\
			&\le C\left(\sigma^2+\frac{\sigma^3}{3m}\right)\norm{J(f)}_{L^2}+C\sigma^2\norm{\tau(\nabla f)}_{L^2}\\
						&\le C\sigma^3\norm{J(f)}_{L^2}+\frac{C}{\sigma^2}\norm{f}_{W^{2,2}}.
			\end{align*}
			In the last inequality, we used Lemma \ref{lem_prelim_chitau} and the definition of the weighted norms.
	Choosing $\sigma_0$ sufficiently large and bringing the last term to left hand side yields		the desired inequality.
		\end{proof}		
	\subsection{Uniqueness of surfaces with prescribed $\mathcal{H}^2$ in $B_\sigma(B_1,B_2,B_3)$}
	The goal of this subsection is to prove a uniqueness statement about cross sections with prescrived spacetime mean curvature within the a-priori class.
\begin{prop}\label{prop_uniqueness}
		For any positive constants $B_1,B_2,B_3$, and $c_1\ge 10$ there exists a number $\sigma_0(m,B_1,B_2,B_3,c_1)$ such that the following holds for all $\sigma\ge\sigma_0$:
		If $\Sigma_0,\Sigma_1$ in $B_\sigma(B_1,B_2,B_3)$ with
		\[
			\norm{\frac{\omega_i}{\rho_i}}_{C^3(\widehat{\gamma})}\le c_1,\text{ }i\in\{0,1\}
		\]
		satisfy $\mathcal{H}^2_0=\mathcal{H}^2_1$,
		then $\omega_0=\omega_1$, i.e., $\Sigma_0=\Sigma_1$.
	\end{prop}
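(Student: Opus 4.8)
The plan is to join the two surfaces by the straight path $\omega_t := (1-t)\omega_0 + t\omega_1$, $t \in [0,1]$, set $u := \omega_1 - \omega_0$, and use that $\mathcal{H}^2$ cannot change along this path if it agrees at the two endpoints. First I would verify that each $\Sigma_{\omega_t}$ again lies in a slightly enlarged a-priori class $B_\sigma(B_1, B_2', B_3')$ and satisfies the hypotheses of Propositions \ref{prop_apriori}, \ref{prop_strictstability} and \ref{prop_stability2} uniformly in $t$: the bound $\sigma - B_1 \le \omega_t \le \sigma + B_1$ is immediate, while the bounds on $\accentset{\circ}{A}$ and $\nabla\accentset{\circ}{A}$ along the path follow because Proposition \ref{prop_apriori} and Corollary \ref{kor_aprior} exhibit $\omega_0$ and $\omega_1$ as $O(\sigma^{-1})$-perturbations in $C^{2,\alpha}$ (respectively $C^3$) of boosted spheres $b_{\rho_i, \vec{a}_i}$ with $\btr{\vec{a}_i} \le C\sigma^{-1}$, so that every $\omega_t$ is such a perturbation of a boosted sphere as well; combined with the explicit formula \eqref{eq_Adef} for $A$ and Lemma \ref{lem_apriori_vecalipschitz}, this gives the required control.

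Since $\partial_t x_t = u\, \ul{L}$ for the graphs $x_t = \Phi_{\ul{L}}(\omega_t, \cdot)$, the first variation formula for the spacetime mean curvature (the identity $\tfrac{\d}{\d s}\mathcal{H}^2 = J(\ul{\theta} f)$ recorded just before \eqref{eq_stability1}) gives $\partial_t \mathcal{H}^2_t = J_t(\ul{\theta}_t u)$, where $J_t$ is the Jacobi operator of $\Sigma_{\omega_t}$. Integrating over $t \in [0,1]$ and using $\mathcal{H}^2_0 = \mathcal{H}^2_1$ produces the identity $\int_0^1 J_t(\ul{\theta}_t u)\, \d t = 0$ on $\Sbb^2$, which I rewrite as $J_0(\ul{\theta}_0 u) = -\int_0^1 \left( J_t(\ul{\theta}_t u) - J_0(\ul{\theta}_0 u) \right) \d t$. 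By Proposition \ref{prop_stability2}, $J_0 \colon W^{2,2}(\gamma_{\omega_0}) \to L^2(\gamma_{\omega_0})$ is invertible with $\norm{\cdot}_{W^{2,2}} \le C\sigma^3 \norm{J_0(\cdot)}_{L^2}$, and since $\ul{\theta}_0$ and its derivatives are pinched around $\sigma^{-1}$ by Lemma \ref{lem_prelim_chitau}, this yields
\[
	\norm{u}_{W^{2,2}(\gamma_{\omega_0})} \le C\sigma^4 \norm{\int_0^1 \left( J_t(\ul{\theta}_t u) - J_0(\ul{\theta}_0 u) \right) \d t}_{L^2(\gamma_{\omega_0})}.
\]

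It then remains to estimate the integrand. Splitting $J_t(\ul{\theta}_t u) - J_0(\ul{\theta}_0 u) = J_t\!\left( (\ul{\theta}_t - \ul{\theta}_0) u \right) + (J_t - J_0)(\ul{\theta}_0 u)$ and differentiating in $t$, every factor occurring is an $r$-derivative of a background quantity (the metric, $\ul{\chi}$, $\chi$, $\zeta$, the ambient curvature terms, or $\ul{\theta}$) evaluated along $\{r = \omega_t\}$ and multiplied by $tu$; by Definition \ref{defi_asymclassS} and Lemmas \ref{lem_prelim_backgroundfol}, \ref{lem_prelim_spacetimemeancurvature}, \ref{lem_prelim_chitau}, each such $r$-derivative carries an extra power $\sigma^{-1}$ and the ambient $\ul{\theta}$-weight another, so the integrand is bounded in $L^2(\gamma_{\omega_0})$ by $C\sigma^{-4}\norm{u}_{C^2(\hatgamma)}\norm{u}_{W^{2,2}(\gamma_{\omega_0})}$. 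Feeding this back into the displayed inequality gives $\norm{u}_{W^{2,2}(\gamma_{\omega_0})} \le C\norm{u}_{C^2(\hatgamma)}\norm{u}_{W^{2,2}(\gamma_{\omega_0})}$, so $u \equiv 0$ once $\norm{u}_{C^2(\hatgamma)} < 1/C$, where $C = C(m,B_1,B_2,B_3)$.

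The main obstacle is exactly this last smallness. The power $\sigma^3$ in Proposition \ref{prop_stability2} is forced by the first spherical harmonics, the ``translational'' directions, which are near-kernel for $J$, so the crude bound $\norm{u}_{C^2(\hatgamma)} = O(1)$ coming from $\Sigma_i \in B_\sigma$ does not by itself beat $1/C$. To remedy this I would, before running the contraction, extract smallness of $u$ directly from $\mathcal{H}^2_0 = \mathcal{H}^2_1$ via the expansion \eqref{eq_aprioriH2}: the equality forces $\tfrac{4}{\rho_0^2} - \tfrac{8m}{b_{\rho_0,\vec{a}_0}^3}$ and $\tfrac{4}{\rho_1^2} - \tfrac{8m}{b_{\rho_1,\vec{a}_1}^3}$ to coincide up to $O(\sigma^{-4})$, and projecting this onto the constant and the first-harmonic modes bounds $\btr{\rho_0 - \rho_1}$ and $\btr{\vec{a}_0 - \vec{a}_1}$, hence $\norm{u}_{C^{2,\alpha}(\hatgamma)}$, in terms of $\sigma$ and $m$. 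To handle the residual translational part of $u$ uniformly in $m$ one decomposes $u$ into its first-harmonic part and a remainder: the first-harmonic part is pinned down by the equivariance of the associated $4$-vector $\textbf{Z}$ under Lorentz boosts together with Lemma \ref{lem_apriori_vecalipschitz}, while on the mean-zero, harmonic-orthogonal remainder the inverse of $J$ costs only $\sigma^2$ rather than $\sigma^3$, which is comfortably enough to absorb the error. Getting the $\sigma$-power bookkeeping in these error terms to line up is the genuinely delicate part; everything else is a routine, if lengthy, re-use of the a-priori estimates already in hand.
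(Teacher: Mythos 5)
Your overall architecture (interpolate between $\omega_0$ and $\omega_1$, differentiate $\mathcal{H}^2$ along the path to get $\int_0^1 J_t(\ul{\theta}_t f_t)\,\d t=0$, invert $J$ via Proposition \ref{prop_stability2}, and beat the inverse bound with a Lipschitz estimate for $t\mapsto J_t(\ul{\theta}_t f_t)$) is exactly the paper's, but the straight-line path leaves a genuine gap that your proposed repairs do not close. You correctly locate the crux: with $f_t\equiv u=\omega_1-\omega_0$, the best available variation estimate is $\sim\sigma^{-4}\norm{u}_{C^2(\hatgamma)}\norm{u}_{W^{2,2}}$, and since the inverse of $J$ costs $\sigma^3$ (i.e.\ $\sigma^4$ after the $\ul{\theta}$-weight) because of the translational near-kernel, you need $\norm{u}_{C^2(\hatgamma)}$ small — which is false in the a-priori class. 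Your first remedy, projecting $\mathcal{H}^2_0=\mathcal{H}^2_1$ onto constants and first harmonics via \eqref{eq_aprioriH2}, only gives $\btr{\rho_0-\rho_1}\le C/\sigma$ and $\btr{\vec{a}_0-\vec{a}_1}\le C/(m\sigma)$ (the $O(\sigma^{-4})$ remainder in \eqref{eq_aprioriH2} is not orthogonal to first harmonics, so you cannot do better), hence the first-harmonic part of $u$ is only $O(1/m)$ in $C^2$ — bounded, not small, and not improving as $\sigma\to\infty$, so the contraction constant is never beaten. Your second remedy is not an argument as stated: Lemma \ref{lem_apriori_vecalipschitz} bounds $\btr{\vec{a}_0-\vec{a}_1}$ \emph{by} $\norm{u}_{L^\infty}$, which is the wrong direction for ``pinning down'' the harmonic part of $u$; the claimed $\sigma^2$ inverse bound for $J$ on the complement of the first harmonics is nowhere established (Proposition \ref{prop_stability2} is global), and since $J$ is neither self-adjoint nor invariant under that splitting, the cross terms with the non-small harmonic component would still have to be controlled.

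The paper resolves precisely this difficulty by a different device: it does not seek smallness of $u$, but replaces the straight line by the path $\omega_t=(\omega_0-\rho_0b_0)+\rho(t)b_t+t\bigl((\omega_1-\rho_1b_1)-(\omega_0-\rho_0b_0)\bigr)$, whose velocity splits as $f_t=\rho(t)b_t'+R+\widetilde{R}_t$ with $R,\widetilde{R}_t$ small. The potentially large piece $\rho(t)b_t'$ is exactly the generator of the boosted-sphere family, and the key cancellation $\overline{J}_{\rho(t)b_t}\bigl(2b_t'/b_t\bigr)=0$ (the model Jacobi operator annihilates boost generators, since $\mathcal{H}^2$ is unchanged along boosts at fixed area radius) means this piece only enters multiplied by the $O(1/\sigma)$ deviation $\norm{\omega_t-\rho(t)b_t}_{C^2(\hatgamma)}$; together with $\btr{\vec{a}_1-\vec{a}_0}\le C\sigma^{-2}\sup_\tau\norm{f_\tau}_{W^{2,2}}$ (Lemma \ref{lem_apriori_vecalipschitz} plus Corollary \ref{kor_sobolevineq}) this yields the $\sigma^{-5}$ bound of Lemma \ref{lem_appx_bootstrap}, which beats the $\sigma^{-4}$ from Proposition \ref{prop_stability2} with a full factor $\sigma^{-1}$ to spare, no refined spectral decomposition needed. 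To salvage your approach you would essentially have to reproduce this structure — compare the boost part of $u$ with the instantaneous boost generator along the path and exploit the same kernel identity — so as written the proposal is incomplete at its decisive step.
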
	
	
		The proof of this proposition relies on a very carefully chosen interpolation between $\Sigma_0$ and $\Sigma_1$, for which we have to establish a couple of technical estimates first. We therefore postpone the proof of the proposition to the end of this subsection.
		
		Consider $\Sigma_0$, $\Sigma_1\in B_\sigma(B_1,B_2,B_3)$ corresponding to the functions $\omega_0$, $\omega_1$ which satisfy the assumptions of Proposition \ref{prop_apriori}. For simplicity, we write $b_{\rho_i,\vec{a}_i}=\rho_ib_i$, $i=0,1$, cf. Subsection \ref{subsec_4vector}. Then, we choose the smooth path
	\[
	\omega_t:=(\omega_0-\rho_0b_0)+\rho(t)b_t+t\left((\omega_1-\rho_1b_1)-(\omega_0-\rho_0b_0)\right)
	\]
	from $\omega_0$ to $\omega_1$, where
	\begin{align*}
	\rho(t)&=\rho_0+t(\rho_1-\rho_0),\\
	\vec{a}(t)&=\vec{a}_0+t(\vec{a}_1-\vec{a}_0),\\
	b_t(\vec{x})&=\frac{1}{\sqrt{1+\btr{\vec{a}(t)}^2}-\vec{a}(t)\cdot\vec{x}}.
	\end{align*}
	Let $\Sigma_t:=\Sigma_{\omega_t}$ denote the corresponding cross section of $\mathcal{N}$, and $\gamma_t$ its induced metric. Note that $\rho(t)b_t$ is a smooth path of 
	{boosted spheres} from $\rho_0b_0$ to $\rho_1b_1$ that linearly interpolates between both the area radius $\rho(t)$ and boost vector $\vec{a}(t)$, respectively. On the other hand, $\omega_t-\rho(t)b_t$ interpolates linearly between the difference $\omega_i-\rho_ib_i$. By \eqref{eq_aprioic2alpha}, it is immediate to check that
	\begin{align}\label{eq_omegat1}
	\norm{\omega_t-\rho(t)b_t}_{C^2(\widehat{\gamma})}\le
		(1-t)\norm{\omega_0-\rho_0b_0}_{C^2(\widehat{\gamma})}+	t\norm{\omega_1-\rho_1b_1}_{C^2(\widehat{\gamma})}\le
	 \frac{C}{\sigma}
	\end{align}
	for all $t\in[0,1]$. Moreover, \eqref{eq_areareadiusB1} and \eqref{eq_apriori_veca} yield that
	\begin{align}\label{eq_omegat2}
	\norm{\rho(t)b_t-\rho_ib_i}_{C^2(\widehat{\gamma})}\le C(B_1).
	\end{align}
	Hence, it is easy to check that
	\begin{align}\label{eq_omegat3}
	\norm{\omega_t-\omega_i}_{C^2(\widehat{\gamma})}+\norm{\omega_t-\rho_ib_i}_{C^2(\widehat{\gamma})}\le C
	\end{align}
	for all $t \in[0,1]$.
	We first show that $\Sigma_t$ lies in the a-priori class. In the following, we  denote the area radii of $\Sigma_t$ by $\rho_t$.
	\begin{lem}\label{lem_ometatapriori}
		Assume $\Sigma_0, \Sigma_1\in B_\sigma(B_1,B_2,B_3)$ and $\omega_0,\omega_1$ satisfy the assumptions of Proposition \ref{prop_apriori}. Then $\Sigma_t\in B_\sigma(\widetilde{B}_1,\widetilde{B}_2,\widetilde{B}_3)$, and
		\begin{align}\label{eq_derivatives_along_path}
		\btr{\widehat{\nabla}^l\frac{\omega_t}{\rho_t}}\le c_1,\qquad 1\leq l\leq 3,
		\end{align}
		where $\widetilde{B}_1,\widetilde{B}_2,\widetilde{B}_3$ only depend on $B_1$, $B_2$, $B_3$, $c_1$ for $\sigma\ge\sigma_0(B_1,B_2,B_3,c_1)$.
	\end{lem}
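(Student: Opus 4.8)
The plan is to show that each of the three defining conditions of the a-priori class is satisfied by $\Sigma_t$ with constants controlled only by $B_1,B_2,B_3,c_1$. The $C^0$-bound and the derivative bounds \eqref{eq_derivatives_along_path} are the easy part: by construction $\omega_t$ is an affine combination involving $\rho(t)b_t$ and the differences $\omega_i-\rho_ib_i$, and all of these are controlled in $C^2(\widehat\gamma)$ by \eqref{eq_omegat1}, \eqref{eq_omegat2}, \eqref{eq_omegat3}, which already gives $\sigma-\widetilde B_1\le \omega_t\le \sigma+\widetilde B_1$ for $\widetilde B_1=\widetilde B_1(B_1)$ after noting $|\rho(t)b_t-\rho(t)|\le C(B_1)$ from the $|\vec a_t|\le C(B_1)/\sigma$ estimate \eqref{eq_apriori_veca} applied along the path. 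For the third-derivative bound one cannot appeal to \eqref{eq_aprioic2alpha} directly (that estimate only controls two derivatives), so I would instead invoke Corollary \ref{kor_aprior}: once we know $\Sigma_t\in B_\sigma(\widetilde B_1,\widetilde B_2,\widetilde B_3)$ and the $C^2$-bound, the corollary upgrades this to $|\widehat\nabla^l(\omega_t/\rho_t)|\le C(\widetilde B_1)/\sigma\le c_1$ for $1\le l\le 3$ and $\sigma$ large. This makes the argument slightly circular in appearance, so the logical order matters: first establish the $C^0$ and $C^2$ bounds and the bounds on $\mathring A_t$ and $\nabla\mathring A_t$ using only \eqref{eq_omegat1}--\eqref{eq_omegat3}, thereby placing $\Sigma_t$ in the class, and only then invoke Corollary \ref{kor_aprior} for \eqref{eq_derivatives_along_path}.

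The substantive part is the estimate on the trace-free scalar second fundamental form $\mathring A_t$ of $\Sigma_t$ and its derivative. The key point is that the boosted spheres $\rho(t)b_t$ are, up to error terms coming from the asymptotics of $\mathcal N$, exactly the surfaces for which $\mathring A=0$ in the Minkowski model; concretely, one uses the representation \eqref{eq_Adef} for $A$ in terms of $\omega$ and the background-foliation data, together with Lemma \ref{lem_prelim_backgroundfol}, Lemma \ref{lem_prelim_chitau}, and the appendix lemmas on $\gamma_\omega$-decay. Writing $A_t = A(\rho(t)b_t) + (\text{linear correction in }\omega_t-\rho(t)b_t) + (\text{quadratic remainder})$, the boosted-sphere term contributes a trace-free part of size $O(m\sigma^{-4})$ (this is the Schwarzschild contribution, analogous to the $8m/b^3$ term in \eqref{eq_aprioriH2}), while the correction terms are controlled by $\|\omega_t-\rho(t)b_t\|_{C^2}\le C/\sigma$ from \eqref{eq_omegat1} together with the decay of $\ul\chi_r$, $\zeta_r$, $A_r$. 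This yields $|\mathring A_t|_{\gamma_t}\le \widetilde B_2/\sigma^4$; the analogous computation one derivative higher, using \eqref{eq_omegat1} in $C^2$ and the improved $\nabla$-bounds from $\eqref{eq_derivatives_along_path}$ (or rather from Corollary \ref{kor_aprior} applied to $\omega_0,\omega_1$ and interpolated), gives $|\nabla\mathring A_t|_{\gamma_t}\le \widetilde B_3/\sigma^5$.

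The main obstacle I anticipate is bookkeeping the trace-free part carefully: one must check that the pieces of $A_t$ that are \emph{not} small pointwise — the leading $\ul\theta^{-1}$-times-round-metric term — are pure-trace and hence drop out of $\mathring A_t$, so that what survives in the trace-free part genuinely has the improved decay. This requires using the fact that $\rho(t)b_t$ is conformally round (so $\Hess_{\widetilde\gamma_t}(\rho(t)b_t)$ has the structure forcing $\mathring{A}=0$ in the exact Minkowski case) and then tracking how the asymptotically-Schwarzschildean corrections perturb this; the relevant cancellations are exactly those already exploited in the proof of Proposition \ref{prop_apriori}, so I would organize the computation to parallel that proof. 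A secondary subtlety is that $\rho_t$ (the area radius of $\gamma_t$) is not literally $\rho(t)$; but by \eqref{eq_omegat1} and \eqref{eq_appx_arearadiusdifference}-type estimates they differ by $O(1)$, which is harmless for all the $\sigma^{-k}$-type bounds and gives $\widetilde B_1,\widetilde B_2,\widetilde B_3$ depending only on $B_1,B_2,B_3,c_1$ as claimed.
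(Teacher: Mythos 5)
Your route for \eqref{eq_derivatives_along_path} and, more importantly, for the $\nabla\accentset{\circ}{A}_t$ bound has a genuine gap. First, the logical-order fix you propose does not remove the circularity: Corollary \ref{kor_aprior} has as hypothesis ``all conditions of Proposition \ref{prop_apriori}'', which includes the very bound $\norm{\omega_t/\rho_t}_{C^3(\hatgamma)}\le c_1$ you want to produce, so membership of $\Sigma_t$ in the class plus a $C^2$ bound is not enough to invoke it for $\Sigma_t$. The paper never applies the corollary to $\Sigma_t$ at all; it applies it only to the endpoints $\omega_0,\omega_1$ (whose hypotheses hold by assumption) and combines this with the explicit bound $\btr{\widehat{\nabla}^l b_t}\le C(B_1)/\sigma$, $1\le l\le 3$, obtained from \eqref{eq_apriori_veca} along the interpolated boost vector $\vec a(t)$; since $\omega_t$ is a fixed linear combination of $\omega_0,\omega_1,\rho_0b_0,\rho_1b_1,\rho(t)b_t$, this yields \eqref{eq_uniqueness_bettergradient} and hence \eqref{eq_derivatives_along_path} directly. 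Your parenthetical remark points in this direction, but as stated your primary argument is not valid.

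Second, and this is the substantive issue: expanding $A_t$ around the boosted sphere $\rho(t)b_t$ and using $\norm{\omega_t-\rho(t)b_t}_{C^2(\hatgamma)}\le C/\sigma$ does give the pointwise bound $\btr{\accentset{\circ}{A}_t}\le C/\sigma^4$, but it cannot give $\btr{\nabla\accentset{\circ}{A}_t}\le \widetilde B_3/\sigma^5$: that derivative contains third derivatives of $\omega_t-\rho(t)b_t$, for which \eqref{eq_aprioic2alpha} provides no smallness (it is only a $C^{2,\alpha}$ estimate), and the raw $O(1)$ third-derivative bounds coming from Corollary \ref{kor_aprior} at the endpoints lose exactly one power of $\sigma$. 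The mechanism actually needed is different from ``closeness to a boosted sphere'': one uses that $\accentset{\circ}{A}_t\approx -2\ul{\theta}_t\accentset{\circ}{\Hess}_{\gamma_t}\omega_t$ and that the trace-free Hessian is linear in the function being differentiated, so $\accentset{\circ}{\Hess}_{\omega_t}\omega_t$ splits into the pieces $(1-t)(\omega_0-\rho_0b_0)$, $t(\omega_1-\rho_1b_1)$ and $\rho(t)b_t$; after switching the base conformal factor at a cost of $C/\sigma^3$ in $C^1(\widetilde{\gamma}_t)$ (Equation \eqref{eq_tracefree_hession} together with Lemma \ref{lem_appx_norms_equivalence}), the endpoint pieces are controlled by $\norm{\accentset{\circ}{\Hess}_{\omega_i}\omega_i}_{C^1(\omega_i^2\hatgamma)}\le C(B_1,B_2,B_3)/\sigma^3$, i.e.\ by the hypothesis $\Sigma_i\in B_\sigma(B_1,B_2,B_3)$ itself (in particular the $\nabla\accentset{\circ}{A}_i$ bound), while the boosted-sphere pieces vanish identically, $\accentset{\circ}{\Hess}_{\rho(t)b_t}\rho(t)b_t=0$. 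Your plan uses only \eqref{eq_omegat1}--\eqref{eq_omegat3} and gradient bounds, never the endpoint membership bounds on $\accentset{\circ}{A}_i$, $\nabla\accentset{\circ}{A}_i$, so the decisive cancellation at third-derivative order is missing and the claimed $\sigma^{-5}$ decay does not follow from the ingredients you list.
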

	\begin{proof}
			Note first that
			\[
			\omega_t=(1-t)\omega_0+t\omega_1+\left((\rho(t)b_t-\rho_0b_0)-t(\rho_1b_1-\rho_0b_0)\right).
			\]
			By \eqref{eq_omegat2} and because
				\[
			\sigma-
			{B_1}\le \omega_i\le \sigma+
			{B_1}
			\]		
		 for $i=0,1$ as $\Sigma_0,\Sigma_1\in B_\sigma(B_1,B_2,B_3)$, it is easy to check that 
		 {the function} $\omega_t$ of the cross section $\Sigma_t$ satisfies
			\[
			\sigma-C(B_1)\le \omega_t\le \sigma+C(B_1)
			\]
			{In particular, $\frac{1}{2}\sigma\le \omega_t,\rho_t\le 2\sigma$ for $\sigma\ge\sigma_0(B_1)$}. 
			{As in the proof of Corollary \ref{kor_aprior}, one can directly check that Inequality \eqref{eq_apriori_veca} yields
			\begin{align}\label{eq_gradientboost}
			\btr{\widehat{\nabla}^lb_t}\le \frac{C(B_1)}{\sigma} \text{ }(1\le l \le3).
			\end{align}}
			Recall furthermore that by Corollary \ref{kor_aprior}, we have
				\begin{align*}
			\btr{\widehat{\nabla}^l\frac{\omega_i}{\rho_i}}\le \frac{C(B_1)}{\sigma} \text{ }(1\le l \le3)
			\end{align*}
			for $i=0,1$. Combining these two estimates, we get
			\begin{align}\label{eq_uniqueness_bettergradient}
			\btr{\widehat{\nabla}^l\frac{\omega_t}{\rho_t}}\le \frac{C(B_1)}{\sigma} \text{ }(1\le l \le3).
			\end{align}
{Hence, the bound \eqref{eq_derivatives_along_path} follows for $\sigma\ge\sigma_0(B_1,c_1)$}. To finish the proof, it remains to establish estimates on $\accentset{\circ}{A}$. 
			 Using the trace-free part of Equation \eqref{eq_Adef}, we note that Lemma \ref{lem_prelim_backgroundfol} and \ref{lem_prelim_chitau} imply that
			\[
			\norm{\accentset{\circ}{A}_t+2\ul{\theta}_t\accentset{\circ}{\Hess}_{\gamma_t}\omega_t}_{C^1(\gamma_t)}\le \frac{C}{\sigma^4}.
			\]
			In particular, $\omega_t\in B_\sigma(\widetilde{B}_1,\widetilde{B}_2,\widetilde{B}_3)$ as claimed if and only if 
			\[
			\norm{\accentset{\circ}{\Hess}_{\gamma_t}\omega_t}_{C^1(\gamma_t)}\le\frac{C}{\sigma^3}
			\]
			with $C$ only depending on $B_1,B_2,B_3$. As $\norm{Q_{IJ}^K}_{C^1(\gamma_t)}\le \frac{C}{\sigma^3}$, cf. Lemma \ref{lem_appx_differencetensors}, it remains to show that $\norm{\accentset{\circ}{{\Hess}}_{\omega_t}\omega_t}_{C^1(\widetilde{\gamma}_t)}\le \frac{C}{\sigma^3}$, where ${\Hess}_{\omega_t}$ denotes the Hessian with respect to the conformally round metric $\widetilde{\gamma_t}=\omega_t^2\widehat{\gamma}$.
			
			We note that for any function $f$, and conformally round metrics $v_1^2\widehat{\gamma}$, $v_2^2\widehat{\gamma}$, it holds that
			\[
			\accentset{\circ}{\Hess}_{v_1}f-\accentset{\circ}{\Hess}_{v_2}f={\Hess}_{v_1}f-{\Hess}_{v_2}f=f_K\left({}^{v_2}\widetilde{\Gamma}_{IJ}^K-{}^{v_1}\widetilde{\Gamma}_{IJ}^K\right).
			\]
			Using \eqref{eq_appx_differencetensorconformallyround}, we note that if $\norm{v_1-v_2}_{C^2(\widehat{\gamma})}\le C_1$, $\norm{\widehat{\nabla f}}_{C^1(\widehat{\gamma})}\le C_2$, then 
			\begin{align}\label{eq_tracefree_hession}
			\norm{\accentset{\circ}{\Hess}_{v_1}f-\accentset{\circ}{\Hess}_{v_2}f}_{C^1(\widehat{\gamma})}\le \frac{C(C_1,C_2)}{\sigma}.
			\end{align}
			
			By linearity and the uniform equivalence of the $C^1$-norms along the path, see Lemma \ref{lem_appx_norms_equivalence}, we have
			\begin{align*}
			\norm{\accentset{\circ}{{\Hess}}_{\omega_t}\omega_t}_{C^1(\widetilde{\gamma}_t)}&\le\,(1-t)\left(\norm{\accentset{\circ}{{\Hess}}_{\omega_t}\omega_0}_{C^1(\widetilde{\gamma}_t)}+\norm{\accentset{\circ}{{\Hess}}_{\omega_t}\rho_0b_0}_{C^1(\widetilde{\gamma}_t)}\right)\\
			&\qquad \,+t\left(\norm{\accentset{\circ}{{\Hess}}_{\omega_t}\omega_1}_{C^1(\widetilde{\gamma}_t)}+\norm{\accentset{\circ}{{\Hess}}_{\omega_t}\rho_1b_1}_{C^1(\widetilde{\gamma}_t)}\right)+ \norm{\accentset{\circ}{{\Hess}}_{\omega_t}\rho(t)b_t}_{C^1(\widetilde{\gamma}_t)}\\
			&\le  C\left(\norm{\accentset{\circ}{{\Hess}}_{\omega_0}\omega_0}_{C^1(\omega_0^2\widehat{\gamma})}+\norm{\accentset{\circ}{{\Hess}}_{\rho_0b_0}\rho_0b_0}_{C^1(\rho_0^2b_0^2\widehat{\gamma})}+\norm{\accentset{\circ}{{\Hess}}_{\omega_1}\omega_1}_{C^1(\omega_1^2\widehat{\gamma})} \right)\\
			&\qquad \,+C\left(\norm{\accentset{\circ}{{\Hess}}_{\rho_1b_1}\rho_1 b_1}_{C^1(\rho_1^2b_1^2\widehat{\gamma})}+\norm{\accentset{\circ}{{\Hess}}_{\rho(t)b_t}\rho(t)b_t}_{C^1(\rho(t)^2b_t^2\widehat{\gamma})}\right)+\frac{C(B_1,c_1)}{\sigma^3}.
			\end{align*}
			In the last inequality, we combined \eqref{eq_tracefree_hession} with \eqref{eq_omegat1} and \eqref{eq_omegat3}. Furthermore, we used Corollary \ref{kor_aprior}, \eqref{eq_gradientboost}, the inequality
			\[
				\norm{T}_{C^1(\widetilde{\gamma}_t)}\le \frac{C(c_1)}{\sigma^2}\norm{T}_{C^1(\widehat{\gamma})},
			\]
			which holds for any $(0,2)$ tensor by the scaling properties of the metric, and \eqref{eq_appx_differencetensorconformallyround}. Finally, using that
			\[
			\norm{\accentset{\circ}{{\Hess}}_{\omega_0}\omega_0}_{C^1(\omega_0^2\widehat{\gamma})}+\norm{\accentset{\circ}{{\Hess}}_{\omega_1}\omega_1}_{C^1(\omega_1^2\widehat{\gamma})}\le \frac{C(B_1, B_2,B_3)}{\sigma^3},
			\]
			as $\Sigma_1,\Sigma_2\in B_\sigma(B_1,B_2,B_3)$, and using that $\rho(t)b_t$ is a family of exact 
			, i.e.,
			\[
			\accentset{\circ}{{\Hess}}_{\rho(t)b_t}\rho(t)b_t
			{=-\rho(t)b_t^2\accentset{\circ}{\Hess}_{\hatgamma}\left(\frac{1}{b_t}\right)}=0
			\]
			for all $t\in[0,1]$, cf. \cite[Remark 2]{wolff1}, the desired estimate follows. This shows that $\Sigma_t\in B_\sigma(\widetilde{B}_1, \widetilde{B}_2,\widetilde{B}_3)$.
	\end{proof}
	The key ingredient to showing uniqueness for prescribed 
	{spacetime mean} curvature surfaces within the a-priori class is the following technical lemma. A crucial observation in establishing this technical lemma is that $\rho(t)b_t'$ lies in the kernel of the 
	{Jacobi} operator in the Minkowski lightcone along the path $\rho(t)b_t$ of 
	.
		\begin{lem}\label{lem_appx_bootstrap}
			Assume $\Sigma_0$, $\Sigma_1\in B_\sigma(B_1,B_2,B_3)$ and $\omega_0$, $\omega_1$ satisfy the assumptions of Proposition \ref{prop_apriori}. Consider $\omega_t$ as above. Assume additionally, that
			\[
			\btr{\rho_0-\rho_1}\le \frac{C_\rho}{\sigma}
			\]
			for some positive constant $C_\rho>0$. Then, for $\sigma\ge \sigma_0(B_1,B_2,B_3,c_1,C_{\rho})$, there exists a constant $C=C(B_1,B_2,B_3,C_\rho)$ such that
			\[
			\norm{J_s(\ul{\theta}_sf_s)-J_t(\ul{\theta}_tf_t)}_{L^2(\gamma_s)}\le \frac{C}{\sigma^5}\sup\limits_{\tau\in[0,1]}\norm{f_\tau}_{W^{2,2}(\gamma_\tau)}
			\]
			for all $s,t\in[0,1]$, where $f_t:=\frac{\d}{\d t}\omega_t$, and where we write $\ul{\theta}_t$ to emphasize that $\ul{\theta}$ is evaluated along $\Sigma_t$.
		\end{lem}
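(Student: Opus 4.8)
The plan is to control the second variation of $\mathcal H^2$ along the path $\omega_t$ by comparison with the standard Minkowski lightcone, in which boosted spheres are STCMC; so the dominant (boost) part of the path is a ``flat'' direction and every deviation from this model is of strictly lower order.

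\smallskip
\emph{Step 1.} Decompose $\omega_t=\rho(t)b_t+\eta_t$, $\eta_t=(1-t)(\omega_0-\rho_0b_0)+t(\omega_1-\rho_1b_1)$, so that $\norm{\eta_t}_{C^{2,\alpha}(\widehat\gamma)}\le C\sigma^{-1}$ by \eqref{eq_aprioic2alpha}, and
\[
f_t=\frac{\d}{\d t}\omega_t=(\rho_1-\rho_0)b_t+\rho(t)\dot b_t+e,\qquad e:=(\omega_1-\rho_1b_1)-(\omega_0-\rho_0b_0),
\]
with $e$ independent of $t$, $\norm{e}_{C^{2,\alpha}(\widehat\gamma)}\le C\sigma^{-1}$, and $\dot b_t:=\tfrac{\d}{\d t}b_t$. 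By \eqref{eq_apriori_veca}, Lemma \ref{lem_apriori_vecalipschitz}, Remark \ref{bem_apriori} and Corollary \ref{kor_sobolevineq} one has $\btr{\rho_0-\rho_1}+\btr{\vec a_0-\vec a_1}\le C\sigma^{-1}\min\{1,\,\sigma^{-1}\sup_{\tau}\norm{f_{\tau}}_{W^{2,2}(\gamma_{\tau})}\}$ (using $\norm{\omega_0-\omega_1}_{L^\infty(\widehat\gamma)}=\norm{\int_0^1f_\tau\,\d\tau}_{L^\infty(\widehat\gamma)}\le C\sigma^{-1}\sup_\tau\norm{f_\tau}_{W^{2,2}(\gamma_\tau)}$), and, together with the hypothesis $\btr{\rho_0-\rho_1}\le C_\rho\sigma^{-1}$, the explicit computations of Corollary \ref{kor_aprior} give that $\dot b_t,\ddot b_t$ and their $\widehat\gamma$-derivatives up to order three are $\landau{\sigma^{-1}}$; in fact $\dot b_t,\ddot b_t$ and the non-boost terms of $f_t$, as well as $\dot f_t=\rho(t)\ddot b_t+2(\rho_1-\rho_0)\dot b_t$, each carry at least one extra factor among $\min\{1,\sigma^{-1}\sup_\tau\norm{f_\tau}_{W^{2,2}(\gamma_\tau)}\}$, $\rho_0-\rho_1$, $\vec a_0-\vec a_1$, $e$, so the only $\landau{1}$-sized part of $f_t$ is the boost term $\rho(t)\dot b_t$. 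By Lemma \ref{lem_ometatapriori} every $\Sigma_t$ lies in a fixed a-priori class, so all estimates of Section \ref{sec_apriori} hold uniformly along the path.

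\smallskip
\emph{Step 2.} By the Raychaudhuri optical equations $J_\tau(\ul\theta_\tau f_\tau)=\tfrac{\d}{\d\tau}\mathcal H^2_\tau$, hence
\[
J_s(\ul\theta_sf_s)-J_t(\ul\theta_tf_t)=\int_t^s\frac{\d^2}{\d\tau^2}\mathcal H^2_\tau\,\d\tau,
\]
and by the uniform equivalence of the $\gamma_\tau$ along the path (Lemma \ref{lem_appx_norms_equivalence}) it suffices to prove $\norm{\tfrac{\d^2}{\d\tau^2}\mathcal H^2_\tau}_{L^2(\gamma_\tau)}\le C\sigma^{-5}\sup_{\tau'}\norm{f_{\tau'}}_{W^{2,2}(\gamma_{\tau'})}$ for each $\tau$. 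Differentiating the first-variation identity once more, $\tfrac{\d^2}{\d\tau^2}\mathcal H^2_\tau=J_\tau(\ul\theta_\tau\dot f_\tau)+\mathcal Q_\tau(f_\tau,f_\tau)$, where $\mathcal Q_\tau$ is the (explicit) bilinear second-variation operator built from $\gamma_\tau,\ul\theta_\tau,\tau_\tau$ and the ambient curvature along $\Sigma_\tau$.

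\smallskip
\emph{Step 3.} In the Minkowski lightcone every boosted sphere $\{t=r=\rho b_{\vec a}\}$ is STCMC with $\mathcal H^2\equiv 4\rho^{-2}$, i.e.\ its Minkowski spacetime mean curvature is $4K_{\omega^2\widehat\gamma}$, whence the exact model identities
\[
J^{\mathrm{Mink}}_{\rho b_{\vec a}}\!\big(\ul\theta^{\mathrm{Mink}}\rho\,\partial_{a^i}b_{\vec a}\big)=0,\qquad J^{\mathrm{Mink}}_{\rho b_{\vec a}}\!\big(\ul\theta^{\mathrm{Mink}}b_{\vec a}\big)=-8\rho^{-3},
\]
and (differentiating the first once more along the boost family) $\mathcal Q^{\mathrm{Mink}}_{\rho b_{\vec a}}(\rho\dot b,\rho\dot b)+J^{\mathrm{Mink}}_{\rho b_{\vec a}}(\ul\theta^{\mathrm{Mink}}\rho\ddot b)=0$ (cf.\ \cite{wolff1,wolff4}). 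Writing $\mathcal H^2_\tau=4K_{\omega_\tau^2\widehat\gamma}+\big(\mathcal H^2_\tau-4K_{\omega_\tau^2\widehat\gamma}\big)$, I would estimate the two resulting contributions to $\tfrac{\d^2}{\d\tau^2}\mathcal H^2_\tau$ separately. For the first, $\tfrac{\d^2}{\d\tau^2}(4K_{\omega_\tau^2\widehat\gamma})$ is small because $\omega_\tau$ is the constant-curvature boosted sphere $\rho(\tau)b_\tau$ plus the $\landau{\sigma^{-1}}$-perturbation $\eta_\tau$: expanding $f_\tau=\rho(\tau)\dot b_\tau+\big((\rho_1-\rho_0)b_\tau+e\big)$ and $\dot f_\tau=\rho(\tau)\ddot b_\tau+2(\rho_1-\rho_0)\dot b_\tau$ and invoking the three displayed identities, the $\rho(\tau)\dot b_\tau$- and $\rho(\tau)\ddot b_\tau$-contributions cancel exactly, the $(\rho_1-\rho_0)b_\tau$-contribution reduces to the slowly varying constant $-8(\rho_1-\rho_0)\rho(\tau)^{-3}$, and every surviving term (the cross and pure-small terms of $\mathcal Q^{\mathrm{Mink}}$, and the errors from evaluating the model operators at $\Sigma_{\omega_\tau}$ rather than at $\Sigma_{\rho(\tau)b_\tau}$) carries at least one factor among $e$ (or its derivatives), $\rho_1-\rho_0$, $\vec a_0-\vec a_1$, or the slow $\tau$-variation of an explicit model quantity, hence by Step 1 is $\landau{\sigma^{-5}\sup_{\tau'}\norm{f_{\tau'}}_{W^{2,2}(\gamma_{\tau'})}}$ in $L^2(\gamma_\tau)$. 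For the second, $\mathcal H^2_\tau-4K_{\omega_\tau^2\widehat\gamma}$ is, by the Gauss equation \eqref{eq_prelim_gauss3}, a sum of an ambient-curvature term of Schwarzschild size $\landau{m\sigma^{-3}}$ and terms of size $\landau{\sigma^{-4}}$ controlled by Lemmas \ref{lem_prelim_backgroundfol}, \ref{lem_prelim_chitau} and \ref{lem_prelim_spacetimemeancurvature}, Proposition \ref{prop_apriori}, Corollary \ref{kor_aprior} and the Appendix lemmas; since all $\tau$-dependence enters through $\Sigma_{\omega_\tau}$ with velocity $f_\tau$, its $\tfrac{\d^2}{\d\tau^2}$ is again $\landau{\sigma^{-5}\sup_{\tau'}\norm{f_{\tau'}}_{W^{2,2}(\gamma_{\tau'})}}$ in $L^2(\gamma_\tau)$ after using the refined size bounds of Step 1 and the asymptotics of Definition \ref{defi_asymclassS}. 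Collecting these estimates, integrating over $[t,s]$ and transferring back to $L^2(\gamma_s)$ via Lemma \ref{lem_appx_norms_equivalence} completes the proof.

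\smallskip
The main difficulty is the bookkeeping of Step 3: organising the second variation so that the model kernel and cancellation identities exactly remove the $\landau{1}$-sized boost term $\rho(\tau)\dot b_\tau$ of $f_\tau$ at the two leading orders, and then verifying term by term that every surviving contribution carries the extra powers of $\sigma^{-1}$ --- and of $\sup_{\tau'}\norm{f_{\tau'}}_{W^{2,2}(\gamma_{\tau'})}$ --- required for the rate $\sigma^{-5}$. In particular, holding the Schwarzschild-curvature contribution at order $m\sigma^{-5}$ rather than $m\sigma^{-4}$ relies on the fact that the relevant explicit model quantities vary at rate only $\landau{\sigma^{-2}}$ in $\tau$, so that only their $\tau$-differences over $[t,s]$ survive in the final estimate.
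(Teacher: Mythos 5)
Your strategy is genuinely different from the paper's: you integrate the \emph{second} variation, writing $J_s(\ul{\theta}_sf_s)-J_t(\ul{\theta}_tf_t)=\int_t^s\frac{\d^2}{\d\tau^2}\mathcal{H}^2_\tau\,\d\tau$ and aiming at a pointwise-in-$\tau$ bound on $\frac{\d^2}{\d\tau^2}\mathcal{H}^2_\tau$, whereas the paper never leaves the first-variation level: it compares $J_t(\ul{\theta}_tf_t)$ with an auxiliary Schwarzschild-model operator $\widetilde{J}_t$, splits off the flat part $\overline{J}_{\omega_t}=-\frac{2}{\omega_t^2}\widehat{\Delta}-\frac{4}{\rho(t)^2}$, uses the kernel identity $\overline{J}_{\rho(t)b_t}\bigl(\frac{2b_t'}{b_t}\bigr)=0$ together with Lipschitz dependence of $\overline{J}_\omega$ on $\omega$, and only at the end converts $\btr{\vec{a}_1-\vec{a}_0}$ and $\norm{\omega_s-\omega_t}_{W^{2,2}}$ into $\sup_\tau\norm{f_\tau}_{W^{2,2}(\gamma_\tau)}$ via Lemma \ref{lem_apriori_vecalipschitz} and Corollary \ref{kor_sobolevineq}. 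The shared key ideas (boost direction as a kernel/flat direction of the model, and the conversion of $\btr{\vec{a}_1-\vec{a}_0}$, $\btr{\rho_1-\rho_0}$ into $\sup_\tau\norm{f_\tau}$-linear factors) are correctly identified in your proposal, and your power counting in the model cases is consistent with the target rate.

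However, there is a genuine gap: the decisive estimate $\norm{\frac{\d^2}{\d\tau^2}\mathcal{H}^2_\tau}_{L^2(\gamma_\tau)}\le C\sigma^{-5}\sup_{\tau'}\norm{f_{\tau'}}_{W^{2,2}(\gamma_{\tau'})}$ is asserted, not proved. The bilinear operator $\mathcal{Q}_\tau$ is never derived; on an asymptotically Schwarzschildean null cone this requires differentiating all coefficients of $J$ (the torsion $\tau$, $\accentset{\circ}{\ul{\chi}}$, $\ul{\theta}$, and the ambient curvature contractions, which brings in $\overline{\nabla}\,\overline{\Riem}$ contracted with $f\ul{L}$) --- a computation of the same weight as Lemma \ref{lem_evotracefree}, which your Step 3 replaces by an appeal to the exact Minkowski identities evaluated exactly on boosted spheres. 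The whole point of the lemma is that on $\Sigma_{\omega_\tau}$ in the perturbed spacetime the failure of these cancellations must be shown to carry the precise extra powers, and in addition the a-priori \emph{quadratic} structure of $\mathcal{Q}_\tau(f_\tau,f_\tau)$ must be converted into a bound \emph{linear} in $\sup_\tau\norm{f_\tau}_{W^{2,2}}$; this needs, besides $\btr{\vec{a}_1-\vec{a}_0}\le C\sigma^{-2}\sup_\tau\norm{f_\tau}_{W^{2,2}}$, also $\btr{\rho_1-\rho_0}\le C\sigma^{-1}\sup_\tau\norm{f_\tau}_{W^{2,2}}$ for the $\dot\rho^2$-term of the model second variation, and Remark \ref{bem_apriori} only provides the Lipschitz bound for the conformal radius $\widetilde{\rho}$, so a short extra argument (differentiating the area functional, using $\partial_r$ of the $O_{3,3}(1)$-error in \eqref{eq_appx_volumeform}) is needed and missing. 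None of these steps looks unworkable, but as written the proof stops exactly where the real work begins; the paper's route avoids all of it by never invoking a second-variation formula.
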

		\begin{proof}
			Note that
			\[
			f_t:=\frac{\d}{\d t}\omega_t=(\rho_1-\rho_0)b_t+\rho(t)b_t'+(\omega_1-\rho_1b_1)-(\omega_0-\rho_0b_0),
			\]
			and for simplicity, we will write 
			\[
			f_t=\widetilde{f}_t+R+\widetilde{R}_t,
			\]
			where
			\begin{align*}
			\widetilde{f}_t&:=\rho(t)b_t',\\
			R&:=(\omega_1-\rho_1b_1)-(\omega_0-\rho_0b_0)+(\rho_1-\rho_0)b_0,\\
			\widetilde{R}_t&:=(\rho_1-\rho_0)(b_t-b_0).
			\end{align*}
			By assumption, we have
			\begin{align*}
			\btr{\rho_0-\rho_1}\le \frac{C_\rho}{\sigma}.
			\end{align*}
			and due to \eqref{eq_appx_arearadiusdifference}, a similar estimate holds for $\widetilde{\rho}_0$, $\widetilde{\rho}_1$. By Lemma \ref{lem_ometatapriori} $\Sigma_t$ remains in the a-priori class, where we write $B_1$, $B_2$, $B_3$ for simplicity and choose $\sigma\ge \sigma_0$ sufficiently large. A direct computation using the area radius of the conformally round metrics then yields that
			\begin{align}\label{eq_arearadiusUniqueness}
			\btr{\rho_t-\rho(t)}+\btr{\rho_t-\rho_s}\le \frac{C(C_\rho)}{\sigma}
			\end{align}
			for all $t,s\in[0,1]$, 
			{where we recall that $\rho_t$ denotes the area radius of $\Sigma_t$ and $\rho(t)$ the linear interpolation between $\rho_0$ and $\rho_1$}. Using \eqref{eq_apriori_veca} and \eqref{eq_aprioic2alpha}, it is straightforward to check that
			\begin{align}
			\norm{\widetilde{f}_s-\widetilde{f}_t}_{L^2(\gamma_s)}&\le C\sigma^2\btr{\vec{a}_1-\vec{a}_0}^2\le C\sigma\btr{\vec{a}_1-\vec{a}_0},\label{eq_bootstrap_ftilde}\\
			\norm{R}_{C^2(\gamma_s)}&\le \frac{C}{\sigma},\label{eq_bootstrap_R}\\
			\norm{\widetilde{R}_t}_{W^{2,2}(\gamma_s)}&\le C\btr{\vec{a}_1-\vec{a}_0}\label{eq_bootstrap_Rtilde},
			\end{align}
			where here and in the following, we will frequently use that all $\gamma_t$-norms are uniformly equivalent due \eqref{eq_uniqueness_bettergradient} and Lemma \ref{lem_appx_norms_equivalence}.
			Consider the auxiliary operator
			\[
			\widetilde{J}(f):=-\frac{2}{\omega^2}\widehat{\Delta}f-\mathcal{H}^2f+\frac{4m}{\omega^3}f,
			\]
			and note that $\widetilde{J}$ agrees with $J$ in the Schwarzschild lightcone. Similar to the proof of Proposition \ref{prop_strictstability}, using Lemmas \ref{lem_prelim_chitau}, \ref{lem_appx_curvatureestimates1}, and also using Lemma \ref{lem_appx_differencetensors} to estimate the difference tensor $\widetilde{Q}_{IJ}^K$, we find that 
			\begin{align}\label{eq_bootstrap_estimate1}
			\norm{J_t(\ul{\theta}_tf_t)-\widetilde{J}_t\left(\frac{2}{\omega_t}f_t\right)}_{L^2(\gamma_t)}\le \frac{C}{\sigma^5}\norm{f_t}_{W^{2,2}(\gamma_t)}.
			\end{align}
			For convenience, let us define $\widetilde{\ul{\theta}}_t:=\frac{2}{\omega_t}$, and note that we can write $\widetilde{J}_t(\widetilde{\ul{\theta}}_tf_t)$ as
			\[
			\widetilde{J}_t(\widetilde{\ul{\theta}}_tf_t)=\left(-\frac{2}{\omega_t^2}\widehat{\Delta}\left(\widetilde{\ul{\theta}}_tf_t\right)-\frac{4}{\rho(t)^2}\widetilde{\ul{\theta}}_tf_t\right)+\left(\frac{8m}{\omega_t^3}-\left(\mathcal{H}^2_t-\frac{4}{\rho(t)^2}\right)\right)\widetilde{\ul{\theta}}_tf_t
			\]
			Using Equations \eqref{eq_aprioriH2_constant}, \eqref{eq_arearadiusUniqueness}, it follows that for $B_t:=\left(\frac{8m}{\omega_t^3}-\left(\mathcal{H}^2_t-\frac{4}{\rho(t)^2}\right)\right)$
			\begin{align*}
			\norm{B_s\widetilde{\ul{\theta}}_sf_s-B_t\widetilde{\ul{\theta}}_tf_t}_{L^2(\gamma_s)}\le \frac{C}{\sigma_5}\norm{f_s}_{L^2(\gamma_s)}+\frac{C}{\sigma^4}\left(\norm{\widetilde{f}_s-\widetilde{f}_t}_{L^2(\gamma_s)}+\norm{\widetilde{R}_s}_{L^2(\gamma_s)}+\norm{\widetilde{R}_t}_{L^2(\gamma_s)}\right),
			\end{align*}
			where we used the decomposition of $f_t$, and note that $R$ is time-independent. Using \eqref{eq_bootstrap_ftilde}, \eqref{eq_bootstrap_Rtilde}, we conclude that
			\begin{align}\label{eq_bootstrap_estimate2}
			\norm{B_s\widetilde{\ul{\theta}}_sf_s-B_t\widetilde{\ul{\theta}}_tf_t}_{L^2(\gamma_s)}\le \frac{C}{\sigma_5}\norm{f_s}_{W^{2,2}(\gamma_s)}+\frac{C}{\sigma^3}\btr{\vec{a}_1-\vec{a}_0}.
			\end{align}
			It remains to consider the operator $\overline{J}_{\omega_t}:=-\frac{2}{\omega_t^2}\widehat{\Delta}-\frac{4}{\rho(t)^2}$. It is easy to check that for any smooth function $f$, it holds that
			\begin{align*}
			\norm{\overline{J}_{\omega_t}\left(\frac{2f}{\omega_t}\right)}_{L^2(\gamma_t)}&\le \frac{C}{\sigma^3}\norm{f}_{W^{2,2}(\gamma_t)},\\
			\norm{\overline{J}_{\omega_s}\left(\frac{2f}{\omega_s}\right)-\overline{J}_{\omega_t}\left(\frac{2f}{\omega_t}\right)}_{L^2(\gamma_s)}&\le \frac{C}{\sigma^4}\norm{f}_{C^2(\gamma_s)}\norm{\omega_s-\omega_t}_{W^{2,2}(\gamma_s)}.
			\end{align*}
			Finally notice that $\overline{J}_{\rho(t)b_t}\left(\frac{2b_t'}{b_t}\right)=0$, as $\overline{J}_{\rho(t)b_t}$ agrees with $J$ in the Minkowski lightcone for $\omega_t=\rho(t)b_t$	
			\footnote{One can either check that this is true by direct computation or noting that for $\omega_t=\rho(t)b_t$ one finds $\mathcal{H}^2_t=\frac{4}{\rho(t)}$ in the Minkowski lightcone. In particular, $\mathcal{H}^2_t$ only changes with respect to the area radius, but not under the variation of $\vec{a}(t)$.}.
			One then finds that
			\begin{align*}
			\btr{\overline{J}_{\omega_t}\left(\frac{2\rho(t)b_t'}{\omega_t}\right)}&=\btr{\overline{J}_{\omega_t}\left(\frac{2\rho(t)b_t'}{\omega_t}\right)-\overline{J}_{\rho(t)b_t}\left(\frac{2b_t'}{b_t}\right)}\le \frac{C}{\rho^3}\norm{{\omega}_t-\rho(t)b_t}_{C^2(\widehat{\gamma})}\norm{b_t'}_{C^2(\widehat{\gamma})}.
			\end{align*}
			We note that
			\[
			b_t'=-b_t^2\spann{\frac{\vec{a(t)}}{\sqrt{1+\btr{\vec{a}(t)}^2}}-\vec{x},\vec{a}_1-\vec{a}_0},
			\]
			and hence $\norm{b_t'}_{C^2(\widehat{\gamma})}\le C\btr{\vec{a}_1-\vec{a}_0}$. Using \eqref{eq_omegat1}, we find
			\begin{align*}
			\norm{\overline{J}_{\omega_t}\left(\frac{2\rho(t)b_t'}{\omega_t}\right)}_{L^2(\gamma_t)}\le \frac{C}{\sigma^3}\btr{\vec{a}_1-\vec{a}_0}.
			\end{align*}
			Combining the above estimates and using the decomposition $f_t=\widetilde{f}_t+R+\widetilde{R}_t$, where $\widetilde{f}_t=\rho(t)b_t'$, we find that
			\begin{align*}
			\norm{\overline{J}_{\omega_s}\left(\frac{2f_s}{\omega_s}\right)-\overline{J}_{\omega_t}\left(\frac{2f_t}{\omega_t}\right)}_{L^2(\gamma_s)}
			\le&\, \norm{\overline{J}_{\omega_s}\left(\frac{2\widetilde{f}_s}{\omega_s}\right)}_{L^2(\gamma_s)}+\norm{\overline{J}_{\omega_t}\left(\frac{2\widetilde{f}_t}{\omega_t}\right)}_{L^2(\gamma_t)}\\
			&\,+\norm{\overline{J}_{\omega_s}\left(\frac{2R}{\omega_s}\right)-\overline{J}_{\omega_t}\left(\frac{2R}{\omega_t}\right)}_{L^2(\gamma_s)}\\
			&\,+\norm{\overline{J}_{\omega_s}\left(\frac{2\widetilde{R}_s}{\omega_s}\right)}_{L^2(\gamma_s)}+\norm{\overline{J}_{\omega_t}\left(\frac{2\widetilde{R}_t}{\omega_t}\right)}_{L^2(\gamma_t)}\\
			\le&\,\frac{C}{\sigma^3}\left(\btr{\vec{a}_1-\vec{a}_0}+\norm{\widetilde{R}_s}_{W^{2,2}(\gamma_s)}+\norm{\widetilde{R}_t}_{W^{2,2}(\gamma_t)}\right)\\
			&\,+\frac{C}{\sigma^4}\norm{R}_{C^2(\gamma_s)}\norm{\omega_s-\omega_t}_{W^{2,2}(\gamma_s)}.
			\end{align*}
			Hence, using \eqref{eq_bootstrap_R}, \eqref{eq_bootstrap_Rtilde}, we find
			\begin{align}\label{eq_bootstrap_estimate3}
			\norm{\overline{J}_{\omega_s}\left(\frac{2f_s}{\omega_s}\right)-\overline{J}_{\omega_t}\left(\frac{2f_t}{\omega_t}\right)}_{L^2(\gamma_s)}\le \frac{C}{\sigma^3}\btr{\vec{a}_1-\vec{a}_0}+\frac{C}{\sigma^5}\norm{\omega_s-\omega_t}_{W^{2,2}(\gamma_s)}.
			\end{align}
			Combining \eqref{eq_bootstrap_estimate1}, \eqref{eq_bootstrap_estimate2}, \eqref{eq_bootstrap_estimate3}, we conclude that for all $t,s\in[0,1]$
			\begin{align}\label{eq_bootstrapEnd}
			\norm{J_s(\ul{\theta}_sf_s)-J_t(\ul{\theta}_tf_t)}_{L^2(\gamma_s)}\le \frac{C}{\sigma^3} \btr{\vec{a}_1-\vec{a}_0}+\frac{C}{\sigma^5}\left(\sup\limits_{\tau\in[0,1]}\norm{f_\tau}_{W^{2,2}(\gamma_\tau)}+\norm{\omega_s-\omega_t}_{W^{2,2}(\gamma_s)}\right).
			\end{align}
			Notice that by using the H\"older inequality, it is straightforward to check that for all $s,t\in[0,1]$
			\[
			\norm{\omega_s-\omega_t}_{W^{2,2}(\gamma_s)}\le C\sup\limits_{\tau\in[0,1]}\norm{f_\tau}_{W^{2,2}(\gamma_\tau)}.
			\]
			Moreover, by Lemma \ref{lem_apriori_vecalipschitz} and the Sobolev inequality, Corollary \ref{kor_sobolevineq}, we find that
			\[
			\btr{\vec{a}_0-\vec{a}_1}\le \frac{C}{\sigma^2}\norm{\omega_0-\omega_1}_{W^{2,2}(\gamma_0)}\le \frac{C}{\sigma^2}\sup\limits_{\tau\in[0,1]}\norm{f_\tau}_{W^{2,2}(\gamma_\tau)}.
			\]
			Combining these observations with \eqref{eq_bootstrapEnd} yields the claim.
		\end{proof}
	We are now ready to prove the desired uniqueness statement.
	\begin{proof}[Proof of Proposition \ref{prop_uniqueness}]
		We first note that for $\Sigma_0,\Sigma_1\in B_\sigma(B_1,B_2,B_3)$ with $\mathcal{H}^2_0=\mathcal{H}^2_1$, \eqref{eq_aprioriH2_constant} implies that
		\[
		\btr{\frac{1}{\rho_0^2}-\frac{1}{\rho_1^2}}\le \frac{C(B_1,B_3,C_0,c_1)}{\sigma^4}.
		\]
		Hence
		\begin{align*}
		\btr{\rho_0-\rho_1}\le \frac{C_\rho}{\sigma},
		\end{align*}
		where $C_\rho=C_{\rho}(B_1,B_3,C_0,c_1)$. By Lemma \ref{lem_ometatapriori} $\Sigma_t$ remains in the a-priori class, where we write $B_1$, $B_2$, $B_3$ for simplicity and choose $\sigma\ge \sigma_0$ sufficiently large. 
		Now, by assumption, we have
		\[
		\mathcal{H}^2_0(\vec{x})=\mathcal{H}^2_1(\vec{x}),
		\]
		for any $\vec{x}\in\Sbb^2$. In particular, the fundamental theorem of calculus implies
		\[
			0=\mathcal{H}^2_1(\vec{x})-\mathcal{H}^2(\vec{x})=\int\limits_0^1\frac{\d}{\d t}\mathcal{H}^2_t(\vec{x})\d t=\int\limits_0^1(J_t(\ul{\theta}_tf_t))(\vec{x})\d t.
		\]
		Again, we write $\ul{\theta}_t$ to emphasize that $\ul{\theta}$ is evaluated along $\Sigma_t$.
		In particular, 
		\[
		(J_s(\ul{\theta}_sf_s))(\vec{x})=\int\limits_0^1(J_s(\ul{\theta}_sf_s))(\vec{x})-(J_t(\ul{\theta}_tf_t))(\vec{x})\d t
		\]
		for all $s\in[0,1]$. By Propsition \ref{prop_stability2}, and Lemma \ref{lem_prelim_chitau}, we find
		\begin{align}\label{eq_bootstrapStart}
		\frac{C}{\sigma^4}\norm{f_s}_{W^{2,2}(\gamma_s)}\le \norm{J_s(\ul{\theta}_sf_s)}_{L^2(\gamma_s)}\le \sup\limits_{t\in[0,1]}\norm{J_s(\ul{\theta}_sf)-J_t(\ul{\theta}_tf_t)}_{L^2(\gamma_s)}
		\end{align}
		for all $s\in[0,1]$, where $C=C(B_1,B_2,B_3,m)$, and we obtained the upper bound on the $L^2$-norm from the above equality by exchanging the order of integration and applying the H\"older inequality. Since the estimate is valid for all $s$, Lemma \ref{lem_appx_bootstrap} implies that
		\[
			\sigma\left(\sup\limits_{t\in[0,1]}\norm{f_t}_{W^{2,2}(\gamma_t)}\right)\le C(B_1,B_2,B_3,m)\left(\sup\limits_{t\in[0,1]}\norm{f_t}_{W^{2,2}(\gamma_t)}\right).
		\]
		This implies that $\sup\limits_{t\in[0,1]}\norm{f_t}_{W^{2,2}(\gamma_t)}=0$ for $\sigma$ sufficiently large. In particular $\omega_0=\omega_1$, concluding the proof.
	\end{proof}
	\section{Area preserving null mean curvature flow}\label{sec_APNMCF}
	
	We now want to employ \emph{area preserving null mean curvature flow} (APNMCF) as a tool to find STCMC surfaces. Thus, the speed of the flow $x\colon [0,T)\times \Sigma\to\mathcal{N}$ is defined as
	\begin{align}\label{eq_APNMCF}
	\frac{\d}{\d t} x=-\frac{1}{2\ul{\theta}}\left(\mathcal{H}^2-\fint\mathcal{H}^2\right)\ul{L}.
	\end{align}
	Note that the flow is invariant under the choice of null generator $\ul{L}$ and, it is immediate from \eqref{eq_APNMCF} that the flow is only defined on lightcones, i.e., when $\ul{\theta}>0$. Moreover, STCMC surfaces are indeed the stationary points of the flow, and the flow preserves area as the name suggests, cf. Subsection \ref{subsec_stability}. In the (standard) lightcones of the Minkowski, de\,Sitter and Anti-de\,Sitter spactime, the flow is equivalent to Hamilton's Ricci flow \cite{wolff1, wolff6}
	As argued in \cite{huisken1}, short-time existence follows similarly as for the respective mean curvature flow, in this case null mean curvature flow \cite{roeschscheuer}. For more details, we also refer to the doctoral thesis of Pihan \cite{pihan}.
	
	Although we would hope that the flow is well-behaved for general surfaces at least in the Schwarzschild lightcone, here we will only consider solutions within the a-priori class $B_\sigma(B_1,B_2,B_3)$. In particular, we aim to show that the a-priori class is preserved under the flow for suitable choices of $B_1$, $B_2$, $B_3$, which will lead to long-time existence and convergence of the flow. Our approach is strongly motivated by the work of Huisken--Yau \cite{huiskenyau}.
	
	As the area is preserved under the flow, we have
	\[
	\rho(t)=\rho(0),
	\]
	and consequently 
	\[
	\btr{\rho(t)-\sigma}\le C(\Sigma_0).
	\]
	\subsection{Long-time existence and convergence in $B_\sigma(B_1,B_2,B_3)$}
	
	The first aim of this section is to prove the following theorem:
	\begin{thm}\label{thm_longtime}
		Let $B_1^0$, $B_2^0$, $B_3^0$, and $c_1\ge 10$ be strictly positive constants. Then there exist constants $B_2(B_2^0,m)$, $B_3(B_2^0,B_3^0,m)$, $B_1(B_2,B_3,B_1^0,m)$ and $\sigma_0(m,B_1,B_2,B_3,c_1)$ such that the following holds:\newline
		If $\sigma\ge\sigma_0$, and $\Sigma_0\in B_\sigma(B_1^0,B_2^0,B_3^0)$ such that 
		\[
			\norm{\frac{\omega_0}{\rho_0}}_{C^3(\hatgamma)}\le c_1,
		\]
		then the solution of APNMCF exists for all times and remains in $B_\sigma(B_1,B_2,B_3)$.
	\end{thm}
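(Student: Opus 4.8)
The plan is to run a continuity/bootstrap argument. Short-time existence of APNMCF is known (see the discussion after~\eqref{eq_APNMCF}), so it suffices to show that the a-priori class is preserved: if the solution lies in $B_\sigma(B_1,B_2,B_3)$ on $[0,T)$, then it in fact lies strictly inside this class, which by a standard open--closed argument yields long-time existence with the stated bounds. First I would fix the constants in the right order dictated by the theorem statement: $B_2$ depending on $B_2^0$ and $m$, then $B_3$ depending on $B_2^0,B_3^0,m$, then $B_1$ depending on the previous ones and $B_1^0$, and finally $\sigma_0$ large. The idea is that the $C^0$-bound $\abs{\omega-\sigma}\le B_1$ has "room to spare" because the area is conserved ($\rho(t)=\rho(0)$, hence $\abs{\rho(t)-\sigma}\le C(\Sigma_0)\le B_1^0$), while the roundness quantities $\accentset{\circ}{A}$ and $\nabla\accentset{\circ}{A}$ decay exponentially toward values governed only by $m$ and the asymptotics.

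The core is the evolution equations. I would compute $\frac{\d}{\d t}\accentset{\circ}{A}$ and $\frac{\d}{\d t}\nabla\accentset{\circ}{A}$ along the flow. Since $\frac{\d}{\d t}x=-\frac{1}{2\ul\theta}(\mathcal{H}^2-\fint\mathcal{H}^2)\ul L=:f\ul L$, one uses the Raychaudhuri/optical equations (cf.\ \cite[Proposition 4.27]{wolff_thesis}) together with the null Simon's identity, Proposition~\ref{prop_nullsimon}, to produce a reaction--diffusion system of the schematic form
\[
\frac{\d}{\d t}\accentset{\circ}{A}=-2\,\accentset{\circ}{\Hess}(\ul\theta f)+(\text{lower order}),
\]
and, substituting $\ul\theta f=-\tfrac12(\mathcal{H}^2-\fint\mathcal{H}^2)=-\tfrac12(\tr A-\fint\tr A)$ and commuting derivatives, a parabolic inequality
\[
\frac{\d}{\d t}\newbtr{\accentset{\circ}{A}}^2\le \Delta\newbtr{\accentset{\circ}{A}}^2-2\newbtr{\nabla\accentset{\circ}{A}}^2-\frac{c}{\sigma^2}\newbtr{\accentset{\circ}{A}}^2+\frac{C(m)}{\sigma^{?}}\newbtr{\accentset{\circ}{A}}+(\text{ambient curvature and }\tau\text{ terms bounded by }C/\sigma^{10}),
\]
where the crucial negative zero-order coefficient $-c/\sigma^2$ comes from the strict stability established in Proposition~\ref{prop_strictstability} (the positive-mass term $6m/\sigma^3$ and the spectral gap of $-2\Delta-\mathcal{H}^2$ on $(\Sbb^2,\gamma_\omega)$). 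By the a-priori estimates, Proposition~\ref{prop_apriori} and Corollary~\ref{kor_aprior} (in particular \eqref{eq_scalar2FF_C1} and the improved $C^3$-bounds on $\omega/\rho$, which apply at every time in the class), all the inhomogeneous terms are controlled by $C(m,B_1,B_2,B_3)/\sigma^{k}$ with $k$ strictly larger than the scaling of $\newbtr{\accentset{\circ}{A}}^2\sim\sigma^{-8}$. Applying the maximum principle (for the $C^0$-estimate on $\accentset{\circ}{A}$) and then a differentiated version plus interpolation (for $\nabla\accentset{\circ}{A}$), one obtains exponential decay of $\newbtr{\accentset{\circ}{A}}-\sigma^{-4}B_2^\infty(m)$ and of $\newbtr{\nabla\accentset{\circ}{A}}$ toward their "Schwarzschild" values, which shows that starting from $B_\sigma(B_1^0,B_2^0,B_3^0)$ the solution enters (and stays strictly inside) $B_\sigma(B_1,B_2,B_3)$ for appropriate $B_2,B_3$. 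For the $C^0$-bound on $\omega$, one integrates $\frac{\d}{\d t}\omega=f=-\frac{1}{2\ul\theta}(\mathcal{H}^2-\fint\mathcal{H}^2)$ in time: using \eqref{eq_aprioriH2_constant} and the exponential decay of the oscillation of $\mathcal{H}^2$ (equivalently of $\accentset{\circ}{A}$), $\int_0^\infty\norm{f}_{L^\infty}\,\d t$ is bounded by $C(m,B_2^0,B_3^0)/\sigma$, which combined with $\abs{\rho(t)-\sigma}\le B_1^0$ gives $\abs{\omega-\sigma}\le B_1$ with $B_1$ depending on the already-fixed constants.

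The main obstacle is the derivation and bookkeeping of the evolution equation for $\accentset{\circ}{A}$ (and its covariant derivative) along a flow in a genuinely Lorentzian, degenerate-metric setting: one must carefully track the ambient curvature terms $\overline{\Riem}$, $\overline{\nabla}\,\overline{\Riem}$ and the torsion $\tau$ through the null Simon identity, Proposition~\ref{prop_nullsimon}, and verify that every error term indeed decays \emph{faster} in $\sigma$ than the linearized-stability term $-c\sigma^{-2}\newbtr{\accentset{\circ}{A}}^2$ — this is exactly where positivity of the mass and the spectral estimates of Subsection~\ref{subsec_stability} are indispensable, and where the precise decay rates baked into Definition~\ref{defi_asymclassS} are used. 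A secondary subtlety is ensuring the bootstrap constants can be chosen in the stated dependency order (in particular that $B_2,B_3$ do not circularly depend on $B_1$); this works because the stability constant and all curvature error terms depend only on $m$ and the fixed asymptotic data, not on the size of the $C^0$-neighbourhood $B_1$.
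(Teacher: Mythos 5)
Your overall strategy matches the paper's on the core of the theorem: fix the constants in the order $B_2$, $B_3$, $B_1$, $\sigma_0$, assume the solution stays in the class on $[0,T)$, derive the evolution of $\accentset{\circ}{A}$ and $\nabla\accentset{\circ}{A}$ from the optical equations together with the (contracted) null Simon identity, close the estimates by the parabolic maximum principle using Proposition \ref{prop_apriori} and Corollary \ref{kor_aprior} to control the error terms, and extend the flow by parabolic regularity. However, your attribution of the crucial negative zero-order term is incorrect: the coefficient $-c/\sigma^2$ in front of $\newbtr{\accentset{\circ}{A}}^2$ does \emph{not} come from Proposition \ref{prop_strictstability}. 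That proposition is an integral inequality for mean-zero test functions and cannot be fed into a pointwise evolution inequality; in the paper the good term is the curvature coefficient $-\bigl(\scal+\tfrac{2(1-h)}{\omega^2}+\tfrac{h'}{\omega}\bigr)\accentset{\circ}{A}_{ij}$ in Lemma \ref{lem_evotracefree}, with $\scal\approx 2/\sigma^2$ dominating the mass contributions of order $m/\sigma^3$, and Remark \ref{bem_B2} explicitly notes that the $\accentset{\circ}{A}$- and $\nabla\accentset{\circ}{A}$-estimates can be made independent of $m$. So the claim that positivity of the mass and the spectral estimates are indispensable at this step is wrong; $m>0$ enters elsewhere (in the paper through the $C^0$-estimate, Proposition \ref{prop_B1}, and through stability/convergence; in your plan through the decay rate of the speed). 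Since your schematic inequality is nonetheless the correct one and the null Simon identity does produce the curvature term, this is a repairable misattribution rather than a fatal flaw, but as written the derivation of the $-c/\sigma^2$ term would fail.

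Your $C^0$-bound is also handled by a genuinely different mechanism than the paper's. You integrate the speed in time using exponential decay of $\mathcal{H}^2-\fint\mathcal{H}^2$, whereas Proposition \ref{prop_B1} argues at a first time and point where $\omega$ reaches $\sigma+D$: there the speed has a sign, and combining the a-priori expansion of $\mathcal{H}^2$ in terms of $b_{\rho,\vec{a}}$ with first- and second-derivative tests at the maximum forces $\btr{\vec{a}}\le C/\sigma$, while $\omega(p_0)=\sigma+D$ forces $\btr{\vec{a}}\ge (D-K_0)/(4\sigma)-C/\sigma^2$, bounding $D$. Your route can be made to work inside the bootstrap, but note two caveats: (i) you need $L^\infty$-decay of the speed, hence interpolation against uniform higher-order bounds (Shi-type or parabolic smoothing estimates), which is extra machinery compared with the paper's elementary pointwise argument and must be justified on the bootstrap interval; and (ii) with decay rate of order $m/\sigma^3$ and initial speed of order $\sigma^{-3}$, the time integral of $\norm{f}_{L^\infty}$ is $O(1/m)$, not $O(1/\sigma)$ as you claim---still sufficient to fix $B_1=B_1(B_2,B_3,B_1^0,m)$, but your stated bound is too optimistic.
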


	In the following Propositions, we will always assume that there exist fixed constants ${C_1, C_2\ge 10}$ such that
	\[
		\norm{\frac{\omega_t}{\rho_0}}_{C^3(\hatgamma)}\le C_1,\text{ and }\btr{\mathcal{H}_t^2}\le \frac{C_2}{\rho^2},
	\]
	for all $\Sigma_t$ and for all times $t\in[0,T)$. Later, we will easily verify that these conditions are indeed satisfied and preserved along the flow as long as it remains in the a-priori class by using Proposition \ref{prop_apriori} and Corollary \ref{kor_aprior}.
	
	We begin by establishing $C_0$-estimates under the flow. Recall that 
	{the asymptotics ensure that }$\ul{\theta}>0$ in the asymptotic region $(r_0,\infty)\times\Sbb^2$ of $\mathcal{N}$ {}\footnote{Here, we identify $(r_0,\infty)\times\Sbb^2$ with a subset in $\mathcal{N}$ via the diffeomorphism induced by the background foliation} 
	{for $r_0$ sufficiently large}. Provided the solution lies in $B_\sigma(B_1,B_2,B_3)$, we can ensure $\omega_t\ge \sigma-B_1>r_0$ provided $\sigma\ge \sigma_0(B_1)$ is sufficiently large. From now on, we will always assume that this is the case.
	
	\begin{prop}\label{prop_B1}
		Let $(\Sigma_t)_{[0,T)}$ be a smooth solution to APNMCF starting at $\Sigma_0$, such that $\Sigma_t\in B_\sigma(B_1,B_2,B_3)$ for all $t\in[0,T)$. Let $B_1^0$ be a non-negative constant (independent of $\sigma$) such that $\btr{\omega(0,\cdot)-\sigma}\le B_1^0$. Then we have
		\begin{align*}
		\omega_{\max}(t)&\le \sigma+C(B_1^0,B_3,C_1,m),\\
		\omega_{\min}(t)&\ge \sigma-C(B_1^0,B_3,C_1,m),
		\end{align*}
		for all $t\in[0,T)$ provided $\sigma\ge \sigma_0(m,B_1,B_2,B_3,C_1,C_2)$.
	\end{prop}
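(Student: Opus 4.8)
The plan is to track the extremal values $\omega_{\max}(t)$ and $\omega_{\min}(t)$ along the flow and show they stay within $O(1)$ of $\sigma$ by a maximum principle argument applied to the evolution of $\omega_t$. First I would compute the evolution equation for $\omega_t$ under APNMCF. Since $\frac{\d}{\d t}x = -\frac{1}{2\ul\theta}(\mathcal{H}^2 - \fint \mathcal{H}^2)\ul{L}$ and $\omega$ is the graph function over the background foliation (with $\partial_r$ identified with $\ul{L}$), the normal speed translates directly into $\frac{\d}{\d t}\omega_t = -\frac{1}{2\ul\theta}(\mathcal{H}^2 - \fint\mathcal{H}^2)$. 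Using the identity for $\mathcal{H}^2$ in terms of $\omega$ (the traced version of \cite[Proposition 4.22]{wolff_thesis}, i.e. $\mathcal{H}^2 = \mathcal{H}^2_r - 2\ul\theta\Delta_{\gamma_\omega}\omega + \ul\theta^2|\nabla\omega|^2 - 2\ul\theta\zeta_r(\nabla\omega)$), this becomes a quasilinear parabolic PDE for $\omega_t$ of the form $\frac{\d}{\d t}\omega_t = \Delta_{\gamma_{\omega_t}}\omega_t + (\text{lower order in }\nabla\omega) + \frac{1}{2\ul\theta}\left(\fint\mathcal{H}^2 - \mathcal{H}^2_r\right)$.

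Next I would apply the parabolic maximum principle. At a spatial maximum point of $\omega_t$, the Laplacian term is $\le 0$ and the gradient terms vanish, so $\frac{\d}{\d t}\omega_{\max} \le \frac{1}{2\ul\theta}\left(\fint\mathcal{H}^2 - \mathcal{H}^2_r|_{\omega_{\max}}\right)$. The key point is that along leaves of the background foliation $\mathcal{H}^2_r$ is (to leading order) a strictly decreasing function of $r$: by Lemma \ref{lem_prelim_backgroundfol}, $\mathcal{H}^2_r = \frac{2h(r)}{r^2} + O_{1,1}(r^{-4})$ with $h(r) = 1 - \frac{2m}{r} + O_4(r^{-2})$, so $\frac{\partial}{\partial r}\mathcal{H}^2_r = -\frac{4}{r^3} + \frac{12m}{r^4} + \dots < 0$ for $r$ large. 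Meanwhile $\fint\mathcal{H}^2$ is, by the a-priori estimate \eqref{eq_aprioriH2_constant}, essentially $\frac{4}{\rho^2} - \frac{8m}{\sigma^3} + O(\sigma^{-4})$ with $\rho = \rho(0)$ fixed. Combining these and using that $\ul\theta \approx \frac{2}{\omega}$, one gets at the maximum point a bound of the shape $\frac{\d}{\d t}\omega_{\max} \le -\frac{c}{\sigma^3}(\omega_{\max} - \sigma) + \frac{C(B_1^0,B_3,C_1,m)}{\sigma^4}$ for a uniform $c>0$ (coming from the strict monotonicity, which is where positivity of $m$ or rather just the leading $-4/r^3$ enters — actually the leading term already gives monotonicity), once $\omega_{\max} \ge \sigma + C$. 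Here the contribution of $\rho(t)^{-2}$ versus $\omega_{\max}^{-2}$ must be handled carefully: since $\rho(t) = \rho(0)$ is constant and within $O(B_1^0)$ of $\sigma$ at $t=0$, the discrepancy is controlled. An analogous lower-barrier argument at the spatial minimum gives $\frac{\d}{\d t}\omega_{\min} \ge -\frac{c}{\sigma^3}(\omega_{\min} - \sigma) - \frac{C}{\sigma^4}$. A standard ODE comparison then shows $\omega_{\max}(t) - \sigma$ and $\sigma - \omega_{\min}(t)$ cannot exceed $\max\{B_1^0, C(B_1^0,B_3,C_1,m)\}$ for all $t \in [0,T)$, which is the assertion.

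The main obstacle is getting the sign right in the barrier argument, i.e. genuinely exploiting the strict monotonicity of $\mathcal{H}^2_r$ in $r$ against the nearly-constant $\fint\mathcal{H}^2$, while absorbing all the error terms ($O_{1,1}(r^{-4})$ pieces of $\mathcal{H}^2_r$, the $O(\sigma^{-4})$ error in \eqref{eq_aprioriH2_constant}, the difference between $\rho(t)=\rho(0)$ and $\sigma$, and the gradient/Laplacian lower-order terms from the $\omega$-identity for $\mathcal{H}^2$) into the $\frac{C}{\sigma^4}$ remainder. One must also be careful that the constants $C_1$, $C_2$ are only assumed (not yet verified) at this stage, so all estimates must be phrased purely in terms of $C_1$, $C_2$, $B_1$, $B_2$, $B_3$, $m$; the bounds on $\mathcal{H}^2_r$, $\ul\theta$, $\zeta_r$ come from Definition \ref{defi_asymclassS} and Lemma \ref{lem_prelim_backgroundfol} and do not require the solution's own a-priori bounds, while the bound on $\fint\mathcal{H}^2$ uses \eqref{eq_aprioriH2_constant}, whose hypotheses are exactly the assumed $C^3$-control on $\omega_t/\rho_0$ and membership in $B_\sigma(B_1,B_2,B_3)$. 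Finally, the constant $\sigma_0$ must be taken large enough that $\sigma - B_1 > r_0$ (so $\ul\theta > 0$ and the flow is defined, as already noted before the Proposition) and that the $\sigma^{-4}$ errors are dominated by the $\sigma^{-3}$ restoring term.
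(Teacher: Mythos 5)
Your opening moves (the evolution equation $\tfrac{\d}{\d t}\omega=-\tfrac{1}{2\ul{\theta}}(\mathcal{H}^2-\fint\mathcal{H}^2)$ and the examination of the sign of the speed at a spatial maximum / first-touching time) match the paper, but the core mechanism you invoke does not work, and the claimed differential inequality does not follow from your computation. At a maximum you discard the Laplacian and are left comparing $\fint\mathcal{H}^2$ with $\mathcal{H}^2_r\vert_{r=\omega_{\max}}$. By Lemma \ref{lem_prelim_backgroundfol} and \eqref{eq_aprioriH2_constant} this difference is, to leading order, $\tfrac{4}{\rho^2}-\tfrac{4}{\omega_{\max}^2}$ plus mass terms of one order lower; since $\rho$ is essentially an average of $\omega$ (so $\omega_{\max}\ge\rho$ up to $O(\sigma^{-1})$), the radial monotonicity of $\mathcal{H}^2_r$ makes this leading contribution \emph{nonnegative}: the farther-out point has the \emph{smaller} background curvature, hence $\fint\mathcal{H}^2-\mathcal{H}^2_r\vert_{\omega_{\max}}\gtrsim \sigma^{-3}(\omega_{\max}-\rho)\ge 0$, and your estimate only yields $\tfrac{\d}{\d t}\omega_{\max}\le(\text{positive term of order }\sigma^{-2}(\omega_{\max}-\rho))$ — a destabilizing, not restoring, inequality. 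The aside that ``the leading term already gives monotonicity'' (so that $m>0$ is not really needed) is the telltale sign of the problem: in the Minkowski lightcone ($m=0$) off-center boosted spheres $\omega=\rho\, b_{\vec a}$ are exact STCMC surfaces, hence stationary under APNMCF, so no centering $C^0$ bound can be extracted from an argument that is insensitive to the mass term. The genuine restoring effect is carried by the term you threw away: for surfaces in the a-priori class the pointwise estimate \eqref{eq_aprioriH2} says $\mathcal{H}^2\approx\tfrac{4}{\rho^2}-\tfrac{8m}{b_{\rho,\vec a}^3}$, in which the $\tfrac{4}{\rho^2}$ part is \emph{constant along} $\Sigma_t$ and cancels exactly in $\mathcal{H}^2-\fint\mathcal{H}^2$; the sign information then resides entirely in the $m$-term, which is largest where $\omega$ is largest.

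This is exactly how the paper argues, and the remaining steps are not an ODE comparison at all: at the first time the maximum reaches $\sigma+D$, the nonnegativity of the speed gives $0\ge \fint 8m\,b_{\rho,\vec a}^{-3}\,\d\mu/\btr{\Sigma}-8m\,b_{\rho,\vec a}(p_0)^{-3}-C\sigma^{-4}$; the first- and second-derivative tests at $p_0$ applied to the profile $b_{\vec a}$ (using \eqref{eq_proofaprioir1}, \eqref{eq_proofaprioir2}) show $b_{\vec a}(p_0)^{-1}\approx\sqrt{1+\btr{\vec a}^2}-\btr{\vec a}$, and an elementary computation converts the inequality into $\btr{\vec a}\le C(B_3,C_1,m)/\sigma$. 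Separately, $\omega(p_0)=\sigma+D$ together with $\omega\approx\rho\,b_{\vec a}$ and $\rho\le\sigma+K_0$ forces $\btr{\vec a}\ge\tfrac{D-K_0}{4\sigma}-C\sigma^{-2}$, whence $D\le K_0+C(B_1^0,B_3,C_1,m)$. If you want to salvage a maximum-principle formulation, you must keep the sharp pointwise formula \eqref{eq_aprioriH2} (i.e.\ implicitly retain the Laplacian corrections through Proposition \ref{prop_apriori}) rather than the lossy bound $\mathcal{H}^2(p_0)\ge\mathcal{H}^2_r\vert_{\omega_{\max}}$, and the constant must, as in the statement, depend on $m$.
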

	\begin{proof}
		Note that by \eqref{eq_appx_arearadiusdifference} and the assumption on $\Sigma_0$, we have that
		\[
			\rho=\rho(t)=\rho(0)\le \sigma+B_1^0+1
		\]
		for $\sigma\ge\sigma_0$ sufficiently large. For convenience, we define $K_0:=B_1^0+1\ge 1$, and we first aim to prove the upper bound.
		
		Let $D>K_0$ and assume there exists a first time $t_0$ and point $p_0$ such that 
		\[
		\omega_{\max}(t_0)=\omega(t_0,p_0)=\sigma+D.
		\]
		By our choice of $D$, we have $t_0\in(0,T)$. In particular, $0\le \frac{\d}{\d t}\omega(t_0,p_0)$ and since $\ul{\theta}>0$, this implies
		\[
		0\ge \mathcal{H}^2(t_0,p_0)-\fint\mathcal{H}^2(t_0).
		\]
		Then, the a-priori estimates, Proposition \ref{prop_apriori}, yield that
		\begin{align}\label{eq_speed1}
		0\ge \frac{8m}{4\pi\rho^2}\int_{\Sbb^2}\frac{1}{b_{\rho,\vec{a}}^3}\d\mu_{\gamma_\omega}-\frac{8m}{b_{\rho,\vec{a}}(p_0)^3}-\frac{C(B_3,C_1)}{\sigma^4}
		\end{align}
		if $\sigma\ge \sigma_0(B_1,B_2,B_3,C_1,C_2)$.
		
		As $\omega$ takes a maximum at $p_0\in\Sbb^2$, $f:=\frac{\widetilde{\rho}}{\omega}$ achieves a minimum at $p_0\in\Sbb^2$. In particular, $\d f_{p_0}=0$, In view of \eqref{eq_proofaprioir1} this implies that
		\[
		\btr{\spann{\vec{a},e_i}}=\btr{\d\left( \frac{1}{b_{\vec{a}}}\right)_{p_0}(e_i)}\le \frac{C(B_3,C_1)}{\sigma^2}
		\]
		for an ON-frame $\{e_1,e_2\}$ of $T_{p_0}\Sbb^2$. As $\{p_0,e_1,e_2\}$ forms an ON-frame of $\R^3$, in particular
		\[
		\vec{a}=\spann{\vec{a},p_0}p_0+\spann{\vec{a},e_1}e_1+\spann{\vec{a},e_2}e_2,
		\]
		the above estimate implies
		\[
		\btr{\btr{\vec{a}}-\btr{\spann{\vec{a},p_0}}}\le \frac{C(B_3,C_1)}{\sigma^2}.
		\]
		We now claim that in fact
		\[
		\btr{\btr{\vec{a}}-\spann{\vec{a},p_0}}\le \frac{C(B_3,C_1)}{\sigma^2}.
		\]
		For $\spann{\vec{a},p_0}\ge 0$, this follows immediately from the previous estimate. Now assume $\spann{\vec{a},p_0}<0$. As $f$ takes a minimum in $p_0$, \eqref{eq_proofaprioir2} yields
		\[
		{\Hess_{\hatgamma}\left( \frac{1}{b_{\vec{a}}}\right)_{p_0}(e_i,e_i)}+\frac{C(B_3,C_1)}{\sigma^2}\ge 0.
		\]
		As $ \frac{1}{b_{\vec{a}}}=\sqrt{1+\btr{\vec{a}}^2}-\vec{a}\cdot\vec{x}$, i.e., a linear combination of a constant and first spherical harmonics, one can directly check that
		\[
		{\Hess_{\hatgamma}\left( \frac{1}{b_{\vec{a}}}\right)_{\vec{x}}(e_i,e_i)}=\spann{\vec{a},\vec{x}}.
		\]
		Hence, the above inequality implies
		\[
		0\le -\spann{\vec{a},p_0}\le \frac{C(B_3,C_1)}{\sigma^2}.
		\]
		We conclude that
		\[
		\btr{\btr{\vec{a}}-\spann{\vec{a},p_0}}=\btr{\btr{\vec{a}}+\btr{\spann{\vec{a},p_0}}}\le \btr{\btr{\vec{a}}-\btr{\spann{\vec{a},p_0}}}+2\btr{\spann{\vec{a},p_0}}\le \frac{C(B_3,C_1)}{\sigma^2}
		\]
		as claimed. Now
		\begin{align}\label{eq_max1}
		\btr{\frac{1}{b_{\vec{a}}(p_0)}-(\sqrt{1+\btr{\vec{a}}^2}-\btr{\vec{a}})}=\btr{\btr{\vec{a}}-\spann{\vec{a},p_0}}\le \frac{C(B_3,C_1)}{\sigma^2}
		\end{align}
		implies
		\[
		\btr{\frac{1}{b_{\rho,\vec{a}}(p_0)^3}-\frac{\left(\sqrt{1+\btr{\vec{a}}^2}-\btr{\vec{a}}\right)^3}{\rho^3}}\le \frac{C(B_3,C_1)}{\sigma^5}.
		\]
		Furthermore, \eqref{eq_aprioic2alpha} and \eqref{eq_appx_volumeform} yield
		\[
		\btr{\int_{\Sbb^2} \frac{1}{b_{\rho,\vec{a}}^3}\d\mu_{\gamma_\omega}-\int_{\Sbb^2}\frac{1}{b_{\rho,\vec{a}}}\d\mu_{\hatgamma}}\le \frac{C(B_3,C_1)}{\sigma^3}.
		\]
		Hence, \eqref{eq_speed1} implies
		\begin{align*}
		0&\ge \frac{1}{\sigma^3}\left(\fint_{\Sbb^2}\frac{1}{b_{\vec{a}}}\d\mu_{\hatgamma}-\left(\sqrt{1+\btr{\vec{a}}^2}-\btr{\vec{a}}\right)^3-\frac{C(B_3,C_1,m)}{\sigma}\right)\\
		&=\frac{1}{\sigma^3}\left(\sqrt{1+\btr{\vec{a}}^2}-\left(\sqrt{1+\btr{\vec{a}}^2}-\btr{\vec{a}}\right)^3-\frac{C(B_3,C_1,m)}{\sigma}\right).
		\end{align*}
		Note that 
		\begin{align*}
		\sqrt{1+\btr{\vec{a}}^2}-\left(\sqrt{1+\btr{\vec{a}}^2}-\btr{\vec{a}}\right)^3
		&=\frac{\sqrt{1+\btr{\vec{a}}^2}\left(\sqrt{1+\btr{\vec{a}}^2}+\btr{\vec{a}}\right)-\left(\sqrt{1+\btr{\vec{a}}^2}-\btr{\vec{a}}\right)^2}{\sqrt{1+\btr{\vec{a}}^2}+\btr{\vec{a}}}\\
		&=\frac{3\btr{\vec{a}}\sqrt{1+\btr{\vec{a}}^2}-\btr{\vec{a}}^2}{\sqrt{1+\btr{\vec{a}}^2}+\btr{\vec{a}}}\\
		&\ge\frac{2\btr{\vec{a}}\sqrt{1+\btr{\vec{a}}^2}}{2\sqrt{1+\btr{\vec{a}}^2}}= \btr{a},
		\end{align*}
		and thus we obtain
		\begin{align}\label{eq_speed2}
		\btr{\vec{a}}\le \frac{C(B_3,C_1,m)}{\sigma}.
		\end{align}
		As $\omega(p_0)=\sigma+D$, \eqref{eq_proofaprioir2} and \eqref{eq_max1} imply that
		\[
		\sqrt{1+\btr{\vec{a}}^2}-\btr{\vec{a}}-\frac{C(B_3,C_1)}{\sigma^2}\le \frac{\rho}{\omega(p_0)}=\frac{\rho}{\sigma+D}\le \frac{\sigma+K_0}{\sigma+D}.
		\]
		Rearranging gives
		\[
		\frac{\sigma+K_0}{\sigma+D}+\btr{\vec{a}}\ge \sqrt{1+\btr{\vec{a}}^2}-\frac{C(B_3,C_1)}{\sigma^2}
		\]
		Note that for $\sigma\ge\sigma_0(B_1,B_2,B_3,C_1,C_2)$ the right-hand side is positive, and first squaring both sides and then multiplying by $\frac{\sigma+D}{\sigma+K_0}$ yields
		\begin{align*}
		2\btr{\vec{a}}&\ge \frac{\sigma+D}{\sigma+K_0}-\frac{\sigma+K_0}{\sigma+D}-\frac{(\sigma+D)}{(\sigma+K_0)}\frac{C(B_3,C_1)}{\sigma^2}\\
		&=\frac{D^2-K_0^2+2\sigma(D-K_0)}{(\sigma+K_0)(\sigma+D)}-\frac{(\sigma+D)}{(\sigma+K_0)}\frac{C(B_3,C_1)}{\sigma^2}\\
		&\ge \frac{2\sigma(D-K_0)}{(\sigma+K_0)(\sigma+D)}-\frac{(\sigma+D)}{(\sigma+K_0)}\frac{C(B_3,C_1)}{\sigma^2}
		\end{align*}
		As $\Sigma_0,\Sigma_{t_0}\in B_\sigma(B_1,B_2,B_3)$, we have without loss of generality that $K_0,D\le B_1\le \sigma$ for $\sigma\ge \sigma_0(B_1)$. Hence
		\begin{align}\label{eq_max2}
		\btr{\vec{a}}\ge \frac{D-K_0}{4\sigma}-\frac{C(B_3,C_1)}{\sigma^2}
		\end{align}
		Combining \eqref{eq_speed2} and \eqref{eq_max2} implies the desired upper bound for $\sigma\ge \sigma_0(B_1,B_2,B_3,C_1,C_2)$. The lower bound is derived in analogy.
	\end{proof}
	
	To prove the bounds on $\newbtr{\accentset{\circ}{A}}$, $\newbtr{\nabla\accentset{\circ}{A}}$, we use 
	{Raychaudhuri optical equations}, cf.\ \cite[Proposition 4.27]{wolff_thesis}. We first prove the following Lemma:
	\begin{lem}\label{lem_evotracefree}
		Under APNMCF, we have
		\begin{align*}
		\frac{\d}{\d t}\accentset{\circ}{A}_{ij}=&\,\Delta\accentset{\circ}{A}_{ij}+\nabla_i\accentset{\circ}{A}_{jm}\tau^m+\nabla_i\accentset{\circ}{A}_{im}\tau^m-\dive\accentset{\circ}{A}_i\tau_j-\dive\accentset{\circ}{A}_j\tau_i\\
		&\,+\left(\accentset{\circ}{\left(\tau\otimes\d{\mathcal{H}}^2\right)}+\accentset{\circ}{\left(\d {\mathcal{H}}^2\otimes\tau\right)}\right)_{ij}-\left(\scal+\frac{2(1-h)}{\omega^2}+\frac{h'}{\omega}\right)\accentset{\circ}{A}_{ij}\\
		&\,+\dive\tau A_{ij}-\mathcal{H}^2\nabla_i\tau_j+(\nabla\tau\cdot A)_{ij}-(A\cdot\nabla\tau)_{ji}+\left(\mathcal{H}^2-\fint\mathcal{H}^2\right)\accentset{\circ}{\nabla\tau}_{ij}\\
		&\,+\mathcal{H}^2\tau_i\tau_j-\btr{\tau}^2A_{ij}+\btr{\nabla\omega}^2\left(h''+\frac{2(1-h)}{\omega^2}\right)\ul{\theta}\accentset{\circ}{\ul{\chi}}_{ij}+\left(\mathcal{H}^2-\fint\mathcal{H}^2\right)\accentset{\circ}{\left(\tau\otimes\tau\right)}_{ij}\\
		&\,+\frac{1}{2}\left(\mathcal{H}^2-\fint\mathcal{H}^2\right)\left(\frac{1}{\ul{\theta}^2}\left(\newbtr{\accentset{\circ}{\ul{\chi}}}^2+\overline{\Ric}(\ul{L},\ul{L})\right)\accentset{\circ}{A}_{ij}-\frac{1}{\ul{\theta}}\accentset{\circ}{\left(A\cdot\overset{\circ}{\ul{\chi}}\right)}_{ij}+\accentset{\circ}{\left(\overline{\Riem}(\ul{L},\cdot,L,\cdot)\right)}_{ij}\right)\\
		&\,+\left(\ul{\theta}^2\left(2\frac{(1-h)}{\omega^2}+\frac{2h'}{\omega}-h''\right)-\ul{\theta}\left(16\frac{(1-h)}{\omega^3}+12\frac{h'}{\omega^2}\right)\right)\accentset{\circ}{\left(\d\omega\otimes\d\omega\right)}_{ij}\\
		&\,-2\ul{\theta}\left(2\frac{(1-h)}{\omega^2}+\frac{h'}{\omega}\right)\left(\accentset{\circ}{\left(\tau\otimes\d\omega\right)}+\accentset{\circ}{\left(\d \omega\otimes\tau\right)}\right)_{ij}+\frac{1}{2}\mathcal{H}^2\left(\mathcal{H}^2-\fint\mathcal{H}^2\right)\frac{1}{\ul{\theta}}\accentset{\circ}{\ul{\chi}}_{ij}\\
		&\,+\ul{\theta}f^{(1)}_{ij}+\accentset{\circ}{(A*f^{(2)})}_{ij}+\ul{\theta}\accentset{\circ}{(\ul{\chi}*f^{(3)})}_{ij}+\ul{\theta}\accentset{\circ}{(\tau*f^{(4)})}_{ij}
		\end{align*}
	\end{lem}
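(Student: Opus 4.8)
The plan is to differentiate the defining relation $A=\ul{\theta}\chi$ along the flow and to assemble the evolution equation from the null structure (Raychaudhuri) equations, the null Simons identity (Proposition \ref{prop_nullsimon}), and the Gauss--Codazzi equations (Proposition \ref{prop_nullgauss}, Remark \ref{bem_nullcodazzi}). Write $\phi:=-\frac{1}{2\ul{\theta}}(\mathcal{H}^2-\fint\mathcal{H}^2)$ for the speed of \eqref{eq_APNMCF}, so that $\partial_tx=\phi\ul{L}$ and $\partial_t\omega=\phi$. Then $\partial_t\gamma_{ij}=2\phi\ul{\chi}_{ij}$ and, by the variation formula for $\mathcal{H}^2$ recalled in Subsection \ref{subsec_stability}, $\partial_t\mathcal{H}^2=J(\ul{\theta}\phi)$ with $J$ as in \eqref{eq_stability1}; hence
\[
\partial_t\accentset{\circ}{A}_{ij}=\partial_tA_{ij}-\tfrac12 J(\ul{\theta}\phi)\,\gamma_{ij}-\mathcal{H}^2\phi\,\ul{\chi}_{ij},
\]
and the last summand already contributes $\tfrac12\mathcal{H}^2(\mathcal{H}^2-\fint\mathcal{H}^2)\ul{\theta}^{-1}\accentset{\circ}{\ul{\chi}}_{ij}$ to the right-hand side (its $\gamma$-trace part being harmless). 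It thus remains to compute $\partial_tA_{ij}$.

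For this I would record $\partial_t\ul{\theta}$ and $\partial_t\chi_{ij}$ separately from \cite[Proposition 4.27]{wolff_thesis}. Since $\ul{\theta}$ extends to a function on $\mathcal{N}$ and $\ul{L}$ is geodesic, Raychaudhuri gives $\partial_t\ul{\theta}=\phi\,\ul{L}(\ul{\theta})=-\phi\big(\tfrac12\ul{\theta}^2+\newbtr{\accentset{\circ}{\ul{\chi}}}^2+\overline{\Ric}(\ul{L},\ul{L})\big)$, and the evolution of $\chi$ under a variation in the $\ul{L}$-direction contributes a Hessian term $\propto\nabla_i\nabla_j\phi$, the ambient term $\propto\phi\,\overline{\Riem}(\ul{L},\cdot,L,\cdot)$, connection terms coupling $\zeta$ --- equivalently $\tau$ via \eqref{eq_prelim_deftau} --- to $\nabla\phi$, and terms quadratic in the null second fundamental forms (which, after multiplication by $\ul{\theta}$, include $\ul{\theta}^{-1}A*\accentset{\circ}{\ul{\chi}}$). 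Expanding $\phi=-\frac{1}{2\ul{\theta}}(\mathcal{H}^2-\fint\mathcal{H}^2)$ by the product rule turns $\ul{\theta}\nabla_i\nabla_j\phi$ into $-\tfrac12\nabla_i\nabla_j\mathcal{H}^2$ plus terms in $\nabla\ul{\theta}$, $\nabla^2\ul{\theta}$, $\nabla\mathcal{H}^2$, and the contracted null Simons identity (Proposition \ref{prop_nullsimon}, traced over the appropriate pair of indices) rewrites $\nabla_i\nabla_j\mathcal{H}^2=\nabla_i\nabla_j\tr A$ as $\Delta A_{ij}$ plus curvature terms $\Riem* A$, torsion terms $\tau*\nabla A$ and $\nabla\tau* A$, and ambient terms involving $\overline{\nabla}(\overline{\Riem})$; combining this with $\partial_t\gamma$ produces the leading parabolic term $\Delta\accentset{\circ}{A}_{ij}$ and, after rewriting the first-order contributions in divergence form via the Codazzi identity \eqref{eq_codazziA}, the $\nabla\accentset{\circ}{A}*\tau$ and $\dive\accentset{\circ}{A}\otimes\tau$ terms. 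At this stage one obtains a reaction--diffusion equation for $\accentset{\circ}{A}$ valid in an arbitrary spacetime.

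Next I would specialise to an asymptotically Schwarzschildean $\mathcal{N}$. Trading the ambient Ricci/Riemann contractions for the intrinsic scalar curvature $\scal$ via the Gauss equation \eqref{eq_prelim_gauss2} and substituting the explicit Schwarzschild-model values of the remaining ambient curvature components from Lemma \ref{lem_appx_curvatureestimates1} produces the term $-\big(\scal+\tfrac{2(1-h)}{\omega^2}+\tfrac{h'}{\omega}\big)\accentset{\circ}{A}_{ij}$. Decomposing $\overline{\Riem}(\ul{L},\cdot,L,\cdot)$ and the background quantities $\chi_r$, $\theta_r$, $\zeta_r$ in the frame $\{\partial_I,\ul{L},L_r\}$ via \eqref{eq_prelim_nullgeometry_partiali}, \eqref{eq_prelim_nullgeometry_L} and inserting the leading orders from Lemma \ref{lem_prelim_backgroundfol} and Lemma \ref{lem_prelim_chitau} produces the remaining explicit tensors, with coefficients built from $h$, $h'$, $h''$, multiplying $\accentset{\circ}{\ul{\chi}}_{ij}$, $\accentset{\circ}{(\d\omega\otimes\d\omega)}_{ij}$, $\accentset{\circ}{(\tau\otimes\d\omega)}_{ij}$, $\accentset{\circ}{(\tau\otimes\tau)}_{ij}$, $\accentset{\circ}{(\overline{\Riem}(\ul{L},\cdot,L,\cdot))}_{ij}$ and the like; the $J(\ul{\theta}\phi)$ contribution similarly unfolds into the $(\mathcal{H}^2-\fint\mathcal{H}^2)$-weighted terms. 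Every summand that does not match one of the displayed tensors decays strictly faster, and these are collected into the four reservoirs $\ul{\theta}f^{(1)}_{ij}$, $\accentset{\circ}{(A*f^{(2)})}_{ij}$, $\ul{\theta}\,\accentset{\circ}{(\ul{\chi}*f^{(3)})}_{ij}$, $\ul{\theta}\,\accentset{\circ}{(\tau*f^{(4)})}_{ij}$, whose coefficient tensors $f^{(k)}$ are controlled by the asymptotics of Definition \ref{defi_asymclassS} together with Lemma \ref{lem_appx_curvatureestimates1}.

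The main difficulty is bookkeeping rather than conceptual: each term generated by the structure equations, Simons, Codazzi and Gauss must be carried through the moving frame, and in the last step one has to decide, summand by summand, which tensors must be displayed explicitly --- because their leading part is needed for the subsequent maximum-principle estimates --- and which may be absorbed into the $f^{(k)}$. The one genuinely delicate point is extracting $\Delta\accentset{\circ}{A}_{ij}$ with coefficient exactly $1$: this forces one to use the contracted Simons identity in the correct contraction and to verify that all remaining second-order terms either cancel or reduce, via Codazzi, to the first-order $\nabla\accentset{\circ}{A}$ terms listed, so that no stray Hessian survives.
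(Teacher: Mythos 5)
Your proposal is correct and follows essentially the same route as the paper: differentiate $A=\ul{\theta}\chi$ using the variation formulas of \cite[Proposition 4.27]{wolff_thesis}, obtain $\frac{\d}{\d t}\mathcal{H}^2=J(\ul{\theta}\phi)$ from \eqref{eq_stability1} with $\ul{\theta}\phi=-\tfrac12\left(\mathcal{H}^2-\fint\mathcal{H}^2\right)$, and convert the resulting $\Hess\,\mathcal{H}^2$ into $\Delta A$ plus lower-order and curvature terms via the contracted null Simons identity, before inserting the asymptotically Schwarzschildean model values and absorbing faster-decaying remainders into the $f^{(k)}$. The only difference is organizational: the paper packages the traced Simons identity together with the Schwarzschild asymptotics and the $f^{(k)}$ reservoirs as Lemma~\ref{lem_appx_nullsimon} in the appendix, whereas you rederive that content inline from Proposition~\ref{prop_nullsimon}, Codazzi, and the curvature estimates.
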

	\begin{proof}
		As $A=\ul{\theta}\chi$, using the evolution equations for a general flow, \cite[Proposition 4.27]{wolff_thesis}, we find
		\begin{align*}
		\frac{\d}{\d t}A_{ij}&=\Hess_{ij}\mathcal{H}^2+(\tau\otimes\d\mathcal{H}^2+\d\mathcal{H}^2\otimes\tau)_{ij}+\frac{1}{2}\left(\mathcal{H}^2-\fint\mathcal{H}^2\right)\left(\frac{1}{\ul{\theta}^2}\left(\newbtr{\accentset{\circ}{\ul{\chi}}}^2+\overline{\Ric}(\ul{L},\ul{L})\right)A_{ij}\right)
		\\
		&\,-\frac{1}{2}\left(\mathcal{H}^2-\fint\mathcal{H}^2\right)\left(\frac{1}{\ul{\theta}}\left(A\cdot\accentset{\circ}{\ul{\chi}}\right)_{ij}-\overline{\Riem}(\ul{L},\partial_i,L,\partial_i)-2\nabla_i\tau_j-2\tau_i\tau_j\right)\\ 
		\frac{\d}{\d t}\gamma_{ij}&=-\left(\mathcal{H}^2-\fint\mathcal{H}^2\right)\frac{1}{\ul{\theta}}\ul{\chi}_{ij},\\
		\frac{\d}{\d t}\gamma^{ij}&=\left(\mathcal{H}^2-\fint\mathcal{H}^2\right)\frac{1}{\ul{\theta}}\ul{\chi}^{ij},
		\end{align*}
		where we used that
		\begin{align*}
			\left(\newbtr{{\ul{\chi}}}^2+\overline{\Ric}(\ul{L},\ul{L})\right)A_{ij}&=\frac{1}{2}\ul{\theta}^2A_{ij}+\left(\newbtr{\accentset{\circ}{\ul{\chi}}}^2+\overline{\Ric}(\ul{L},\ul{L})\right)A_{ij},\\
			(\chi\cdot\ul{\chi})_{ij}&=\frac{1}{2}A_{ij}+\frac{1}{\ul{\theta}}(A\cdot\accentset{\circ}{\ul{\chi}})_{ij}.
		\end{align*}
		{Using \eqref{eq_stability1} with $f=-\frac{1}{2}\left(\mathcal{H}^2-\fint\mathcal{H}^2\right)$ to compute the evolution of $\mathcal{H}^2$, the claim follows from product rule and the contracted null Simons' identity Lemma \ref{lem_appx_nullsimon}, as $\accentset{\circ}{A}=A-\frac{1}{2}\mathcal{H}^2\gamma$}.
	\end{proof}
	\begin{prop}\label{prop_B2}
		Let $(\Sigma_t)_{[0,T)}$ be a solution of APNMF starting at $\Sigma_0$. Assume that $\Sigma_t\in B_{\sigma}(B_1,B_2,B_3)$ for all $t\in[0,T)$, and there exists $B_2^0$ such that
		\[
		\btr{\accentset{\circ}{A}}_{\Sigma_0}\le \frac{B_2^0}{\sigma^4}.
		\]
		Then 
		\[
		\btr{\accentset{\circ}{A}}\le \frac{C(m,B_2^0)}{\sigma^4}
		\]
		for all $t\in[0,T)$ provided $\sigma\ge \sigma_0(m,B_1,B_3,C_1,C_2)$.
	\end{prop}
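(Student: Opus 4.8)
The plan is to prove the bound on $\newbtr{\accentset{\circ}{A}}$ by a parabolic maximum principle argument applied to the evolution equation in Lemma \ref{lem_evotracefree}, exactly in the spirit of Huisken--Yau. First I would consider the evolution of the scalar $g(t):=\sigma^8\newbtr{\accentset{\circ}{A}}^2_{\gamma_t}$ (or work directly with $\newbtr{\accentset{\circ}{A}}^2$ and track powers of $\sigma$), which from Lemma \ref{lem_evotracefree} satisfies an equation of the schematic form
\[
\frac{\d}{\d t}\newbtr{\accentset{\circ}{A}}^2=\Delta\newbtr{\accentset{\circ}{A}}^2-2\newbtr{\nabla\accentset{\circ}{A}}^2+\langle\text{reaction}\rangle,
\]
where the reaction terms are read off by contracting the right-hand side of Lemma \ref{lem_evotracefree} with $\accentset{\circ}{A}$. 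The key structural point, just as in \cite{huiskenyau}, is that the \emph{only} reaction term which is not manifestly of lower order is the linear term $-\bigl(\scal+\tfrac{2(1-h)}{\omega^2}+\tfrac{h'}{\omega}\bigr)\newbtr{\accentset{\circ}{A}}^2$; using \eqref{eq_aprioriscal}, $\scal=\tfrac{2}{\rho^2}+O(\sigma^{-4})$, and $h=1-\tfrac{2m}{r}+O_4(r^{-2})$ with $m>0$, one finds
\[
\scal+\frac{2(1-h)}{\omega^2}+\frac{h'}{\omega}\ge \frac{2}{\sigma^2}+O\!\left(\frac{1}{\sigma^3}\right)>0
\]
for $\sigma$ large, so this term has a \emph{good sign} and provides exponential decay at rate $\gtrsim\sigma^{-2}$.

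Next I would estimate all remaining terms on the right-hand side of Lemma \ref{lem_evotracefree} (after contracting with $\accentset{\circ}{A}$) as genuine lower-order inhomogeneities. Here one uses: the a-priori estimates Proposition \ref{prop_apriori} and Corollary \ref{kor_aprior} (so $\btr{A}\le C\sigma^{-2}$, $\btr{\nabla A}\le C\sigma^{-5}$, hence also $\btr{\nabla\mathcal{H}^2}\le C\sigma^{-5}$, $\btr{\d\omega}\le C\sigma^{-1}$ etc.); Lemma \ref{lem_prelim_chitau} for $\ul{\theta}=\tfrac{2}{\omega}+O(\sigma^{-3})$, $\newbtr{\accentset{\circ}{\ul{\chi}}}\le C\sigma^{-3}$, $\newbtr{\tau}\le C\sigma^{-3}$; Lemma \ref{lem_appx_curvatureestimates1} for the ambient curvature terms $\overline{\Riem}(\ul{L},\cdot,L,\cdot)=O(\sigma^{-4})$ and the error tensors $f^{(1)},\dots,f^{(4)}$ coming from the difference to Schwarzschild; and the a-priori control $\newbtr{\accentset{\circ}{A}}\le B_2\sigma^{-4}$ itself (valid as long as we stay in $B_\sigma(B_1,B_2,B_3)$) to absorb any term that is quadratic or higher in $\accentset{\circ}{A}$ into the good linear term with room to spare. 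The upshot should be an inequality
\[
\frac{\d}{\d t}\newbtr{\accentset{\circ}{A}}^2\le \Delta\newbtr{\accentset{\circ}{A}}^2-\frac{c}{\sigma^2}\newbtr{\accentset{\circ}{A}}^2+\frac{C(m)}{\sigma^{10}}
\]
with $c>0$ and $C(m)$ depending only on $m$ (and on the fixed constants $C_1,C_2$), after Cauchy--Schwarz/Young on the cross terms involving $\nabla\mathcal{H}^2\otimes\tau$ and $\nabla\tau$. Some care is needed: the term $\dive\tau\,A_{ij}-\mathcal{H}^2\nabla_i\tau_j$ contains $\nabla\tau$, which a priori is only $O(\sigma^{-4})$ from Lemma \ref{lem_prelim_chitau} (that lemma controls $\tau$ in $C^2$, hence $\nabla\tau$ in $C^1$ at rate $\sigma^{-3}$, and a second derivative at $\sigma^{-3}$ too, but one should double-check the exact weights); multiplied by $\btr{A}\le C\sigma^{-2}$ this is $O(\sigma^{-5})$ after contracting with $\accentset{\circ}{A}$ that is $O(\sigma^{-4})$, giving $O(\sigma^{-9})$ or better — comfortably absorbable.

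Finally I would apply the scalar parabolic maximum principle: at a spatial maximum of $\newbtr{\accentset{\circ}{A}}^2$ we have $\Delta\newbtr{\accentset{\circ}{A}}^2\le 0$, so $\phi(t):=\max_{\Sigma_t}\newbtr{\accentset{\circ}{A}}^2$ satisfies (in the barrier / Hamilton sense) $\phi'\le -\tfrac{c}{\sigma^2}\phi+\tfrac{C(m)}{\sigma^{10}}$; integrating with initial value $\phi(0)\le (B_2^0)^2\sigma^{-8}$ yields
\[
\phi(t)\le e^{-ct/\sigma^2}\phi(0)+\frac{C(m)}{\sigma^{8}}\le \frac{(B_2^0)^2+C(m)}{\sigma^{8}},
\]
hence $\newbtr{\accentset{\circ}{A}}\le \tfrac{C(m,B_2^0)}{\sigma^4}$ for all $t\in[0,T)$, which is the claim (with $C(m,B_2^0)=\sqrt{(B_2^0)^2+C(m)}$, and noting that $C$ depends on the fixed background constants $C_1,C_2$ but not on $B_1,B_2,B_3$ beyond what is used for $\sigma_0$). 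The main obstacle I anticipate is purely bookkeeping rather than conceptual: one must verify that every single reaction term in the (long) evolution equation of Lemma \ref{lem_evotracefree} is either the one good negative linear term or decays strictly faster than $\sigma^{-2}\cdot\newbtr{\accentset{\circ}{A}}$, so that no term competes with the damping; in particular one must be careful that the terms carrying the factor $(\mathcal{H}^2-\fint\mathcal{H}^2)$ are small — this follows from \eqref{eq_aprioriH2} together with $\newbtr{\nabla\accentset{\circ}{A}}\le B_3\sigma^{-5}$, which via the Codazzi identity \eqref{eq_codazzi} forces $(\mathcal{H}^2-\fint\mathcal{H}^2)=O(\sigma^{-4})$ or so — and that the resulting constant $C$ genuinely depends only on $m$ (and the frozen $C_1,C_2$), not on $B_2$ or $B_3$, since otherwise the later bootstrap fixing $B_2=B_2(B_2^0,m)$ in Theorem \ref{thm_longtime} would be circular.
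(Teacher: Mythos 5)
Your proposal follows essentially the same route as the paper: contract the evolution equation of Lemma \ref{lem_evotracefree} with $\accentset{\circ}{A}$, use the a-priori and decay estimates (Proposition \ref{prop_apriori}, Corollary \ref{kor_aprior}, Lemmas \ref{lem_prelim_chitau}, \ref{lem_appx_curvatureestimates1}, \ref{lem_appx_nullsimon}) to isolate the good damping term of order $\sigma^{-2}\newbtr{\accentset{\circ}{A}}^2$, absorb the cross terms via Young's inequality against $-2\newbtr{\nabla\accentset{\circ}{A}}^2$ so that the inhomogeneity is $C(m)\sigma^{-10}$ with no $B_2,B_3$ dependence, and conclude by the parabolic maximum principle. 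This is exactly the paper's argument (the paper phrases the final step as a first-new-maximum barrier rather than integrating the ODE, which is an immaterial difference), so the proposal is correct.
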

	\begin{bem}\label{bem_B2}
		In the proof of Proposition \ref{prop_B2} and Proposition \ref{prop_B3} below, we use that Corollary \ref{kor_aprior} and Lemma \ref{lem_appx_c3control} imply
		\[
			\btr{\nabla\omega}+\sigma\btr{\Hess\omega}\le \frac{C}{\sigma^{\frac{1}{2}}}
		\]
		for some uniform constant $C$ provided $\sigma\ge\sigma_0$ sufficiently large. In fact, for $\sigma\ge\sigma_0$ sufficiently large, we have
		\[
		\btr{\nabla\omega}+\sigma\btr{\Hess\omega}\le \frac{C}{\sigma^{1-\varepsilon}}
		\]
		for any choice of $\varepsilon>0$, and we can make the estimates on $\btr{\accentset{\circ}{A}}$, $\btr{\nabla\accentset{\circ}{A}}$ independent of the mass parameter $m$.
	\end{bem}
	\begin{proof}
		As 
		\[
			\frac{\d}{\d t}\newbtr{\accentset{\circ}{A}}^2=2\spann{\accentset{\circ}{A},\frac{\d}{\d t}\accentset{\circ}{A}}+2\left(\mathcal{H}^2-\fint\mathcal{H}^2\right)\frac{1}{\ul{\theta}}\ul{\chi}^{ik}\gamma_\omega^{jl}\accentset{\circ}{A}_{ij}\accentset{\circ}{A}_{kl},
		\]
		Lemma \ref{lem_evotracefree}, and using the asymptotic behavior stated in Equation \eqref{eq_aprioriH2_constant}, Equation \eqref{eq_aprioriscal}, Corollary \ref{kor_aprior}, Remark \ref{bem_koraprior} (ii), and Lemma \ref{lem_appx_nullsimon} yields
		\begin{align*}
			\frac{\d}{\d t}\newbtr{\accentset{\circ}{A}}^2&\le \Delta \newbtr{\accentset{\circ}{A}}^2-2\newbtr{\nabla \accentset{\circ}{A}}^2-\frac{3}{\sigma^2}\newbtr{\accentset{\circ}{A}}^2+\frac{C}{\sigma^3}\newbtr{\nabla\accentset{\circ}{A}}\cdot \newbtr{\accentset{\circ}{A}}+\frac{C(m)}{\sigma^6}\newbtr{\accentset{\circ}{A}}\\
			&\le \Delta \newbtr{\accentset{\circ}{A}}^2-\frac{3}{2}\newbtr{\nabla \accentset{\circ}{A}}^2-\frac{2}{\sigma^2}\newbtr{\accentset{\circ}{A}}^2+\frac{C(m)}{\sigma^6}\newbtr{\accentset{\circ}{A}}\\
			&\le \Delta \newbtr{\accentset{\circ}{A}}^2-\frac{1}{\sigma^2}\newbtr{\accentset{\circ}{A}}^2+\frac{C(m)}{\sigma^{10}},
		\end{align*}
		using Young's inequality provided $\sigma\ge\sigma_0(m,B_1,B_2,B_3,C_1,C_2)$. Hence, whenever $\newbtr{\accentset{\circ}{A}}^2$ reaches a new maximum for the first time, we have
		\[
		\newbtr{\accentset{\circ}{A}}^2\le \frac{C(m)}{\sigma^8},
		\]
		so
		\[
		\newbtr{\accentset{\circ}{A}}^2\le \max\left(\frac{C(m)}{\sigma^8},\max\limits_{\Sigma_0}\newbtr{\accentset{\circ}{A}}^2\right)\le \frac{C(m,B_2^0)}{\sigma^8}.
		\]
	\end{proof}
	
	Before we prove the estimate on $\newbtr{\nabla\accentset{\circ}{A}}$, let us first recall some well-known identities:\newline
	For a smooth path of metrics we have
	\[
	\frac{\d}{\d t}\Gamma_{ij}^k=\frac{1}{2}\gamma^{kl}\left(\nabla_i\frac{\d}{\d t}\gamma_{jl}+\nabla_j\frac{\d}{\d t}\gamma_{il}-\nabla_l\frac{\d}{\d t}\gamma_{ij}\right).
	\]
	Moreover, for a symmetric $(0,2)$-Tensor $T$ we have
	\[
	\frac{\d}{\d t}\nabla_kT_{ij}=\nabla_k\frac{\d}{\d t}T_{ij}-T_{in}\frac{\d}{\d t}\Gamma_{jk}^n-T_{jn}\frac{\d}{\d t}\Gamma_{ik}^n,
	\]
	and
	\begin{align*}
	\nabla^3_{ijk}T_{mn}-\nabla^3_{jki}T_{mn}
	=&\,T_n^l\nabla_j\Riem_{kilm}+T_m^l\nabla_j\Riem_{kiln}+\nabla^lT_{mn}\Riem_{jilk}\\
	&\,+\nabla_kT_n^l\Riem_{jilm}+\nabla_kT_m^l\Riem_{jiln}+\nabla_jT_n^l\Riem_{kilm}+\nabla_jT_m^l\Riem_{kiln}.
	\end{align*}
	Using that in $2$ dimensions we have $\Riem_{ijkl}=\frac{1}{2}\scal\left(\gamma_{ik}\gamma_{jl}-\gamma_{jk}\gamma_{ik}\right)$, we obtain
	\begin{align*}
	2\spann{\nabla_i\Delta T_{mn},\nabla_iT_{mn}}
	=&\,\Delta\btr{\nabla T}^2-\btr{\nabla^2 T}^2-\frac{1}{2}\scal\btr{\nabla T}^2+2T(\nabla \scal,\dive T)\\
	&\,-2\spann{T\otimes\d \scal,\nabla T}+2\scal\btr{\dive T}^2-2\scal\nabla_mT_{in}\nabla^iT^{mn}.
	\end{align*}
	\begin{lem}\label{lem_evo_nablaA}
		Under APNMCF, we have
		\begin{align*}
		\frac{\d}{\d t}\newbtr{\nabla\accentset{\circ}{A}}^2
		=&\,\Delta \newbtr{\nabla\accentset{\circ}{A}}^2-2\newbtr{\nabla^2\accentset{\circ}{A}}^2+\spann{\accentset{\circ}{A}\otimes\d(\mathcal{H}^2-2\scal),\nabla\accentset{\circ}{A}}+\spann{\d(\mathcal{H}^2-2\scal)\otimes\accentset{\circ}{A},\nabla\accentset{\circ}{A}}\\
		&\,-\left(\frac{5}{2}\scal-\left(\mathcal{H}^2-\fint\mathcal{H}^2\right)\left(\frac{3}{2}+\frac{1}{\ul{\theta}^2}\left(\newbtr{\accentset{\circ}{\ul{\chi}}}^2+\overline{\Ric}(\ul{L},\ul{L})\right)\right)+\frac{4(1-h)}{r^2}+\frac{2h'}{r}\right)\newbtr{\nabla\accentset{\circ}{A}}^2\\
		&\,+2\scal\btr{\dive\accentset{\circ}{A}}^2-2\scal\nabla_i\accentset{\circ}{A}_{kj}\nabla^k\accentset{\circ}{A}^{ij}-\accentset{\circ}{A}\left(\nabla(\mathcal{H}^2-2\scal),\dive\accentset{\circ}{A}\right)\\
		&\,+\frac{1}{\ul{\theta}^2}\left(\newbtr{\accentset{\circ}{\ul{\chi}}}^2+\overline{\Ric}(\ul{L},\ul{L})\right)\spann{\d\mathcal{H}^2\otimes\accentset{\circ}{A},\nabla\accentset{\circ}{A}}-\frac{1}{\ul{\theta}}\spann{\d\mathcal{H}^2\otimes\accentset{\circ}{\left(\overset{\circ}{\ul{\chi}}\cdot A\right)},\nabla\accentset{\circ}{A}}\\
		&\,+\spann{\d\mathcal{H}^2\otimes\accentset{\circ}{\left(\overline{\Riem}(\ul{L},\cdot,\ul{L},\cdot)\right)},\nabla\accentset{\circ}{A}}+2\spann{\d\mathcal{H}^2\otimes\left(\accentset{\circ}{\nabla\tau}+\accentset{\circ}{\left(\tau\otimes\tau\right)}\right),\nabla\accentset{\circ}{A}}\\
		&\,+\frac{1}{\ul{\theta}}\spann{\left(\mathcal{H}^2-\fint\mathcal{H}^2\right)\d\mathcal{H}^2\otimes\accentset{\circ}{\ul{\chi}}+\mathcal{H}^2\d\mathcal{H}^2\otimes\accentset{\circ}{\ul{\chi}},\nabla\accentset{\circ}{A}}\\
		&\,+\left(\mathcal{H}^2-\fint\mathcal{H}^2\right)\spann{\d\left(\frac{1}{\ul{\theta}^2}\left(\newbtr{\accentset{\circ}{\ul{\chi}}}^2+\overline{\Ric}(\ul{L},\ul{L})\right)\right)\otimes\accentset{\circ}{A}+\nabla\accentset{\circ}{\left(\overline{\Riem}(\ul{L},\cdot,\ul{L},\cdot)\right)},\nabla\accentset{\circ}{A}}\\
		&\,+\left(\mathcal{H}^2-\fint\mathcal{H}^2\right)\spann{\nabla\left(\accentset{\circ}{\nabla\tau}\right)+\nabla\accentset{\circ}{\left(\tau\otimes\tau\right)},\nabla\accentset{\circ}{A}}+2\mathcal{H}^2\left(\mathcal{H}^2-\fint\mathcal{H}^2\right)\spann{\nabla\left(\frac{1}{\ul{\theta}}\accentset{\circ}{\ul{\chi}}\right),\nabla\accentset{\circ}{A}}\\
		&\,+\spann{\nabla\left(\left(\ul{\theta}^2\left(2\frac{(1-h)}{\omega^2}+\frac{2h'}{\omega}-h''\right)-\ul{\theta}\left(16\frac{(1-h)}{\omega^3}+12\frac{h'}{\omega^2}\right)\right)\accentset{\circ}{\left(\d\omega\otimes\d\omega\right)}\right),\nabla\accentset{\circ}{A}}\\
		&\,+\spann{\nabla\left(\btr{\nabla\omega}^2\left(h''+\frac{2(1-h)}{\omega^2}\right)\ul{\theta}\accentset{\circ}{\ul{\chi}}\right),\nabla \accentset{\circ}{A}}-\partial_r\left(4\frac{(1-h)}{\omega^2}+\frac{2h'}{\omega}\right)\spann{\d\omega\otimes\accentset{\circ}{A},\nabla \accentset{\circ}{A}}\\
		&\,-2\spann{\nabla\left(\ul{\theta}\left(2\frac{(1-h)}{\omega^2}+\frac{h'}{\omega}\right)\left(\accentset{\circ}{\left(\tau\otimes\d\omega\right)}+\accentset{\circ}{\left(\d \omega\otimes\tau\right)}\right)\right),\nabla \accentset{\circ}{A}}\\
		&\,+\nabla^2\accentset{\circ}{A}*\tau*\nabla \accentset{\circ}{A}+\nabla \dive \accentset{\circ}{A}*\tau*\nabla \accentset{\circ}{A}+\frac{\left(\mathcal{H}^2-\fint\mathcal{H}^2\right)}{\ul{\theta}}\accentset{\circ}{\ul{\chi}}*\nabla \accentset{\circ}{A}*\nabla \accentset{\circ}{A}\\
		&\,+\nabla\tau*\nabla A*\nabla\accentset{\circ}{A}+\nabla A*\tau*\tau*\nabla \accentset{\circ}{A}+A*\nabla\tau*\tau*\nabla\accentset{\circ}{A}+\nabla^2\tau*A*\nabla\accentset{\circ}{A}\\
		&\,+\ul{\theta}\Riem_{ijkL}*\tau*\nabla \accentset{\circ}{A}+\left(\mathcal{H}^2-\fint\mathcal{H}^2\right)\nabla\left(\frac{1}{\ul{\theta}}\accentset{\circ}{\ul{\chi}}\right)*\nabla\accentset{\circ}{A}+2\spann{\nabla(\ul{\theta}f^{(1)},\nabla \accentset{\circ}{A}}\\
		&\,+\nabla A*f^{(2)}*\nabla \accentset{\circ}{A}+A*\nabla f^{(2)}*\nabla\accentset{\circ}{A}+\d\ul{\theta}*\ul{\chi}*f^{(3)}*\nabla\accentset{\circ}{A}+\ul{\theta}\cdot\nabla\ul{\chi}*f^{(3)}*\nabla\accentset{\circ}{A}\\
		&\,+\ul{\theta}\cdot\ul{\chi}*\nabla f^{(3)}*\nabla \accentset{\circ}{A}+\d\ul{\theta}*\tau*f^{(4)}*\nabla \accentset{\circ}{A}+\ul{\theta}\cdot\nabla\tau*f^{(4)}*\nabla \accentset{\circ}{A}+\ul{\theta}\cdot\tau*\nabla f^{(4)}*\nabla \accentset{\circ}{A}
		\end{align*}
	\end{lem}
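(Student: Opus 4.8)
The plan is to differentiate $\newbtr{\nabla\accentset{\circ}{A}}^{2}=\gamma^{ai}\gamma^{bj}\gamma^{ck}\nabla_{a}\accentset{\circ}{A}_{bc}\,\nabla_{i}\accentset{\circ}{A}_{jk}$ directly and reduce every term to the evolution equation for $\accentset{\circ}{A}$ from Lemma~\ref{lem_evotracefree} together with the standard surface commutation identities recalled just above. First I would record, as in the proof of Lemma~\ref{lem_evotracefree}, that $\frac{\d}{\d t}\gamma_{ij}=-\big(\mathcal{H}^{2}-\fint\mathcal{H}^{2}\big)\ul{\theta}^{-1}\ul{\chi}_{ij}$ and $\frac{\d}{\d t}\gamma^{ij}=\big(\mathcal{H}^{2}-\fint\mathcal{H}^{2}\big)\ul{\theta}^{-1}\ul{\chi}^{ij}$; the three inverse–metric factors in $\newbtr{\nabla\accentset{\circ}{A}}^{2}$ then contribute only terms of the schematic form $\big(\mathcal{H}^{2}-\fint\mathcal{H}^{2}\big)\ul{\theta}^{-1}\ul{\chi}*\nabla\accentset{\circ}{A}*\nabla\accentset{\circ}{A}$, of which I keep the single signed copy appearing in the statement and absorb the rest into the $*$–notation.

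The bulk of the computation is $2\spann{\nabla\accentset{\circ}{A},\frac{\d}{\d t}\nabla\accentset{\circ}{A}}$. Using $\frac{\d}{\d t}\nabla_{k}\accentset{\circ}{A}_{ij}=\nabla_{k}\frac{\d}{\d t}\accentset{\circ}{A}_{ij}-\accentset{\circ}{A}_{in}\frac{\d}{\d t}\Gamma^{n}_{jk}-\accentset{\circ}{A}_{jn}\frac{\d}{\d t}\Gamma^{n}_{ik}$ and the formula for $\frac{\d}{\d t}\Gamma$ in terms of $\nabla\frac{\d}{\d t}\gamma$, the Christoffel contributions become $\nabla\!\big((\mathcal{H}^{2}-\fint\mathcal{H}^{2})\ul{\theta}^{-1}\ul{\chi}\big)*\accentset{\circ}{A}*\nabla\accentset{\circ}{A}$, i.e.\ $*$–terms plus the displayed contributions linear in $\d\mathcal{H}^{2}$. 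For the main term $\nabla_{k}\frac{\d}{\d t}\accentset{\circ}{A}_{ij}$ I substitute the right-hand side of Lemma~\ref{lem_evotracefree} and take one covariant derivative of every summand. The only term needing genuine care is the Laplacian: $\nabla_{k}\Delta\accentset{\circ}{A}_{ij}$ must be rewritten using the third–derivative commutation identity and the two–dimensional relation $\Riem_{ijkl}=\tfrac12\scal(\gamma_{ik}\gamma_{jl}-\gamma_{jk}\gamma_{il})$ recalled above, which is exactly what produces the identity
\[
2\spann{\nabla_{i}\Delta T_{mn},\nabla_{i}T_{mn}}=\Delta\btr{\nabla T}^{2}-\btr{\nabla^{2}T}^{2}-\tfrac12\scal\btr{\nabla T}^{2}+2\,T(\nabla\scal,\dive T)-2\spann{T\otimes\d\scal,\nabla T}+2\scal\btr{\dive T}^{2}-2\scal\nabla_{m}T_{in}\nabla^{i}T^{mn},
\]
applied with $T=\accentset{\circ}{A}$; this accounts for the first lines of the statement together with the $2\scal\btr{\dive\accentset{\circ}{A}}^{2}-2\scal\nabla_{i}\accentset{\circ}{A}_{kj}\nabla^{k}\accentset{\circ}{A}^{ij}$ terms, after merging with the $\scal$– and $h$–dependent coefficients inherited from the $\accentset{\circ}{A}=A-\tfrac12\mathcal{H}^{2}\gamma$ normalization in Lemma~\ref{lem_evotracefree}.

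Every remaining summand of $\frac{\d}{\d t}\accentset{\circ}{A}$ in Lemma~\ref{lem_evotracefree} is, after one further derivative, either already of lower order — built from $\tau$, $\ul{\chi}$, the remainders $f^{(i)}$, ambient curvature, or the factor $\mathcal{H}^{2}-\fint\mathcal{H}^{2}$ — or carries an explicit $\d\mathcal{H}^{2}$. I would commute $\nabla_{k}$ past the trace-free projection $\accentset{\circ}{(\cdot)}$ (harmless since $\nabla\gamma=0$, producing only $\nabla\accentset{\circ}{A}$– and $\d\mathcal{H}^{2}$–type corrections), contract with $\nabla\accentset{\circ}{A}$, and sort the outcome into the terms displayed explicitly — those needed for the maximum–principle argument in Proposition~\ref{prop_B3} — versus the generic remainders $\nabla^{2}\accentset{\circ}{A}*\tau*\nabla\accentset{\circ}{A}$, $\nabla A*\cdots$, $\ul{\theta}f^{(1)}*\cdots$, and the like. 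Along the way I use the contracted Codazzi identity~\eqref{eq_codazzi} to trade $\nabla\overline{\Riem}_{ijkL}$ for $\dive\accentset{\circ}{A}$ plus $\d\mathcal{H}^{2}$, $\tau$ and curvature terms, and the contracted null Simon's identity (Lemma~\ref{lem_appx_nullsimon}) wherever a bare undifferentiated $\nabla^{2}\accentset{\circ}{A}$ would otherwise persist. The expected main obstacle is purely bookkeeping: there is no conceptual subtlety, but one must be scrupulous about signs, about the fact that $\frac{\d}{\d t}$ commutes neither with $\nabla$ nor with $\accentset{\circ}{(\cdot)}$, and about keeping the curvature– and $h$–dependent terms in exactly the form Proposition~\ref{prop_B3} consumes, while verifying that all other contributions genuinely match the schematic $*$–notation.
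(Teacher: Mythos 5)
Your proposal is correct and follows essentially the same route as the paper: differentiate $\newbtr{\nabla\accentset{\circ}{A}}^2$ using the evolution of $\gamma^{ij}$, commute $\frac{\d}{\d t}$ with $\nabla$ via $\frac{\d}{\d t}\Gamma_{ij}^k$ (whose conformal part produces the explicit $\d\mathcal{H}^2$-terms), substitute the evolution of $\accentset{\circ}{A}$ from Lemma \ref{lem_evotracefree} and differentiate it term by term, and handle the Laplacian with the stated identity for $2\spann{\nabla\Delta T,\nabla T}$ with $T=\accentset{\circ}{A}$ using the two-dimensional form of $\Riem$. The remaining sorting into explicit versus schematic $*$-terms matches the paper's bookkeeping.
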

	\begin{proof}
		First, it is immediate from the evolution of $\gamma^{ij}$ that
		\[
		\frac{\d}{\d t}\newbtr{\nabla\accentset{\circ}{A}}^2=2\spann{\frac{\d}{\d t}\nabla\accentset{\circ}{A},\nabla\accentset{\circ}{A}}+\frac{3}{2}\left(\mathcal{H}^2-\fint\mathcal{H}^2\right)\newbtr{\nabla\accentset{\circ}{A}}^2+\frac{\left(\mathcal{H}^2-\fint\mathcal{H}^2\right)}{\ul{\theta}}\accentset{\circ}{\ul{\chi}}*\nabla \accentset{\circ}{A}*\nabla \accentset{\circ}{A}.
		\]
		Moreover, under APNMCF we obtain
		\[
		\frac{\d}{\d t}\Gamma_{ij}^k=\frac{1}{4}\left(\nabla^k\mathcal{H}^2\gamma_{ij}-\d\mathcal{H}^2_i\delta_j^k-\d\mathcal{H}^2_j\delta_i^k\right),
		\]
		which gives
		\begin{align*}
		\frac{\d}{\d t}\nabla_k\accentset{\circ}{A}_{ij}
		=&\,\nabla_k\frac{\d}{\d t}\accentset{\circ}{A}_{ij}+\frac{1}{2}\d\mathcal{H}^2_k\accentset{\circ}{A}_{ij}+\frac{1}{4}\left(\accentset{\circ}{A}_{ik}\d\mathcal{H}^2_j+\accentset{\circ}{A}_{jk}\d\mathcal{H}^2_i\right)\\
		&\,-\frac{1}{4}\left(\accentset{\circ}{A}(\nabla\mathcal{H}^2,\partial_i)\gamma_{jk}+\accentset{\circ}{A}(\nabla\mathcal{H}^2,\partial_j)\gamma_{ik}\right).
		\end{align*}
		The claim then follows by Lemma \ref{lem_evotracefree} after taking a tensor derivative, and using that
		\begin{align*}
		2\spann{\nabla\Delta \accentset{\circ}{A},\nabla \accentset{\circ}{A}}
		=&\,\Delta\btr{\nabla \accentset{\circ}{A}}^2-\btr{\nabla^2 \accentset{\circ}{A}}^2-\frac{1}{2}\scal\btr{\nabla \accentset{\circ}{A}}^2+2\accentset{\circ}{A}(\nabla \scal,\dive \accentset{\circ}{A})\\
		&\,-2\spann{\accentset{\circ}{A}\otimes\d \scal,\nabla \accentset{\circ}{A}}+2\scal\btr{\dive \accentset{\circ}{A}}^2-2\scal\nabla_m\accentset{\circ}{A}_{in}\nabla^i\accentset{\circ}{A}^{mn}.
		\end{align*}
	\end{proof}
	\begin{prop}\label{prop_B3}
		Let $(\Sigma_t)_{t\in][0,T)}$ be a solution of APNMCF starting at $\Sigma_0$. Assume that $\Sigma_0$ satisfies 
		\begin{align*}
		\btr{\accentset{\circ}{A}}&\le \frac{B_2^0}{\sigma^4},\\
		\btr{\nabla\accentset{\circ}{A}}&\le \frac{B_3^0}{\sigma^5}
		\end{align*}
		for constants $B_2^0$, $B_3^0$, and assume that $\Sigma_t\in B_\sigma(B_1,B_2,B_3)$ for all $t\in[0,T)$. Then
		\[
		\btr{\nabla\accentset{\circ}{A}}^2\le \frac{C(B_2^0,B_3^0,m)}{\sigma^{10}}
		\]
		provided $\sigma\ge \sigma_0(m,B_1,B_2,B_3,C_1,C_2)$.
	\end{prop}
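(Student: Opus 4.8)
The plan is to rerun, now for $\newbtr{\nabla\accentset{\circ}{A}}^2$, the reaction--diffusion/maximum principle argument already used for Proposition~\ref{prop_B2}, starting from the evolution equation in Lemma~\ref{lem_evo_nablaA}. Since by hypothesis $\Sigma_t\in B_\sigma(B_1,B_2,B_3)$ for every $t\in[0,T)$, Proposition~\ref{prop_apriori}, Corollary~\ref{kor_aprior}, Remark~\ref{bem_koraprior}, Remark~\ref{bem_B2} and Proposition~\ref{prop_B2} are all available; in particular I may use $\newbtr{\accentset{\circ}{A}}\le C(m,B_2^0)\sigma^{-4}$, $\newbtr{\nabla A}\le C(m)\sigma^{-5}$, $\scl=\tfrac{2}{\rho^2}+O(\sigma^{-4})$ in $C^1$, $\mathcal{H}^2-\fint\mathcal{H}^2=O(\sigma^{-4})$ in $C^1$, $\ul{\theta}=\tfrac{2}{\omega}+O(\sigma^{-3})$, $\newnorm{\accentset{\circ}{\ul{\chi}}}_{C^3(\gamma_\omega)}\le C\sigma^{-3}$, $\norm{\tau}_{C^2(\gamma_\omega)}\le C\sigma^{-3}$, the bound $\btr{\nabla\omega}+\sigma\btr{\Hess\omega}\le C\sigma^{-1/2}$ from Remark~\ref{bem_B2}, and the ambient curvature estimates of Lemma~\ref{lem_appx_curvatureestimates1}. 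Substituting these into Lemma~\ref{lem_evo_nablaA}, the many terms on the right-hand side organise into four groups: (a) the manifestly good contributions $-2\newbtr{\nabla^2\accentset{\circ}{A}}^2$ and the zeroth-order term $-\bigl(\tfrac52\scl+\dots\bigr)\newbtr{\nabla\accentset{\circ}{A}}^2$, whose leading part is $-\tfrac{5}{\sigma^2}\newbtr{\nabla\accentset{\circ}{A}}^2$; (b) terms quadratic in $\nabla\accentset{\circ}{A}$ carrying a factor $\tau$, $\accentset{\circ}{\ul{\chi}}$, $\nabla\omega$ or $\mathcal{H}^2-\fint\mathcal{H}^2$, hence of coefficient $O(\sigma^{-3})$ and absorbed by the good zeroth-order term for $\sigma$ large; (c) terms linear in $\nabla^2\accentset{\circ}{A}$ (or $\nabla\dive\accentset{\circ}{A}$) carrying a factor $\tau$, split by Young's inequality into $\varepsilon\newbtr{\nabla^2\accentset{\circ}{A}}^2$ (absorbed by (a)) plus an $O(\sigma^{-6})\newbtr{\nabla\accentset{\circ}{A}}^2$ remainder; and (d) all remaining terms, which are linear in $\nabla\accentset{\circ}{A}$ with a coefficient $O(\sigma^{-k})$, $k\ge 7$ (those built from $\accentset{\circ}{A}$, $\d(\mathcal{H}^2-2\scl)$, ambient curvature, $\Hess\omega$ and the remainders $f^{(j)}$), and which after Young's inequality contribute only $\varepsilon\sigma^{-2}\newbtr{\nabla\accentset{\circ}{A}}^2+C(m)\sigma^{-12}$.

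The one point deserving care is the Bochner pair $2\scl\btr{\dive\accentset{\circ}{A}}^2-2\scl\,\nabla_i\accentset{\circ}{A}_{kj}\nabla^k\accentset{\circ}{A}^{ij}$ in Lemma~\ref{lem_evo_nablaA}: a priori each summand is of order $\sigma^{-2}\newbtr{\nabla\accentset{\circ}{A}}^2$, the same size as the good term, so it cannot be discarded. Here I would use the two-dimensional algebra of symmetric trace-free $2$-tensors together with the fact that $\accentset{\circ}{A}$ is \emph{almost} Codazzi: the trace-free part of \eqref{eq_codazziA}, combined with $\norm{\tau}_{C^0}\le C\sigma^{-3}$, $\newbtr{A}\le C\sigma^{-2}$, $\d\mathcal{H}^2=O(\sigma^{-5})$ and $\ul{\theta}\,\overline{\Riem}_{ijkL}=O(\sigma^{-5})$ (one checks the purely tangential Schwarzschild component vanishes by spherical symmetry, leaving only the $\overline{O}_{2,2}(r^{-4})$ deviation), gives $\nabla_i\accentset{\circ}{A}_{jk}-\nabla_j\accentset{\circ}{A}_{ik}=O(\sigma^{-5})$ and, via \eqref{eq_codazzi}, $\dive\accentset{\circ}{A}=O(\sigma^{-5})$ pointwise. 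Consequently $2\scl\btr{\dive\accentset{\circ}{A}}^2=O(\sigma^{-12})$ is already of forcing size, while for trace-free symmetric Codazzi tensors on a surface one has the pointwise identity $\nabla_i\accentset{\circ}{A}_{kj}\nabla^k\accentset{\circ}{A}^{ij}=\newbtr{\nabla\accentset{\circ}{A}}^2$, so $-2\scl\,\nabla_i\accentset{\circ}{A}_{kj}\nabla^k\accentset{\circ}{A}^{ij}\le -2\scl\newbtr{\nabla\accentset{\circ}{A}}^2+O(\sigma^{-7})\newbtr{\nabla\accentset{\circ}{A}}$; that is, the Bochner pair in fact supplies a \emph{further} non-positive leading term $\approx-\tfrac{4}{\sigma^2}\newbtr{\nabla\accentset{\circ}{A}}^2$ plus forcing-size errors. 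Combining this with (a)--(d) yields, for $\sigma\ge\sigma_0(m,B_1,B_2,B_3,C_1,C_2)$, a differential inequality
\[
\frac{\d}{\d t}\newbtr{\nabla\accentset{\circ}{A}}^2\le\Delta\newbtr{\nabla\accentset{\circ}{A}}^2-\frac{c}{\sigma^2}\newbtr{\nabla\accentset{\circ}{A}}^2+\frac{C(m)}{\sigma^{12}}
\]
with constants $c>0$ and $C(m)>0$.

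Finally I would apply the scalar maximum principle (Hamilton's trick) to $M(t):=\max_{\Sigma_t}\newbtr{\nabla\accentset{\circ}{A}}^2$: at a spatial maximum the $\Delta$-term is $\le 0$, so whenever $M(t)>\tfrac{2C(m)}{c}\sigma^{-10}$ the function $M$ is strictly decreasing, whence
\[
\newbtr{\nabla\accentset{\circ}{A}}^2\le\max\!\left(\frac{2C(m)}{c\,\sigma^{10}},\ \max_{\Sigma_0}\newbtr{\nabla\accentset{\circ}{A}}^2\right)\le\frac{C(B_2^0,B_3^0,m)}{\sigma^{10}}
\]
for all $t\in[0,T)$, using the hypothesis $\max_{\Sigma_0}\newbtr{\nabla\accentset{\circ}{A}}^2\le(B_3^0)^2\sigma^{-10}$. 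The main obstacle is precisely the bookkeeping just described: establishing that the Codazzi defect of $\accentset{\circ}{A}$ and that $\dive\accentset{\circ}{A}$ are of size $O(\sigma^{-5})$ so that the $\scl$-weighted second-order terms do not compete with the good term, and verifying that each of the numerous lower-order terms appearing in Lemma~\ref{lem_evo_nablaA} decays at least like $\sigma^{-7}$ (so that Young's inequality produces only $O(\sigma^{-12})$ forcing); modulo this, the argument is structurally identical to the proof of Proposition~\ref{prop_B2}.
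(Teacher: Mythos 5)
Your argument is correct, but at the decisive step it takes a genuinely different route from the paper. The paper makes no attempt to show that the reaction term for $\newbtr{\nabla\accentset{\circ}{A}}^2$ has a favorable sign: it simply bounds everything coming from Lemma \ref{lem_evo_nablaA} (including the Bochner pair $2\scal\btr{\dive\accentset{\circ}{A}}^2-2\scal\nabla_i\accentset{\circ}{A}_{kj}\nabla^k\accentset{\circ}{A}^{ij}$) by a possibly \emph{positive} term $+\frac{K_1}{\sigma^2}\newbtr{\nabla\accentset{\circ}{A}}^2+\frac{K_2(m)}{\sigma^{12}}$, and then compensates with the Huisken-style combination $f:=\newbtr{\nabla\accentset{\circ}{A}}^2+\frac{K_1}{\sigma^2}\newbtr{\accentset{\circ}{A}}^2$, using the good term $-\frac{3}{2}\newbtr{\nabla\accentset{\circ}{A}}^2$ already obtained in the proof of Proposition \ref{prop_B2} to produce $\frac{\d}{\d t}f\le\Delta f-\frac{K_1}{2\sigma^2}f+\frac{C(m,B_2^0)}{\sigma^{12}}$ and conclude by the maximum principle. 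You instead prove a strictly negative reaction coefficient for $\newbtr{\nabla\accentset{\circ}{A}}^2$ alone, by exploiting that $\accentset{\circ}{A}$ is almost Codazzi: via \eqref{eq_codazziA}, \eqref{eq_codazzi}, \eqref{eq_codazzi_curvature} and the improved bounds of Corollary \ref{kor_aprior} one indeed gets a Codazzi defect and $\dive\accentset{\circ}{A}$ of size $O(\sigma^{-5})$, so the Bochner pair contributes $-2\scal\newbtr{\nabla\accentset{\circ}{A}}^2$ up to forcing-size errors rather than a competing positive term. Your route buys a clean inequality for $\newbtr{\nabla\accentset{\circ}{A}}^2$ by itself, at the cost of the extra structural estimates and the substantial term-by-term bookkeeping you acknowledge; the paper's coupling trick is more robust precisely because it never needs any sign information on the $\scal$-weighted second-order terms. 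One small caveat in your justification: the purely tangential Schwarzschild components of $\overline{\Riem}_{ijkL}$ do not contribute only through the $\overline{O}_{2,2}(r^{-4})$ deviation --- the decomposition of $\partial_i$ and $L$ via \eqref{eq_prelim_nullgeometry_partiali}, \eqref{eq_prelim_nullgeometry_L} brings in nonvanishing Schwarzschild components of size $O(\sigma^{-4})$ multiplied by $\d\omega$ (this is exactly the leading term $\bigl(\frac{4(h-1)}{\omega^3}-\frac{2h'}{\omega^2}\bigr)\d\omega_j$ in \eqref{eq_codazzi_curvature}); these are only suppressed to $O(\sigma^{-5})$ because $\btr{\nabla\omega}_{\gamma_\omega}\le C/\sigma$ from Corollary \ref{kor_aprior}, so the improved gradient bound is essential to your argument, not merely the smallness of the curvature deviation. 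With that point made precise, both proofs deliver the same conclusion.
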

	\begin{proof}
		As in the proof of Proposition \ref{prop_B2} the decay estimates, i.e., Lemma \ref{lem_prelim_chitau}, \ref{lem_prelim_chitau}, \ref{lem_appx_nullsimon}, Remark \ref{bem_apriori}, \ref{bem_koraprior} (ii), Lemma \ref{lem_appx_curvatureestimates1}, Equations \eqref{eq_aprioriscal}, \eqref{eq_appx_riemdecompLbarL},
		and Proposition \ref{prop_B2} itself, yield
		\[
			\frac{\d}{\d t}\btr{\nabla \accentset{\circ}{A}}^2\le \Delta \btr{\nabla \accentset{\circ}{A}}^2+\frac{K_1}{\sigma^2}\btr{\nabla \accentset{\circ}{A}}^2+\frac{K_2(m)}{\sigma^{12}}.
		\]
		for some (fixed) non-negative constants $K_1, K_2(m)$ for $\sigma\ge \sigma_0(m,B_1,B_2,B_3,C_1,C_2)$ sufficiently large. Redefine $K_1:=K_1+1$, and recall that for $\sigma\ge \sigma_0(m,B_1,B_2,B_3,C_1,C_2)$ sufficiently large, we have seen in the proof of Proposition \ref{prop_B2} that
		\[
			\frac{\d}{\d t}\btr{\accentset{\circ}{A}}^2\le \Delta \btr{ \accentset{\circ}{A}}^2-\frac{3}{2}\btr{\nabla \accentset{\circ}{A}}^2+\frac{C(m)}{\sigma^{10}}.
		\]
		Hence $f:=\btr{\nabla \accentset{\circ}{A}}^2+\frac{K_1}{\sigma^2}\btr{ \accentset{\circ}{A}}^2$ satisfies
		\[
			\frac{\d}{\d t}f\le \Delta f-\frac{K_1}{2\sigma^2}\btr{\nabla \accentset{\circ}{A}}^2+\frac{C(m)}{\sigma^{12}}\le \Delta f-\frac{K_1}{2\sigma^2}f+\frac{C(m,B_2^0)}{\sigma^{12}},
		\]
		using Proposition \ref{prop_B2}. In particular, whenever $f$ takes a new maximum in time, we have
		\[
		f\le \frac{C(m,B_2^0)}{\sigma^{10}},
		\]
		and hence
		\[
		\newbtr{\nabla\accentset{\circ}{A}}^2\le f\le  \max\left(\max\limits_{\Sigma_0}f,\frac{C(m,B_2^0)}{\sigma^{10}}\right)\le \frac{C(m,B_2^0,B_3^0)}{\sigma^{10}}.
		\]
	\end{proof}
	We are now ready to prove Theorem \ref{thm_longtime}:
	\begin{proof}[Proof of Theorem \ref{thm_longtime}]
		Let $B_1^0$, $B_2^0$, $B_3^0$ be stictly positive constants, $c_1,c_2\ge 10$. We first define define $C_1:=2c_1+1$, $C_2:=2c_2+1$. Let $\mathcal{B}_2=C(m,B_2^0)$ denote the constant in the statement of Proposition \ref{prop_B2}, and let $\mathcal{B}_3=C(m,B_2^0,B_3^0)$ denote the constant in the statement of Proposition \ref{prop_B3}. Note that $\mathcal{B}_2\ge B_2^0$, $\mathcal{B}_3\ge B_3^0$ by definition. Define
		\begin{align*}
		B_2&:=2\mathcal{B}_2+1,\\
		B_3&:=2\mathcal{B}_3+1.
		\end{align*}
		Let $\mathcal{B}_1=C(B_1^0,B_3,C_1,m)$ be the constant in the statement of Proposition \ref{prop_B1}. Again, $\mathcal{B}_1\ge B_1^0$ by definition. Define $B_1:=2\mathcal{B}_1+1$. Finally define $\sigma_0=\sigma_0(m,B_1,B_2,B_3,C_1,C_2)$ according to Propositions \ref{prop_B1}, \ref{prop_B2}, \ref{prop_B3}, and such that
		\[
			\norm{\frac{\omega_0}{\rho_0}}_{C^3(\widehat{\gamma})}\le c_1,\text{ and }\btr{\mathcal{H}^2_0}\le \frac{c_2}{\rho_0^2},
		\]
		cf. Proposition \ref{prop_apriori}, Corollary \ref{kor_aprior}.
		
		Now by definition of $B_1$, $B_2$, $B_3$, any solution of APNMCF starting at $\Sigma_0$ will remain in $B_\sigma(B_1,B_2,B_3)$ at least for some short time interval $[0,\varepsilon)$, 
		and satisfies
		\[
		\norm{\frac{\omega}{\rho}}_{C^3(\hatgamma)}\le C_1,\text{ and }\btr{\mathcal{H}^2}\le \frac{C_2}{\rho^2}
		\]
		in $[0,\varepsilon)$. In view of the estimates in Proposition \ref{prop_B1}, \ref{prop_B2}, \ref{prop_B3} and the definition of $B_1$, $B_2$, $B_3$, the solution will in fact strictly remain in $B_\sigma(B_1,B_2,B_3)$. Moreover, the a-priori estimates, Proposition \ref{prop_apriori} and Corollary \ref{kor_aprior}, imply that in fact
		\[
		\norm{\frac{\omega}{\rho}}_{C^3(\hatgamma)}\le c_1,\text{ and }\btr{\mathcal{H}^2}\le \frac{c_2}{\rho^2}
		\]
		provided $\sigma\ge\sigma_0(m,B_1,B_2,B_3,C_1,C_2)$.
		
		Furthermore, we can extend the flow beyond any finite time interval $[0,T)$ by standard arguments as long as it remains in the a-priori class: As $\Sigma_t\in B_\sigma(B_1,B_2,B_3)$ and using Remark \ref{bem_apriori} we have uniform $C^1$ bounds on $\omega$ (independent of $t$). As the solution of APNMCF is equivalent to a quasilinear equation on $\omega$, we can use the regularity theory for parabolic equations to obtain uniform $C^k$-estimates (independent of $t$) similar as in the case of null mean curvature flow, cf.  \cite[Proposition 3.8]{roeschscheuer}. Alternatively, one can use Shi-type estimates to obtain uniform bounds on $\btr{\nabla^k\mathcal{H}^2}$ similar as in \cite[Lemma 21]{wolff1} (one however only gets a uniform upper bound and a-priori no exponential decay), and then argue as usual.
		
		Hence, we can continue the flow and the solution again remains in $B_\sigma(B_1,B_2,B_3)$ 
		and satisfies 
		\[
		\norm{\frac{\omega}{\rho_0}}_{C^3(\hatgamma)}\le C_1,\text{ and }\btr{\mathcal{H}^2}\le \frac{C_2}{\rho^2},
		\]
		for some short time. Thus, the solution can be extended indefinitely and exists for all times. In particular, the solution remains (strictly) in $B_\sigma(B_1,B_2,B_3)$ and satisfies
		\[
		\norm{\frac{\omega}{\rho_0}}_{C^3(\hatgamma)}\le C_1,\text{ and }\btr{\mathcal{H}^2}\le \frac{C_2}{\rho^2},
		\]
		for all times. This concludes the proof.	
	\end{proof}
	\begin{thm}\label{thm_convergence}
		Under the assumptions of Theorem \ref{thm_longtime} and for any initial data $\Sigma_0$ as in Theorem \ref{thm_longtime}, we have smooth, exponential convergence to an STCMC surface provided $\sigma\ge \sigma_0(m,B_1,B_2,B_3,C_1,C_2)$.
	\end{thm}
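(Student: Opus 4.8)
The plan is to establish convergence in two stages: first a polynomial (in fact exponential) decay of the oscillation of $\mathcal{H}^2$ along the flow via a spectral argument using the stability operator $J$, and then a bootstrap upgrading this to smooth exponential convergence of $\omega_t$ to a fixed limit function $\omega_\infty$. By Theorem~\ref{thm_longtime}, the flow exists for all time and stays in $B_\sigma(B_1,B_2,B_3)$, so all the a-priori estimates of Proposition~\ref{prop_apriori}, Corollary~\ref{kor_aprior}, and the stability and invertibility results of Propositions~\ref{prop_strictstability} and \ref{prop_stability2} are available uniformly in $t$.

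First I would set $\psi_t := \mathcal{H}^2_t - \fint \mathcal{H}^2_t$ and recall from Subsection~\ref{subsec_stability} that $\frac{\d}{\d t}\mathcal{H}^2 = J(\ul{\theta}f)$ with $f = -\frac{1}{2\ul\theta}\psi$, hence $\frac{\d}{\d t}\mathcal{H}^2 = -\tfrac12 J(\psi)$. Since the flow preserves area, $\frac{\d}{\d t}\fint\mathcal{H}^2 = \fint \frac{\d}{\d t}\mathcal{H}^2 + (\text{lower order from }\d\mu\text{-variation})$; one checks this correction is of order $\sigma^{-2}\norm{\psi}^2$ and controllable. Then
\[
\frac{\d}{\d t}\int_\Sigma \psi^2\,\d\mu = -\int_\Sigma \psi J(\psi)\,\d\mu + (\text{error terms}) \le -\frac{6m}{\sigma^3}\int_\Sigma\psi^2\,\d\mu + (\text{error}),
\]
using the strict stability estimate \eqref{eq_stability2} (note $\fint\psi = 0$, so $\psi$ is an admissible test function). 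The error terms arise because $J$ acting on $\psi$ differs from $J$ acting on the mean-zero part used in Proposition~\ref{prop_strictstability} only by the $\fint\mathcal{H}^2$-piece and the metric variation; these are higher order in $\sigma^{-1}$ and absorbable. This gives $\norm{\psi_t}_{L^2}^2 \le e^{-\frac{6m}{\sigma^3}t}\norm{\psi_0}_{L^2}^2$ up to a small perturbation of the exponent, hence exponential decay of $\norm{\mathcal{H}^2_t - \fint\mathcal{H}^2_t}_{L^2}$.

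Next I would upgrade this to convergence of the surfaces. Since $\frac{\d}{\d t}\omega_t = -\frac{1}{2\ul\theta}\psi_t$, exponential $L^2$-decay of $\psi_t$ combined with interpolation and the uniform higher-order bounds from Corollary~\ref{kor_aprior} and Remark~\ref{bem_koraprior}(ii) (which control $\norm{\omega_t/\rho}_{C^3}$ and $\norm{A}_{C^1}$ uniformly) yields $\int_0^\infty \norm{\frac{\d}{\d t}\omega_t}_{C^0}\,\d t < \infty$, so $\omega_t \to \omega_\infty$ in $C^0$ with exponential rate. To get smooth convergence, I would interpolate the exponential $L^2$-decay against the uniform $C^k$-bounds (which follow from parabolic Schauder/Shi-type estimates as invoked in the proof of Theorem~\ref{thm_longtime}): for any $k$, $\norm{\psi_t}_{C^k} \le C_k \norm{\psi_t}_{L^2}^{\theta_k}\norm{\psi_t}_{C^{k'}}^{1-\theta_k}$ for suitable $k' > k$ and $\theta_k \in (0,1)$, giving exponential decay in every $C^k$-norm (with a slightly degraded exponent, but still positive). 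Feeding this back, $\omega_t \to \omega_\infty$ in $C^\infty$ exponentially, and the limit $\Sigma_{\omega_\infty}$ is stationary, i.e.\ $\psi_\infty \equiv 0$, so it is an STCMC surface; it lies in $\overline{B_\sigma(B_1,B_2,B_3)}$, and in fact strictly inside by the a-priori estimates.

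**Main obstacle.** The delicate point is controlling the error terms in the $L^2$-energy inequality: $J$ is not self-adjoint (the torsion term $\tau(\nabla\cdot)$), and the mean value $\fint\mathcal{H}^2_t$ is itself $t$-dependent with a nontrivial derivative coming from the variation of $\d\mu$ along the flow. One must verify that $\int \psi\, J(\psi)\,\d\mu$ can still be bounded below by $\frac{6m}{\sigma^3}\norm{\psi}_{L^2}^2$ up to errors of strictly higher order in $\sigma^{-1}$ — this uses that the skew part of $J$ contributes only through $\tau$, which is $O(\sigma^{-3})$ by Lemma~\ref{lem_prelim_chitau}, so $\int\psi\,\tau(\nabla\psi)\,\d\mu$ is bounded by $\sigma^{-3}\norm{\nabla\psi}_{L^2}\norm{\psi}_{L^2}$ and can be absorbed using the gradient term present in $\int\psi J(\psi)$ together with the Sobolev inequality of Lemma~\ref{prop_sobolevineq}. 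Getting all constants to line up so the effective decay rate stays bounded below by a positive multiple of $m\sigma^{-3}$ is the computational heart of the argument, but it is structurally parallel to the corresponding step in Huisken--Yau and in Proposition~\ref{prop_stability2}.
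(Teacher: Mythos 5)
Your proposal follows essentially the same route as the paper: differentiate $\int(\mathcal{H}^2-\fint\mathcal{H}^2)^2\,\d\mu$, observe that the time derivative of $\fint\mathcal{H}^2$ is spatially constant and therefore pairs to zero against the mean-zero factor, apply the strict stability estimate \eqref{eq_stability2} to the admissible test function $\mathcal{H}^2-\fint\mathcal{H}^2$ to obtain exponential $L^2$-decay (the only genuine error term being the cubic one from the variation of $\d\mu$, absorbed via the pointwise bound $\newbtr{\mathcal{H}^2-\fint\mathcal{H}^2}\le C\sigma^{-4}$ from Remark \ref{bem_apriori}), and then interpolate against the uniform higher-order bounds to upgrade to smooth exponential convergence. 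The paper's proof is the same in substance, concluding smooth convergence through the evolution equation for $\gamma_t$ and \cite[Appendix A]{brendleSphere} rather than through $\omega_t$, which is only a cosmetic difference.
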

	\begin{proof}
		Choose $B_1$, $B_2$, $B_3, C_1, C_2$ as in Theorem \ref{thm_longtime} (with respect to constants $B_1^0$, $B_2^0$, $B_3^0$, and $c_1,c_2\ge 10$). Let $\sigma_0^1(m,B_1,B_2,B_3,C_1,C_2)$ as in Theorem \ref{thm_longtime}, $\sigma_0^2(m,B_1,B_3,C_1,C_2)$ as in Proposition \ref{prop_strictstability}. Hence, if $\Sigma_0\in B_\sigma(B_1^0,B_2^0,B_3^0)$ 
		and satisfies 
		\[
		\norm{\frac{\omega}{\rho_0}}_{C^3(\hatgamma)}\le c_1
		\]
		for ${\sigma\ge\sigma_0:=\max(\sigma_0^1(m,B_1,B_2,B_3,C_1,C_2),\sigma_0^2(m,B_1,B_3,C_1,C_2))}$ the solution of APNMCF exists for all times and remains in $B_\sigma(B_1,B_2,B_3)$. Moreover, each surface $\Sigma_t$ is strictly stable (under area preserving variations) by Proposition \ref{prop_strictstability}. 
		
		Recall the evolution of area and $\mathcal{H}^2$ written down in Subsection \ref{subsec_stability} under a general deformation. We note that $\frac{\d}{\d t}\fint\mathcal{H}^2$ is constant along $\Sigma_t$ and thus, we find that
		\begin{align*}
		\frac{\d}{\d t}\int \left(\mathcal{H}^2-\fint\mathcal{H}^2\right)^2\d\mu
		&=2\int \left(\mathcal{H}^2-\fint\mathcal{H}^2\right)\frac{\d}{\d t}\mathcal{H}^2-\frac{1}{2}\left(\mathcal{H}^2-\fint\mathcal{H}^2\right)^3\d\mu\\
		&=-\int\left(\mathcal{H}^2-\fint\mathcal{H}^2\right)J\left(\mathcal{H}^2-\fint\mathcal{H}^2\right)-\frac{1}{2}\left(\mathcal{H}^2-\fint\mathcal{H}^2\right)^3\d\mu\\
		&\,\le -\left(\frac{6m}{\sigma^3}-\frac{C}{\sigma^4}\right)\int \left(\mathcal{H}^2-\fint\mathcal{H}^2\right)^2\d\mu\\
		&\,\le-\frac{4m}{\sigma^3}\int \left(\mathcal{H}^2-\fint\mathcal{H}^2\right)^2\d\mu
		\end{align*}
		provided $\sigma\ge \sigma_0(m,B_1,B_2,B_3,C_1,C_2)$, 
		{using Proposition \ref{prop_strictstability} and Remark \ref{bem_apriori} in the third and fourth line, respectively}.
		Hence,
		\begin{align}\label{eq_thm_convergence2}
		\norm{\mathcal{H}^2-\fint\mathcal{H}^2}_{L^2(\Sigma_t)}^2\le \exp\left(-\frac{4m}{\sigma^3}t\right)\cdot\norm{\mathcal{H}^2-\fint\mathcal{H}^2}_{L^2(\Sigma_0)}^2.
		\end{align}
		As all derivatives $\nabla^k\mathcal{H}^2$ are uniformly bounded (as $\omega_t$ is uniformly bounded in $C^{k+2}$ for any $k$), standard interpolation inequalities yield exponential convergence in higher Sobolov norms. Then, the standard embedding theorems yield exponential convergence of $\left(\mathcal{H}^2-\fint\mathcal{H}^2\right)$ in any $C^k$-norm. As
		\[
		\frac{\d }{\d t}\gamma=-\left(\mathcal{H}^2-\fint\mathcal{H}^2\right)\frac{1}{\ul{\theta}}\ul{\chi},
		\]
		this implies the smooth, exponential convergence, cf. \cite[Appendix A]{brendleSphere}. Here, we use that $\frac{1}{\ul{\theta}}\ul{\chi}$ is a smooth, bounded tensor with bounded derivatives, where all bounds depend only on the bound of $\omega$ and its derivatives.
	\end{proof}
\section{Asymptotic foliations of STCMC surfaces}\label{sec_foliation}
	Let $(S_\sigma)_{\sigma\ge \sigma_0}$ be a smooth family of spacelike cross sections of $\mathcal{N}$ in $B_\sigma(B_1,B_2,B_3)$ and area radius $\rho_\sigma=\sigma$, where everything is as in Theorem \ref{thm_longtime}. In particular, for every $\sigma\ge \sigma_0$ there exists an STCMC surface $\Sigma_\sigma\in B_\sigma(B_1,B_2,B_3)$ with area radius $\sigma$ such that the solution of APNMCF starting at $S_\sigma$ smoothly converges to $\Sigma_\sigma$ as $t\to\infty$.
	
	{Note that we can guarantee the existence of such a smooth family $(S_\sigma)$ as coordinate spheres with $\omega=r(\sigma)$ are admissible initial data in the sense of Theorem \ref{thm_longtime} for $\sigma\ge \sigma_0$ sufficiently large. In fact, this remains true for smooth families of boosted spheres $\omega_{r(\sigma),\vec{a}}$ (and small perturbations) for $\vec{a}$ sufficiently small with respect to $\sigma$, cf. Remark \ref{bem_aprioriclass} and Proposition \ref{prop_apriori}.}
	
	\begin{thm}\label{thm_foliation}
		For $\sigma\ge \sigma_0$, the family $(\Sigma_\sigma)$ forms a smooth, asymptotic foliation of $\mathcal{N}$.
	\end{thm}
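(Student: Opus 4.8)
The plan is to show that the map $\sigma \mapsto \Sigma_\sigma$ is a smooth embedding of an interval into $\mathcal{N}$ whose image sweeps out the asymptotic region, i.e.\ that the leaves are pairwise disjoint and depend smoothly on $\sigma$. The key structural input is that each $\Sigma_\sigma$ is the unique STCMC surface in $B_\sigma(B_1,B_2,B_3)$ with area radius $\sigma$ (Proposition \ref{prop_uniqueness}), that the Jacobi operator $J$ on each leaf is invertible with the quantitative bound of Proposition \ref{prop_stability2}, and that $\mathcal{H}^2$ is essentially $\tfrac{4}{\rho^2}-\tfrac{8m}{\sigma^3}+O(\sigma^{-4})$ by \eqref{eq_aprioriH2_constant}, so that distinct area radii force distinct values of $\fint\mathcal{H}^2$.

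First I would establish smooth dependence on $\sigma$. Writing $\Sigma_\sigma = \Sigma_{\omega_\sigma}$, the STCMC condition together with the area-radius normalization is a smooth equation $\mathcal{F}(\omega,\sigma)=\bigl(\mathcal{H}^2(\omega)-\fint\mathcal{H}^2(\omega),\ \rho(\omega)-\sigma\bigr)=0$; its linearization in $\omega$ at a solution is, up to the positive factor $\tfrac12\ul\theta$, the operator $J$ restricted to mean-zero functions together with the area functional, and this is an isomorphism by Proposition \ref{prop_stability2} (the mean-zero part) plus the fact that the first variation of area in the $\ul L$ direction is $\int \ul\theta f\,\d\mu \neq 0$ for constant $f$. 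Hence the implicit function theorem in suitable weighted Hölder/Sobolev spaces gives a smooth family $\sigma\mapsto\omega_\sigma$; uniqueness within $B_\sigma(B_1,B_2,B_3)$ (Proposition \ref{prop_uniqueness}) guarantees that this locally-constructed family agrees with the limit surfaces of APNMCF, so the family $(\Sigma_\sigma)$ is genuinely smooth in $\sigma$.

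Next I would show the leaves are disjoint, hence form a foliation. Suppose $\Sigma_{\sigma_1}$ and $\Sigma_{\sigma_2}$ intersect with $\sigma_1<\sigma_2$. If in fact $\sigma_1 \ne \sigma_2$ then by \eqref{eq_aprioriH2_constant} their constant values of $\mathcal{H}^2$ differ by roughly $\tfrac{4}{\sigma_1^2}-\tfrac{4}{\sigma_2^2}+O(\sigma^{-3})>0$, so $\mathcal{H}^2_{\sigma_1}>\mathcal{H}^2_{\sigma_2}$ pointwise. Now compare the two graphs over $\Sbb^2$: since both lie within $\tfrac{C}{\sigma}$ of boosted spheres $b_{\rho_i,\vec a_i}$ with $|\vec a_i|\le \tfrac{C}{\sigma}$ (Proposition \ref{prop_apriori}), one checks that $\omega_{\sigma_1}<\omega_{\sigma_2}$ everywhere on $\Sbb^2$, using that $\rho_{\sigma_2}-\rho_{\sigma_1}=\sigma_2-\sigma_1$ dominates the $O(\sigma^{-1})$ fluctuations once $\sigma_2-\sigma_1$ is bounded below; for $\sigma_1,\sigma_2$ very close one instead differentiates the family and uses the sign of $\partial_\sigma\omega_\sigma$. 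Concretely, differentiating $\mathcal{F}(\omega_\sigma,\sigma)=0$ gives $J(\tfrac12\ul\theta\,\partial_\sigma\omega_\sigma) = \partial_\sigma\fint\mathcal{H}^2_\sigma = \tfrac{\d}{\d\sigma}\bigl(-\tfrac{4}{\sigma^2}+\tfrac{8m}{\sigma^3}+O(\sigma^{-4})\bigr)<0$, and since $J$ applied to a positive constant is again negative of comparable size by \eqref{eq_prop_stability2}, the invertibility estimate forces $\partial_\sigma\omega_\sigma$ to be a positive function (the mean-zero correction being lower order by Proposition \ref{prop_stability2}); thus $\omega_\sigma$ is strictly increasing in $\sigma$ and the leaves are disjoint. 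Finally, the union $\bigcup_{\sigma\ge\sigma_0}\Sigma_\sigma$ is open (smooth family of embeddings with the transversality just shown) and its complement in the asymptotic region is controlled since $\omega_\sigma\to\infty$ uniformly as $\sigma\to\infty$ while $\omega_{\sigma_0}$ is bounded; together with the $C^0$-sandwich $\sigma-B_1\le\omega_\sigma\le\sigma+B_1$ this shows the leaves exhaust a neighbourhood of infinity, proving $(\Sigma_\sigma)$ is an asymptotic foliation.

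The main obstacle is the disjointness/monotonicity step: one needs a clean argument that $\partial_\sigma \omega_\sigma > 0$ pointwise, not merely in an averaged sense. The subtlety is that $J$ is not self-adjoint and its kernel-free-ness is only quantitative, so one cannot simply invoke a maximum principle for $J$; instead one must split $\partial_\sigma\omega_\sigma$ into its average (which is positive and of size $\sim\sigma/\sigma = O(1)$ relative to the natural scaling, coming from $\rho_\sigma=\sigma$) and a mean-zero remainder, and then use Proposition \ref{prop_stability2} together with the $C^0$-control from Corollary \ref{kor_sobolevineq} to absorb the remainder. I would expect this to work exactly as the analogous step in Huisken--Yau and Tenan, with the area-radius normalization doing the job that the enclosed-volume parameter does in the Riemannian case.
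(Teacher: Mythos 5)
Your overall skeleton (smooth dependence on $\sigma$, reduction of disjointness to $\partial_\sigma\omega_\sigma>0$, exhaustion of a neighbourhood of infinity by an intermediate-value/openness argument) matches the paper, and your implicit-function-theorem route to smoothness is an acceptable substitute for the paper's appeal to parabolic regularity of the flow limits. The genuine gap is in the key monotonicity step: you determine the sign and size of $\partial_\sigma\fint\mathcal{H}^2_\sigma$ by differentiating the asymptotic estimate \eqref{eq_aprioriH2_constant} in $\sigma$. That estimate is only a pointwise-in-$\sigma$ bound $\btr{\mathcal{H}^2_\sigma-\frac{4}{\rho^2}+\frac{8m}{\sigma^3}}\le C\sigma^{-4}$; nothing controls the $\sigma$-derivative of the $O(\sigma^{-4})$ error, so you cannot conclude $\partial_\sigma\mathcal{H}^2_\sigma=-\frac{8}{\sigma^3}+O(\sigma^{-4})$ (as written your formula also has a sign slip and would give a positive derivative, and the factor $\tfrac12$ in $J(\tfrac12\ul{\theta}\partial_\sigma\omega_\sigma)$ is spurious since $\frac{\d}{\d s}\mathcal{H}^2=J(\ul{\theta}f)$ for a variation $f\ul{L}$). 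Since your absorption of the mean-zero correction needs the right-hand side $J(\ul{\theta}\partial_\sigma\omega_\sigma)=\partial_\sigma\mathcal{H}^2_\sigma$ to be a negative constant of size comparable to $\sigma^{-3}$, the argument does not close as written. A smaller issue: for well-separated radii the deviation of $\omega_\sigma$ from $\rho_\sigma$ is only $O(1)$, not $O(\sigma^{-1})$, because $\btr{b_{\rho,\vec{a}}-\rho}\le C\rho\btr{\vec{a}}\le C$; so the $C^0$-comparison only handles $\sigma_2-\sigma_1$ larger than a fixed constant, and the derivative argument must cover the complementary range — which makes the gap above essential.

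The paper circumvents exactly this difficulty in two moves you are missing. First, it shows $f_2\colon\sigma\mapsto\mathcal{H}^2_\sigma$ is strictly decreasing without computing its derivative: if $\partial_\sigma\mathcal{H}^2_\sigma=0$, then $J(\ul{\theta}\partial_\sigma\omega_\sigma)=0$, so $\ul{\theta}\partial_\sigma\omega_\sigma\equiv0$ by the invertibility in Proposition \ref{prop_stability2}, contradicting the exact area identity $8\pi\sigma=\partial_\sigma\btr{\Sigma_\sigma}=\int\ul{\theta}\partial_\sigma\omega_\sigma\,\d\mu$; strict monotonicity then follows since $\mathcal{H}^2_\sigma\to0$. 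Second, it reparametrizes the family by $\sigma'$ defined through $\mathcal{H}^2=\frac{4}{(\sigma')^2}-\frac{8m}{(\sigma')^3}$, so that along the reparametrized family $\partial_{\sigma'}\mathcal{H}^2$ is known \emph{exactly}; it then estimates $\newnorm{J\left(\ul{\theta}u-\frac{2}{\sigma'}\right)}_{L^2}\le C\sigma^{-4}$ for $u=\partial_{\sigma'}\omega$, and Proposition \ref{prop_stability2} together with Corollary \ref{kor_sobolevineq} gives $\btr{u-1}\le C/\sigma$, hence $\partial_\sigma\omega_\sigma>0$. If you insist on working in the original parameter, you would instead have to use the area normalization to fix the sign of the unknown constant $c_\sigma:=\partial_\sigma\mathcal{H}^2_\sigma$: write $\ul{\theta}\partial_\sigma\omega_\sigma=c_\sigma J^{-1}(1)$, show $J^{-1}(1)$ is a strictly negative function (constant ansatz close to $-\rho^2/4$ plus a correction controlled via Proposition \ref{prop_stability2} and Corollary \ref{kor_sobolevineq}), and read off $c_\sigma<0$ from $\int\ul{\theta}\partial_\sigma\omega_\sigma\,\d\mu=8\pi\sigma>0$; this quantitative step is precisely what your phrase ``the mean-zero correction being lower order'' leaves unproved.
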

	\begin{proof}
		{Let $\Phi_{\ul{L}}\colon(r_0,\infty)\times\Sbb^2\to\mathcal{N}$ denote the smooth diffeomorphism induced by the background foliation of coordinate spheres as in Subsection \ref{subsec_generalsetup}}. From now on, we will identify $\mathcal{N}$ with $(0,\infty)\times\Sbb^2$ via $\Phi_{\ul{L}}$ for convenience. Moreover, let $\omega_\sigma\colon \Sbb^2\to(r_0,\infty)$ denote the smooth function on $\Sbb^2$ such that $\Sigma_\sigma=\{r=\omega_\sigma\}$. Then, the claim is equivalent to showing that
		\[
		\Phi\colon (\sigma_0,\infty)\times\Sbb^2\to\mathcal{N},(\sigma,\vec{x})\mapsto (\omega_{\sigma}(\vec{x}),\vec{x})
		\]
		is a smooth diffeomorphism onto $\widetilde{\mathcal{N}}:=\{(s,\vec{x})\colon s>\omega_{\sigma_0}(\vec{x})\}$.
		
		Note that the regularity of $\Phi$ follows from the regularity of $(S_\sigma)$ and standard parabolic theory.
		
		Let us first show that $\Phi$ is surjective onto $\widetilde{\mathcal{N}}$: Let $p=(s,\vec{x})\in\widetilde{\mathcal{N}}$. Since $\Sigma_\sigma\in B_\sigma(B_1,B_2,B_3)$, we have $\omega_\sigma\ge \sigma-B_1$. In particular, there exists $\sigma_1>s$ such that 
		\[
		\omega_{\sigma_1}(\vec{x})>s>\omega_{\sigma_0}(\vec{x}).
		\]
		{Recall that $\pi^1$ denotes the projection of $(\Phi_{\ul{L}})^{-1}$ onto the $s$-component}. Then 
		\[
		f_{\vec{x}}\colon [\sigma_0,\sigma_1]\to\R,\sigma\mapsto \pi^1\circ\Phi(\sigma,\vec{x})=\omega_\sigma(\vec{x})
		\]
		is a continuous map with $f_{\vec{x}}(\sigma_0)<s<f_{\vec{x}}(\sigma_1)$. By the intermediate value theorem there exists $\sigma\in(\sigma_0,\sigma_1)$ such that $\omega_{\sigma}(\vec{x})=f_{\vec{x}}(\sigma)=s$. In particular, $\Phi(\sigma,\vec{x})=(s,\vec{x})=p$.
		
		Note that
		\[
		D\Phi=\begin{pmatrix}
		\partial_\sigma\omega_\sigma&\partial_1\omega_\sigma&\partial_2\omega_\sigma\\
		0&1&0\\
		0&0&1
		\end{pmatrix},
		\]
		where $x^1$, $x^2$ denotes (local) coordinates on $\Sbb^2$. Moreover, 
		\[
		\Phi(\sigma_1,\vec{x}_1)=\Phi(\sigma_2,\vec{x}_2)\Leftrightarrow \vec{x}_1=\vec{x}_2,\text{ and }\omega_{\sigma_1}(\vec{x}_1)=\omega_{\sigma_2}(\vec{x}_2).
		\]
		Hence, it remains to show that $\partial_\sigma\omega\not=0$.
		
		To this end, we first show that 
		\begin{align}\label{eq_foliation_f2}
		f_2\colon (\sigma_0,\infty)\to (0,\infty), \sigma\mapsto \mathcal{H}^2_\sigma
		\end{align}
		is a smooth, strictly decreasing function with $\operatorname{Im}(f_2)=(0,\mathcal{H}^2_{\sigma_0})$, where $\mathcal{H}^2_\sigma$ denotes the (constant) spacetime mean curvature of $\Sigma_\sigma$. Note that the regularity of $f_2$ immediately follows from the regularity of $\Phi$.
		
		Assume that there exists $\sigma$ such that $\partial_\sigma f_2(\sigma)=\partial_\sigma\mathcal{H}^2_\sigma=0$. As $\partial_\sigma\mathcal{H}^2_\sigma=J(\ul{\theta}\partial_\sigma\omega_\sigma)$, Proposition \ref{prop_stability2} implies that $\ul{\theta}\partial_\sigma\omega_\sigma\equiv 0$. On the other hand, as $\btr{\Sigma_\sigma}=4\pi\sigma^2$, we have
		\[
		8\pi\sigma=\partial_\sigma\btr{\Sigma_\sigma}=\int\ul{\theta}\partial_\sigma\omega_\sigma\d\mu,
		\] 
		a contradiction. Hence, $\partial_\sigma f_2\not=0$, and since $\mathcal{H}^2_\sigma\to0$ as $\sigma\to\infty$ by Proposition \ref{prop_apriori}, $f_2$ is necessarily strictly decreasing. In particular, $\operatorname{Im}(f_2)=(0,\mathcal{H}^2_{\sigma_0})$.
		
		Note that for $r_0>C(m)$, $r\mapsto \frac{4}{r^2}-\frac{8m}{r^3}$ is strictly decreasing on $[r_0,\infty)$, and thus for $\sigma_0(m,B_1,B_2,B_3,C_1,C_2)$ sufficiently large there exists a unique $\sigma_0'$ such that ${\mathcal{H}^2_{\sigma_0}=\frac{4}{(\sigma_0')^2}-\frac{8m}{(\sigma_0')^3}}$ and 
		\begin{align}\label{eq_foliation_f3}
		f_3:(\sigma_0',\infty)\to (0,\mathcal{H}^2_{\sigma_0}), \sigma'\mapsto \frac{4}{(\sigma')^2}-\frac{8m}{(\sigma')^3}
		\end{align}
		is a smooth, strictly decreasing bijection. In particular,
		\[
		\sigma(\sigma'):=f_2^{-1}(f_3(\sigma'))
		\]
		is a smooth, strictly increasing bijection. Recall that by Remark \ref{bem_apriori} (with $\rho_\sigma=\sigma$)
		\[
		\btr{\mathcal{H}^2-\frac{4}{\sigma^2}+\frac{8m}{\sigma^3}}\le \frac{C}{\sigma^4},
		\]
		which implies 
		\[
		\frac{1}{2}\sigma\le \sigma'\le 2\sigma
		\]
		for $\sigma\ge\sigma_0(m,B_1,B_2,B_3,C_1,C_2)$ sufficiently large.
		In fact, using \eqref{eq_aprioriH2_constant}, and the fact that $f_3$ is bijective, one can show that $\btr{\sigma-\sigma'}\le \frac{C}{\sigma}$ for $\sigma\ge\sigma_0(m,B_1,B_2,B_3,C_1,C_2)$ by the mean value theorem.
		
		We now define the smooth map
		\[
		\Phi'\colon (\sigma_0',\infty)\times\Sbb^2\to \widetilde{\mathcal{N}}, (\sigma',\vec{x})\mapsto(\omega_{\sigma(\sigma')}(\vec{x}),\vec{x}),
		\]
		and let $u:=\partial_{\sigma'}\omega_{\sigma(\sigma')}$. By construction, we have
		\[
		J(\ul{\theta}u)=\partial_{\sigma'}\mathcal{H}^2=-\frac{8}{(\sigma')^3}+\frac{24m}{(\sigma')^4}.
		\]
		In particular, using Lemma \ref{lem_prelim_chitau}, \ref{lem_appx_curvatureestimates1} and the fact that $\mathcal{N}$ is aymptotically Schwarzschild, i.e., $h=1-\frac{2m}{r}+O_4(r^{-2})$, we find
		\begin{align*}
		\btr{J\left(\ul{\theta}u-\frac{2}{\sigma'}\right)}
		\le&\,\btr{\partial_{\sigma'}\mathcal{H}^2+\mathcal{H}^2\frac{2}{\sigma'}+\frac{1}{\sigma'}\left(2\overline{\Ric}(\ul{L},L)-\Riem(\ul{L},L,L,\ul{L})\right)}\\
		&\,+\btr{\frac{2\mathcal{H}^2}{\sigma'\ul{\theta}^2}\left(\newbtr{\accentset{\circ}{\ul{\chi}}}^2+\overline{\Ric}(\ul{L},\ul{L}\right)+\frac{2}{\sigma'}\left(\frac{1}{\ul{\theta}}\spann{\accentset{\circ}{\ul{\chi}},\accentset{\circ}{A}}+2\dive\tau+2\btr{\tau}^2\right)}\\
		\le &\,\btr{\frac{8m}{\sigma'}\left(\frac{1}{(\sigma')^3}-\frac{1}{\omega_{\sigma(\sigma')}^3}\right)}+\frac{C}{\sigma^5}\le \frac{C}{\sigma^5}.
		\end{align*}
		Hence
		\[
		\norm{J\left(\ul{\theta}u-\frac{2}{\sigma'}\right)}_{L^2(\Sigma)}\le \frac{C}{\sigma^4},
		\]
		and Proposition \ref{prop_stability2} and Corollary \ref{kor_sobolevineq} imply that
		\[
		\btr{\ul{\theta}u-\frac{2}{\sigma'}}\le \frac{C}{\sigma^2},
		\]
		so 
		\[
		\btr{u-1}\le \frac{C}{\sigma}
		\]
		for $\sigma\ge \sigma_0$. Hence, for $\sigma\ge \sigma_0$, we have $u>0$, and as
		\[
		u=\partial_{\sigma'}(\sigma(\sigma'))\partial_\sigma\omega_{\sigma}
		\]
		we also find $\partial_\sigma\omega_{\sigma}>0$. This shows that both $\Phi$ and $\Phi'$ are smooth diffeomorphisms onto $\widetilde{\mathcal{N}}$.
	\end{proof}
	
	We now state our main theorem:
	\begin{thm}\label{thm_main_foliation}
		Let $\mathcal{N}$ be an asymptotically Schwarzschildean lightcone. Then there exists an asymptotic foliation of STCMC surfaces. The foliation is unique within an a-priori class of surfaces in the following sense: There exists suitable constants $B_1,B_2,B_3$, $c_1\ge 10$, and $\sigma_0$, such that if $\Sigma$ is an STCMC surface in $B_\sigma(B_1,B_2,B_3)$ for $\sigma\ge \sigma_0$ and satisfies
		\[
			\norm{\frac{\omega}{\rho}}_{C^3(\widehat{\gamma})}\le c_1
		\]
		then $\Sigma$ is a leaf of the foliation.
	\end{thm}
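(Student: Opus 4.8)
The plan is to assemble Theorem \ref{thm_main_foliation} from the results of Sections \ref{sec_APNMCF} and \ref{sec_foliation}: existence of the foliation is essentially Theorem \ref{thm_foliation}, and uniqueness within the a-priori class follows by combining Proposition \ref{prop_uniqueness} with the strict monotonicity of $\mathcal{H}^2$ along the constructed foliation that was established in the proof of Theorem \ref{thm_foliation}.

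\emph{Existence.} First I would fix initial constants $B_1^0,B_2^0,B_3^0>0$ and $c_1=10$ so that every coordinate sphere $S_r$ with $r$ sufficiently large lies strictly in $B_r(B_1^0,B_2^0,B_3^0)$ and satisfies $\norm{\omega/\rho}_{C^3(\widehat{\gamma})}\le c_1$; this is possible by Lemma \ref{lem_prelim_backgroundfol} and Remark \ref{bem_aprioriclass}. Let $B_1,B_2,B_3$ and $\sigma_0$ be the associated constants from Theorem \ref{thm_longtime}. For each $\sigma\ge\sigma_0$ take the coordinate sphere of area radius $\sigma$ as initial datum for APNMCF; these form a smooth family. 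By Theorems \ref{thm_longtime} and \ref{thm_convergence} the flow exists for all times, remains in $B_\sigma(B_1,B_2,B_3)$, and converges smoothly and exponentially to an STCMC surface $\Sigma_\sigma\in B_\sigma(B_1,B_2,B_3)$ of area radius $\sigma$. Theorem \ref{thm_foliation} then shows that $(\Sigma_\sigma)_{\sigma\ge\sigma_0}$ is a smooth asymptotic foliation of $\mathcal{N}$.

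\emph{Uniqueness.} Now let $\Sigma=\Sigma_\omega$ be an STCMC surface in $B_\sigma(B_1,B_2,B_3)$ with $\norm{\omega/\rho}_{C^3(\widehat{\gamma})}\le c_1$, and set $h_0:=\mathcal{H}^2$. By \eqref{eq_aprioriH2_constant} and \eqref{eq_areareadiusB1} one has $h_0=\frac{4}{\sigma^2}-\frac{8m}{\sigma^3}+O(\sigma^{-3})$; in particular, if $\sigma_0$ is taken large enough, $h_0$ lies in the image $(0,\mathcal{H}^2_{\sigma_0})$ of the strictly decreasing bijection $f_2$ from the proof of Theorem \ref{thm_foliation}, since $h_0$ is of order $\sigma^{-2}$ while $\mathcal{H}^2_{\sigma_0}$ is of order $\sigma_0^{-2}$. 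Hence there is a unique $\sigma^\ast:=f_2^{-1}(h_0)$ with $\mathcal{H}^2_{\sigma^\ast}=h_0=\mathcal{H}^2(\Sigma)$, so the leaf $\Sigma_{\sigma^\ast}$ has the same spacetime mean curvature as $\Sigma$. Comparing the two expansions of $h_0$ through \eqref{eq_aprioriH2_constant} gives $\btr{\sigma-\sigma^\ast}\le C(B_1,m)$, so both $\Sigma$ and $\Sigma_{\sigma^\ast}$ lie in a common a-priori class $B_{\sigma^\ast}(\widehat{B}_1,\widehat{B}_2,\widehat{B}_3)$ with $\widehat{B}_i$ depending only on $B_1,B_2,B_3,m$, and both satisfy $\norm{\omega/\rho}_{C^3(\widehat{\gamma})}\le c_1$ (for $\Sigma_{\sigma^\ast}$ this is guaranteed by Theorem \ref{thm_longtime} and Corollary \ref{kor_aprior}). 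Applying Proposition \ref{prop_uniqueness} with these constants and $\sigma^\ast$ --- which is admissible once $\sigma_0$ exceeds the corresponding threshold plus $C(B_1,m)$ --- yields $\Sigma=\Sigma_{\sigma^\ast}$, i.e.\ $\Sigma$ is a leaf of the foliation. In particular the foliation is canonical: its leaves are exactly the STCMC surfaces contained in $B_\sigma(B_1,B_2,B_3)$ for $\sigma\ge\sigma_0$, independently of the initial family $(S_\sigma)$.

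\emph{Main obstacle.} The analytic substance is already contained in Theorems \ref{thm_longtime}, \ref{thm_convergence}, \ref{thm_foliation} and Proposition \ref{prop_uniqueness}, so the only real work is the bookkeeping of constants: one must pick $B_1,B_2,B_3,c_1,\sigma_0$ in the statement so that they are at once the constants in which the flow-generated foliation lives and large enough that, for every STCMC surface $\Sigma$ in $B_\sigma(B_1,B_2,B_3)$, both $\Sigma$ and the matching leaf $\Sigma_{\sigma^\ast}$ fall into the slightly enlarged a-priori class $B_{\sigma^\ast}(\widehat{B}_1,\widehat{B}_2,\widehat{B}_3)$ to which Proposition \ref{prop_uniqueness} applies, with $\sigma^\ast$ above the required threshold. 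The one genuinely nontrivial point is to verify that $\mathcal{H}^2(\Sigma)$ lands in the range of $f_2$ and that $\sigma^\ast$ and $\sigma$ are comparable, both of which follow from the a-priori estimate \eqref{eq_aprioriH2_constant}.
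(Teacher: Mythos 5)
Your proposal is correct and follows essentially the same route as the paper: existence is delegated to Theorems \ref{thm_longtime}, \ref{thm_convergence}, \ref{thm_foliation}, and uniqueness is obtained by matching $\mathcal{H}^2(\Sigma)$ to a leaf via the strictly monotone map $f_2$, showing $\sigma$ and $\sigma^\ast=f_2^{-1}(\mathcal{H}^2)$ differ by a bounded amount through \eqref{eq_aprioriH2_constant} and \eqref{eq_areareadiusB1}, placing both surfaces in a common enlarged a-priori class, and invoking Proposition \ref{prop_uniqueness}. The only difference is that you compress the paper's intermediate comparison through the model function $f_3$ (the auxiliary radius $\widetilde{\sigma}'$) into a direct comparison of the two expansions of $\mathcal{H}^2$, which is the same estimate chain.
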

	\begin{bem}
		Note that the constants chosen are non-unique. In particular, one can chose arbitrarily large (but fixed) constants $B_1,B_2,B_3,c_1$ such that the statement remains true for sufficiently large $\sigma_0$.
	\end{bem}
	\begin{proof}
		It remains to show that the foliation is unique within $B_\sigma(B_1,B_2,B_3)$ as claimed, where we pick $B_1,B_2,B_3$, $c_1\ge 10$ as in the proof of Theorem \ref{thm_convergence}. In particular, the leaves of the foliation lie in $B_\sigma(B_1,B_2,B_3)$ and satisfy the $C^3$-bound. First, recall the maps $f_2$, $f_3$ defined via \eqref{eq_foliation_f2}, \eqref{eq_foliation_f3} with respect to the foliation.
		
		Now let $\Sigma\in B_\sigma(B_1,B_2,B_3)$ be an STCMC surface satisfying 
		\[
		\norm{\frac{\omega}{\rho}}_{C^3(\widehat{\gamma})}\le c_1
		\]
		By Remark \ref{bem_apriori} Equation \eqref{eq_aprioriH2_constant} and \eqref{eq_areareadiusB1}, we find
		\begin{align}\label{eq_maintheorem_h2estimate}
			0<\frac{4}{\rho^2}-\frac{8m}{\rho^3}-\frac{C}{\sigma^4}\le\mathcal{H}^2\le \frac{4}{\rho^2}-\frac{8m}{\rho^3}+\frac{C}{\sigma^4}\le \frac{5}{\sigma^2}
		\end{align}
		for $\sigma\ge\sigma_0$ sufficiently large. In particular, $\mathcal{H}^2$ lies in the image of $f_2$, $f_3$. Since $\mathcal{H}^2$ is constant, and $f_2$, $f_3$ bijective, we can consider $\widetilde{\sigma}:=f_2^{-1}(\mathcal{H}^2)$, $\widetilde{\sigma}'=f_3^{-1}(\mathcal{H}^2)$. In particular
		\[
			\mathcal{H}^2=\mathcal{H}^2_{\widetilde{\sigma}}=\frac{4}{(\widetilde{\sigma}')^2}-\frac{8m}{(\widetilde{\sigma}')^3}.
		\]
		We now want to show that $\Sigma=\Sigma_{\widetilde{\sigma}}$, 
		{where $\Sigma_{\widetilde{\sigma}}\in B_{\widetilde{\sigma}}(B_1,B_2,B_3)$ denotes the leaf of the foliation with $\mathcal{H}^2_{\widetilde{\sigma}}=\mathcal{H}^2$. To this end, we first show that both cross sections lie in the a-priori class $B_\sigma(\widetilde{B}_1,\widetilde{B}_2,\widetilde{B}_3)$ for suitable constants.}
			
		First note that, as $f_2$ is a smooth, strictly decreasing bijection, \eqref{eq_maintheorem_h2estimate} implies that for $\sigma\ge\sigma_0$ sufficiently large, $\widetilde{\sigma}$ is sufficiently large as in the proof of Theorem \ref{thm_foliation} such that
		\[
			\frac{1}{2}\widetilde{\sigma}\le\widetilde{\sigma}'\le 2\widetilde{\sigma},
		\]
		and in particular such that $\widetilde{\sigma}'\ge 3m$. Thus, \eqref{eq_maintheorem_h2estimate} implies that
		\[
			\frac{4}{(3\widetilde{\sigma}')^2}\le\frac{C}{\sigma^2},
		\]
		and thus $\widetilde{\sigma}\ge c\sigma$ for some small, fixed constant $c>0$. Using Remark \ref{bem_apriori} Equation \eqref{eq_aprioriH2_constant} for both $\Sigma$, $\Sigma_{\widetilde{\omega}}$, we find
		\begin{align*}
			\btr{\rho^2\mathcal{H}^2-4}&\le \frac{C}{\sigma},\\
			\btr{\widetilde{\sigma}^2\mathcal{H}^2-4}&\le \frac{C}{\widetilde{\sigma}}.
		\end{align*}
		Hence using this together with \eqref{eq_areareadiusB1}, we can conclude that for any $\varepsilon>0$, there exists $\sigma_0$ such that for all $\sigma\ge\sigma_0$ we find
		\begin{align}\label{eq_main_sigmacomp1}
			(1+\varepsilon)^{-1}\widetilde{\sigma}\le \sigma\le (1+\varepsilon)\widetilde{\sigma}.
		\end{align}
		Using Remark \ref{bem_apriori} Equation \eqref{eq_aprioriH2_constant} once again, \eqref{eq_main_sigmacomp1} now implies
		\[
			\btr{\frac{4}{\widetilde{\sigma}^2}-\frac{4}{\rho^2}}\le \btr{\frac{4}{\widetilde{\sigma}^2}-\frac{8m}{\widetilde{\sigma}^3}-\mathcal{H}^2+\mathcal{H}^2+\frac{8m}{\sigma^3}-\frac{4}{\rho^2}}+\frac{8m}{\sigma^3}+\frac{8m}{\widetilde{\sigma}^3}\le \frac{C}{\sigma^3}.
		\]
		Due to \eqref{eq_areareadiusB1}, \eqref{eq_main_sigmacomp1} this implies
		\[
			\btr{\widetilde{\sigma}-\rho}\le C
		\]
		for $\sigma\ge\sigma_0$ sufficiently large. By \eqref{eq_areareadiusB1}, we conclude that
		\begin{align}\label{eq_main_comp2}
			\btr{\sigma-\widetilde{\sigma}}\le C+2B_1.
		\end{align}
		Hence, for $\sigma\ge\sigma_0$ sufficiently large, \eqref{eq_main_sigmacomp1}, \eqref{eq_main_comp2} imply that
		$\Sigma,\Sigma_{\widetilde{\sigma}}\in B_\sigma(3B_1+C,2B_2,2B_3)$. We conclude that $\Sigma=\Sigma_{\widetilde{\sigma}}$ for $\sigma\ge\sigma_0$ sufficiently large by Proposition \ref{prop_uniqueness}.
	\end{proof}

	\section{Outlook}\label{sec_outlook}
	
	In this section, we briefly comment on possible physical interpretations of our result and motivate future research directions. For a null hypersurface approaching null infinity, energy, linear momentum and mass are defined via the Bondi--Sachs formalism, cf. \cite{maedlerwinicour}. To this end, the spacetime metric is expressed in Bondi--Sachs coordinates $(u,r,x^I)$ with respect to a family of outgoing null hypersurfaces $\{u=\operatorname{const.}\}$, and energy and linear momentum are defined as integrals over a suitably defined mass aspect function. For details, we refer the interested reader to \cite{chenwangwangyau,maedlerwinicour} and the references given therein.
	
	While not suitable to discuss all phenomena at null infinity, we note that for a fixed null hypersurface, one can equivalently define energy and linear momentum along a background foliation of surfaces $(S_r)$ with
	\begin{align}\label{eq_outlook1}
		\lim\limits_{r\to\infty}r^2\mathcal{K}_r=1,
	\end{align} 
	where $\mathcal{K}_r$ denotes the Gauss curvature of $S_r$, cf. Sauter \cite[Chapter 4]{sauter}. It is not difficult to see that the background foliation $(S_r)$ of an asymptotically Schwarzschildean lightcone as specified in Definition \ref{defi_asymclassS} satisfies \eqref{eq_outlook1}. One then finds that the given asymptotically flat lightcone $\mathcal{N}$ has vanishing linear momentum and its Bondi energy and Bondi mass coincide with the mass parameter $m$ with respect to $(S_r)$.
 	By Proposition \ref{prop_apriori}, in particular in view of \eqref{eq_apriori_veca}, we find that the asymptotic foliation $(\Sigma_\sigma)$ by STCMC surfaces constructed in Theorem \ref{thm_main_foliation} also satisfies \eqref{eq_outlook1} and is thus suitable to evaluate Bondi energy and linear momentum. We find
	\begin{align*}
	E_{Bondi}(\mathcal{N},(\Sigma_\sigma))&=E_{Bondi}(\mathcal{N},(S_r))=m,\\
	\vec{P}^i_{Bondi}(\mathcal{N},(\Sigma_\sigma))&=\vec{P}^i_{Bondi}(\mathcal{N},(S_r))=0.
	\end{align*}
	Hence, the foliation $(\Sigma_\sigma)$ is centered in the sense that the linear momentum is vanishing and that Bondi energy and Bondi mass coincide. More generally, for a family $(\widetilde{\Sigma}_s)$ converging to a boosted sphere $b_{\vec{a}}$ after rescaling, we find that
	\begin{align}
	E_{Bondi}(\mathcal{N},(\widetilde{\Sigma}_s))&=m\sqrt{1+\btr{\vec{a}}^2},\label{eq_outlook2}\\
	\vec{P}_{Bondi}^i(\mathcal{N},(\widetilde{\Sigma}_s))&=m\vec{a}\,{}^i.\label{eq_outlook3}
	\end{align}
	
	To give further interpretation, let us get inspiration from special relativity. In special relativity, the \emph{dynamic mass moment} $\vec{N}$ of a matter distribution of (non-trivial) energy $E$, linear momentum $\vec{P}$ and center of mass $\vec{z}$ is given by
	\[
	\vec{N}=E\vec{z}-t\vec{P},
	\]
	and is conserved in time. See \cite{fayngold}. In particular, the conservation in time of the dynamic mass moment dictates that the center of mass changes along a straight line in direction of the total linear momentum. This implies
	\[
		\frac{\d}{\d t}\vec{z}=\frac{\vec{P}}{E}.
	\]
	We note that this has been verified for the center of mass defined via an asymptotic foliation by STCMC surfaces in the initial data set case, see \cite{nerz2, cederbaumsakovich}. See also \cite{chrusciel}. 
	Choosing a suitable origin of coordinates such that $\vec{N}=\vec{0}$, this further gives
	\begin{align}\label{eq_outlook4}
		\vec{z}=t\frac{\vec{P}}{E}.
	\end{align}
	As $t=r=\omega$ along a spacelike cross section $\Sigma_\omega$ of the Minkowski lightcone, this may serve as a good analogy to the situation studied in our paper. Recall that the boost vector $\vec{a}$ is defined via the associated $4$-vector $\textbf{Z}$ as in Subsection \ref{subsec_4vector} and satisfies the decay estimate \eqref{eq_apriori_veca} along the asympotic foliation by STCMC surfaces $(\Sigma_\sigma)$ constructed in Theorem \ref{thm_main_foliation}. Further, \eqref{eq_apriori_veca} implies that $\frac{\btr{\textbf{Z}}}{\omega}\to 1$ as $\sigma\to\infty$ along the constructed foliation. Hence, \eqref{eq_outlook3} and \eqref{eq_outlook4} together motivate to define a notion of center of mass via the spacial components of the associated $4$-vector $\textbf{Z}$, i.e.,
	\[
	\vec{z}\,{}^i=\lim_{\sigma\to\infty}\textbf{Z}(\Sigma_\sigma)^i,\qquad i=1,2,3,
	\]
	similar to the geometric notion of center of mass in the initial data set case. Observe that \eqref{eq_apriori_veca} implies that the spacial components of $\textbf{Z}$ remain bounded, so at least converge up to a subsequence. While this does not a-priori guarantee that the limiting vector is well-defined, a similar issue persists in the setting of Riemannian manifolds and initial data sets unless suitable parity conditions are satisfied, cf. \cite{cederbaumsakovich}. See \cite{cederbaumgraf} on a discussion on the genericness of these parity conditions. We refer to \cite{chenkellerwangwangyau, chenwangwangyau} and references therein for definitions of center of mass and angular momentum at null infinity. In future work, we plan to investigate if the above definition is indeed suitable to define a notion of center of mass, and if it differs or coincides with one of the already existing notions.
	
	Finally, we want to comment on our asymptotic assumptions and the definition of the a-priori class. Observe that in the Riemannian situation considered in Huisken--Yau \cite{huiskenyau}, (finite) spatial translations preserve the strong asymptotics and the a-priori class (for sufficiently large constant $B_1$). In particular, the precise choice of the initial asymptotically Schwarzschildean coordinate chart does not play a crucial role in their construction.
	
	In the null setting, instead of a (spatial) translation, one can apply a Lorentz boost in the restricted Lorentz group $\operatorname{SO}^+(1,3)$ to a given background foliation to obtain a new background foliation. We note that the property $\lim\limits_{r\to\infty}r^2\mathcal{K}_r=1$ is preserved, as $\operatorname{SO}^+(1,3)$ is isomorphic to the M\"obius group. Indeed, $\operatorname{SO}^+(1,3)$ is a subgroup of the Bondi--Metzner--Sachs group which is the group of asymptotic isometries that relate two Bondi--Sachs coordinate systems. As expected, it is a well-known fact that energy and linear momentum transform equivariantly under a Lorentz boost. In particular, applying a Lorentz transformation to the asymptotically Schwarzschildean background foliation as in Definition \ref{defi_asymclassS} will yield an asymptotically flat background foliation that is however not asymptotically Schwarschildean in the sense of Definition \ref{defi_asymclassS}. Moreover, as we formulated the a-priori class with respect to the asymptotically Schwarzschildean background foliation, any boosted background foliation for a fixed vector $\vec{a}$ will leave the a-priori class due to \eqref{eq_apriori_veca} sufficiently far out in the asymptotic region. In particular, any foliation within the a-priori class will yield vanishing linear momentum even if the leaves are not STCMC surfaces. 
	
	On the other hand, the decay estimate \eqref{eq_apriori_veca} for the boost vector $\vec{a}$ is the key ingredient to establish the improved gradient bounds in Corollary \ref{kor_aprior} which are essential to our analysis at several points. This might suggest that it will not be possible to weaken these assumptions (in a significant way), but rather motivates the question whether these assumptions, which constitute a specific gauge choice in this setting, are naturally justified in the construction of (stable) STCMC surfaces. By the work of Chen--Wang \cite{chenwang}, we know that in the Schwarzschild lightcone all STCMC surfaces are centered round spheres and in particular always lie within the a-priori class. Thus, it is reasonable to assume that this also holds for asymptotically Schwarzschildean lightcones sufficiently far out in the asymptotic region. In view of the work of Huisken--Yau \cite[Theorem 5.1]{huiskenyau}, we formulate the corresponding conjecture:
	\begin{conj}\label{conj1}
		Let $\mathcal{N}$ be an asymptotically Schwarzschildean lightcone. Then any strictly stable STCMC surface $\Sigma$ lies in the a-priori class $B_\sigma(B_1,B_2,B_3)$ for suitable values of $B_1, B_2,B_3$ if its area radius $\rho$ is sufficiently large.
	\end{conj}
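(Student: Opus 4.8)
The plan is to adapt the strategy of Huisken--Yau's proof of \cite[Theorem 5.1]{huiskenyau} to the null setting, using Chen--Wang's classification \cite{chenwang} of STCMC surfaces in the Schwarzschild (and, after blow-down, Minkowski) lightcone as the model at infinity. I would argue by contradiction and compactness: fix constants $B_1,B_2,B_3$ (to be chosen at the end, and allowed to be large), and suppose there is a sequence of strictly stable STCMC surfaces $\Sigma_k\subset\mathcal{N}$ with area radii $\rho_k\to\infty$ but $\Sigma_k\notin B_{\sigma_k}(B_1,B_2,B_3)$ for any admissible $\sigma_k\sim\rho_k$. The goal is to rescale, extract a limit in the Minkowski lightcone, and contradict the freedom that this limit appears to allow, by exploiting the Schwarzschild correction and strict stability.

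First I would pin down the constant value $\mathcal{H}^2_k$ and the reference radius. Integrating the Gauss equation \eqref{eq_prelim_gauss3} over $\Sigma_k$, using Gauss--Bonnet, the identity $|\vec{\two}|^2=\tfrac12\mathcal{H}^2+\langle\ul{\theta}^{-1}\accentset{\circ}{\ul{\chi}},\accentset{\circ}{A}\rangle$, and $|\Sigma_k|=4\pi\rho_k^2$, yields $\mathcal{H}^2_k=\tfrac{4}{\rho_k^2}+o(\rho_k^{-2})$ once crude a priori bounds on the radial position of $\Sigma_k$ (so the ambient curvature terms decay) and on $\int_{\Sigma_k}|\accentset{\circ}{A}_k|^2$ are in hand; these crude bounds I would establish as part of the compactness package, using strict stability and the topology of $\Sbb^2$ to rule out extreme tilting. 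Thus $\rho_k^2\mathcal{H}^2_k\to4$, and I would let $\sigma_k$ be defined by $\mathcal{H}^2_k=\tfrac{4}{\sigma_k^2}-\tfrac{8m}{\sigma_k^3}$ exactly as in Theorem \ref{thm_foliation}, so $\sigma_k\sim\rho_k$.

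Second, the roundness estimate. Since $\mathcal{H}^2_k$ is constant, the contracted Codazzi identity \eqref{eq_codazzi} shows that $\dive\accentset{\circ}{A}_k$ is controlled by $\tau_k$, $\accentset{\circ}{A}_k$ and the ambient curvature along $\Sigma_k$. Combining this with the fact that $\Sbb^2$ carries no nontrivial trace-free divergence-free symmetric $2$-tensor (equivalently, that $\dive$ on trace-free symmetric $2$-tensors has trivial kernel on metrics close to round), the null Simons identity Proposition \ref{prop_nullsimon}, and the strict stability inequality tested against functions built from $\accentset{\circ}{A}_k$, I would obtain $\int_{\Sigma_k}|\accentset{\circ}{A}_k|^2\,d\mu\le C\rho_k^{-6}$. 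Simons-type iteration, De Lellis--M\"uller / Shi--Wang--Wu estimates (Proposition \ref{prop_shiwangwu} applied to the induced metric, whose Gauss curvature is then $C^1$-close to constant), and elliptic regularity for the almost-STCMC equation upgrade this to the pointwise bounds $|\accentset{\circ}{A}_k|\le C\rho_k^{-4}$ and $|\nabla\accentset{\circ}{A}_k|\le C\rho_k^{-5}$; for $B_2,B_3$ larger than these universal constants, the last two defining inequalities of $B_{\sigma_k}(B_1,B_2,B_3)$ hold. (In practice steps one through three are intertwined and are run simultaneously inside the compactness argument.)

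Third, and this is the main obstacle, the positioning estimate $|\omega_k-\sigma_k|\le B_1$. After rescaling $\widetilde\gamma_k=\rho_k^{-2}\gamma_{\omega_k}$ the roundness estimate forces $(\Sigma_k,\widetilde\gamma_k)\to(\Sbb^2,\hatgamma)$ in $C^{2,\alpha}$, and blowing down $\mathcal{N}$ exhibits a subsequential limit that is a (strictly stable) STCMC surface in the Minkowski lightcone, hence a boosted sphere $b_{\vec a_\infty}$. The difficulty is that the blow-down limit equation is insensitive to $m$ and therefore permits an arbitrary boost: to conclude I must show both $\vec a_\infty=0$ and, quantitatively, $|\vec a_k|\le C/\rho_k$, since if $\vec a_\infty\neq0$ the spread of $\rho_k b_{\vec a_\infty}$ over $\Sbb^2$ is of order $\rho_k$ and no finite $B_1$ works. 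Here the positivity of the mass is essential: in the Minkowski model the boost generators $b_{\sigma,\vec a}'$ are exact Jacobi fields of the stability operator (cf. the role of $\rho(t)b_t'$ in Lemma \ref{lem_appx_bootstrap}), so only the next-order term — precisely the attractive $\tfrac{12m}{\sigma^3}$-contribution appearing in Proposition \ref{prop_strictstability}, which survives because $\mathcal{N}$ is asymptotically Schwarzschildean rather than merely asymptotically flat — can break the degeneracy and penalize a nonzero boost, exactly as a large center-of-mass offset is penalized in \cite[Section 5]{huiskenyau}. Making this rigorous requires a careful second-order expansion of $J$ about the family $b_{\sigma,\vec a}$, control of the approximate kernel $b_{\sigma,\vec a}'$, and a Lyapunov--Schmidt-type reduction; from $|\vec a_k|\le C/\rho_k$ together with \eqref{eq_aprioic2alpha}-type $C^{2,\alpha}$ estimates and the definition of the associated $4$-vector $\textbf{Z}$ in Subsection \ref{subsec_4vector}, one gets $|\omega_k-b_{\sigma_k,\vec a_k}|\le C$ and hence $|\omega_k-\sigma_k|\le B_1$ for $B_1$ large, contradicting $\Sigma_k\notin B_{\sigma_k}(B_1,B_2,B_3)$. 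It is precisely this last step — converting $\vec a\to0$ into a sharp rate using only strict stability and the sign of $m$ — that makes the statement delicate enough to be posed only as a conjecture here.
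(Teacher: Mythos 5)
The statement you set out to prove is posed in the paper only as Conjecture \ref{conj1}: the authors give no proof and explicitly defer it to future work, so the only thing to measure your sketch against is their own remark on why it is hard --- and that remark points at exactly the place where your argument breaks down. Your second step claims that testing the strict stability inequality ``against functions built from $\accentset{\circ}{A}_k$'' yields $\int_{\Sigma_k}|\accentset{\circ}{A}_k|^2\,\d\mu\le C\rho_k^{-6}$, in analogy with Huisken--Yau. In the Riemannian setting this works because the stability operator contains a zeroth-order $|A|^2$ term, so stability directly penalizes $\accentset{\circ}{A}$. In the null setting the operator $J$ in \eqref{eq_stability1} couples to $\accentset{\circ}{A}$ only through $\frac{1}{\ul{\theta}}\langle\accentset{\circ}{\ul{\chi}},\accentset{\circ}{A}\rangle$, and $\accentset{\circ}{\ul{\chi}}$ vanishes identically in the exact Schwarzschild lightcone and is of size $O(r^{-3})$ here; this is precisely the obstruction the paper states in Section \ref{sec_outlook} (the Jacobi operator does not depend on $\accentset{\circ}{A}$ in the Schwarzschild model, so one must find different test functions). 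Hence strict stability gives essentially no direct control of $\accentset{\circ}{A}$, the Huisken--Yau test-function computation does not transfer, and without that estimate your application of Proposition \ref{prop_shiwangwu}, the pointwise bounds $|\accentset{\circ}{A}_k|\le C\rho_k^{-4}$, $|\nabla\accentset{\circ}{A}_k|\le C\rho_k^{-5}$, and the blow-down all lack their starting point. The same applies to the ``crude bounds on the radial position \dots ruling out extreme tilting'' that you need even to define $\sigma_k$: a strongly boosted sphere is intrinsically round, sits at radii ranging over a factor $e^{2\alpha}$, and is (to leading order) stable, since the boost directions are exact Jacobi fields in the Minkowski model, as you yourself note; so topology plus stability alone cannot exclude it --- excluding it is the content of the conjecture, not an input to it.

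Your third step, the quantitative decay $|\vec a_k|\le C/\rho_k$ via a second-order expansion of $J$ about the family of boosted spheres and a Lyapunov--Schmidt reduction exploiting the $\tfrac{12m}{\sigma^3}$ term, is the genuine mathematical content, and you only describe it rather than carry it out; you acknowledge as much. So what you have is a plausible program --- essentially the adaptation of \cite[Theorem 5.1]{huiskenyau} together with the Chen--Wang classification that the authors themselves indicate they intend to pursue --- but not a proof: the two load-bearing estimates (smallness of $\accentset{\circ}{A}$ from stability, and the mass-induced penalization of the boost) are exactly the open points that make the statement a conjecture rather than a theorem in this paper.
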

	Observe that the Jacobi operator $J$ in the null setting, given by \eqref{eq_stability1}, does not depend on $\accentset{\circ}{A}$ in the Schwarzschild model, which is a notable difference compared to the Riemannian setting. In particular, one has to come up with different test functions to exploit the stability compared to the Riemannian setting. If Conjecture \ref{conj1} is true, this would a-posteriori justify our gauge choice and allow us to state our main result in a gauge independent purely geometric way:
	\begin{conj}\label{conj2}
		Let $\mathcal{N}$ be an asymptotically Schwarzschildean lightcone. Then there exists a unique asymptotic foliation of $\mathcal{N}$ by strictly stable STCMC surfaces.
	\end{conj}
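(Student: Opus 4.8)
The existence half of the statement is essentially a repackaging of the machinery already in place. The plan is to fix a smooth family of coordinate spheres $(S_\sigma)_{\sigma\ge\sigma_0}$ of area radius $\sigma$; by Remark~\ref{bem_aprioriclass} these lie strictly in $B_\sigma(B_1,B_2,B_3)$ for suitable fixed constants, so Theorem~\ref{thm_longtime} gives a global solution of APNMCF that remains in $B_\sigma(B_1,B_2,B_3)$, Theorem~\ref{thm_convergence} yields smooth exponential convergence to an STCMC surface $\Sigma_\sigma$ of area radius $\sigma$, and Theorem~\ref{thm_foliation} assembles the $\Sigma_\sigma$ into a smooth asymptotic foliation. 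So the real content is the uniqueness assertion, and I would reserve all the effort for that.

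For uniqueness, let $\Sigma\in B_\sigma(B_1,B_2,B_3)$ be an STCMC surface with the stated $C^3$ bound, area radius $\rho$, and constant spacetime mean curvature value $\mathcal{H}^2$. First I would use the a-priori estimate \eqref{eq_aprioriH2_constant} together with \eqref{eq_areareadiusB1} to see that $\mathcal{H}^2>0$ and $\mathcal{H}^2=\tfrac{4}{\rho^2}-\tfrac{8m}{\rho^3}+O(\sigma^{-4})$, so that $\mathcal{H}^2$ lies in the image of the two strictly decreasing bijections $f_2$ (the STCMC value along the foliation as a function of $\sigma$) and $f_3$ (the Schwarzschild model comparison function) produced in the proof of Theorem~\ref{thm_foliation}. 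Setting $\widetilde\sigma:=f_2^{-1}(\mathcal{H}^2)$ and $\widetilde\sigma':=f_3^{-1}(\mathcal{H}^2)$, the leaf $\Sigma_{\widetilde\sigma}$ has the same constant spacetime mean curvature as $\Sigma$, and $\mathcal{H}^2=\tfrac{4}{(\widetilde\sigma')^2}-\tfrac{8m}{(\widetilde\sigma')^3}$.

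The next step is to show that $\Sigma$ and $\Sigma_{\widetilde\sigma}$ sit in one common a-priori class $B_\sigma(\widetilde B_1,\widetilde B_2,\widetilde B_3)$ with enlarged but fixed constants. I would do this by a short chain of comparisons: applying \eqref{eq_aprioriH2_constant} to $\Sigma$ (with $\rho\approx\sigma$) and to $\Sigma_{\widetilde\sigma}$ (with parameter $\widetilde\sigma$), and using the non-degeneracy of $r\mapsto\tfrac{4}{r^2}-\tfrac{8m}{r^3}$ on $[r_0,\infty)$, first gives $\tfrac12\widetilde\sigma\le\widetilde\sigma'\le 2\widetilde\sigma$ and $\widetilde\sigma\ge c\sigma$; then $\lvert\widetilde\sigma^2\mathcal{H}^2-4\rvert,\,\lvert\rho^2\mathcal{H}^2-4\rvert\le C\sigma^{-1}$ together with \eqref{eq_areareadiusB1} yield $(1+\varepsilon)^{-1}\widetilde\sigma\le\sigma\le(1+\varepsilon)\widetilde\sigma$; feeding this back into \eqref{eq_aprioriH2_constant} controls $\lvert\widetilde\sigma^{-2}-\rho^{-2}\rvert$, hence $\lvert\widetilde\sigma-\rho\rvert\le C$, hence $\lvert\sigma-\widetilde\sigma\rvert\le C+2B_1$. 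This places both surfaces in $B_\sigma(3B_1+C,2B_2,2B_3)$, and since the leaves of the foliation satisfy the $C^3$ bound by Corollary~\ref{kor_aprior} while $\Sigma$ does by hypothesis, Proposition~\ref{prop_uniqueness} applied to this enlarged class forces $\omega=\omega_{\widetilde\sigma}$, i.e.\ $\Sigma=\Sigma_{\widetilde\sigma}$ is a leaf.

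I expect the only genuinely substantial ingredient here to be that Proposition~\ref{prop_uniqueness} is already available with uniform constants --- that is, the heart of the matter is really the careful interpolation along boosted spheres of Lemma~\ref{lem_appx_bootstrap} underlying that proposition, not the elementary radius bookkeeping above. The main points to be careful about are (i) checking that $\sigma_0$ can be chosen large enough that Proposition~\ref{prop_uniqueness} applies to $B_\sigma(3B_1+C,2B_2,2B_3)$, and (ii) keeping the two notions of area radius (the induced-metric $\rho$ and the conformally round $\widetilde\rho$ of \eqref{eq_areareadiusB1}) straight throughout the comparison.
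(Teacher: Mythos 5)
There is a genuine gap, and it is exactly the point the paper flags as open: the statement you are proving is Conjecture \ref{conj2}, which the paper does \emph{not} prove --- it is explicitly deferred to future work, contingent on Conjecture \ref{conj1}. Your argument, both in structure and in detail, is the proof of Theorem \ref{thm_main_foliation}: existence via Theorems \ref{thm_longtime}, \ref{thm_convergence}, \ref{thm_foliation}, and uniqueness for an STCMC surface that is \emph{assumed} to lie in $B_\sigma(B_1,B_2,B_3)$ and to satisfy the $C^3$ bound $\norm{\omega/\rho}_{C^3(\widehat{\gamma})}\le c_1$. But Conjecture \ref{conj2} asserts uniqueness among all \emph{strictly stable} STCMC surfaces, with no a-priori membership in the class and no gauge-dependent $C^3$ bound. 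Your proposal never uses the strict stability of the competitor surface $\Sigma$ at all; instead it quietly substitutes the hypothesis ``$\Sigma\in B_\sigma(B_1,B_2,B_3)$ with the $C^3$ bound,'' which is a strictly weaker uniqueness statement (that of Theorem \ref{thm_main_foliation}), not the conjecture.

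The missing ingredient is precisely Conjecture \ref{conj1}: that any strictly stable STCMC surface of sufficiently large area radius automatically lies in $B_\sigma(B_1,B_2,B_3)$ (with, in effect, the accompanying gradient/roundness control). This is the analogue of Huisken--Yau's \cite[Theorem 5.1]{huiskenyau}, and the paper explains why it does not follow from the existing machinery: the Jacobi operator \eqref{eq_stability1} in the null setting does not see $\accentset{\circ}{A}$ in the Schwarzschild model, so the Riemannian strategy of testing stability against suitable functions to extract curvature estimates does not transfer directly; one would need new test functions or a different mechanism to convert strict stability into the $C^0$ and $\accentset{\circ}{A}$, $\nabla\accentset{\circ}{A}$ bounds defining the class. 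Until that step is supplied, the ``radius bookkeeping'' and the appeal to Proposition \ref{prop_uniqueness} in your second and third paragraphs only reprove the gauge-dependent uniqueness already in the paper; they cannot rule out a strictly stable STCMC surface that sits outside the a-priori class (for instance one that is far from round in the sense of $\accentset{\circ}{A}$, or strongly boosted relative to the asymptotically Schwarzschildean background foliation), which is exactly the scenario the conjecture is about.
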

	We aim to establish Conjecture \ref{conj1} in a future research project, directly obtaining Conjecture \ref{conj2} as a consequence. Conjecture \ref{conj2} would then imply that at least for an asymptotically Schwarzschildean lightcone there exists a geometric choice of background foliation that has vanishing linear momentum and is possibly suitable to define a geometric notion of center of mass (if well-defined regarding convergence along the foliation). In view of the results in the initial data set case \cite{cederbaumsakovich, tenan}, we expect that our main theorem, Theorem \ref{thm_main_foliation}, also holds for an asymptotically flat null hypersurface, and that one again can employ area preserving null mean curvature flow towards this goal. Due to explicit counterexamples constructed by Brendle--Eichmair \cite{brendleeichmair} in the Riemannian setting, one can however only expect Conjecture \ref{conj2} to hold for weaker asymptotics under a suitable energy condition as similar counterexamples should arise by analogy.
	
\appendix

\section{Appendix - Difference Tensor Estimates}\label{appx_A}
	Let $\gamma_r$ be as in Definition \ref{defi_aprioriclass}. For a function $\omega\colon\Sbb^2\to(r_0,\infty)$, we consider the metric $\gamma_\omega$ on $\Sbb^2$ defined as $(\gamma_\omega)_p:=(\gamma_\omega(p))_p$. Moreover, we consider the metric $\widetilde{\gamma}_\omega:=\omega^2\hatgamma$. Note that du to the asymptotic expansion of the metric $\gamma_\omega$, we find that
	\begin{align}\label{eq_appx_volumeform}
		\d\mu_{\gamma_\omega}=\d\mu_{\tildegamma_\omega}+O_{3,3}(1)\d\mu_{\hatgamma}.
	\end{align}
	In particular, the area radius $\rho$ of $(\Sbb^2,\gamma_\omega)$ and $\widetilde{\rho}$ of $(\Sbb^2,\tildegamma_\omega)$ satisfy
	\begin{align}\label{eq_appx_arearadiusdifference}
		\btr{\rho-\widetilde{\rho}}\le \frac{C}{\rho}
	\end{align}
	for sufficiently large $\rho$. Moreover, note that  as $\gamma_\omega=\tildegamma_\omega+O_{3,3}(1)$, one can check that
	\begin{align}\label{eq_appx_inverse}
	\gamma_\omega^{KL}=\tildegamma_\omega^{KL}+O_{3,3}(r^{-4})^{KL}.
	\end{align}
	Let $\Gamma_{IJ}^K$, $\widetilde{\Gamma}_{IJ}^K$, $\widehat{\Gamma}_{IJ}^K$ denote Christoffel symbols with respect to $\gamma_\omega$, $\tildegamma_\omega$, and $\hatgamma$, respectively\footnote{Here, we denote local coordinate vector fields on $\Sbb^2$ by capital letters, to emphasize that we work with the pullback of all objects as smooth (families of) tensor fields on $\Sbb^2$.}. Then the difference tensors
	\begin{align*}
		\widehat{Q}_{IJ}^K&:=\Gamma_{IJ}^K-\widehat{\Gamma}_{IJ}^K,\\
		\widetilde{Q}_{IJ}^K&:=\Gamma_{IJ}^K-\widetilde{\Gamma}_{IJ}^K,
	\end{align*}
	satisfy the following identities:
	\begin{lem}\label{lem_appx_differencetensors}
		\begin{align*}
			\widehat{Q}_{IJ}^K=
			&\,\gamma_{\omega}^{KL}\left(w\d\omega_I\hatgamma_{JL}+\omega\d\omega_J\hatgamma_{IL}-\omega\d\omega_L\hatgamma_{IJ}\right)\\
			&\,+\frac{1}{2}\gamma_{\omega}^{KL}\left(\d\omega_IO_{2,3}(r^{-1})_{JL}+\d\omega_JO_{2,3}(r^{-1})_{IL}-\d\omega_LO_{2,3}(r^{-1})_{IJ}\right)\\
			&\,+\gamma_{\omega}^{KL}\left(O_{3,2}(1)_{IJL}+O_{3,2}(1)_{JIL}-O_{3,2}(1)_{LIJ}\right),\\
			\,\\
			\widetilde{Q}_{IJ}^K=
			&\,\frac{1}{2}\gamma_\omega^{KL}\left(d\omega_IO_{2,3}(r^{-1})_{JL}+d\omega_JO_{2,3}(r^{-1})_{IL}-d\omega_LO_{2,3}(r^{-1})_{IJ}\right)\\
			&\,+\gamma^{KL}_\omega\left(O_{3,2}(1)_{IJL}+O_{3,2}(1)_{JIL}-O_{3,2}(1)_{LIJ}\right)\\
			&\,+\frac{1}{\omega}\gamma_\omega^{KL}\left(\widehat{\nabla}\omega^M\hatgamma_{IJ}-\d\omega_I\delta_J^M-\d\omega_J\delta_I^M\right)O_{3,3}(1)_{LM},
		\end{align*}
	\end{lem}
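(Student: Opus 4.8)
The starting point is the standard fact that for any two metrics $g,\bar g$ on $\Sbb^2$ the difference of their Christoffel symbols is the tensor
\[
\Gamma(g)_{IJ}^K-\Gamma(\bar g)_{IJ}^K=\frac{1}{2}g^{KL}\left(\bar\nabla_I g_{JL}+\bar\nabla_J g_{IL}-\bar\nabla_L g_{IJ}\right),
\]
where $\bar\nabla$ is the Levi-Civita connection of $\bar g$. Both claimed identities are obtained by applying this with $g=\gamma_\omega$ and $\bar g=\hatgamma$, respectively $\bar g=\widetilde\gamma_\omega=\omega^2\hatgamma$, and then expanding the right-hand side using the asymptotic decomposition of $\gamma_r$ from Definition \ref{defi_asymclassS}.

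For $\widehat Q$ the plan is to write $(\gamma_\omega)_{IJ}=\omega^2\hatgamma_{IJ}+R_{IJ}$, where $R_r:=\gamma_r-r^2\hatgamma=O_{3,3}(1)$ and $R:=R_r|_{r=\omega}$; under the $C^3$-bound on $\omega/\rho$ the family $R_r$ evaluated along $\{r=\omega\}$ still has $\hatgamma$-controlled angular derivatives by Lemma \ref{lem_appx_gammaomegadecay} (and Lemma \ref{lem_appx_c3control}). Since $\widehat\nabla\hatgamma=0$ we get $\widehat\nabla_I(\omega^2\hatgamma_{JL})=2\omega\,\d\omega_I\,\hatgamma_{JL}$, and the three cyclic combinations of this produce exactly the first line of the asserted formula for $\widehat Q$. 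The contribution of $R$ is handled by the chain rule for differentiation along $\{r=\omega\}$: $\widehat\nabla_I R=(\widehat\nabla_I R_r)|_{r=\omega}+\d\omega_I(\partial_r R_r)|_{r=\omega}$. The first term is a tangential derivative of an $O_{3,3}(1)$ family and hence of the form $O_{3,2}(1)$ after contraction with $\gamma_\omega^{KL}$, while $\partial_r R_r=O_{3,2}(r^{-1})$, so the second term is of the form $\d\omega_I\,O_{2,3}(r^{-1})$; tracking these indices through the symmetrisation and the contraction with $\gamma_\omega^{KL}$ yields the remaining two lines. (The $O(r^{-4})$ correction to $\gamma_\omega^{KL}$ from \eqref{eq_appx_inverse} only affects lower-order error terms and can be absorbed.)

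For $\widetilde Q$ the key simplification is $\widetilde\nabla\widetilde\gamma_\omega=0$, so $\widetilde\nabla\gamma_\omega=\widetilde\nabla(\gamma_\omega-\omega^2\hatgamma)=\widetilde\nabla R$; in particular the leading $\omega\,\d\omega\,\hatgamma$ term is absent here. One then converts $\widetilde\nabla$ to $\widehat\nabla$ using the conformal transformation formula
\[
\widetilde\Gamma_{IJ}^K-\widehat\Gamma_{IJ}^K=\frac{1}{\omega}\left(\delta_I^K\,\d\omega_J+\delta_J^K\,\d\omega_I-\hatgamma_{IJ}\,\widehat\nabla^K\omega\right),
\]
so that $\widetilde\nabla_I R_{JL}=\widehat\nabla_I R_{JL}-(\widetilde\Gamma-\widehat\Gamma)_{IJ}^M R_{ML}-(\widetilde\Gamma-\widehat\Gamma)_{IL}^M R_{JM}$. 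The term $\widehat\nabla R$ reproduces the $O_{3,2}(1)$ and $\d\omega\otimes O_{2,3}(r^{-1})$ contributions exactly as above, and the conformal-difference terms, contracted with $\gamma_\omega^{KL}$ and using $R=O_{3,3}(1)$, give the last line of the formula for $\widetilde Q$.

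The only genuine difficulty is bookkeeping: keeping the $O_{k,l}$ indices straight in Definition \ref{defi_prelim_O} when one differentiates quantities of the form $T_r|_{r=\omega}$ in a tangential direction, since the chain rule trades one angular slot for one radial slot multiplied by $\d\omega$. This is routine given Lemma \ref{lem_appx_gammaomegadecay}, but it is where all the $O_{3,2}$ versus $O_{2,3}$ distinctions in the statement originate, so it is the step that must be carried out with care.
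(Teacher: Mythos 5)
Your proposal is correct and is essentially the paper's own argument: the paper also starts from the standard formula $\widehat{Q}_{IJ}^K=\tfrac12\gamma_\omega^{KL}\left(\widehat{\nabla}_I(\gamma_\omega)_{JL}+\widehat{\nabla}_J(\gamma_\omega)_{IL}-\widehat{\nabla}_L(\gamma_\omega)_{IJ}\right)$ with a direct expansion of $\gamma_\omega=\omega^2\hatgamma+O_{3,3}(1)$ along $\{r=\omega\}$, and it passes to $\widetilde{Q}$ via the explicit conformal difference tensor \eqref{eq_appx_differencetensorconformallyround} together with \eqref{eq_appx_inverse}, which is your computation rearranged (the paper subtracts $\widetilde{\Gamma}-\widehat{\Gamma}$ from $\widehat{Q}$, you convert $\widetilde{\nabla}R$ into $\widehat{\nabla}R$ plus conformal corrections; the resulting terms are identical). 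One small inaccuracy in the step you yourself single out as delicate: under Definition \ref{defi_prelim_O} a tangential derivative of an $O_{3,3}(1)$ family is $O_{2,3}(1)$, not $O_{3,2}(1)$, and $\d\omega_I\,(\partial_r R_r)|_{r=\omega}$ carries the label $\d\omega_I\,O_{3,2}(r^{-1})$, so your assignment of $O_{2,3}(r^{-1})$ to that term does not follow as written -- the cleanest way to get an $O_{\cdot,3}(r^{-1})$ factor next to $\d\omega$ is to use $\partial_r\gamma_r=2\ul{\chi}_r$ and the assumption $\ul{\chi}_r=r\hatgamma+O_{3,3}(r^{-1})$ from Definition \ref{defi_asymclassS}; since the later applications take at most two further derivatives of $\widehat{Q}$, $\widetilde{Q}$, any such split is covered by the $O_{3,3}$ hypotheses and the discrepancy is harmless.
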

	\begin{proof}
		Recall the well-known formula for the difference tensor
		\[
			\widehat{Q}_{IJ}^K=\frac{1}{2}\gamma_\omega^{KL}\left(\widehat{\nabla}_I(\gamma_\omega)_{JL}+\widehat{\nabla}_J(\gamma_\omega)_{IL}-\widehat{\nabla}_L(\gamma_\omega)_{IJ}\right).
		\]
		From this, the identity for $\widehat{Q}$ follows from a direct computation. For the second identity, we additionally consider the difference tensor
		\begin{align}\label{eq_appx_differencetensorconformallyround}
			Q_{IJ}^K:=\widetilde{\Gamma}_{IJ}^K-\widehat{\Gamma}_{IJ}^K=\widetilde{\gamma}_{\omega}^{KL}\left(\omega d\omega_I\delta_J^K+\omega\d\omega_J\delta_I^K-\omega\widehat{\nabla}^K\omega\hatgamma_{IJ}\right).
		\end{align}
		Using \eqref{eq_appx_inverse}, \eqref{eq_appx_differencetensorconformallyround}, the identity for $\widetilde{Q}$ readily follows from the identitiy for $\widehat{Q}$.
	\end{proof}
	From this, we can prove the following useful Lemma:
	\begin{lem}\label{lem_appx_c3control}
		Assume $\frac{1}{2}\rho\le\omega\le 2\rho$. Then, for any $\alpha\in\N_0$, $1\le k\le 3$, and for $\rho$ sufficiently large the following are equivalent:
		\begin{enumerate}
			\item[(i)]
			\[
				\btr{\widehat{\nabla}^l\frac{\omega}{\rho}}_{\hatgamma}\le \frac{C}{\rho^\alpha}
			\]
			for all $1\le l\le k$, where $C$ is a constant independent of $\alpha$ and $\rho$,
			\item[(ii)] 
			\[
				\btr{\nabla^l\omega}_{\gamma_\omega}\le \frac{{C}}{\rho^{\alpha+l-1}}
			\]
			for all $1\le l\le k$, where ${C}$ is a constant independent of $\rho$ and $\alpha$.
			\item[(iii)] \[
			\btr{\widetilde{\nabla}^l\omega}_{\tildegamma_\omega}\le \frac{{C}}{\widetilde{\rho}^{\alpha+l-1}}
			\]
			for all $1\le l\le k$, where ${C}$ is a constant independent of $\widetilde{\rho}$ and $\alpha$.
		\end{enumerate}
	\end{lem}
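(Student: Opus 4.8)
The plan is to prove the chain of equivalences by exploiting the decay of the difference tensors from Lemma \ref{lem_appx_differencetensors} together with the uniform equivalence of $\gamma_\omega$, $\widetilde\gamma_\omega$ and $\rho^2\hatgamma$ (equivalently $\widetilde\rho^2\hatgamma$) under the assumption $\tfrac12\rho\le\omega\le2\rho$ and $\rho$ large, recalled in Remark \ref{bem_prelim_backgroundfol} and Equation \eqref{eq_appx_arearadiusdifference}. The strategy is to first establish $\textbf{(i)}\Leftrightarrow\textbf{(iii)}$, which is a purely conformal computation on $\Sbb^2$, and then $\textbf{(ii)}\Leftrightarrow\textbf{(iii)}$, which follows by comparing the Levi-Civita connections of $\gamma_\omega$ and $\widetilde\gamma_\omega$ via the difference tensor $\widetilde Q$. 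Throughout, I would argue by induction on $k$, since the estimate on the top-order derivative in each formulation feeds into the comparison of the $(k+1)$-st.

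First I would treat $\textbf{(i)}\Leftrightarrow\textbf{(iii)}$. Writing $\widetilde\gamma_\omega=\omega^2\hatgamma$, the Christoffel difference $Q_{IJ}^K=\widetilde\Gamma_{IJ}^K-\widehat\Gamma_{IJ}^K$ is given explicitly by \eqref{eq_appx_differencetensorconformallyround}, which is schematically $\omega^{-1}\ast\widehat\nabla\omega$ after raising an index with $\widetilde\gamma_\omega^{KL}=\omega^{-2}\hatgamma^{KL}$. Hence $\widetilde\nabla^l\omega$ differs from $\widehat\nabla^l\omega$ by terms that are polynomial expressions in $\omega^{-1}$, $\widehat\nabla^{\le l-1}\omega$ and their $\widehat\nabla$-derivatives; iterating and using $\tfrac12\rho\le\omega\le2\rho$ one converts bounds on $\widehat\nabla^l(\omega/\rho)$ into bounds on $\widetilde\nabla^l\omega$ measured in $\widetilde\gamma_\omega$, where each $\widetilde\gamma_\omega$-contraction costs a factor $\omega^{-1}\sim\rho^{-1}$, producing exactly the weight $\rho^{\alpha+l-1}$ (with $\widetilde\rho$ interchangeable with $\rho$ by \eqref{eq_appx_arearadiusdifference}). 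The reverse implication is the same algebra solved the other way. Then for $\textbf{(ii)}\Leftrightarrow\textbf{(iii)}$ I would use the second identity in Lemma \ref{lem_appx_differencetensors}: $\widetilde Q_{IJ}^K=\Gamma_{IJ}^K-\widetilde\Gamma_{IJ}^K$ is, after raising the index with $\gamma_\omega^{KL}=O_{3,3}(r^{-2})$, a sum of terms of the form $\d\omega\ast O_{2,3}(r^{-1})$, $O_{3,2}(1)$, and $\omega^{-1}\,\widehat\nabla\omega\ast O_{3,3}(1)$; crucially, every term carries either an explicit negative power of $r$ or a factor $\d\omega$, and once $\textbf{(iii)}$ (or $\textbf{(ii)}$) is known through order $l-1$ one checks $\btr{\widetilde Q}_{\gamma_\omega}$ and its derivatives up to order $l-1$ are $O(\rho^{-2})$ (or better, $O(\rho^{-2}+\rho^{\alpha-2})$). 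Feeding this into the standard expansion $\nabla^l\omega=\widetilde\nabla^l\omega+(\text{lower-order}\ast\widetilde Q\text{-terms})$ converts the bounds in either direction, the $\widetilde Q$-contributions being strictly lower order so that the leading weight $\rho^{\alpha+l-1}$ is preserved.

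The main obstacle I expect is bookkeeping the weights carefully through the induction: the difference-tensor terms in Lemma \ref{lem_appx_differencetensors} involve the $O_{k,l}$-tensors of Definition \ref{defi_prelim_O}, whose $\widehat\nabla$-derivatives are controlled but which must be contracted against $\gamma_\omega$ (not $\hatgamma$), so one has to track repeatedly that a $\gamma_\omega$- or $\widetilde\gamma_\omega$-norm of a $(0,l)$-tensor is $\rho^{-l}$ times its $\hatgamma$-norm up to uniformly bounded factors (Remark \ref{bem_prelim_backgroundfol}), and that $\widehat\nabla$ acting on a $\gamma_\omega$-contraction generates extra $\widehat\nabla\gamma_\omega$-terms that are themselves $O_{2,2}(1)$ and hence harmless. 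None of the individual steps is deep; the care lies in confirming that at each order the correction terms are genuinely subleading so that the equivalence closes, and in verifying the case $\alpha=0$ (where the constant is merely bounded, not small) still goes through — which it does, since the corrections always come with an additional $\rho^{-1}$ or $\rho^{-2}$.
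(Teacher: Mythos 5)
Your proposal is correct and follows essentially the same route as the paper: compare Levi-Civita connections via the difference tensors of Lemma \ref{lem_appx_differencetensors}, induct on the order of derivatives so that the already-established lower-order bounds control the difference tensors (and their derivatives) at the next order, and convert the weights using the uniform equivalence $\gamma_\omega\sim\widetilde{\gamma}_\omega\sim\rho^2\hatgamma$ together with \eqref{eq_appx_arearadiusdifference}. The only cosmetic difference is that you split the equivalence through the conformal metric (first (i)$\Leftrightarrow$(iii) via $Q$, then (ii)$\Leftrightarrow$(iii) via $\widetilde{Q}$), whereas the paper proves (i)$\Leftrightarrow$(ii) directly with $\widehat{Q}=\Gamma-\widehat{\Gamma}$ and notes that (i)$\Leftrightarrow$(iii) is the special case $\gamma_\omega=\widetilde{\gamma}_\omega$ (Remark \ref{bem_C3control}).
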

	\begin{bem}\label{bem_C3control}
		Note that by \eqref{eq_appx_arearadiusdifference} (i) is equivalent to 
		\[
		\norm{\widehat{\nabla}^l\frac{\omega}{\widetilde{\rho}}}_{\hatgamma}\le \frac{C}{\widetilde{\rho}^\alpha}
		\]
		for all $1\le l\le k$, where $C$ is a constant independent of $\alpha$ and $\widetilde{\rho}$. In particular, it suffices to proof the equivalence between (i) and (ii), as the equivalence between (i) and (iii) follows as the special case $\gamma_\omega=\widetilde{\gamma}_\omega$.
	\end{bem}
	\begin{proof}
		In the following $C$ will denote a constant independent of $\alpha$ and $\rho$ which may differ from line to line. Note that $\gamma_\omega$ and $\tildegamma_\omega$ are (uniformly) equivalent for $\rho$ sufficiently large (as bilinear forms, e.g. with constant $2$). In particular, we find that
		\begin{align*}
			\btr{\nabla\omega}_{\gamma_\omega}&\le C\btr{\widetilde{\nabla}\omega}_{\tildegamma_\omega}=C\frac{\rho^2}{\omega^2}\btr{\widehat{\nabla}\frac{\omega}{\rho}}_{\hatgamma}\le C\btr{\widehat{\nabla}\frac{\omega}{\rho}}_{\hatgamma},\\
			\btr{\nabla\omega}_{\gamma_\omega}&\ge C^{-1}\btr{\widetilde{\nabla}\omega}_{\tildegamma_\omega}=C^{-1}\frac{\rho^2}{\omega^2}\btr{\widehat{\nabla}\frac{\omega}{\rho}}_{\hatgamma}\ge C^{-1}\btr{\widehat{\nabla}\frac{\omega}{\rho}}_{\hatgamma}
		\end{align*}
		This yields the proof for $k=1$.
		
		Recall that
		\[
			\Hess_{IJ}\omega=\rho\widehat{\Hess}_{IJ}\frac{\omega}{\rho}-\widehat{Q}_{IJ}^K\d\omega_K.
		\]
		Having already proven the equivalence for $k=1$, we can show that assuming either (i) or (ii), we obtain
		\begin{align*}
			\btr{\widehat{Q}}_{\hatgamma}&\le C\left(\btr{\widehat{\nabla}\frac{\omega}{\rho}}_{\hatgamma}+\frac{1}{\rho}\right)\le C,\\
			\btr{\widehat{Q}}_{\gamma_\omega}&\le \frac{C}{\rho}\left(\btr{\nabla{\omega}}_{\gamma_\omega}+\frac{1}{\rho}\right)\le \frac{C}{\rho}.
		\end{align*}
		Hence, using again the equivalence between $\gamma_\omega$ and $\tildegamma_\omega$, we find
		\begin{align*}
			\btr{\Hess\omega}_{\gamma_\omega}\le\frac{C}{\rho}\btr{\widehat{\Hess}\frac{\omega}{\rho}}+\frac{C}{\rho}\btr{\nabla\omega}_{\gamma_\omega},
		\end{align*}
		and
		\begin{align*}
			\btr{\widehat{\Hess}\frac{\omega}{\rho}}_{\hatgamma}\le \rho \btr{\Hess\omega}_{\gamma_\omega}+C\btr{\widehat{\nabla}\frac{\omega}{\rho}}_{\hatgamma}.
		\end{align*}
		This implies the equivalence for $k=2$. 
		
		Recall that 
		\begin{align*}
			\nabla_I\Hess_{JK}\omega=&\,\rho\widehat{\nabla}_I\widehat{\Hess}_{JK}\frac{\omega}{\rho}-d\omega_M\widehat{\nabla}_I\widehat{Q}_{JK}^M-\rho\widehat{Q}_{JK}^M\widehat{\Hess}_{MI}\frac{\omega}{\rho}-\widehat{Q}_{IJ}^L\Hess_{LK}\omega-\widehat{Q}_{IK}^L\Hess\omega_{IL}.
		\end{align*}
		Using that we have proven the equivalence for $k=2$, either (i) or (ii) yields that 
		\[
			\btr{\widehat{\nabla}\widehat{Q}}_{\hatgamma}\le C,
		\]
		where we used that by \eqref{eq_appx_inverse}
		\[
			\widehat{\nabla}_I\gamma_{\omega}^{KL}=-\frac{2}{\omega^3}\d\omega_I\hatgamma^{KL}+O_{3,2}(r^{-4})_I^{KL}+\d\omega_IO_{2,3}(r^{-5})^{KL}.
		\]
		Hence, similar to before, we find
		\[
			\btr{\nabla^3\omega}_{\gamma_\omega}\le \frac{C}{\rho^2}\btr{\widehat{\nabla}^3\frac{\omega}{\rho}}_{\widehat{\gamma}}+\frac{C}{\rho^2}\btr{\nabla\omega}_{\gamma_\omega}+\frac{C}{\rho}\btr{\Hess\omega}_{\gamma_\omega}+\frac{C}{\rho^2}\btr{\widehat{\Hess}\frac{\omega}{\rho}}_{\hatgamma},
		\]
		and
		\[
			\btr{\widehat{\nabla}^3\frac{\omega}{\rho}}_{\widehat{\gamma}}\le C\rho^2\btr{\nabla^3\omega}_{\gamma_\omega}+C\btr{\widehat{\nabla}\frac{\omega}{\rho}}_{\hatgamma}+\btr{\widehat{\Hess}\frac{\omega}{\rho}}_{\hatgamma}+C\rho\btr{\Hess\omega}_{\gamma_\omega}.
		\]
		This implies the equivalenc for $k=3$.	
	\end{proof}
	\begin{bem}
		As we assume $\gamma_\omega-\tildegamma_\omega=O_{3,3}(1)$, we can in fact prove the Lemma also for $k=4$. More generally, if we assume $\gamma_\omega-\tildegamma_\omega=O_{m,m}(1)$, then the prove holds for all $1\le k\le m+1$ by induction.
	\end{bem}
	The estimates in the prove of Lemma \ref{lem_appx_c3control}, in particular on the difference tensor, and \eqref{eq_appx_volumeform} directly imply the following equivalence of norms:
	\begin{lem}\label{lem_appx_norms_equivalence}
		Assume $\frac{1}{2}\rho\le\omega\le 2\rho$. For $\rho$ sufficiently large, there exists a constant $C$ such that for any smooth function $f\colon\mathbb{S}^2\to\R$ it holds that
		\begin{align*}
		C^{-1}\norm{f}_{C^l(\widehat{\gamma})}&\le \norm{f}_{C^l(\gamma_\omega)}\le C\norm{f}_{C^l(\widehat{\gamma})},\\
		C^{-1}\rho\norm{f}_{W^{l,2}(\widehat{\gamma})}&\le \norm{f}_{W^{l,2}({\gamma_\omega})}\le C \norm{f}_{W^{l,2}(\widehat{\gamma})}.
		\end{align*}
		for $l=0,1$. Moreover, if $\btr{\widehat{\nabla}^k\frac{\omega}{\rho}}_{\widehat{\gamma}}\le C_1$ for some $k\ge 1$, then the claim also holds for $l=k+1$.
	\end{lem}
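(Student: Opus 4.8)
The plan is to derive the whole statement from the uniform equivalence of $\gamma_\omega$ with the conformally round metric $\omega^2\hatgamma$, the volume form comparison \eqref{eq_appx_volumeform}, and the difference tensor bounds already obtained in the proof of Lemma \ref{lem_appx_c3control}. First I would record the metric equivalence: since $\gamma_\omega=\omega^2\hatgamma+O_{3,3}(1)$ and $\tfrac12\rho\le\omega\le2\rho$, for $\rho$ large there are uniform constants $0<c_0\le C_0$ with $c_0\rho^2\hatgamma\le\gamma_\omega\le C_0\rho^2\hatgamma$ as bilinear forms (cf.\ Remark \ref{bem_prelim_backgroundfol}), and dually $c_0\rho^{-2}\hatgamma^{-1}\le\gamma_\omega^{-1}\le C_0\rho^{-2}\hatgamma^{-1}$. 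Hence for an $(n,m)$-tensor $T$ one has $\btr{T}_{\gamma_\omega}$ comparable to $\rho^{\,n-m}\btr{T}_{\hatgamma}$; in particular $\btr{f}_{\gamma_\omega}=\btr{f}_{\hatgamma}$ and $\btr{\nabla f}_{\gamma_\omega}$ is comparable to $\rho^{-1}\btr{\widehat{\nabla}f}_{\hatgamma}$ for a function $f$, since the Levi-Civita gradient of a function is the metric-independent $1$-form $\d f$. Moreover \eqref{eq_appx_volumeform} gives $\d\mu_{\gamma_\omega}=(\omega^2+O_{3,3}(1))\d\mu_{\hatgamma}$, so $\d\mu_{\gamma_\omega}$ is uniformly comparable to $\rho^2\d\mu_{\hatgamma}$, whence $\norm{g}_{L^2(\gamma_\omega)}$ is comparable to $\rho\norm{g}_{L^2(\hatgamma)}$ and $\norm{g}_{L^\infty(\gamma_\omega)}=\norm{g}_{L^\infty(\hatgamma)}$ for any $g$.

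The cases $l=0,1$ then follow by unwinding the definition of the weighted norms, keeping in mind that the weight is the area radius ($\rho$ for $\gamma_\omega$, and $1$ for $\hatgamma$). For $l=0$ this is immediate from the two comparisons above. For $l=1$ the point is an exact cancellation: $\norm{f}_{C^1(\gamma_\omega)}=\norm{f}_{L^\infty(\gamma_\omega)}+\rho\norm{\nabla f}_{L^\infty(\gamma_\omega)}$, and the prefactor $\rho$ compensates the $\rho^{-1}$ in $\btr{\nabla f}_{\gamma_\omega}$, so the $C^1$-norms are comparable with $\rho$-independent constants; likewise $\rho\norm{\nabla f}_{L^2(\gamma_\omega)}$ is comparable to $\rho\norm{\widehat{\nabla}f}_{L^2(\hatgamma)}$ after using both the metric and the volume comparison, which yields the stated two-sided bound $\norm{f}_{W^{1,2}(\gamma_\omega)}\sim\rho\norm{f}_{W^{1,2}(\hatgamma)}$.

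For $l=k+1$ under the extra hypothesis $\btr{\widehat{\nabla}^k(\omega/\rho)}_{\hatgamma}\le C_1$, I would first note that, since $\tfrac12\le\omega/\rho\le2$ is bounded and $\Sbb^2$ is compact, interpolation yields $\btr{\widehat{\nabla}^j(\omega/\rho)}_{\hatgamma}\le C(C_1)$ for all $1\le j\le k$. Then the difference tensor estimates from the proof of Lemma \ref{lem_appx_c3control}, namely $\btr{\widehat{\nabla}^j\widehat{Q}}_{\hatgamma}\le C$ for $0\le j\le k-1$ together with the bounds on $\widehat{\nabla}^j\gamma_\omega^{KL}$ used there, let one express $\nabla^{k+1}f$ by iterated commutation as $\widehat{\nabla}^{k+1}f$ plus contractions of $\widehat{\nabla}^i\widehat{Q}$ ($i\le k-1$) with $\widehat{\nabla}^j f$ ($j\le k$); weighting each $\nabla^j f$ by $\rho^{\,j}$ and passing to $\gamma_\omega$-norms bounds the weighted $C^{k+1}(\gamma_\omega)$-norm by $\sum_{j=0}^{k+1}\norm{f}_{C^j(\hatgamma)}$, and the reverse estimate follows symmetrically using $-\widehat{Q}$; the $W^{k+1,2}$ statement is identical after inserting the volume comparison. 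The only mildly delicate point, and the closest thing to an obstacle, is the bookkeeping of the $\rho$-powers so that the difference-tensor contributions are genuinely of lower order — but this is precisely what the bounds on $\widehat{Q}$ and its derivatives guarantee, so no real difficulty arises.
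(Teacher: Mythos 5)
Your proposal is correct and follows essentially the same route the paper takes: the paper states this lemma as a direct consequence of the metric/volume-form comparison \eqref{eq_appx_volumeform} and the difference-tensor bounds established in the proof of Lemma \ref{lem_appx_c3control}, which is exactly what you spell out (including the scaling $\btr{T}_{\gamma_\omega}\sim\rho^{\,n-m}\btr{T}_{\hatgamma}$ and the commutation via $\widehat{Q}$ for $l=k+1$). Your two-sided bound $\norm{f}_{W^{l,2}(\gamma_\omega)}\sim\rho\norm{f}_{W^{l,2}(\widehat{\gamma})}$ is in fact the natural symmetric form of the stated $L^2$ inequality, so no gap there.
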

	
	Moreover, with Lemma \ref{lem_appx_c3control} at hand we can prove the following useful estimate:
	\begin{lem}\label{lem_appx_gammaomegadecay} Assume $\frac{1}{2}\rho\le\omega\le2\rho$, and let $1\le k\le 3$, $\alpha\in \Z$. Then the following holds: If 
		\[
			\norm{\frac{\omega}{\rho}}_{C^k}\le c,
		\]
		and $(T_r)$ is a (smooth) family of $(m,n)$-tensors on $\Sbb^2$ with $T=O_{k,k}(r^{\alpha})$, then
		\[
			\norm{T}_{C^k(\gamma_\omega)}\le Cr^{\alpha+m-n},
		\]
		where $T$ and its tensor derivatives are evaluated along the set $\{r=\omega\}$.
	\end{lem}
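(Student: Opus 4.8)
The plan is to argue by induction on $k$, exactly paralleling the structure of Lemma \ref{lem_appx_c3control}. The statement we want says that if $\omega/\rho$ is controlled in $C^k(\hatgamma)$ and $(T_r)$ decays like $r^\alpha$ in the sense of Definition \ref{defi_prelim_O} (with $k$ derivatives in both the $\widehat\nabla$ and $\partial_r$ directions available), then the Levi-Civita tensor norm $\norm{T}_{C^k(\gamma_\omega)}$ of $T$ restricted to the graph $\{r=\omega\}$ satisfies the natural scaling bound $Cr^{\alpha+m-n}$. Since $\frac12\rho\le\omega\le 2\rho$, the weight $r$ and the area radius $\rho$ are interchangeable up to fixed constants, so it suffices to produce bounds with $\rho$-powers.

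For the base case $l=0$: evaluating $T$ along $\{r=\omega\}$ simply means composing with $\omega$, and since $T=O_{0,0}(r^\alpha)$ gives $\btr{T_r}_{\hatgamma}\le Cr^\alpha$ pointwise for every $r$ in the relevant range, we get $\btr{T}_{\hatgamma}\le C\rho^\alpha$ along the graph; then the scaling $\btr{\cdot}_{\gamma_\omega}\le C\rho^{n-m}\btr{\cdot}_{\hatgamma}$ from Remark \ref{bem_prelim_backgroundfol} (using that $\gamma_\omega$ is uniformly equivalent to $\rho^2\hatgamma$, which in turn follows from $\gamma_\omega=\widetilde\gamma_\omega+O_{3,3}(1)$ and $\frac12\rho\le\omega\le 2\rho$ via Lemma \ref{lem_appx_gammaomegadecay}'s hypothesis and Lemma \ref{lem_prelim_backgroundfol}/the appendix estimates) gives the claimed bound. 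For the inductive step, I would write out $\nabla$ acting on the pulled-back tensor using the chain rule. The key point is that the covariant derivative $\nabla_i$ on $\Sigma_\omega$, applied to $T$ evaluated at $r=\omega$, picks up two kinds of terms: first, the ``honest'' derivative of $T$ in the $\Sbb^2$-directions together with a contribution from differentiating in the $r$-direction weighted by $\partial_i\omega$ (this is where $\partial_r T=\mathcal L_{\ul L}T$ enters, cf. Remark \ref{bem_prelim_O}), and second, Christoffel-symbol corrections $\Gamma_{ij}^k$ of $\gamma_\omega$ contracting against $T$. Using \eqref{eq_prelim_nullgeometry_partiali} one has $\partial_i=\partial_I+\partial_I\omega\,\ul L$, so that $\nabla_I^{\gamma_\omega}T = \widehat\nabla_I T + (\partial_I\omega)(\partial_r T) + \widehat Q_{\cdot}\ast T$, where $\widehat Q$ is the difference tensor from Lemma \ref{lem_appx_differencetensors}. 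Each of these three pieces is then estimated: $\widehat\nabla_I T = O_{k-1,k}(r^{\alpha-1})$-type (gaining a derivative against the budget), so it contributes $O(\rho^{\alpha-1})$ in $\hatgamma$ hence $O(\rho^{\alpha-1+n-m})$ in $\gamma_\omega$ after one more index; $(\partial_I\omega)(\partial_r T)$ is controlled because $\btr{\partial_I\omega}_{\hatgamma}=\rho\btr{\widehat\nabla(\omega/\rho)}_{\hatgamma}\le c\rho$ and $\partial_r T=O(r^{\alpha-1})$ by definition of $O_{k,l}$, giving again the right power; and the $\widehat Q\ast T$ term is handled because Lemma \ref{lem_appx_differencetensors} (combined with the already-available $C^k$-control on $\omega/\rho$ and Lemma \ref{lem_appx_c3control}) shows $\btr{\widehat Q}_{\gamma_\omega}\le C\rho^{-1}$, and higher $\widehat\nabla$-derivatives of $\widehat Q$ carry an extra $\rho^{-1}$ each as established inside the proof of Lemma \ref{lem_appx_c3control}. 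Iterating this up to $k$ derivatives and tracking the $\rho$-powers at each stage, together with the weight $\rho$ built into the definition of $\norm{\cdot}_{C^k(\gamma_\omega)}$ via $\norm f_{W^{k+1,p}}=\norm f_{L^p}+\rho\norm{\nabla f}_{W^{k,p}}$, one checks that all terms organize into the single bound $Cr^{\alpha+m-n}$.

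The main obstacle I anticipate is purely bookkeeping rather than conceptual: one must carefully coordinate (a) the drop in the derivative budget for $T$ as one differentiates, which is exactly covered by the hypothesis $T=O_{k,k}(r^\alpha)$ since each $\nabla^{\gamma_\omega}$ costs at most one $\widehat\nabla$ and one $\partial_r$; (b) the fact that differentiating $T$ along the graph mixes $\widehat\nabla$ and $\partial_r$ derivatives weighted by derivatives of $\omega$, which is why the hypothesis requires $k$ derivatives in \emph{both} slots of $O_{k,l}$; and (c) the interplay of the unweighted $\hatgamma$-norms (in which $O_{k,l}$ is phrased) with the $\rho$-weighted $\gamma_\omega$-norms (in which the conclusion is phrased), which introduces exactly the shift $m-n$ in the exponent coming from raising/lowering indices with $\gamma_\omega\sim\rho^2\hatgamma$. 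Since all the quantitative inputs — the difference tensor estimates Lemma \ref{lem_appx_differencetensors}, the equivalence Lemma \ref{lem_appx_c3control} and the norm equivalence Lemma \ref{lem_appx_norms_equivalence} — are already in hand, the proof amounts to feeding these into the chain-rule expansion above and checking powers of $\rho$ line by line; I would present it as a short induction with the $l=0,1$ cases written out and the step from $l$ to $l+1$ ($l\le 2$) indicated schematically, exactly in the style of Lemma \ref{lem_appx_c3control}.
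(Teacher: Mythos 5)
Your proposal follows essentially the same route as the paper's proof: the chain-rule identity $\nabla_I T=\widehat{\nabla}_IT+\d\omega_I\,\partial_rT+T*\widehat{Q}$, bounds on $\widehat{Q}$ and its $\widehat{\nabla}$-derivatives carried over from the analysis around Lemma \ref{lem_appx_c3control}/\ref{lem_appx_differencetensors}, iteration up to $k$ derivatives, and conversion to $\gamma_\omega$-norms via the scaling $\gamma_\omega\sim\rho^2\hatgamma$. The only nitpick is bookkeeping: $\widehat{\nabla}_IT$ has $\hatgamma$-norm $O(r^{\alpha})$ (only $\partial_r$ lowers the power in Definition \ref{defi_prelim_O}), and the extra $\rho^{-1}$ per derivative comes from the added covariant index under the norm conversion, which is how the final exponent $\alpha+m-n$ comes out as you claim.
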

	\begin{proof}
		We first claim the following, assuming $\frac{1}{2}\rho\le\omega\le2\rho$: For all $1\le k\le 3$ such that
		\[
			\norm{\frac{\omega}{\rho}}_{C^k}\le c,
		\]
		we have
		\[
			\btr{\widehat{\nabla}^l\widehat{Q}}_{\hatgamma}\le C
		\]
		for all $0\le l\le k-1$. Note that this was already shown in the proof of Lemma \ref{lem_appx_c3control} up to $k=2$. It is a straightforward computation to establish this in the case of $k=3$.
		Next, note that for any smooth family of tensors $(T_r)$
		\[
			\nabla_I T_r=\widehat{\nabla}_IT_r+\d\omega_I\partial_rT_r+T_r*\widehat{Q},
		\]
		where $T_r*Q$ denotes a linear combination of $T_r$ and $Q$. By applying derivatives to the above identity inductively\footnote{Note that the subsequent $r$-derivatives only apply to the components of $(T_r\vert_{r=\omega(p)})_p$, and not to $\widehat{Q}$, as they appear due to chain rule.} the Lemma follows from the above claim and the scaling properties of $\hatgamma$ and $\gamma_\omega$, i.e., for any $\alpha\in\Z$, and any $(m,n)$-tensor $T$ with 
		\[
			\btr{T}_{\hatgamma}\le Cr^{\alpha}
		\]
		we find
		\[
			\btr{T}_{\gamma_\omega}\le Cr^{\alpha+m-n}.
		\]
	\end{proof}
	Lastly, we can deduce the following technical estimate from Lemma \ref{lem_appx_c3control} (i) and (iii):
	\begin{lem}\label{lem_appx_differencetensorestimate}
		Assume $\frac{1}{2}\widetilde{\rho}\le \omega\le 2\widetilde{\rho}$, and assume that
		\[
			\norm{\frac{\omega}{\widetilde{\rho}}}_{C^l(\Sbb^2,\hatgamma)}\le c
		\]
		for some constant $c$, $1\le l\le 3$. Then
		\[
			\btr{\widetilde{\nabla}^k\widetilde{Q}}_{\tildegamma_\omega}\le \frac{C}{\widetilde{\rho}^{3+k}}
		\]
		for all $0\le k\le l-1$, where $C$ is a constant only depending on $c$ provided $\widetilde{\rho}$ is sufficiently large.
	\end{lem}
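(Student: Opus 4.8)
The plan is to recycle the structure already established in Lemma \ref{lem_appx_differencetensors} and Lemma \ref{lem_appx_c3control}, but now keeping careful track of the $\widetilde\rho$-weights instead of just proving norm equivalences. First I would recall the explicit formula for the difference tensor $\widetilde{Q}$ obtained in Lemma \ref{lem_appx_differencetensors}: it is a contraction of $\gamma_\omega^{KL}$ with terms that are schematically either $\d\omega \ast O_{2,3}(r^{-1})$, or $O_{3,2}(1)$-type tensors (i.e.\ $\overline O$-decay tensors of appropriate weight), or $\tfrac{1}{\omega}\gamma_\omega^{KL}(\widehat\nabla\omega \ast \hatgamma - \d\omega\ast\delta)\ast O_{3,3}(1)$. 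Under the hypothesis $\frac{1}{2}\widetilde\rho\le\omega\le 2\widetilde\rho$ and $\|\omega/\widetilde\rho\|_{C^l(\hatgamma)}\le c$, the equivalence (i)$\Leftrightarrow$(iii) of Lemma \ref{lem_appx_c3control} gives $\btr{\widetilde\nabla^j\omega}_{\tildegamma_\omega}\le C\widetilde\rho^{\,1-j}$ for $1\le j\le l$ (here $\alpha=0$), and in particular $\btr{\d\omega}_{\tildegamma_\omega}\le C$. Plugging these into the formula for $\widetilde Q$, and using the scaling $\btr{\gamma_\omega^{KL}}_{\tildegamma_\omega}\le C\widetilde\rho^{-2}$ together with $\btr{\hatgamma}_{\tildegamma_\omega}\le C\widetilde\rho^{-2}$ (equivalently $\btr{\hatgamma_{IJ}}$ scales like $\widetilde\rho^{-2}$ measured in $\tildegamma_\omega$ on the lower index slots) and $\omega^{-1}\le C\widetilde\rho^{-1}$, one checks term by term that $\btr{\widetilde Q}_{\tildegamma_\omega}\le C\widetilde\rho^{-3}$, which is the $k=0$ case.

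For the higher derivatives $0<k\le l-1$, I would argue inductively. Taking $\widetilde\nabla$-derivatives of the $\widetilde Q$-formula produces: (a) $\widetilde\nabla$ hitting the $O_{m,n}(r^\alpha)$-type tensors, which by Remark \ref{bem_prelim_O} and Lemma \ref{lem_appx_gammaomegadecay} (applied with the metric $\tildegamma_\omega$, which is the special case $\gamma_\omega=\tildegamma_\omega$ allowed there) again lands in an $O$-class with one extra negative power of $r$; (b) $\widetilde\nabla$ hitting $\d\omega$, producing $\widetilde\Hess\,\omega$, controlled by $\btr{\widetilde\nabla^{j}\omega}_{\tildegamma_\omega}\le C\widetilde\rho^{1-j}$; (c) $\widetilde\nabla$ hitting $\gamma_\omega^{KL}$, which by the identity $\widehat\nabla_I\gamma_\omega^{KL}=-\tfrac{2}{\omega^3}\d\omega_I\hatgamma^{KL}+O_{3,2}(r^{-4})+\d\omega\ast O_{2,3}(r^{-5})$ (converted to $\widetilde\nabla$ using the difference tensor itself, absorbing one factor of $\widetilde Q$ which is already $O(\widetilde\rho^{-3})$ by the previous inductive step) again gains a factor $\widetilde\rho^{-1}$; and (d) $\widetilde\nabla$ hitting $\omega^{-1}$, giving $-\omega^{-2}\d\omega$, again a gain of $\widetilde\rho^{-1}$. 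Each differentiation therefore costs exactly one power of $\widetilde\rho$, and a bookkeeping of the weights shows $\btr{\widetilde\nabla^k\widetilde Q}_{\tildegamma_\omega}\le C\widetilde\rho^{-3-k}$, with $C$ depending only on $c$ (and the fixed asymptotic constants of $\mathcal{N}$), provided $\widetilde\rho$ is large enough that $\gamma_\omega$ and $\tildegamma_\omega$ are uniformly equivalent and the $O$-estimates in Definition \ref{defi_asymclassS} hold along the relevant range of $r$.

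I expect the main obstacle to be purely notational: the difference tensor formula mixes $\gamma_\omega$-dependent objects ($\gamma_\omega^{KL}$, whose derivatives are awkward) with $\hatgamma$-dependent objects, and one must consistently decide in which metric ($\tildegamma_\omega$ versus $\hatgamma$ versus $\gamma_\omega$) each norm is taken and convert using the scaling $\btr{T}_{\tildegamma_\omega}\le C\widetilde\rho^{\,n-m}\btr{T}_{\hatgamma}$ for an $(m,n)$-tensor. The cleanest route is to first rewrite everything with $\widetilde\nabla$ and $\tildegamma_\omega$ using the decomposition $\widetilde\nabla T = \widehat\nabla T + \d\omega\,\partial_r T + T\ast Q$ from the proof of Lemma \ref{lem_appx_gammaomegadecay} (with $Q$ as in \eqref{eq_appx_differencetensorconformallyround}, which satisfies $\btr{\widehat\nabla^j Q}_{\hatgamma}\le C$ under the $C^l$-bound on $\omega/\widetilde\rho$), so that the induction is driven entirely by the weight-gain-per-derivative heuristic above; then the result drops out. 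No genuinely new estimate is needed beyond what Lemmas \ref{lem_appx_differencetensors}, \ref{lem_appx_c3control}, and \ref{lem_appx_gammaomegadecay} already provide.
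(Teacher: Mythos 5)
Your proposal is correct and is essentially the paper's own argument, which is stated there in one line as "a straightforward computation carried out inductively as in the proof of Lemma \ref{lem_appx_c3control}": one plugs the explicit formula for $\widetilde{Q}$ from Lemma \ref{lem_appx_differencetensors} into the weight bookkeeping supplied by Lemma \ref{lem_appx_c3control} (iii) and the $O$-decay conversion of Lemma \ref{lem_appx_gammaomegadecay}, gaining one power of $\widetilde{\rho}$ per derivative. Just keep the norm conventions consistent when you execute it: in the $\tildegamma_\omega$-norm one has $\btr{\gamma_\omega^{-1}}_{\tildegamma_\omega}\le C$ (the factor $\widetilde{\rho}^{-2}$ you quote is its size measured against $\hatgamma$), and the $\widehat{\nabla}\to\widetilde{\nabla}$ conversion uses the tensor $Q$ of \eqref{eq_appx_differencetensorconformallyround} rather than $\widetilde{Q}$; with either consistent choice the term-by-term count gives exactly the claimed $\widetilde{\rho}^{-3-k}$.
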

	\begin{bem}\label{bem_appx_differencetensorestimate}
		As already used in the proof of Lemma \ref{lem_appx_c3control} up untl $l=2$, we analogously can show that under the assumptions of Lemma \ref{lem_appx_differencetensorestimate} we also find
		\[
		\btr{\widetilde{\nabla}^k\widehat{Q}}_{\tildegamma_\omega}\le \frac{C}{\widetilde{\rho}^{1+k}}
		\]
		for all $0\le k\le l-1$.
	\end{bem}
	\begin{proof}
		{The proof is once again a straightforward computation carried out inductively as in the prove of Lemma \ref{lem_appx_c3control}.}
	\end{proof}
	{This control on the difference tensor $\widetilde{Q}$ now allows us to obtain the desired $C^1$-comparison of the scalar curvatures $\operatorname{R}$ and $\widetilde{\scal}$:}
	\begin{lem}\label{lem_appx_scalarcurv}
		Assume $\frac{1}{2}\widetilde{\rho}\le \omega\le 2\widetilde{\rho}$, and 
		\[
		\norm{\frac{\omega}{\widetilde{\rho}}}_{C^3(\Sbb^2,\hatgamma)}\le c
		\]
		for some constant $c$.
		Let $R$ denote the scalar curvature of $(\Sbb^2,\gamma_\omega)$, and let $\widetilde{R}$ denote the scalar curvature of $(\Sbb^2,\tildegamma_\omega)$. Then
		\[
			\left(1-\frac{C}{\widetilde{\rho}^2}\right)R-\frac{C}{\widetilde{\rho}^4}\le \widetilde{R}\le \left(1+\frac{C}{\widetilde{\rho}^2}\right)R+\frac{C}{\widetilde{\rho}^4},
		\]
		and
		\[
			\btr{\d\widetilde{R}}_{\tildegamma_\omega}\le \frac{C}{\widetilde{\rho}^3}\btr{R}+C\btr{\d R}_{\gamma_{\omega}}+\frac{C}{\widetilde{\rho}^5}.
		\]
		for constants only depending on $c$ provided $\widetilde{\rho}$ is sufficiently large.
	\end{lem}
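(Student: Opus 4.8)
The plan is to derive both inequalities from the standard formula relating the scalar curvatures of two conformally equivalent---or rather, two close---metrics via the difference tensor, using the estimates on $\widetilde Q$ and $\widehat Q$ already established in Lemma \ref{lem_appx_differencetensorestimate} and Remark \ref{bem_appx_differencetensorestimate}. Recall that for two metrics $g$ and $\bar g$ on a surface, with difference tensor $Q_{IJ}^K = \Gamma_{IJ}^K - \bar\Gamma_{IJ}^K$, the Ricci (equivalently, on a surface, the Gauss) curvatures are related by
\[
R^{g}_{IJ} = R^{\bar g}_{IJ} - \bar\nabla_K Q_{IJ}^K + \bar\nabla_J Q_{IK}^K + Q_{IJ}^L Q_{LK}^K - Q_{IK}^L Q_{LJ}^K.
\]
Contracting with $g^{IJ}$ and comparing with the trace against $\widetilde\gamma_\omega^{IJ}$ (using the asymptotic relation $\gamma_\omega^{IJ} = \widetilde\gamma_\omega^{IJ} + O_{3,3}(r^{-4})$ from \eqref{eq_appx_inverse}), I would express $\widetilde R$ in terms of $R$, the difference tensor $\widetilde Q := \Gamma_{IJ}^K - \widetilde\Gamma_{IJ}^K$, its first $\widetilde\nabla$-derivative, and the $O_{3,3}$-error in the inverse metric.

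First I would write $\widetilde R - R$ as a sum of three types of terms: (a) a term of the form $(\gamma_\omega^{IJ} - \widetilde\gamma_\omega^{IJ}) \cdot (\text{Ricci-type quantity})$, which is controlled by $\frac{C}{\widetilde\rho^4}\cdot|R| \cdot \widetilde\rho^2$-type bounds---more precisely, since $\gamma_\omega^{IJ}-\widetilde\gamma_\omega^{IJ} = O_{3,3}(r^{-4})$ and $R = O(\widetilde\rho^{-2})$, this contributes at worst $\frac{C}{\widetilde\rho^2}|R|$; (b) divergence terms $\widetilde\nabla \widetilde Q$, bounded by $\frac{C}{\widetilde\rho^4}$ using Lemma \ref{lem_appx_differencetensorestimate} with $k=1$ (note $\widetilde\nabla\widetilde Q = O(\widetilde\rho^{-4})$ in the $\widetilde\gamma_\omega$-metric since the tensor $\widetilde Q$ has one upper and two lower indices so its $\widetilde\gamma_\omega$-norm involves the scaling); (c) quadratic terms $\widetilde Q * \widetilde Q$, bounded by $\frac{C}{\widetilde\rho^4}$ using the $k=0$ case. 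Assembling these, $|\widetilde R - R| \le \frac{C}{\widetilde\rho^2}|R| + \frac{C}{\widetilde\rho^4}$, which rearranges to the two-sided bound claimed. One must be slightly careful tracking the metric with respect to which norms are taken, since $\widetilde Q$ is a $(1,2)$-tensor: the relevant bound is $|\widetilde\nabla^k \widetilde Q|_{\widetilde\gamma_\omega} \le C\widetilde\rho^{-(3+k)}$, and after contractions the curvature correction carries the expected $\widetilde\rho^{-4}$ weight.

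For the gradient estimate, I would differentiate the identity for $\widetilde R - R$ once with $\widetilde\nabla$. The derivative of the curvature-difference formula produces: $\widetilde\nabla$ of the $(b)$ and $(c)$ terms, now controlled by $\frac{C}{\widetilde\rho^5}$ via Lemma \ref{lem_appx_differencetensorestimate} at orders $k=0,1,2$ (here one genuinely needs the $C^3$-hypothesis on $\omega/\widetilde\rho$, since differentiating $\widetilde\nabla\widetilde Q$ needs $\widetilde\nabla^2\widetilde Q$); plus $\widetilde\nabla$ of the $(a)$-term, which by the product rule splits into $(\widetilde\nabla(\gamma_\omega^{-1}-\widetilde\gamma_\omega^{-1}))\cdot(\text{Ric}) + (\gamma_\omega^{-1}-\widetilde\gamma_\omega^{-1})\cdot\widetilde\nabla(\text{Ric})$; the first piece contributes $\frac{C}{\widetilde\rho^3}|R|$-type terms and the second reduces to $\frac{C}{\widetilde\rho^2}|\widetilde\nabla R|$; finally the Christoffel discrepancy between $\widetilde\nabla$ and $\nabla$ acting on $R$ (i.e. $dR$ viewed in $\widetilde\nabla$ versus $\nabla$ differs by nothing, since $R$ is a scalar, but converting $|dR|_{\gamma_\omega}$ to $|dR|_{\widetilde\gamma_\omega}$ uses only uniform equivalence of the metrics). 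Collecting, $|d\widetilde R|_{\widetilde\gamma_\omega} \le \frac{C}{\widetilde\rho^3}|R| + C|dR|_{\gamma_\omega} + \frac{C}{\widetilde\rho^5}$, as asserted. The main obstacle, as usual in this appendix, is purely bookkeeping: keeping the weights consistent when passing between $\widehat\gamma$-, $\widetilde\gamma_\omega$-, and $\gamma_\omega$-norms for tensors of mixed valence, and making sure the $O_{3,3}$-remainder in $\gamma_\omega - \widetilde\gamma_\omega$ and its two derivatives are each absorbed with the correct power of $\widetilde\rho$; there is no conceptual difficulty beyond invoking Lemma \ref{lem_appx_differencetensorestimate}, Remark \ref{bem_appx_differencetensorestimate}, and \eqref{eq_appx_inverse}.
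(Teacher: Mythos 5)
Your proposal is correct and follows essentially the same route as the paper: the paper also writes $R-\widetilde{R}$ via the difference tensor $\widetilde{Q}$ (divergence plus quadratic terms, contracted with $\gamma_\omega^{-1}$) together with the term coming from $\gamma_\omega^{AB}-\widetilde{\gamma}_\omega^{AB}$, then invokes Lemma \ref{lem_appx_differencetensorestimate} and \eqref{eq_appx_inverse} and differentiates once for the gradient bound. The only cosmetic difference is that the paper solves for $\widetilde{R}$ by dividing by the prefactor $1+\tfrac12\widetilde{\gamma}_{AB}(\gamma_\omega^{AB}-\widetilde{\gamma}_\omega^{AB})$, whereas you absorb the small multiple of the curvature after rearranging, which amounts to the same estimate.
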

	\begin{proof}
		Recall that
		\[
			R=\gamma_\omega^{AB}\left(\Gamma_{AB,J}^J-\Gamma_{JB,A}^J+\Gamma_{AB}^I\Gamma_{JI}^J-\Gamma_{AI}^J\Gamma_{BJ}^I\right),
		\]
		and in $2$ dimensions, we in fact have
		\[
			\frac{1}{2}R\left(\gamma_\omega\right)_{AB}=\Gamma_{AB,J}^J-\Gamma_{JB,A}^J+\Gamma_{AB}^I\Gamma_{JI}^J-\Gamma_{AI}^J\Gamma_{BJ}^I.
		\]
		Using the above identities, a straightforward computation gives
		\begin{align*}
			R-\widetilde{R}
			=&\,{\gamma}_\omega^{AB}\left(\widetilde{\nabla}_J\widetilde{Q}_{AB}^J-\widetilde{\nabla}_A\widetilde{Q}_{JB}^J+\widetilde{Q}_{AB}^I\widetilde{Q}_{JI}^J-\widetilde{Q}_{AI}^J\widetilde{Q}_{JB}^I\right)\\
			&\,+\frac{1}{2}\widetilde{R}(\tildegamma_\omega)_{AB}\left(\gamma_{\omega}^{AB}-\tildegamma_\omega^{AB}\right).
		\end{align*}
		Hence
		\begin{align*}
			\widetilde{R}=
			&\,\frac{1}{1+\frac{1}{2}(\tildegamma_\omega)_{AB}\left(\gamma_{\omega}^{AB}-\tildegamma_\omega^{AB}\right)}\left(R-{\gamma}_\omega^{AB}\left(\widetilde{\nabla}_J\widetilde{Q}_{AB}^J-\widetilde{\nabla}_A\widetilde{Q}_{JB}^J+\widetilde{Q}_{AB}^I\widetilde{Q}_{JI}^J-\widetilde{Q}_{AI}^J\widetilde{Q}_{JB}^I\right)\right).
		\end{align*}
		Recall that 
		\[
			\btr{\widetilde{\nabla}\gamma_\omega^{-1}}_{\widetilde{\gamma}_\omega}\le \frac{C}{\widetilde{\rho}^3},
		\]
		so we note that
		\begin{align*}
			\btr{1-\frac{1}{1+\frac{1}{2}(\tildegamma_\omega)_{AB}\left(\gamma_{\omega}^{AB}-\tildegamma_\omega^{AB}\right)}}&\le \frac{C}{\widetilde{\rho}^2}\\
			\btr{\widetilde{\nabla}\left(\frac{1}{1+\frac{1}{2}(\tildegamma_\omega)_{AB}\left(\gamma_{\omega}^{AB}-\tildegamma_\omega^{AB}\right)}\right)}_{\widetilde{\gamma}_\omega}&\le \frac{C}{\widetilde{\rho}^3}.
		\end{align*}
		The claim then directly follows from Lemma \ref{lem_appx_differencetensorestimate}, and the fact that 
		\[
			\btr{\d R}_{\tildegamma_\omega}\le C\btr{\d R}_{\gamma_{\omega}}
		\]
		by the (uniform) equivalence of the metrics for $\widetilde{\rho}$ sufficiently large.
	\end{proof}
\section{Appendix - Curvature Estimates}\label{appx_B}
In this appendix, we establish some curvature estimates on asymptotically Schwarzschildean lightcones. Throughout this appendix, we set $h(r)=1-\frac{2m}{r}$.
For the Riemann curvature tensor of a semi-Riemannian manifold $(M,g)$, we use the convention
\begin{align*}
\operatorname{Rm}(X,Y,W,Z)&=g\left( \nabla_X\nabla_YZ-\nabla_Y\nabla_XZ-\nabla_{[X,Y]}Z,W \right),
\end{align*}
and define the Ricci curvature tensor and the scalar curvature accordingly as
\begin{align*}
\operatorname{Ric}(V,W)&=\operatorname{tr}_g \operatorname{Rm}(V,\cdot,W,\cdot),\\
\operatorname{R}&=\operatorname{tr}_g\operatorname{Ric}.
\end{align*}
In the next two lemmata, we state some curvature identities in the Schwarzschild spacetime.
\begin{lem}\label{lem_appx_Riemcurv_classS}
	All non-trivial Riemann-components in the Schwarzschild spacetime are given by
	\begin{align*}
	\overline{\Riem}^{Schw}_{\ul{L}L_r\ul{L}L_r}&=2h'',\\
	\overline{\Riem}^{Schw}_{I\ul{L}JL_r}&=-rh'\hatgamma_{IJ},\\
	\overline{\Riem}^{Schw}_{IJKM}&=r^2(1-h)\left(\hatgamma_{IK}\hatgamma_{JM}-\hatgamma_{IM}\hatgamma_{KJ}\right).
	\end{align*}
\end{lem}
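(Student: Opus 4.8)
\section*{Proof proposal for Lemma \ref{lem_appx_Riemcurv_classS}}

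The plan is to compute the Schwarzschild curvature directly in retarded Eddington--Finkelstein coordinates adapted to the outgoing lightcone, where the null frame has a particularly simple form. Concretely, I would write $\overline{g}^{Schw}=-h\,\d u^2-2\,\d u\,\d r+r^2\hatgamma$ with $h=1-\tfrac{2m}{r}$, where $u=t-r_*$, $r_*=\int h^{-1}\,\d r$. In these coordinates the hypersurfaces $\{u=\mathrm{const.}\}$ are null, and one checks from the inverse metric (the $(u,r)$-block being $\left(\begin{smallmatrix} 0 & -1\\ -1 & h\end{smallmatrix}\right)$) that $\ul{L}=\partial_r$ is a null vector field tangent to $\mathcal{N}$. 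A one-line computation of the Christoffel symbols $\Gamma^\mu_{rr}$ shows $\overline{\nabla}_{\partial_r}\partial_r=0$, so $\ul{L}$ is an affinely parametrized geodesic null generator, matching the normalization of Definition \ref{defi_asymclassS} (in particular $\kappa=0$). The coordinate spheres $S_r=\{u,r=\mathrm{const.}\}$ form the background foliation, and the unique null normal to $S_r$ with $\overline{g}(\ul{L},L_r)=2$ is readily found to be $L_r=-2\partial_u+h\partial_r$.

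Next I would compute the full Christoffel symbols of $\overline{g}^{Schw}$ in the coordinate frame $\{\partial_u,\partial_r,\partial_\theta,\partial_\varphi\}$ (only $h'$ and the metric-sphere Christoffels $\widehat{\Gamma}$ appear, since $g_{rr}=0$ and $g_{uu},g_{ur}$ depend only on $r$), and then the components of $\overline{\Riem}$ using the sign convention fixed at the start of Appendix \ref{appx_B}. Reading off the coordinate components and substituting $\ul{L}=\partial_r$, $L_r=-2\partial_u+h\partial_r$, $\partial_I\in\{\partial_\theta,\partial_\varphi\}$, gives the three stated expressions $\overline{\Riem}^{Schw}_{\ul{L}L_r\ul{L}L_r}=2h''$, $\overline{\Riem}^{Schw}_{I\ul{L}JL_r}=-rh'\hatgamma_{IJ}$, $\overline{\Riem}^{Schw}_{IJKM}=r^2(1-h)(\hatgamma_{IK}\hatgamma_{JM}-\hatgamma_{IM}\hatgamma_{KJ})$. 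The vanishing of all remaining components (e.g.\ $\overline{\Riem}_{IJK\ul{L}}$, $\overline{\Riem}_{I\ul{L}\ul{L}L_r}$, $\overline{\Riem}_{IL_rJL_r}$, $\overline{\Riem}_{\ul{L}L_r\ul{L}\partial_I}$) follows because the Schwarzschild curvature respects the splitting into the $(u,r)$-block and the spherical block, and the change of frame from $\{\partial_u,\partial_r\}$ to $\{\ul{L},L_r\}$ mixes only within the $(u,r)$-block while leaving the $\partial_I$ unchanged. As an independent sanity check on the purely spherical component, I would invoke the null Gauss equation \eqref{eq_prelim_gauss1} together with the exact Schwarzschild values $\ul{\chi}_r=r\hatgamma$ and $\chi_r=rh\hatgamma$ (obtained from $\overline{\nabla}_{\partial_I}\partial_r=\tfrac1r\partial_I$ and $\overline{\nabla}_{\partial_I}L_r=\tfrac{h}{r}\partial_I$), which reproduces $\overline{\Riem}^{Schw}_{IJKM}=r^2(1-h)(\hatgamma_{IK}\hatgamma_{JM}-\hatgamma_{IM}\hatgamma_{KJ})$ after a short contraction.

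The main obstacle is bookkeeping of signs and normalizations rather than any conceptual difficulty: one must track the paper's Riemann sign convention from Appendix \ref{appx_B}, the choice of time orientation fixing which null direction is $\ul{L}$, and the nonstandard pairing $\overline{g}(\ul{L},L_r)=2$ (not $1$ or $-2$), since these together determine the numerical coefficients $2$, $-1$, $1-h$ in the three formulas. A secondary point worth stating explicitly is that, because $\ul{L}=\partial_r$ is affinely parametrized, the frame used here is exactly the one implicitly referenced in Definition \ref{defi_asymclassS} when comparing $\overline{\Riem}$ with $\overline{\Riem}^{Schw}$, so no additional rescaling is needed. Everything else reduces to the routine computation of Christoffel symbols and curvature for a metric of this simple form.
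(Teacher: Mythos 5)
Your proposal is correct and is essentially the paper's own argument: the paper proves Lemma \ref{lem_appx_Riemcurv_classS} by exactly this kind of direct computation of the Schwarzschild Christoffel symbols and curvature in null-adapted coordinates (pointing to a Christoffel-symbol list in double-null coordinates), which is the same routine calculation you set up in retarded Eddington--Finkelstein coordinates with $\ul{L}=\partial_r$, $L_r=-2\partial_u+h\partial_r$, and your frame does satisfy the normalizations of Definition \ref{defi_asymclassS} ($\kappa=0$, $\overline{g}(\ul{L},L_r)=2$, $\ul{\chi}_r=r\hatgamma$, $\chi_r=rh\hatgamma$). One minor caveat: the vanishing of components such as $\overline{\Riem}^{Schw}_{IL_rJL_r}$ does not follow from the $(u,r)$-block/sphere-block splitting alone (it uses the specific Schwarzschild values, i.e.\ that $\ul{L},L_r$ are principal null directions), but the full coordinate computation you propose establishes this anyway.
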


\begin{lem}\label{lem_appx_DerivRiemcurv_classS}
	\begin{align*}
	\overline{\nabla}_A\overline{\Riem}^{Schw}_{BCDL_r}&=-rh\left(1-h+\frac{1}{2}rh'\right)\left(\hatgamma_{BD}\hatgamma_{AC}-\hatgamma_{AB}\hatgamma_{CD}\right),\\
	\overline{\nabla}_A\overline{\Riem}^{Schw}_{BCD\ul{L}}&=-r\left(1-h+\frac{1}{2}rh'\right)\left(\hatgamma_{BD}\hatgamma_{AC}-\hatgamma_{AB}\hatgamma_{CD}\right),\\
	\overline{\nabla}_A\overline{\Riem}^{Schw}_{B\ul{L}\ul{L}L_r}&=(h'+rh'')\hatgamma_{AB},\\
	\overline{\nabla}_{\ul{L}}\overline{\Riem}^{Schw}_{BCDA}&=-2r\left(1-h+\frac{1}{2}rh'\right)\left(\hatgamma_{BD}\hatgamma_{AC}-\hatgamma_{AB}\hatgamma_{CD}\right),\\
	\overline{\nabla}_{\ul{L}}\overline{\Riem}^{Schw}_{B\ul{L}DL_r}&=\left(h'-rh''\right)\hatgamma_{BD}.
	\end{align*}
	Moreover,
	\begin{align*}
		\overline{\nabla}_A\overline{\Riem}^{Schw}_{BCDM}&=\overline{\nabla}_A\overline{\Riem}^{Schw}_{BC\ul{L}L_r}=\overline{\nabla}_A\overline{\Riem}^{Schw}_{B\ul{L}DL_r}=\overline{\nabla}_A\overline{\Riem}^{Schw}_{B\ul{L}\ul{L}M}=0,\\
		\overline{\nabla}_{\ul{L}}\overline{\Riem}^{Schw}_{BCDML_r}&=\overline{\nabla}_{\ul{L}}\overline{\Riem}^{Schw}_{BCD\ul{L}}=\overline{\nabla}_{\ul{L}}\overline{\Riem}^{Schw}_{BC\ul{L}L_r}=\overline{\nabla}_{\ul{L}}\overline{\Riem}^{Schw}_{B\ul{L}D\ul{L}}=\overline{\nabla}_{\ul{L}}\overline{\Riem}^{Schw}_{B\ul{L}\ul{L}L_r}=0.
	\end{align*}
\end{lem}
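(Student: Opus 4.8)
The statement to be proven is Lemma \ref{lem_appx_DerivRiemcurv_classS}, which lists all covariant derivatives $\overline{\nabla}\,\overline{\Riem}^{Schw}$ of the Schwarzschild Riemann tensor in a null frame $\{\ul{L},L_r,\partial_I\}$ adapted to the standard background foliation by spheres $S_r$, with $h(r)=1-\tfrac{2m}{r}$.

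\medskip

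\noindent\textbf{Proof proposal.}

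\emph{Setup and strategy.} The plan is to compute $\overline{\nabla}\,\overline{\Riem}^{Schw}$ directly from the components of $\overline{\Riem}^{Schw}$ recorded in Lemma \ref{lem_appx_Riemcurv_classS}, using the product rule $\overline{\nabla}_E\overline{\Riem}(A,B,C,D)=E\big(\overline{\Riem}(A,B,C,D)\big)-\overline{\Riem}(\overline{\nabla}_EA,B,C,D)-\dots$ and the Christoffel symbols of the null frame. First I would fix the frame: in Schwarzschild, $\ul{L}$ is the (geodesic, affinely parametrized) ingoing or outgoing null generator with $[\ul{L},\partial_I]=0$, and $L_r$ is the transversal null field with $\overline{g}(\ul{L},L_r)=2$ normal to $S_r$; the induced metric on $S_r$ is $r^2\hatgamma$ and $\ul{\chi}_r=r\hatgamma$, $\chi_r=rh\hatgamma$, $\zeta_r=0$ (the exact Schwarzschild values consistent with Definition \ref{defi_asymclassS} at $m$ fixed and no error terms). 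From these I read off, via the structure of $\vec{\two}=-\tfrac12\chi\ul{L}-\tfrac12\ul\chi L$ and $\overline\nabla_{\ul L}\ul L=0$, the needed connection coefficients: $\overline{\nabla}_{\partial_I}\partial_J = {}^{\gamma}\nabla_{\partial_I}\partial_J - \tfrac12 (\chi_r)_{IJ}\ul L - \tfrac12(\ul\chi_r)_{IJ}L_r = {}^{\hatgamma}\Gamma_{IJ}^K\partial_K - \tfrac{rh}{2}\hatgamma_{IJ}\ul L - \tfrac{r}{2}\hatgamma_{IJ}L_r$; $\overline{\nabla}_{\partial_I}\ul L = \tfrac1r\partial_I$ (from $\ul\chi_r = r\hatgamma$ and $\zeta_r=0$); $\overline{\nabla}_{\partial_I}L_r = \tfrac{h}{r}\partial_I$ (wait — one must be careful: $\overline{g}(\overline\nabla_{\partial_I}L_r,\partial_J)=(\chi_r)_{IJ}=rh\hatgamma_{IJ}$, so $\overline\nabla_{\partial_I}L_r=\tfrac{h}{r}\partial_I+(\zeta\text{-term})$, and $\zeta_r=0$); and $\overline\nabla_{\ul L}L_r$, $\overline\nabla_{L_r}\ul L$, $\overline\nabla_{L_r}L_r$ computed from $\overline\nabla_{\ul L}L_r=\tfrac12\ul L(\overline g(\ul L,L_r))\dots$, i.e. using metric-compatibility and the Raychaudhuri/null-structure relations, which in the exact Schwarzschild case give clean radial functions of $h,h',r$ — concretely $\overline\nabla_{\ul L}L_r = 0$ or a multiple of $L_r$ depending on normalization of $r$ as affine parameter (I would pin this down by demanding $\ul L = \partial_r$ along the generators in a suitable coordinate and using $\overline g(\ul L,L_r)=2$ constant).

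\emph{Key computations.} With the Christoffels in hand, the lemma reduces to a finite, bookkeeping-style set of derivative computations, which I would organize by the type of frame indices. (1) \emph{Tangential derivative of purely-tangential Riemann}: from $\overline{\Riem}^{Schw}_{IJKM}=r^2(1-h)(\hatgamma_{IK}\hatgamma_{JM}-\hatgamma_{IM}\hatgamma_{KJ})$, the $r$-dependence $r^2(1-h)=2mr$ is frozen under $\partial_A$ (a tangential derivative kills $\partial_r$), but the Christoffel corrections mixing in $\ul L$ and $L_r$ produce, by substituting the last-index $M\to\ul L$ or $L_r$ via $\overline\nabla_{\partial_A}\partial_M$, exactly the claimed $\overline{\nabla}_A\overline{\Riem}^{Schw}_{BCD\ul L}$ and $\overline{\nabla}_A\overline{\Riem}^{Schw}_{BCDL_r}$; the coefficient $-r(1-h+\tfrac12 rh')$ should emerge from combining the $r^2(1-h)\cdot(-\tfrac{r}{2}\hatgamma)$ contraction with a derivative of the conformal factor, and one must simply verify the algebra. (2) \emph{Tangential derivative of mixed Riemann}: from $\overline{\Riem}^{Schw}_{I\ul L JL_r}=-rh'\hatgamma_{IJ}$, applying $\overline\nabla_{\partial_A}$ and inserting $\overline\nabla_{\partial_A}\partial_I$, $\overline\nabla_{\partial_A}\ul L=\tfrac1r\partial_A$, $\overline\nabla_{\partial_A}L_r=\tfrac hr\partial_A$ yields terms proportional to $\hatgamma$ that recombine to the claimed $\overline{\nabla}_A\overline{\Riem}^{Schw}_{B\ul L\ul L L_r}=(h'+rh'')\hatgamma_{AB}$ etc.; here one uses that $\partial_A(-rh'\hatgamma_{BC})$ has no $\partial_r$ piece but the frame rotation does. (3) \emph{$\ul L$-derivatives}: these are the cleanest since $\ul L=\partial_r$ acts as $\tfrac{d}{dr}$ on the radial coefficients; e.g. $\overline\nabla_{\ul L}\overline{\Riem}^{Schw}_{BCDA} = \partial_r(r^2(1-h))(\hatgamma\hatgamma-\hatgamma\hatgamma)_{BCDA} + (\text{frame terms from }\overline\nabla_{\ul L}\partial_I)$; since $\overline\nabla_{\ul L}\partial_I=\overline\nabla_{\partial_I}\ul L=\tfrac1r\partial_I$, the frame correction contributes $-\tfrac{4}{r}\cdot r^2(1-h)(\dots)$ which together with $\partial_r(r^2(1-h))=\partial_r(2mr)=2m=r^2(1-h)'/\,$... must be reconciled to give $-2r(1-h+\tfrac12 rh')$. (4) Finally I would verify every \emph{vanishing} component in the last block by a parity/index-count argument: $\overline{\Riem}^{Schw}$ has no component with an odd number of $\{\ul L,L_r\}$ legs, and since $\overline\nabla_{\partial_A}$ (tangential) and $\overline\nabla_{\ul L}$ preserve or flip leg-types only in the structured way dictated by the Christoffels above, any listed derivative component whose leg-type pattern cannot be produced is zero; this disposes of $\overline{\nabla}_A\overline{\Riem}^{Schw}_{BCDM}=\overline{\nabla}_A\overline{\Riem}^{Schw}_{BC\ul LL_r}=\dots=0$ and the $\overline\nabla_{\ul L}$ vanishing list at once.

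\emph{Main obstacle and cross-checks.} The hard part will not be any single computation but \emph{getting every sign, factor of $2$, and placement of $h$ versus $1-h$ consistent} with the paper's curvature convention (stated in Appendix \ref{appx_B}) and with the specific normalization $\overline g(\ul L,L_r)=2$ and $\ul L$ affine — a different normalization rescales $L_r$ by a function of $r$ and shifts the $\overline\nabla_{\ul L}L_r$ and $\overline\nabla_{L_r}\ul L$ terms, which feeds into (3) and into the mixed components. To control this I would (i) re-derive $\overline{\Riem}^{Schw}$ of Lemma \ref{lem_appx_Riemcurv_classS} from scratch in the chosen frame as a warm-up, confirming the stated values, then (ii) use the second Bianchi identity $\overline\nabla_{[E}\overline{\Riem}_{AB]CD}=0$ and the contracted Bianchi identity $\overline\nabla^E\overline{\Riem}_{EBCD}=\overline\nabla_C\overline{\Ric}_{BD}-\overline\nabla_D\overline{\Ric}_{BC}$ together with $\overline{\Ric}^{Schw}=0$ as nontrivial consistency checks on the computed derivative components, and (iii) verify the $m\to 0$ limit ($h\to 1$, $h'\to 0$, $h''\to0$) makes every component vanish, as it must since Minkowski is flat. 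Once the frame data is nailed down, the lemma follows by a direct, if lengthy, application of the product rule component by component.
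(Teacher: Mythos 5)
Your overall strategy coincides with the paper's: Lemma \ref{lem_appx_DerivRiemcurv_classS} is proved there by exactly the direct computation you describe, i.e.\ applying the product rule to the components of Lemma \ref{lem_appx_Riemcurv_classS} using the connection coefficients of the adapted frame (the paper takes the Christoffel symbols in double null coordinates from the cited reference, which amounts to the same data). Your frame input is correct: with $\ul{\chi}_r=r\hatgamma$, $\chi_r=rh\hatgamma$, $\zeta_r=0$ one has $\overline{\nabla}_{\partial_I}\partial_J={}^{\hatgamma}\Gamma^K_{IJ}\partial_K-\tfrac{rh}{2}\hatgamma_{IJ}\ul{L}-\tfrac r2\hatgamma_{IJ}L_r$, $\overline{\nabla}_{\partial_I}\ul{L}=\tfrac1r\partial_I$, $\overline{\nabla}_{\partial_I}L_r=\tfrac hr\partial_I$, $\overline{\nabla}_{\ul L}\ul L=0$, and for the affine parametrization with $\overline g(\ul L,L_r)=2$ in fact $\overline{\nabla}_{\ul L}L_r=0$, so the point you left open resolves cleanly; your consistency checks (second Bianchi, $m\to0$) are reasonable safeguards for the signs and factors.

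There is, however, a concrete flaw in step (4): the parity/leg-pattern count does not dispose of all the vanishing identities. For $\overline{\nabla}_A\overline{\Riem}^{Schw}_{BCDM}$ and $\overline{\nabla}_A\overline{\Riem}^{Schw}_{B\ul{L}DL_r}$ the undifferentiated components are \emph{nonzero} ($\overline{\Riem}^{Schw}_{BCDM}=2mr(\hatgamma_{BD}\hatgamma_{CM}-\hatgamma_{BM}\hatgamma_{CD})$ and $\overline{\Riem}^{Schw}_{B\ul L DL_r}=-rh'\hatgamma_{BD}$), so their leg pattern is certainly ``produced''; the vanishing is a cancellation, not an obstruction: the tangential coordinate derivative combines with the tangential Christoffel corrections into $2mr\,\widehat{\nabla}_A(\hatgamma_{BD}\hatgamma_{CM}-\hatgamma_{BM}\hatgamma_{CD})=0$, resp.\ $-rh'\,\widehat{\nabla}_A\hatgamma_{BD}=0$ (the radial coefficients are not hit by $\partial_A$ and $\hatgamma$ is $\widehat{\nabla}$-parallel), while the frame-mixing corrections only meet components with an odd number of null legs or with both null legs inside one antisymmetric pair, which vanish by Lemma \ref{lem_appx_Riemcurv_classS}. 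So these two entries must be verified by the same product-rule computation as the nonzero ones. Relatedly, the bookkeeping in step (1) is off: $\overline{\nabla}_A\overline{\Riem}^{Schw}_{BCD\ul L}$ and $\overline{\nabla}_A\overline{\Riem}^{Schw}_{BCDL_r}$ are not obtained by ``substituting the last index'' in the tangential derivative; they are components of $\overline{\nabla}\,\overline{\Riem}^{Schw}$ in their own right, whose nonzero contributions come from $\overline{\nabla}_A\ul{L}=\tfrac1r\partial_A$ (resp.\ $\overline{\nabla}_AL_r=\tfrac hr\partial_A$) acting on the last slot against $\overline{\Riem}^{Schw}_{BCDK}$, giving the $-r(1-h)$ (resp.\ $-rh(1-h)$) part, together with the $L_r$- (resp.\ $\ul L$-)parts of $\overline{\nabla}_A\partial_B$, $\overline{\nabla}_A\partial_C$ acting against $\overline{\Riem}^{Schw}_{I\ul L JL_r}=-rh'\hatgamma_{IJ}$, giving the $-\tfrac12 r^2h'$ (resp.\ $-\tfrac12 r^2hh'$) part. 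With that fixed, your plan is the paper's proof.
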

Lemma \ref{lem_appx_Riemcurv_classS} and Lemma \ref{lem_appx_DerivRiemcurv_classS} follow from direct computation. For a complete list of Christoffel symbols (in double null coordinates), see \cite[Appendix B]{cedwolff}.
For the rest of this section, we will always assume that $\mathcal{N}$ is asymptotically Schwarzschildean. Using Lemma \ref{lem_appx_c3control} above, we establish the following estimates:
\begin{lem}\label{lem_appx_curvatureestimates1}
	Assume $\Sigma_\omega$ is a spacelike cross section of $\mathcal{N}$ with $\frac{1}{2}\rho\le\omega\le2\rho$, $\rho>2r_0$, and
	\[
		\norm{\frac{\omega}{\rho}}_{C^2(\Sbb^2,\hatgamma)}\le c.
	\]
	Then there exists a constant $C>0$ depending on $c$ such that
	\begin{align*}
		\norm{\overline{\Riem}(\ul{L},L,\ul{L},L)-2h''}_{C^2(\gamma_\omega)}&\le \frac{C}{\rho^4},\\
		\norm{\overline{\Ric}(\ul{L},\ul{L})}_{C^2(\gamma_\omega)}&\le \frac{C}{\rho^4},\\
		\norm{\overline{\Ric}(\ul{L},L)+\frac{2}{\omega}h'+h''}_{C^2(\gamma_\omega)}&\le \frac{C}{\rho^4},\\
		\norm{\overline{R}-\frac{2(1-h)}{\omega^2}+\frac{4h'}{\omega}+h''}_{C^2(\gamma_\omega)}&\le \frac{C}{\rho^4},\\
		\norm{\overline{\Riem}_{ijkL}-\left(\frac{2(1-h)}{\omega^2}+\frac{h'}{\omega}\right)\left(\d\omega_i(\gamma_\omega)_{jk}-\d\omega_j(\gamma_\omega)_{ik}\right)}_{C^2(\gamma_\omega)}&\le \frac{C}{\rho^4}.
	\end{align*}
\end{lem}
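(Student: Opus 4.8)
The plan is to reduce all five estimates in Lemma~\ref{lem_appx_curvatureestimates1} to the combination of two inputs: first, the decay assumptions on the ambient curvature in Definition~\ref{defi_asymclassS}, namely $\overline{\Riem}-\overline{\Riem}^{Schw}=\overline{O}_{2,2}(r^{-4})$; and second, the explicit Schwarzschild curvature components recorded in Lemma~\ref{lem_appx_Riemcurv_classS}. The strategy is: evaluate the relevant contraction in the Schwarzschild model using Lemma~\ref{lem_appx_Riemcurv_classS}, check that the error term produced by $\overline{O}_{2,2}(r^{-4})$ has the right size after contracting with the relevant metric factors and after being measured in the $\gamma_\omega$-norm (using Lemma~\ref{lem_appx_gammaomegadecay} to convert $\hatgamma$-decay into $\gamma_\omega$-decay), and finally account for the fact that everything is being evaluated along the graph $\Sigma_\omega$ rather than along a leaf $S_r$.

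Concretely, I would proceed one estimate at a time. For $\overline{\Riem}(\ul{L},L,\ul{L},L)$, Lemma~\ref{lem_appx_Riemcurv_classS} gives the Schwarzschild value $2h''$ (with $h=1-2m/r$ here), and the difference is $\overline{O}_{2,2}(r^{-4})$ evaluated in the pair $(\ul{L},L)$ twice, which by Definition~\ref{defi_prelim_O_spacetime} is exactly $O_{2,2}(r^{-4})$ as a function on $\Sbb^2$; Lemma~\ref{lem_appx_gammaomegadecay} with $k=2$ then upgrades this to a $C^2(\gamma_\omega)$ bound of order $\rho^{-4}$. For $\overline{\Ric}(\ul{L},\ul{L})$ and $\overline{\Ric}(\ul{L},L)$, I would expand the Ricci contraction $\overline{\Ric}_{\ul{L}\ul{L}}=\gamma_\omega^{ij}\overline{\Riem}_{i\ul{L}j\ul{L}}+\tfrac12\overline{\Riem}_{\ul{L}\ul{L}\ul{L}L}+\tfrac12\overline{\Riem}_{L\ul{L}\ul{L}\ul{L}}$ relative to the null frame $\{\ul{L},L,\partial_i\}$ with $\overline{g}(\ul{L},L)=2$; in Schwarzschild the tangential block $\overline{\Riem}^{Schw}_{I\ul{L}J\ul{L}}$ vanishes (only $\overline{\Riem}^{Schw}_{I\ul{L}JL_r}$ is nonzero) so the leading contribution vanishes and one is left with error terms, while for $\overline{\Ric}(\ul{L},L)$ the term $\gamma_\omega^{ij}\overline{\Riem}^{Schw}_{i\ul{L}jL}=-rh'\,\gamma_\omega^{ij}\hatgamma_{ij}$ produces the claimed $-\tfrac{2}{\omega}h'$ after using $\gamma_\omega^{ij}=\omega^{-2}\hatgamma^{ij}+O(\omega^{-4})$ (from Lemma~\ref{lem_prelim_backgroundfol} and the graph relations of Subsection~\ref{subsec_generalsetup}), plus the $2h''$ from the $(\ul{L},L)$ block contributing $\tfrac12\cdot 2h''\cdot(\text{factor})$. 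The scalar curvature identity then follows by taking the trace $\overline{R}=-\overline{\Riem}(\ul{L},L,\ul{L},L)\cdot\tfrac14\cdot 2+2\overline{\Ric}(\ul{L},L)\cdot\tfrac12+\gamma_\omega^{ij}\overline{\Ric}_{ij}$, carefully tracking the $\overline{g}(\ul{L},L)=2$ normalization, and the tangential Ricci trace contributes $\tfrac{2(1-h)}{\omega^2}$ via Lemma~\ref{lem_appx_Riemcurv_classS}'s last line; combining gives the stated leading term $\tfrac{2(1-h)}{\omega^2}-\tfrac{4h'}{\omega}-h''$. For the last estimate on $\overline{\Riem}_{ijkL}$, the Schwarzschild model (Lemma~\ref{lem_appx_Riemcurv_classS}) has $\overline{\Riem}^{Schw}_{IJKL_r}=0$ identically, so the leading contribution to $\overline{\Riem}_{ijkL}$ along $\Sigma_\omega$ comes entirely from the frame change $L=L_r-|\nabla\omega|^2\ul{L}-2\nabla\omega^I\partial_I$ of \eqref{eq_prelim_nullgeometry_L} and the graph relation $\partial_i=\partial_I+\partial_I\omega\,\ul{L}$ of \eqref{eq_prelim_nullgeometry_partiali}; plugging these in and collecting the terms linear in $\d\omega$ produces precisely $\bigl(\tfrac{2(1-h)}{\omega^2}+\tfrac{h'}{\omega}\bigr)\bigl(\d\omega_i(\gamma_\omega)_{jk}-\d\omega_j(\gamma_\omega)_{ik}\bigr)$ from the $\overline{\Riem}^{Schw}_{IJKM}$ and $\overline{\Riem}^{Schw}_{I\ul{L}JL_r}$ components contracted against $\d\omega$, with everything else of order $\rho^{-4}$ by the decay hypotheses together with the bound $|\nabla\omega|\le C\rho^{-1}$ implicit in $\|\omega/\rho\|_{C^2}\le c$.

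The main obstacle I anticipate is purely bookkeeping: correctly propagating the $\overline{O}_{2,2}(r^{-4})$ decay through the index gymnastics when one replaces the background null frame $\{\ul{L},L_r\}$ by the frame $\{\ul{L},L\}$ adapted to $\Sigma_\omega$, since $L$ differs from $L_r$ by terms involving $\nabla\omega$ and $|\nabla\omega|^2$, and each such substitution mixes curvature components with different decay rates (e.g.\ $\overline{\Riem}_{I\ul{L}JL_r}$ decays like $r^{-1}$ as a function on $\Sbb^2$ by Definition~\ref{defi_prelim_O_spacetime}, and multiplying by $\d\omega\sim r^{-1}\cdot(\omega/r)'\sim$ small gives the needed order). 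One must check that no cross term escapes the $\rho^{-4}$ budget; this uses the improved gradient bounds (here only $C^2$ control of $\omega/\rho$) and Lemma~\ref{lem_appx_gammaomegadecay} to pass between $\hatgamma$-norms and $\gamma_\omega$-norms at each order up to two derivatives. I do not expect any genuine analytic difficulty, only the need to be systematic; the calculation is essentially the same as the corresponding estimates in \cite{marssoria,wolff_thesis} adapted to the stronger Schwarzschildean asymptotics, and I would organize it as a single lemma-by-lemma expansion referencing Lemmas~\ref{lem_appx_Riemcurv_classS}, \ref{lem_prelim_backgroundfol}, \ref{lem_appx_c3control}, and \ref{lem_appx_gammaomegadecay} at each step.
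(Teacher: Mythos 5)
Your proposal follows essentially the same route as the paper's proof: decompose $\partial_i$ and $L$ via the graph relations, compare each contraction with its Schwarzschild value from Lemma \ref{lem_appx_Riemcurv_classS}, and absorb the remainder using $\overline{\Riem}-\overline{\Riem}^{Schw}=\overline{O}_{2,2}(r^{-4})$ together with Lemmas \ref{lem_appx_gammaomegadecay}, \ref{lem_appx_c3control} and \eqref{eq_appx_inverse}. Two harmless slips: your scalar-curvature trace formula double-counts the $\overline{\Riem}(\ul{L},L,\ul{L},L)$ block (the correct identity is $\overline{R}=\gamma_\omega^{ij}\overline{\Ric}_{ij}+\overline{\Ric}(\ul{L},L)$, whose second term already contains $-\tfrac12\overline{\Riem}(\ul{L},L,\ul{L},L)$, and indeed yields the stated leading term $\tfrac{2(1-h)}{\omega^2}-\tfrac{4h'}{\omega}-h''$), and the hypothesis $\norm{\tfrac{\omega}{\rho}}_{C^2(\hatgamma)}\le c$ only gives $\btr{\nabla\omega}_{\gamma_\omega}\le C$ rather than $C\rho^{-1}$ — which suffices here because every Schwarzschild component that these gradient factors multiply vanishes, so they only hit terms already inside the $\rho^{-4}$ error budget.
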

\begin{proof}
	Using the decomposition of 
	\begin{align*}
		\partial_i&=\partial_I+\d\omega_I\ul{L},\\
		L&=L_r-\btr{\nabla\omega}^2_{\gamma_\omega}\ul{L}-2\nabla\omega^I\partial_I,
	\end{align*}
	we find that
	\begin{align*}
	-\frac{1}{2}\overline{\Riem}(\ul{L},L,\ul{L},L)
	=&\,-\frac{1}{2}\overline{\Riem}(\ul{L},L_r,\ul{L},L_r)+\nabla\omega^I\overline{\Riem}(\ul{L},L_s,\ul{L},\partial_I)\\
	&\,-2\nabla\omega^I\nabla\omega^J\overline{\Riem}(\ul{L},\partial_I,\ul{L},\partial_J),\\
	2\overline{\Ric}(\ul{L},L)
	=&\,\gamma_\omega^{IJ}\overline{\Riem}(\ul{L},\partial_I,L_r,\partial_J)-\btr{\nabla\omega}^2_{\gamma_\omega}\gamma_\omega^{IJ}\overline{\Riem}(\ul{L},\partial_I,\ul{L},\partial_J)\\
	&\,-2\nabla\omega^M\gamma_{\omega}^{IJ}\overline{\Riem}(\ul{L},\partial_I,\partial_M,\partial_J)\\
	&\,+\frac{1}{2}\overline{\Riem}(\ul{L},L_r,L_r,\ul{L})-\nabla\omega^I\overline{\Riem}(\ul{L},L_r,\partial_I,\ul{L}),
	\end{align*}
	where we used that at each point $p\in\Sigma_\omega$, $(\gamma_\omega)_p=(\gamma_{r=\omega(p)})_p$.
{Note that by Definition \ref{defi_asymclassS} and Lemma \ref{lem_appx_gammaomegadecay}, we find
	\[
		\norm{\overline{\Riem}_{\alpha_1\alpha_2\alpha_3\alpha_4}-\overline{\Riem}^{Schw}_{\alpha_1\alpha_2\alpha_3\alpha_4}}_{C^2(\gamma_\omega)}\le\frac{C}{\rho^4}
	\]
	for all $\alpha_i\in\{\partial_I,\ul{L},L_r\}$, cf. Remark \ref{bem_prelim_O_spacetime}. Using Lemma \ref{lem_appx_c3control} and \ref{lem_appx_Riemcurv_classS}, the claim for $\overline{\Riem}(\ul{L},L,\ul{L},L)$ and $\overline{\Ric}(\ul{L},L)$ readily follows, additionally using \eqref{eq_appx_inverse} in the latter case. All other identities follow similarly.}
\end{proof}

\section{Appendix - The Null Simons identity}\label{appendix_nullsimon}
In this appendix, we establish a contracted version of the Null Simon's identity Proposition  \ref{prop_nullsimon} which is an important ingredient for establishing estimates along APNMCF in Section \ref{sec:APNMCF}.
\begin{bem}\label{bem_nullsimon}
	Note that Proposition \ref{prop_nullgauss}, Equation \eqref{eq_prelim_gauss1} can be reformulated as
	\begin{align}
	\Riem_{kilm}A^m_j=&\overline{\Riem}_{kilm}A^m_j+\frac{1}{2}B_{kl}A_{im}A^m_j+\frac{1}{2}A_{kl}B_{im}A^m_j-\frac{1}{2}B_{il}A_{km}A^m_j-\frac{1}{2}A_{il}B_{km}A^m_j,
	\end{align}
	where $B:=\frac{1}{\ul{\theta}}\ul{\chi}$. Comparing covariant derivatives furthermore yields		
	\begin{align}
	\begin{split}\label{eq_nullsimon_derivative}
	\overline{\nabla}_i\overline{\Riem}_{kjl(\ul{\theta}L)}
	=&\nabla_i(\overline{\Riem}_{kjl(\ul{\theta}L)})+\frac{1}{2}\overline{\Riem}_{\ul{L}jlL}A_{ik}+\frac{1}{2}\overline{\Riem}_{k\ul{L}lL}A_{ij}+\frac{1}{2}\overline{\Riem}_{kj\ul{L}L}A_{il}\\
	&+\frac{1}{2}\overline{\Riem}_{(\ul{\theta}L)jl(\ul{\theta}L)}B_{ik}+\frac{1}{2}\overline{\Riem}_{k(\ul{\theta}L)l(\ul{\theta}L)}B_{ij}-\overline{\Riem}_{kjlm}A^m_i+\tau_i\overline{\Riem}_{kjl(\ul{\theta}L)}.
	\end{split}
	\end{align}
	Thus, we can state the Null Simon's identity, Proposition \ref{prop_nullsimon}, equivalently as
	\begin{align*}
	\nabla_k\nabla_l A_{ij}
	=&\nabla_i\nabla_j A_{kl}+\tau_j\nabla_iA_{kl}+\tau_i\nabla_kA_{jl}-\tau_k\nabla_iA_{jl}-\tau_l\nabla_kA_{ij}\\
	&+\frac{1}{2}B_{kl}A_{im}A^m_j+\frac{1}{2}A_{kl}B_{im}A^m_j+\frac{1}{2}B_{kj}A_{im}A^m_l+\frac{1}{2}A_{kj}B_{im}A^m_l\\
	&-\frac{1}{2}B_{il}A_{km}A^m_j-\frac{1}{2}A_{il}B_{km}A^m_j-\frac{1}{2}B_{ij}A_{km}A^m_l-\frac{1}{2}A_{ij}B_{km}A^m_l\\
	&+\nabla_i\tau_jA_{kl}+\nabla_k\tau_iA_{jl}-\nabla_i\tau_kA_{jl}-\nabla_k\tau_lA_{ij}+\overline{\Riem}_{kilm}A^m_j+\overline{\Riem}_{kijm}A^m_l\\
	&+\overline{\Riem}_{kjlm}A^m_i-\frac{1}{2}\overline{\Riem}_{\ul{L}jlL}A_{ik}-\frac{1}{2}\overline{\Riem}_{k\ul{L}lL}A_{ij}-\frac{1}{2}\overline{\Riem}_{kj\ul{L}L}A_{il}\\
	&+\overline{\Riem}_{lijm}A^m_k-\frac{1}{2}\overline{Rm}_{\ul{L}ijL}A_{kl}-\frac{1}{2}\overline{\Riem}_{l\ul{L}jL}A_{ki}-\frac{1}{2}\overline{\Riem}_{li\ul{L}L}A_{kj}\\
	&-\frac{1}{2}\overline{\Riem}_{(\ul{\theta}L)jl(\ul{\theta}L)}B_{ik}-\frac{1}{2}\overline{\Riem}_{k(\ul{\theta}L)l(\ul{\theta}L)}B_{ij}-\frac{1}{2}\overline{\Riem}_{(\ul{\theta}L)ij(\ul{\theta}L)}B_{kl}-\frac{1}{2}\overline{\Riem}_{l(\ul{\theta}L)j(\ul{\theta}L)}B_{ik}\\
	&+\overline{\nabla}_i\overline{\Riem}_{kjl(\ul{\theta}L)}+\overline{\nabla}_k\overline{\Riem}_{lij(\ul{\theta}L)}-\tau_i\overline{\Riem}_{kjl(\ul{\theta}L)}-\tau_k\overline{\Riem}_{lij(\ul{\theta}L)}.
	\end{align*}
\end{bem}
\noindent{In the following, we define
	\[
	(T_1\cdot T_2)_{ij}:=\gamma_\omega^{kl}(T_1)_{ik}(T_2)_{lj}
	\]
	for two $(0,2)$-tensors $T_1$, $T_2$, and $*$ denotes a linear combination of contractions of tensors (with respect to $\gamma_\omega$).}
\begin{lem}\label{lem_appx_nullsimon}
	Let $\mathcal{N}$ be asymptotically Schwarzschildean, and let $\Sigma$ be a spacelike cross section with $\frac{1}{2}\rho\le \omega\le 2\rho$, and assume that
	\[
		\norm{\frac{\omega}{\rho}}_{C^2(\hatgamma)}\le c.
	\]
	Then
	\begin{align*}
		\Hess \mathcal{H}^2_{ij}
		=&\, \Delta A_{ij}-\left(R+\frac{2(1-h)}{\omega^2}+\frac{h'}{\omega}\right)\accentset{\circ}{A}_{ij}+\btr{\omega}^2\left(h''+\frac{2(1-h)}{\omega^2}\right)\ul{\theta}\accentset{\circ}{\ul{\chi}}_{ij}\\
		&\,+\left(\ul{\theta}^2\left(\frac{2(1-h)}{\omega^2}+\frac{2h'}{\omega}-h''\right)-\ul{\theta}\left(16\frac{1-h}{\omega^3}+12\frac{h'}{\omega^2}\right)\right)\accentset{\circ}{\left(\d\omega\otimes\d\omega\right)}_{ij}\\
		&\, +\nabla_i\accentset{\circ}{A}_{jm}\tau^m+\nabla_j\accentset{\circ}{A}_{im}\tau^m-\dive\accentset{\circ}{A}_i\tau_j-\dive\accentset{\circ}{A}_j\tau_i\\
		&\,+\dive\tau A_{ij}-\mathcal{H}^2\nabla_i\tau_j+(\nabla\tau\cdot A)_{ij}-(A\cdot\nabla\tau)_{ji}\\
		&\, +\mathcal{H}^2 (\tau\otimes\tau)_{ij}-\btr{\tau}^2A_{ij}-2\ul{\theta}\left(2\frac{(1-h)}{\omega^2}+\frac{h'}{\omega}\right)\left(\accentset{\circ}{\left(\d\omega\otimes\tau\right)}+\accentset{\circ}{\left(\tau\otimes\d\omega\right)}\right)_{ij}\\
		&\,+\ul{\theta}f^{(1)}_{ij}+\accentset{\circ}{(A*f^{(2)})}_{ij}+\ul{\theta}\accentset{\circ}{(\ul{\chi}*f^{(3)})}_{ij}+\ul{\theta}\accentset{\circ}{(\tau*f^{(4)})}_{ij},
	\end{align*}
	where $f^{(i)}$ are smooth tensors on $\Sbb^2$ such that
	\[
		\rho\norm{f^{(1)}}_{C^1(\gamma_\omega)}+\norm{f^{(2)}}_{C^1(\gamma_\omega)}+\norm{f^{(3)}}_{C^1(\gamma_\omega)}+\norm{f^{(4)}}_{C^1(\gamma_\omega)}\le \frac{C}{\rho^4},
	\]
	and $f^{(1)}$ is symmetric and trace-free, with $C$ only depending on $c$.
\end{lem}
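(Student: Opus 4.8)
The plan is to derive the claimed identity for $\Hess\,\mathcal{H}^2$ by contracting the Null Simon's identity of Proposition \ref{prop_nullsimon} (or equivalently the reformulated version in Remark \ref{bem_nullsimon}) over the indices $k$ and $l$ with $\gamma_\omega^{kl}$, and then carefully bookkeeping which of the resulting terms are ``principal'' (the ones written explicitly in the statement) and which are absorbed into the error tensors $f^{(1)},\dots,f^{(4)}$. First I would recall that $\mathcal{H}^2 = \tr_{\gamma_\omega} A$, so that $\gamma_\omega^{kl}\nabla_i\nabla_j A_{kl} = \nabla_i\nabla_j \mathcal{H}^2 = \Hess\,\mathcal{H}^2_{ij}$, while $\gamma_\omega^{kl}\nabla_k\nabla_l A_{ij} = \Delta A_{ij}$. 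Thus contracting Proposition \ref{prop_nullsimon} immediately produces the two leading terms $\Delta A_{ij}$ and $\Hess\,\mathcal{H}^2_{ij}$, and the task reduces to contracting and classifying the remaining terms on the right-hand side: the intrinsic curvature terms $\Riem_{kijm}A^m_l + \Riem_{kilm}A^m_j$, the torsion transport terms $\tau_j\nabla_iA_{kl} + \dots$, the $\nabla\tau\otimes A$ terms, and the ambient-curvature terms $\nabla_i(\overline{\Riem}_{kjl(\ul{\theta}L)}) + \nabla_k(\overline{\Riem}_{lij(\ul{\theta}L)})$.

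Next I would handle each family in turn. For the intrinsic curvature terms I would use that in two dimensions $\Riem_{ijkl} = \tfrac{1}{2}\scal(\gamma_{ik}\gamma_{jl} - \gamma_{il}\gamma_{jk})$, so $\gamma_\omega^{kl}(\Riem_{kijm}A^m_l + \Riem_{kilm}A^m_j)$ collapses into a combination of $\scal\, A_{ij}$ and $\scal\,\mathcal{H}^2\gamma_{ij}$; rewriting $A = \accentset{\circ}{A} + \tfrac12\mathcal{H}^2\gamma$ and absorbing the full-trace pieces (which are pure-trace and hence do not contribute to the trace-free structure, or get absorbed into $A*f^{(2)}$ after using $\scal = \tfrac{2}{\rho^2} + O(\rho^{-4})$ from \eqref{eq_aprioriscal}) gives the $-(\scal + \dots)\accentset{\circ}{A}_{ij}$ term. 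For the torsion transport terms the contraction produces exactly $\nabla_i\accentset{\circ}{A}_{jm}\tau^m + \nabla_j\accentset{\circ}{A}_{im}\tau^m - \dive\accentset{\circ}{A}_i\tau_j - \dive\accentset{\circ}{A}_j\tau_i$ together with trace corrections that I would move into the $A*f^{(2)}$-type remainder using $\btr{\tau}_{\gamma_\omega}\le C\rho^{-3}$ (Lemma \ref{lem_prelim_chitau}); similarly the $\nabla\tau\otimes A$ terms give $\dive\tau\, A_{ij} - \mathcal{H}^2\nabla_i\tau_j + (\nabla\tau\cdot A)_{ij} - (A\cdot\nabla\tau)_{ji}$ up to remainders. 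The purely quadratic-in-$\tau$ and quadratic-in-$\accentset{\circ}{\ul\chi}$ pieces that appear when passing through the evolution of $\mathcal{H}^2$ account for the $\mathcal{H}^2(\tau\otimes\tau)_{ij} - \btr{\tau}^2 A_{ij}$ terms.

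The genuine work is in the ambient-curvature terms $\overline{\nabla}_i\overline{\Riem}_{kjl(\ul{\theta}L)} + \overline{\nabla}_k\overline{\Riem}_{lij(\ul{\theta}L)}$ and their conversion from ambient covariant derivatives to intrinsic ones via \eqref{eq_nullsimon_derivative}. Here I would first use Definition \ref{defi_asymclassS} together with Lemma \ref{lem_appx_curvatureestimates1} and Lemma \ref{lem_appx_DerivRiemcurv_classS} to reduce $\overline{\nabla}\,\overline{\Riem}$ along $\mathcal{N}$ to its Schwarzschild value plus an $\overline{O}_{1,1}(r^{-5})$ error; then I would decompose $\partial_i = \partial_I + \d\omega_I\ul{L}$ and $L = L_r - \btr{\nabla\omega}^2\ul{L} - 2\nabla\omega^I\partial_I$ (as in the proof of Lemma \ref{lem_appx_curvatureestimates1}) and expand the Schwarzschild components explicitly. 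This is where the terms $\btr{\nabla\omega}^2(h'' + 2(1-h)/\omega^2)\ul{\theta}\accentset{\circ}{\ul\chi}_{ij}$, the $\accentset{\circ}(\d\omega\otimes\d\omega)$ term with its precise coefficient $\ul{\theta}^2(\dots) - \ul{\theta}(16(1-h)/\omega^3 + 12 h'/\omega^2)$, and the cross term $-2\ul{\theta}(2(1-h)/\omega^2 + h'/\omega)(\accentset{\circ}(\d\omega\otimes\tau) + \accentset{\circ}(\tau\otimes\d\omega))$ come from — they are the ``geometric'' contributions of the Schwarzschild curvature contracted against the graph decomposition, and tracking their coefficients is the main obstacle: it requires combining the explicit Christoffel/curvature formulas, the expansions $\ul{\theta} = 2/\omega + O(\rho^{-3})$ and $\ul{\chi}$, $\zeta_r$ decay from Lemma \ref{lem_prelim_backgroundfol}, and the improved gradient bounds on $\omega$. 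Everything not of this explicit form — in particular all terms carrying an extra factor of $\accentset{\circ}{\ul\chi}$, $\tau$, or a higher power of $r^{-1}$ than accounted for — I would collect into $f^{(1)},\dots,f^{(4)}$, verifying the stated norm bound $\rho\norm{f^{(1)}}_{C^1} + \norm{f^{(2)}}_{C^1} + \norm{f^{(3)}}_{C^1} + \norm{f^{(4)}}_{C^1}\le C\rho^{-4}$ by repeated application of Lemmas \ref{lem_prelim_chitau}, \ref{lem_appx_gammaomegadecay}, \ref{lem_appx_curvatureestimates1} and Corollary \ref{kor_aprior}; the requirement that $f^{(1)}$ be symmetric and trace-free is arranged by moving any pure-trace or antisymmetric residue into the $\d\mathcal{H}^2$/$\Delta A$ side or into $f^{(2)}$ respectively. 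Finally I would double-check that taking the trace-free part of the whole identity is consistent with the trace identity $\tr_{\gamma_\omega}(\Hess\,\mathcal{H}^2) = \Delta\mathcal{H}^2 = \tr_{\gamma_\omega}(\Delta A)$, which serves as a useful sanity check on the bookkeeping.
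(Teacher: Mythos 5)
Your overall strategy is the paper's: contract the Null Simon's identity of Proposition \ref{prop_nullsimon} with $\gamma_\omega^{kl}$, use the two--dimensional identity $\Riem_{ijkl}=\tfrac12\scal(\gamma_{ik}\gamma_{jl}-\gamma_{jk}\gamma_{il})$ for the intrinsic curvature block, convert the ambient curvature terms via \eqref{eq_nullsimon_derivative} and the frame decomposition $\partial_i=\partial_I+\d\omega_I\ul{L}$, $L=L_r-\btr{\nabla\omega}^2\ul{L}-2\nabla\omega^M\partial_M$, and let $f^{(1)},\dots,f^{(4)}$ collect the differences to the Schwarzschild values of Lemmas \ref{lem_appx_Riemcurv_classS} and \ref{lem_appx_DerivRiemcurv_classS}. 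However, there is a concrete gap in your treatment of the torsion--gradient block $\tau_j\nabla_iA_{kl}+\tau_i\nabla_kA_{jl}-\tau_k\nabla_iA_{jl}-\tau_l\nabla_kA_{ij}$. The $\gamma^{kl}$-contraction does \emph{not} ``produce exactly'' the symmetric combination $\nabla_i\accentset{\circ}{A}_{jm}\tau^m+\nabla_j\accentset{\circ}{A}_{im}\tau^m-\dive\accentset{\circ}{A}_i\tau_j-\dive\accentset{\circ}{A}_j\tau_i$ up to trace corrections: it produces, among others, the non-symmetric term $\tau^k\nabla_k\accentset{\circ}{A}_{ij}$, and the discrepancy is of the form $\tau*\nabla\accentset{\circ}{A}$. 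Such a term cannot be absorbed into any of the remainders $\ul{\theta}f^{(1)}$, $\accentset{\circ}{(A*f^{(2)})}$, $\ul{\theta}\accentset{\circ}{(\ul{\chi}*f^{(3)})}$, $\ul{\theta}\accentset{\circ}{(\tau*f^{(4)})}$ (none of them multiplies $\nabla\accentset{\circ}{A}$), and under the hypotheses of the lemma $\nabla\accentset{\circ}{A}$ is not controlled at all, since membership in the a-priori class is not assumed. The missing idea is precisely the step the paper writes out: use the Codazzi equation \eqref{eq_codazziA} to exchange the derivative indices in these terms; this symmetrization is also the source of the quadratic terms $\mathcal{H}^2(\tau\otimes\tau)_{ij}-\btr{\tau}^2A_{ij}$ and of the $\ul{\theta}\overline{\Riem}*\tau$ contributions that, after inserting the Schwarzschild values, give the explicit cross term in $\accentset{\circ}{(\d\omega\otimes\tau)}$ and feed $f^{(4)}$. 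Your attribution of the $\mathcal{H}^2\tau\otimes\tau-\btr{\tau}^2A$ terms to ``passing through the evolution of $\mathcal{H}^2$'' is a symptom of this confusion: the lemma is a static identity with no flow involved.

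Two smaller points: the appeal to Corollary \ref{kor_aprior} (improved gradient bounds) is not available here, since only $\tfrac12\rho\le\omega\le2\rho$ and $\norm{\omega/\rho}_{C^2(\hatgamma)}\le c$ are assumed; it is also unnecessary, as the paper's bounds follow from Definition \ref{defi_asymclassS} together with Lemmas \ref{lem_appx_c3control}, \ref{lem_appx_gammaomegadecay}, \ref{lem_prelim_chitau} and \ref{lem_appx_curvatureestimates1}. Also, the intrinsic curvature contraction yields exactly $\scal\,\accentset{\circ}{A}_{ij}$ (the pure-trace pieces cancel identically), so no absorption into $A*f^{(2)}$ is needed there; and a pure-trace residue could not be hidden in the remainders anyway, since all four remainder slots are trace-free by construction --- the correct observation is that the total residue is automatically trace-free, consistent with your trace sanity check.
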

\begin{proof}
	The results directly follows by computation after taking a trace over the null Simons' identity, Proposition \ref{prop_nullsimon}. For the convenience of the reader, we outline some parts of the computation.
	
	First notice that as $\Sigma$ is $2$-dimensional, we have
	\[
		\Riem_{ijkl}=\frac{1}{2}\scal\left((\gamma_\omega)_{ik}(\gamma_\omega)_{jl}-(\gamma_\omega)_{jk}(\gamma_\omega)_{il}\right).
	\]
	From this, it readily follows that
	\[
		\gamma_\omega^{kl}\left(\Riem_{kijm}A^m_l+\Riem_{kilm}A^m_j\right)=\scal\accentset{\circ}{A}_{ij}.
	\]
	Using the Codazzi Equation, we further find
	\begin{align*}
		&\,\tau_l\nabla_kA_{ij}+\tau_k\nabla_iA_{lj}-\tau_i\nabla_kA_{lj}-\tau_j\nabla_iA_{kl}\\
		=\,&\tau_l\nabla_jA_{ki}+\tau_k\nabla_iA_{lj}-\tau_i\nabla_kA_{lj}-\tau_j\nabla_kA_{il}+\left(\nabla_kA_{il}-\nabla_i A_{kl}\right)\tau_j+\left(\nabla_kA_{ji}-\nabla_jA_{ki}\right)\tau_l\\
		=\,&\frac{1}{2}\tau_l\d\mathcal{H}^2_j(\gamma_\omega)_{ki}+\frac{1}{2}\tau_k\d\mathcal{H}^2_i(\gamma_\omega)_{lj}-\frac{1}{2}\tau_i\d\mathcal{H}^2_k(\gamma_\omega)_{lj}-\frac{1}{2}\tau_j\d\mathcal{H}^2_k(\gamma_\omega)_{il}\\
		\,&+\tau_l\nabla_j\accentset{\circ}{A}_{ki}+\tau_k\nabla_i\accentset{\circ}{A}_{lj}-\tau_i\nabla_k\accentset{\circ}{A}_{lj}-\tau_j\nabla_k\accentset{\circ}{A}_{il}\\
		\,&+\ul{\theta}\overline{\Riem}_{kilL}\tau_j+\tau_j\tau_iA_{kj}-\tau_k\tau_jA_{li}+\ul{\theta}\overline{\Riem}_{kjiL}\tau_l+\tau_l\tau_jA_{ki}-\tau_l\tau_kA_{ij}.
	\end{align*}
Note that
	\[
		\gamma_\omega^{kl}\left(\frac{1}{2}\tau_l\d\mathcal{H}^2_j(\gamma_\omega)_{ki}+\frac{1}{2}\tau_k\d\mathcal{H}^2_i(\gamma_\omega)_{lj}-\frac{1}{2}\tau_i\d\mathcal{H}^2_k(\gamma_\omega)_{lj}-\frac{1}{2}\tau_j\d\mathcal{H}^2_k(\gamma_\omega)_{il}\right)=0.
	\]
	Recall the decomposition 
	\begin{align*}
		\partial_i&=\partial_I+\d\omega_I\ul{L},\\
		L&=L_r-\btr{\nabla\omega}^2\ul{L}-2\nabla\omega^M\partial_M,
	\end{align*}
	with which one can decompose all ambient Riemann-terms, e.g.
	\begin{align}
		\begin{split}\label{eq_appx_riemdecompLbarL}
		\overline{\Riem}_{i\ul{L}jL}=\,&\overline{\Riem}_{I\ul{L}JL_r}-\btr{\nabla\omega^2}\overline{\Riem}_{I\ul{L}J\ul{L}}-2\nabla\omega^M\overline{\Riem}_{I\ul{L}JM}\\
		&\,+\d\omega_J\overline{\Riem}_{I\ul{L}\ul{L}L_r}-2\d\omega_J\nabla^M\overline{\Riem}_{I\ul{L}\ul{L}M}.
		\end{split}
	\end{align}
	To apply the decay assumptions in Definition \ref{defi_asymclassS} also for the tensor derivatives of $\overline{\Riem}$, we use equation \eqref{eq_nullsimon_derivative}. Observe that\footnote{Note that as $\omega$ is not constant in general, we implicitly use the bound on $\widehat{\nabla}\frac{\omega}{\rho}$.}
	\[
		\hatgamma=\frac{1}{\omega^2}\gamma_\omega+O_{1,1}(r^{-2}),\text{\,\,\,\,\,\,\,\,\,\,\, }\hatgamma\otimes\hatgamma=\frac{1}{\omega^4}\gamma_\omega\otimes\gamma_\omega+O_{1,1}(r^{-2}).
	\]
	The claim then follows from the decay assumptions in Definition \ref{defi_asymclassS}, Lemma \ref{lem_appx_gammaomegadecay}, Lemma \ref{lem_appx_Riemcurv_classS} and Lemma \ref{lem_appx_DerivRiemcurv_classS}, where the tensors $f^{(i)}$ arise from the difference between the Riemann terms and the respective values in Schwarzschild. Note that the fact that $f^{(1)}$ and the contractions with $f^{(2)}$, $f^{(3)}$, $f^{(4)}$ are individually trace-free follows from the algebraic properties of the individual terms.
\end{proof}

\bibliography{bib_schwarzschildlightcones}

\begin{thebibliography}{10}

\bibitem{aleks}
A.~D. Aleksandrov.
\newblock {Uniqueness theorems for surfaces in the large. I.}
\newblock {\em Vestnik Leningrad. Univ.}, 11:5--19, 1956.

\bibitem{anderssonmarssimon}
Lars Andersson, Marc Mars, and Simon Walter.
\newblock {Stability of marginally outer trapped surfaces and existence of
  marginally outer trapped tubes}.
\newblock {\em Adv. Theor. Math. Phys.}, 12:853--888, 2008.

\bibitem{besse}
Arthur Besse.
\newblock {\em {Einstein Manifolds}}.
\newblock Springer, Berlin--Heidelberg--New York, 2008.

\bibitem{borghinifogagnolopinamonti}
Stefano Borghini, Mattia Fogagnolo, and Andrea Pinamonti.
\newblock {The Equality case in the substatic Heintze--Karcher inequality}.
\newblock {\em arXiv:2307.04253}, 2023.

\bibitem{brendleSphere}
Simon Brendle.
\newblock {\em Ricci Flow and the Sphere Theorem}.
\newblock AMS Press, Rhode Island, New York, 2010.

\bibitem{brendleCMC}
Simon Brendle.
\newblock {Constant mean curvature surfaces in warped product manifolds}.
\newblock {\em Publ. math. IHES}, 117:247--269, 2013.

\bibitem{brendleeichmair}
Simon Brendle and Michael Eichmair.
\newblock {Large outlying stable constant mean curvature spheres in initial
  data sets}.
\newblock {\em Invent. Math.}, 197(3), 2014.

\bibitem{cederbaumcortiersakovich}
Carla Cederbaum, Julien Cortier, and Anna Sakovich.
\newblock {On the Center of Mass of Asymptotically Hyperbolic Initial Data
  Sets}.
\newblock {\em Ann. Henri Poincar\'{e}}, 17:1505--1528, 2016.

\bibitem{cederbaumgraf}
Carla Cederbaum and Melanie Graf.
\newblock {\em Coordinates are Messy---Not Only in General Relativity}, pages
  273--288.
\newblock Springer Nature Switzerland, Cham, 2023.

\bibitem{cednerz}
Carla Cederbaum and Christopher Nerz.
\newblock {Explicit Riemannian Manifolds with Unexpectedly Behaving Center of
  Mass}.
\newblock {\em Ann. Henri Poincar\'{e}}, 16:1609--1631, 2015.

\bibitem{cederbaumsakovich}
Carla Cederbaum and Anna Sakovich.
\newblock {On the center of mass and foliations by constant spacetime mean
  curvature surfaces for isolated systems in General Relativity}.
\newblock {\em Calc. Var. Partial Differ. Equ.}, 60(214), 2021.

\bibitem{cedwolff}
Carla Cederbaum and Markus Wolff.
\newblock {Some new perspectives on the Kruskal-Szekeres extension with
  applications to photon surfaces}.
\newblock {\em Lett. Math. Phys.}, 114(40), 2024.

\bibitem{chenkellerwangwangyau}
Po-Ning Chen, Jordan Keller, Mu-Tao Wang, Ye-Kai Wang, and Shing-Tung Yau.
\newblock {Evolution of angular momentum and center of mass at null infinity}.
\newblock {\em Comm. Math. Phys.}, 386:551--588, 2021.

\bibitem{chenwangwangyau}
Po-Ning Chen, Mu-Tao Wang, Ye-Kai Wang, and Shing-Tung Yau.
\newblock {BMS Charges Without Supertranslation Ambiguity}.
\newblock {\em Comm. Math. Phys.}, 393:1411--1449, 2022.

\bibitem{chenwang}
Po-Ning Chen and Ye-Kai Wang.
\newblock {Two rigidity results for surfaces in Schwarzschild spacetimes}.
\newblock {\em arXiv:2306.07477}, 2023.

\bibitem{chrusciel}
Piotr~T. Chru\'sciel.
\newblock {Lectures on Energy in General Relativity}.
\newblock Krak\'ow 2010, revised and expanded in 2013.

\bibitem{eichmairkoerber}
Michael Eichmair and Thomas K\"{o}rber.
\newblock {Foliations of asymptotically flat manifolds by stable constant mean
  curvature spheres}.
\newblock {\em J. Diff. Geom.}, 128(3):1037--1083, 2024.

\bibitem{fayngold}
Moses Fayngold.
\newblock {\em {Special Relativity and How it Works}}.
\newblock Wiley\& Sons Inc., 2008.

\bibitem{guileesun}
Yaoting Gui, Yuqiao Li, and Jun Sun.
\newblock {Stability of the area preserving mean curvature flow in asymptotic
  Schwarzschild space}.
\newblock {\em arXiv:2409.12810}, 2024.

\bibitem{huang}
Lan-Hsuan Huang.
\newblock {Foliations by Stable Spheres with Constant Mean Curvature for
  Isolated Systems with General Asymptotics}.
\newblock {\em Comm. Math. Phys.}, 300(2):331--373, 2010.

\bibitem{huisken1}
Gerhard Huisken.
\newblock The volume preserving mean curvature flow.
\newblock {\em J. Reine Angew. Math.}, 135:35--48, 1991.

\bibitem{huiskenyau}
Gerhard Huisken and Shing-Tung Yau.
\newblock {Definition of center of mass for isolated physical systems and
  unique foliations by stable spheres with constant man curvature}.
\newblock {\em Invent. Math.}, 124:281--311, 1996.

\bibitem{klainermanszeftel}
Sergiu Klainerman and J\'er\'emie Szeftel.
\newblock {Effective Results on Uniformization and Intrinsic GCM Spheres in
  Perturbations of Kerr}.
\newblock {\em Ann. PDE}, 8(8), 2022.

\bibitem{le1}
Pengyu Le.
\newblock {A proof of the Kazdan--Warner identity via the Minkowski spacetime}.
\newblock {\em arXiv:1602.02060}, 2016.

\bibitem{le2}
Pengyu Le.
\newblock {A Lorentz invariant sharp Sobolev inequality on the circle}.
\newblock {\em arXiv:2303.02709}, 2023.

\bibitem{le3}
Pengyu Le.
\newblock {A proof of effective uniformization of 2-dim sphere via Minkowski
  lightcone}.
\newblock {\em arXiv:2305.16005}, 2023.

\bibitem{ma}
Shiguang Ma.
\newblock {Uniqueness of the foliation of constant mean curvature spheres in
  asymptotically flat $3$-manifolds}.
\newblock {\em Pac. J. Math.}, 252(1), 2011.

\bibitem{maedlerwinicour}
Thomas M\"adler and Jeffrey Winicour.
\newblock {The Bondi-Sachs formalism}.
\newblock {\em Scholarpedia}, 11:33528, 2016.

\bibitem{marssoria}
Marc Mars and Alberto Soria.
\newblock {The asymptotic behavior of the Hawking energy along null
  asymptotically flat hypersurfaces}.
\newblock {\em Class. Quantum Grav.}, 32(18):185020, 2015.

\bibitem{metzger}
Jan Metzger.
\newblock {Foliations of asymptotically flat 3-manifolds by 2-surfaces of
  prescribed mean curvature}.
\newblock {\em J. Diff. Geom.}, 77, 2007.

\bibitem{nerz2}
Christopher Nerz.
\newblock {Time evolution of ADM and CMC center of mass in general relativity}.
\newblock {\em arXiv:1312.6274v3}, 2013.

\bibitem{pihan}
Denis~M. Pihan.
\newblock A length preserving geometric heat flow for curves.
\newblock {\em PhD thesis, University of Melbourne}, 1998.

\bibitem{roesch}
Henri Roesch.
\newblock Proof of a null penrose conjecture using a new quasi-local mass.
\newblock {\em arXiv:1609.02875v1}, 2016.

\bibitem{roesch2}
Henri Roesch.
\newblock Quasi-round motss and stability of the schwarzschild null penrose
  inequality.
\newblock {\em Ann. Henri Poincar\'{e}}, 22, 2021.

\bibitem{roeschscheuer}
Henri Roesch and Julian Scheuer.
\newblock {Mean Curvature Flow in Null Hypersurfaces and the Detection of
  MOTS}.
\newblock {\em Comm. Math. Phys.}, 390:1--25, 2022.

\bibitem{sauter}
Johannes Sauter.
\newblock {Foliations of null hypersurfaces and the Penrose inequality}.
\newblock {\em Doctoral thesis, ETH Z\"urch}, 2008.

\bibitem{shiwangwu}
Yuguang Shi, Guofang Wang, and Jie Wu.
\newblock {On the behavior of quasi-local mass at the infinity along nearly
  round surfaces}.
\newblock {\em Ann. Glob. Anal. Geom.}, 36:419--441, 2009.

\bibitem{tenan}
Jacopo Tenan.
\newblock {Volume preserving spacetime mean curvature flow in initial data sets
  and applications to General Relativity}.
\newblock {\em arXiv:2407.12971}, 2024.

\bibitem{wolff6}
Markus Wolff.
\newblock An area growth argument for null mean curvature flow along the
  standard de sitter lightcone.
\newblock {\em arXiv:2312.07317}, 2023.

\bibitem{wolff4}
Markus Wolff.
\newblock A delellis--mueller type estimate on the minkowski lightcone.
\newblock {\em arXiv:2306.10892}, 2023.

\bibitem{wolff3}
Markus Wolff.
\newblock {On effects of the null energy condition on totally umbilic
  hypersurfaces in a class of static spacetimes}.
\newblock {\em arXiv:2310.11229}, 2023.

\bibitem{wolff_thesis}
Markus Wolff.
\newblock On the spacetime mean curvature of surfaces in general relativity
  (phd thesis).
\newblock {\em Universit\"atsbibliothek T\"ubingen}, 2023.

\bibitem{wolff1}
Markus Wolff.
\newblock {Ricci flow on surfaces along the standard lightcone in the
  $3+1$-Minkowski spacetime}.
\newblock {\em Calc. Var. Part. Differ. Equ.}, 62(90), 2023.

\bibitem{ye2}
Rugang Ye.
\newblock {Foliation by constant mean curvature spheres}.
\newblock {\em Pac. J. Math.}, 147(2):381--396, 1991.

\bibitem{ye3}
Rugang Ye.
\newblock {Foliation by Constant Mean Curvature Spheres on Asymptotically Flat
  Manifolds}.
\newblock {\em arXiv:dg-ga/9709020}, 1997.

\end{thebibliography}

\nopagebreak
\bibliographystyle{plain}
\end{document}